\newtheorem{definition}{Definition}[section]
\newtheorem{proposition}[definition]{Proposition}
\newtheorem{theorem}[definition]{Theorem}
\newtheorem{corollary}[definition]{Corollary}
\newtheorem{conjecture}[definition]{Conjecture}
\newtheorem{lemma}[definition]{Lemma}
\declaretheoremstyle[bodyfont=\normalfont]{normalfont}
\theoremstyle{definition}
\newtheorem{example}[definition]{Example}
\theoremstyle{definition}
\newtheorem{remark}[definition]{Remark}
\newcommand{\haru}[1]{\ar[rr]|-{\vstretch{0.60}{|}} \ar[rr]^-{#1} }
\newcommand{\hard}[1]{\ar[rr]|-{\vstretch{0.60}{|}} \ar[rr]_-{#1} }
\newcommand{\srarrow}[2]{\xymatrixcolsep{0.35cm} \xymatrix{{#1} \ar[r] \ar[r]|{\vstretch{0.50}{|}} & {#2}}}
\newcommand*{\@old@slash}{}\let\@old@slash\slash
\def\slash{\relax\ifmmode\delimiter"502F30E\mathopen{}\else\@old@slash\fi}
\begin{document}
\title{Cartesian Double Categories with an Emphasis on Characterizing Spans}
\author{Evangelia Aleiferi}

\phd

\degree{Doctor of Philosophy}
\degreeinitial{PhD}
\faculty{Science}
\dept{Department of Mathematics and Statistics}

\defencemonth{August}\defenceyear{2018}

\dedicate{For my parents, Pavlos and Stamatiki.}

\nolistoftables
\nolistoffigures

\frontmatter

\begin{abstract}
In this thesis, we introduce Cartesian double categories, motivated by the work of Carboni, Kelly, Walters, and Wood on Cartesian bicategories. Moving from bicategories to the slightly more generalized notion of double categories allows us to set the whole theory inside the welcoming 2-category of double categories, and to overcome technical problems that were caused by working with left adjoints inside a general bicategory. Cartesian double categories that are also fibrant are of particular interest to us. After describing some important properties of Cartesian and fibrant double categories, we give a characterization of the double category of Spans as a Cartesian double category. Lastly, we talk about profunctors and give a potential framework for their characterization as Cartesian double categories.
\end{abstract}

\begin{acknowledgements}
Firstly, I would like to thank Richard Wood and Geoff Cruttwell for their patience, motivation, and encouragement throughout this thesis. I couldn't have asked for better and kinder supervisors, thank you both!

I would also like to thank Bob Par\'{e}, for all of our discussions in the hallway outside our offices, as well as Dorette Pronk for her support throughout my studies. Many thanks to my fellow graduate students for five amazing years in the Department of Mathematics, to my undergraduate advisor Panagis Karazeris for inspiring me to get into Category Theory, and to Christina Vasilakopoulou for motivating me and helping me with this thesis from the other side of the continent.

Special thanks to my parents for their unlimited support throughout the years. I am really grateful for everything you have done for me! Also, many thanks to my brother Spyros, who has been an excellent example for me.

Finally, I would like to thank my dear friends in Halifax, Eliza, Shauna, Rowan, Emily, Chris, Mitch, Kirsti, Sarah, Emily, Shuv, PB, Izzy, Lew, and everybody else who made this place a beautiful home for me. Last but not least, special thanks to Noel, for his love and support this last year of my thesis. Thank you all!
\end{acknowledgements}

\mainmatter

\chapter{Introduction} \label{Introduction}

The first generalization of ordinary Cartesian categories came from Carboni and Walters in \cite{Carboni1987}. In that paper they introduced Cartesian bicategories in the case where the bicategories were locally ordered. The motivating example was that of the bicategory of relations, in which we can define an essentially unique, unital, associative, and symmetric up to coherent isomorphism tensor product. In the same paper they gave a characterization of the locally ordered bicategory $\mathbf{Ord}\mathcal{E}$ of ordered objects and ordered ideals as a Cartesian bicategory.\par

It was about twenty years later that a definition of a Cartesian bicategory was given in general. This was in the paper \cite{Carboni2008}, where Carboni, Kelly, Walters, and Wood define a Cartesian bicategory $\mathcal{B}$ as one that has finite products locally, its full sub-bicategory of left adjoints has finite bicategorical products, and certain derived lax functors $\otimes: \mathcal{B}\times \mathcal{B} \rightarrow \mathcal{B}$ and $I:\mathbf{1}\rightarrow \mathcal{B}$ have invertible constraints so that they are in fact pseudofunctors. Soon after that, in \cite{SpansLWW}, Lack, Walters, and Wood gave a characterization of the bicategory of spans as a Cartesian bicategory.\par

So it seemed that Cartesianness can be used as a base for characterizing important examples of bicategories. This was the idea behind the start of this thesis. The first example we considered was the one of profunctors internal to a finitely complete category $\mathcal{E}$ with reflexive coequalizers preserved by pullback functors, as in \cite{JohnstoneT}. The second was the bicategory of $V$-matrices over a Cartesian monoidal category $V$ with finite coproducts, such that the tensor distributes over them, as in \cite{Kelly2001}. Motivated by the fact that profunctors can be seen as modules over spans (\cite{Benabou}), we considered the bicategory of modules on a locally ordered bicategory first, and then on a general bicategory. In the first case, we showed that the bicategory of modules on a locally ordered Cartesian bicategory is Cartesian as well, by first showing that its Karoubi envelope (\cite{Borceux1986}) is Cartesian.

In the general case, a similar theorem would require showing that the full sub-bicategory of left adjoints in the bicategory of modules has finite products. However, describing this sub-bicategory is not an easy task. In particular, as far as we know, there is no characterization in the literature of the left adjoints in the bicategory of internal profunctors. Coming back to the example of $V$-matrices, one encounters the same problem. Having said that, it is not difficult to see that both internal profunctors and $V$-matrices share a similar property regarding this matter: there is a nice mere category with finite products embedded - not necessarily fully - in their sub-bicategory of left adjoints. For internal profunctors that was the category of internal categories and functors, and for $V$-matrices that was the category of sets and functions.\par

For this reason, we changed our setting from bicategories to double categories. Double categories were introduced in 1965 by Ehresmann (\cite{ehresmann65}), and have been further studied by various authors. Classic references are \cite{LimitsGP}, \cite{AdjointsGP}, or \cite{Shulman2008}. Double categories allow us to consider an extra type of arrows, whose composition is strictly associative and unital, i.e. they form a category. We will call that the vertical category of the double category. For our examples above, this consists of the internal functors and the functions respectively. Moreover, double categories together with double functors and vertical natural transformations live inside a strict 2-category. This means that we can consider adjunctions between double functors, which will lead to a very concrete definition of Cartesian double categories compared to the one for bicategories.\par

We prove many basic properties of Cartesian double categories, with a particular interest in the ones that are additionally fibrant. Fibrant double categories were discussed by Shulman in \cite{Shulman2010}, \cite{Shulman2008}, and prior to that in \cite{AdjointsGP}, as double categories with companions and conjoints. We then proceed to a characterization theorem of the double category of spans as a Cartesian double category. For that, we use ideas that appeared in \cite{SpansLWW} for the characterization of the bicategory of spans, together with some results on the double category of spans  in \cite{SpanCospan} by Niefield, and in \cite{Grandis2017} by Grandis and Par\'e.

In contrast to bicategories, it is fairly easy to prove that the double category of internal profunctors is a Cartesian double category. As in the case of locally ordered bicategories, we apply the construction of modules on the double category of spans. The construction of modules has been discussed briefly in \cite{Shulman2008} for fibrant double categories. This last result of the thesis can set a fundamental base for a characterization of the double category of internal profunctors in the future.

This thesis indicates that double categories might be a better enviroment for the generalization of Cartesian categories than bicategories. For that reason, we give a more detailed comparison between the two at the end of the thesis.\par

\vspace{1.3cm}

We assume that the reader is familiar with the basic theory of categories, as in \cite{MacLane}, as well as with the basic definitions and properties of 2-categories and bicategories, as in \cite{Benabou}, \cite{Gray1974}, \cite{Handbook}, or \cite{Companion}. The outline of the thesis is as follows:\par

In Chapter 2 we introduce the reader to the definition of a locally ordered Cartesian bicategory from \cite{Carboni1987}, and we prove that the Karoubi envelope (\cite{Borceux1986}) and the bicategory of modules (\cite{CKW}) retain the property of being Cartesian. We then give a brief overview of the paper \cite{Carboni2008}, and their definition of a general Cartesian bicategory. In the last two sections of this chapter we talk about modules, profunctors, $V$-matrices, and the problems we encountered when we tried to prove that they are Cartesian.\par

The first section of Chapter 3 is an introduction to double categories. The second and third sections include generalizations of the ideas that were introduced in \cite{Street1972} to double categories. The definitions for monads and comonads in double categories are the ones that Fiore, Gambino, and Kock gave in \cite{Fiore2011b}. However, we consider a simpler version of their definition for Eilenberg-Moore objects. The last section is devoted to fibrant double categories. The first part of this section consists of results that were proven by Shulman in \cite{Shulman2008}. However, we give full proofs for most of them, since we believe it will be helpful in understanding the rest of the thesis. In the second part of this section we study fibrant double categories for which the vertical category has finite products, and the others have finite products locally.

The main definitions and properties of Cartesian double categories are in Chapter 4. We start this chapter with the definition of precartesian double categories, on which we build the definition of Cartesian double categories in the following section. We give a characterization of Cartesian double categories that are also fibrant, and we study some examples. In the last section of this chapter we study some properties of Cartesian double categories with particular emphasis on fibrant Cartesian double categories. At this point, we also give the definition of a unit-pure double category, which will come in handy later in Chapter 5. 

In Chapter 5 we focus on the double category of spans. An important feature of this double category is that it has tabulators, which will prove useful for our characterization theorem. This is why we dedicate the first section of this chapter to study conditions that a Cartesian double category needs to satisfy in order to have tabulators. In the second section we show that the vertical category of a unit-pure double category with tabulators has pullbacks. We also review the construction of a double functor with domain the double category of spans on the vertical category of a given double category, which was introduced in \cite{SpanCospan}. In the last section we give conditions that a Cartesian double category needs to satisfy in order for this functor to be an equivalence.

In Chapter 6 we apply the structure of modules on what we have proved in the previous chapter for spans. This leads to the double category of profunctors, which we prove is Cartesian. We propose a possible characterization of it, based also on the characterization of the locally ordered bicategory of ordered objects and ordered ideals that Carboni and Walters gave in \cite{Carboni1987}.

Finally, in the conclusion we compare the two definitions, that is, the one for Cartesian double categories and the one for Cartesian bicategories.

\chapter{Cartesian Bicategories} \label{Cartesian Bicategories}

\section{Locally Ordered Cartesian Bicategories} \label{Locally Ordered Cartesian Bicategories}

In this section we review the first paper that was written on Cartesian Bicategories, \cite{Carboni1987}. In that paper, the authors focus on the example of sets together with relations or additive relations, and that of ordered objects and ideals. They notice that all of the above are equipped with a symmetric tensor product that satisfies specific properties and every object is a cocommutative comonoid object. They proceed to call this tensor product a Cartesian structure. This might be misleading at first, but later they show that we are not talking about an extra structure on the bicategory, but rather a property of it.  We present their definition here, but first we define what a tensor product on a bicategory is. For the following, consider a locally ordered bicategory $\mathcal{B}$, that is, a bicategory where every hom-category is a partially ordered set. Such a bicategory is actually a 2-category, since we assume that the order is antisymmetric. However, we would like to follow the terminology that was used in \cite{Carboni2008}.

\begin{definition}
A \textbf{tensor product} on $\mathcal{B}$ is a pseudofunctor $\otimes :\mathcal{B}\times \mathcal{B} \rightarrow \mathcal{B}$, together with an object $I$, called the identity object, and natural isomorphisms $$\rho:X\rightarrow X\otimes I,$$ $$ \lambda:X \rightarrow I\otimes X,$$ $$\gamma : X\otimes Y \rightarrow Y \otimes X \text{ and}$$ $$\alpha :(X \otimes Y)\otimes Z \rightarrow X\otimes (Y\otimes Z),$$ satisfying the following conditions.
\begin{enumerate}
\item $$\xymatrixcolsep{0.2cm} \xymatrix{  & (X \otimes Y) \otimes (Z \otimes W) \ar[dr]^{\alpha} \\
((X \otimes Y) \otimes Z) \otimes W \ar[ur]^{\alpha} \ar[d]_{\alpha \otimes W} &  & X \otimes (Y \otimes (Z \otimes W)) \\
(X \otimes (Y \otimes Z)) \otimes W \ar[rr]_{\alpha} & & X \otimes ((Y \otimes Z) \otimes W), \ar[u]_{X\otimes \alpha} } $$
\item $$ \xymatrixcolsep{0.2cm} \xymatrix{ & X \otimes Y \ar[dl]_{\rho \otimes Y} \ar[dr]^{X \otimes \lambda} \\
(X \otimes I) \otimes Y \ar[rr]_{\alpha} & & X\otimes (I \otimes Y)  }$$
\item $$ \xymatrixcolsep{1cm} \xymatrix{ (X \otimes Y) \otimes Z \ar[r]^{\alpha}\ar[d]_{\gamma \otimes Z} & X \otimes (Y \otimes Z) \ar[r]^{\gamma} & (Y \otimes Z ) \otimes X \ar[d]^{\alpha} \\
(Y \otimes X ) \otimes Z \ar[r]_{\alpha} & Y \otimes (X \otimes Z) \ar[r]_{Y \otimes \gamma} & Y \otimes (Z \otimes X) } $$
\item $$ \xymatrixcolsep{0.2cm} \xymatrix{ & Y \otimes X \ar[dr]^{\gamma} \\
X \otimes Y \ar[rr]_1 \ar[ur]^{\gamma} & & X \otimes Y.}$$
\end{enumerate}
\end{definition}

The definition that Carboni and Walters gave is the following.

\begin{definition}\label{CW} \cite{Carboni1987}
A \textbf{Cartesian structure} on $\mathcal{B}$ consists of:
\begin{enumerate}[label=\roman*.]
\item A tensor product $\otimes$ on $\mathcal{B}$.
\item On every $X$ in $\mathcal{B}$, a cocommutative comonoid structure
$$d_X : X \rightarrow X \otimes X \text{ and}$$ $$ t_X:X \rightarrow I,$$ meaning that the following diagrams commute:
\begin{enumerate}
\item $$\xymatrixcolsep{1cm} \xymatrix{ X \ar[rr]^{d_X} \ar[d]_{d_X} & & X\otimes X \ar[d]^{X \otimes d_X} \\
X \otimes X \ar[r]_{d_X \otimes X \quad \quad} & (X \otimes X) \otimes X \ar[r]_{\alpha} & X\otimes (X \otimes X) } $$
\item $$\xymatrixcolsep{1cm} \xymatrix{ & X \ar[dl]_{\rho} \ar[dr]^{\lambda} \ar[d]^{d_X}\\
X\otimes I & X\otimes X \ar[l]^{X \otimes t_X} \ar[r]_{t_X \otimes X }& I \otimes X } $$
\item $$ \xymatrixcolsep{0.4cm} \xymatrix{ & X \ar[dl]_{d_X} \ar[rd]^{d_X} \\
X \otimes X \ar[rr]_{\gamma} & & X\otimes X }$$
\end{enumerate}
\end{enumerate}
We ask for the following axioms to be satisfied:
\begin{enumerate}
\item For each arrow $F: X\rightarrow Y$,
$$\xymatrixcolsep{0.2cm} \xymatrixrowsep{0.2cm} \xymatrix{ X \ar[rr]^{d_X \quad} \ar[dd]_F & & X \otimes X \ar[dd]^{F \otimes F}\\
& \quad \le \\
Y \ar[rr]_{d_Y \quad} & & Y \otimes Y}$$
and
$$\xymatrixcolsep{0.2cm} \xymatrixrowsep{0.2cm} \xymatrix{ X \ar[ddd]_F \ar[dddrrr]^{t_X} \\
\\
& \le \quad \\
Y \ar[rrr]_{t_Y} & & & I, } $$
i.e. each $F$ is a colax comonoid homomorphism.
\item The arrows $d_X$ and $t_X$ have right adjoints $d_X^*$ and $t_X^*$. That is, there are inequalities $1_X \le d_X^* d_X$,  $d_X d_X^* \le 1_{ X\otimes X}$, $1_X \le t_X^* t_X$ and $t_X t_X^* \le 1_I $.
\end{enumerate}
\end{definition}

In the above definition and throughout this chapter, $\le$ represents the partial order on the hom-categories of $\mathcal{B}$. One of the main examples in \cite{Carboni1987} is the bicategory of relations.

\begin{example}\label{relations}
Consider a regular category $\mathcal{E}$. That is a category such that:
\begin{enumerate}[label=\roman*.]
\item All finite limits exist.
\item For every morphism $f:d\to c$ in $\mathcal{E}$ and its pullback,
\begin{displaymath} \xymatrix{
d \times_c d \ar[r]^{p_1} \ar[d]_{p_2} & d \ar[d]^{f} \\ d \ar[r]_f & c } \end{displaymath}
the coequalizer of $p_1$ and $p_2$, exists in $\mathcal{E}$. The pair
$\xy \morphism(0,0)|a|/@{>}@<3pt>/<400,0>[d\times_c d`d;p_1]
\morphism(0,0)|b|/@{>}@<-3pt>/<400,0>[d\times_c d`d;p_2]\endxy$
is called the \textbf{kernel pair} of $f$.
\item For every regular epimorphism $f : d\to c$, i.e. for every $f : d\to c$ which can be expressed as the coequalizer of some parallel pair, its pullback along any morphism is a regular epimorphism.
\end{enumerate}
 Then the bicategory of relations $\mathbf{Rel}\mathcal{E}$ in a regular category $\mathcal{E}$ consists of the objects $A,B, \dots$ of $\mathcal{E}$, together with relations in $\mathcal{E}$ and morphism between them: A relation $r:A \rightarrow B$ is a span
\begin{displaymath} \xymatrix{
A & R \ar[l]_{r_0} \ar[r]^{r_1} & B } \end{displaymath}
such that the arrow $(r_0,r_1) : R \rightarrow A\times B$ is monic. For the composition we use again the pullbacks, with the restriction that we need monic arrows. For that we use the fact that in a regular category, every arrow can be factored uniquely up to isomorphism as a composite of a regular epi with a monic. So given the pullback of such two relations, we can factor it as a regular epi followed by a monic span, and the latter is the composite we want. The bicategory $\mathbf{Rel}\mathcal{E}$ is Cartesian.
\end{example}

In the following two lemmas is where we will see that the above structure looks more like a property.

\begin{lemma}\label{lemma_fox}\cite{Carboni1987}
If $\mathcal{B}$ is a locally ordered bicategory with tensor product then the tensor product is the bicategorical product in $\mathcal{B}$ (\Cref{biproducts}) if and only if every object has a cocomutative comonoid structure $(X,d_X,t_X)$ and every arrow $F:X \rightarrow Y$ is a comonoid homomorphism, meaning that the following diagrams commute.
$$\xymatrix{ X \ar[d]_F \ar[r]^{d_X \quad} & X\otimes X \ar[d]^{F\otimes F} \\ Y \ar[r]_{d_Y \quad} & Y\otimes Y}$$
$$\xymatrix{ X \ar[d]_F \ar[dr]^{t_X}\\ Y \ar[r]_{t_Y} & I}$$
\end{lemma}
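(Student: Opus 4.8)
The plan is to prove this as a locally ordered, bicategorical instance of Fox's theorem relating natural comonoid structures to cartesian products. Two preliminary observations make the setting rigid enough to argue essentially as in ordinary category theory: since every hom-category of $\mathcal{B}$ is a poset, a pseudofunctor into $\mathcal{B}$ has invertible, hence identity, constraint $2$-cells, so $\otimes$ is a strict $2$-functor and the components of $\rho,\lambda,\gamma,\alpha$ are genuine isomorphism $1$-cells; and the bicategorical product property of \Cref{biproducts} says precisely that for each $Z$ the canonical map $\mathcal{B}(Z,X\otimes Y)\to\mathcal{B}(Z,X)\times\mathcal{B}(Z,Y)$ is an order-isomorphism, so that a mediating $1$-cell into a product is unique when it exists, and likewise maps into the bicategorical terminal object are unique. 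I prove the two implications in turn.

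For the direction from product to comonoids, suppose $\otimes$ realizes the bicategorical product, with projections $p_{X,Y},q_{X,Y}$ and terminal object $I$. Define $t_X\colon X\to I$ as the unique map to $I$ and $d_X\colon X\to X\otimes X$ as the unique map with $p_{X,X}d_X=1_X=q_{X,X}d_X$. Each coassociativity, counit and cocommutativity axiom for $(X,d_X,t_X)$, and each of the two homomorphism squares for an arbitrary $F\colon X\to Y$, is an equation of $1$-cells whose codomain is a product or the terminal object; one computes the composites of both sides with the relevant projections, using the naturality of projections ($p_{A',B'}(a\otimes b)=a\,p_{A,B}$, $q_{A',B'}(a\otimes b)=b\,q_{A,B}$) and the coherence isomorphisms, checks that they agree, and invokes uniqueness of the mediator (or of the map to $I$). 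This direction is routine.

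For the converse, assume each object carries a cocommutative comonoid structure and each arrow is a comonoid homomorphism. Put $\pi_X:=\rho_X^{-1}(1_X\otimes t_Y)\colon X\otimes Y\to X$ and $\pi_Y:=\lambda_Y^{-1}(t_X\otimes 1_Y)\colon X\otimes Y\to Y$. Given $F\colon Z\to X$ and $G\colon Z\to Y$, set $H:=(F\otimes G)\,d_Z$. A short calculation gives $\pi_X H=F$: by functoriality of $\otimes$ and the counit-homomorphism law $t_Y G=t_Z$ one has $(1_X\otimes t_Y)(F\otimes G)\,d_Z=(F\otimes 1_I)(1_Z\otimes t_Z)\,d_Z=(F\otimes 1_I)\,\rho_Z=\rho_X F$, using the counit axiom of \Cref{CW} for $Z$ and naturality of $\rho$; symmetrically $\pi_Y H=G$, so a mediator always exists. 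That $I$ is bicategorical terminal follows because the counit-homomorphism law is exactly the assertion that $t$ is a natural transformation $\mathrm{Id}_{\mathcal{B}}\Rightarrow\Delta_I$ into the constant functor $\Delta_I$ at $I$, so any $f\colon A\to I$ satisfies $t_I f=t_A$; once $t_I=1_I$ this forces $f=t_A$ and $\mathcal{B}(A,I)$ to be a point. For uniqueness of the mediator, if $K\colon Z\to X\otimes Y$ also has $\pi_X K=F$ and $\pi_Y K=G$, then the comonoid-homomorphism law $d_{X\otimes Y}K=(K\otimes K)\,d_Z$ gives $(\pi_X\otimes\pi_Y)\,d_{X\otimes Y}\,K=(\pi_X K\otimes\pi_Y K)\,d_Z=H$, so it remains only to prove $(\pi_X\otimes\pi_Y)\circ d_{X\otimes Y}=1_{X\otimes Y}$.

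The main obstacle is precisely this last identity together with $t_I=1_I$: jointly they say that the given natural family of comonoids is automatically compatible with $\otimes$, i.e. that $I$ carries the trivial comonoid and that the comultiplication of a tensor product is, modulo the coherence isomorphisms, the interchange of the two comultiplications. I would obtain them by a diagram chase combining the naturality squares of $d$, $t$, $\rho$ and $\lambda$ (with $\gamma$ and $\alpha$ for the interchange bookkeeping), the counit and coassociativity axioms of \Cref{CW}, and the coherence identity $\rho_I=\lambda_I$; naturality of $d$ and the cocommutativity axiom carry most of the load. Once these two identities are established, both halves of the converse close at once, and the equivalence follows.
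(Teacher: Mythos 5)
The thesis itself gives no proof of this lemma (it is quoted from Carboni--Walters, who defer to Fox), so your proposal stands or falls on its own. The forward direction and the existence half of the converse are correct, and you have isolated the crux exactly: terminality of $I$ reduces to $t_I=1_I$, and uniqueness of mediators reduces to $(\pi_X\otimes\pi_Y)\,d_{X\otimes Y}=1_{X\otimes Y}$. The first identity is indeed obtainable by the chase you describe: the counit axiom at $I$ gives $\lambda_I^{-1}(t_I\otimes 1_I)d_I=1_I$, which by $\rho_I=\lambda_I$ and naturality of $\rho$ equals $t_I\,(\rho_I^{-1}d_I)$, so $t_I$ is split epi; naturality of $t$ at $t_I$ gives $t_I t_I=t_I$; composing with the splitting yields $t_I=1_I$.

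The gap is the second identity, and I do not believe the diagram chase you defer to actually closes. That identity is equivalent to $d_{X\otimes Y}$ agreeing, modulo the middle-four interchange, with $d_X\otimes d_Y$, i.e.\ to the \emph{monoidality} of the transformation $d$, and none of the tools you list can produce it: every hypothesis mentioning $d_{X\otimes Y}$ either applies one and the same morphism to both output copies (naturality of $d$ at some arrow) or involves only $d_{X\otimes Y}$, $t_{X\otimes Y}$ and coherence cells (coassociativity, counit, cocommutativity). None of these ever relates $d_{X\otimes Y}$, evaluated away from the image of a pairing $(F\otimes G)d_Z$, to $d_X$ and $d_Y$. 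What you can prove is only that $e:=(\pi_X\otimes\pi_Y)d_{X\otimes Y}$ is a natural idempotent satisfying $\pi_X e=\pi_X$, $\pi_Y e=\pi_Y$ and $e\cdot(F\otimes G)d_Z=(F\otimes G)d_Z$; the equation $e=1$ never appears. This is precisely why the standard formulations carry an extra hypothesis: Fox's corollary requires every object to carry a \emph{unique} cocommutative comonoid structure (then the tensor comonoid $m(d_X\otimes d_Y)$ on $X\otimes Y$, being another such structure, must coincide with $d_{X\otimes Y}$), and the Heunen--Vicary version cited in the conclusion of this thesis requires $d$ and $t$ to be \emph{monoidal} natural transformations. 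To complete your argument you must either read the lemma's hypothesis as including one of these (uniqueness, or compatibility of the chosen comonoids with $\otimes$) and then run the identification of $d_{X\otimes Y}$ with $m(d_X\otimes d_Y)$ explicitly, or accept that the statement as literally written does not determine $d_{X\otimes Y}$ enough to force uniqueness of mediators.
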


For a locally ordered bicategory $\mathcal{B}$ we can consider the full sub-bicategory $\mathbf{Map(\mathcal{B})}$ consisting of the arrows that have a right adjoint. We call these arrows \textbf{\emph{maps}}.

\begin{lemma}\label{lemma_Map_products}\cite{Carboni1987}
If $\mathcal{B}$ is a locally ordered Cartesian bicategory then $\mathbf{Map(\mathcal{B})}$ has finite products.
\end{lemma}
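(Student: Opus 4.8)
The plan is to deduce the statement from the Fox-type characterization of Lemma~\ref{lemma_fox}, applied not to $\mathcal{B}$ itself but to $\mathbf{Map(\mathcal{B})}$. So I would check three things: that $\otimes$ restricts to a tensor product on $\mathbf{Map(\mathcal{B})}$; that every object carries a cocommutative comonoid structure \emph{inside} $\mathbf{Map(\mathcal{B})}$; and that every arrow of $\mathbf{Map(\mathcal{B})}$ is a strict comonoid homomorphism. Once these hold, Lemma~\ref{lemma_fox} says that $\otimes$ is the bicategorical product on $\mathbf{Map(\mathcal{B})}$, so in particular $\mathbf{Map(\mathcal{B})}$ has finite products, with binary products $X\otimes Y$ and terminal object $I$.

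For the first point, a pseudofunctor preserves adjunctions, so from $F\dashv F^{*}$ and $G\dashv G^{*}$ one gets $F\otimes G\dashv F^{*}\otimes G^{*}$; hence $\otimes$ carries a pair of maps to a map. The unit object $I$ is a map, and the constraints $\rho,\lambda,\gamma,\alpha$ have invertible components, so are maps; their naturality and the four tensor-product axioms hold in $\mathcal{B}$, hence in the sub-bicategory $\mathbf{Map(\mathcal{B})}$. Thus $(\mathbf{Map(\mathcal{B})},\otimes,I)$ is a locally ordered bicategory with a tensor product. For the second point, the Cartesian structure requires each $d_{X}$ and each $t_{X}$ to have a right adjoint, so these are arrows of $\mathbf{Map(\mathcal{B})}$, and the coassociativity, counit, and cocommutativity diagrams are diagrams of $\mathcal{B}$ that live entirely in $\mathbf{Map(\mathcal{B})}$; so $(X,d_{X},t_{X})$ is a cocommutative comonoid there.

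The substance is the third point. Let $F\colon X\to Y$ be a map, $F\dashv F^{*}$, with unit $1_{X}\le F^{*}F$ and counit $FF^{*}\le 1_{Y}$. The Cartesian structure makes \emph{every} arrow of $\mathcal{B}$ --- so $F^{*}$ as well as $F$ --- a colax comonoid homomorphism, which gives $t_{Y}F\le t_{X}$, $d_{Y}F\le (F\otimes F)d_{X}$, and also $t_{X}F^{*}\le t_{Y}$, $d_{X}F^{*}\le (F^{*}\otimes F^{*})d_{Y}$. Then $t_{X}=t_{X}1_{X}\le t_{X}F^{*}F\le t_{Y}F$, using the unit and then $t_{X}F^{*}\le t_{Y}$; combined with $t_{Y}F\le t_{X}$ and antisymmetry of the local order, this forces $t_{Y}F=t_{X}$. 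Likewise, composing $d_{X}F^{*}\le (F^{*}\otimes F^{*})d_{Y}$ on the right with $F$ and using the unit gives $d_{X}\le (F^{*}\otimes F^{*})d_{Y}F$; composing on the left with $F\otimes F$ and using the counit tensored with itself gives $(F\otimes F)d_{X}\le (F\otimes F)(F^{*}\otimes F^{*})d_{Y}F=(FF^{*}\otimes FF^{*})d_{Y}F\le d_{Y}F$; combined with $d_{Y}F\le (F\otimes F)d_{X}$, this forces $(F\otimes F)d_{X}=d_{Y}F$. Hence $F$ is a strict comonoid homomorphism, and Lemma~\ref{lemma_fox} applied to $\mathbf{Map(\mathcal{B})}$ completes the proof.

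I do not expect a deep obstacle here; the parts needing the most care are essentially bookkeeping: verifying that the restriction of $\otimes$ to $\mathbf{Map(\mathcal{B})}$ really is a tensor product in the sense of the definition (in particular, that the pseudofunctor transports the adjunction data so that $F\otimes G\dashv F^{*}\otimes G^{*}$), and, in the third point, noticing that one must use the colax comonoid structure on the \emph{right adjoint} $F^{*}$, not only on $F$, to obtain the reverse inequalities.
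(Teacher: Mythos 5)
Your proposal is correct and follows essentially the same route as the paper: reduce via Lemma~\ref{lemma_fox} to showing every map is a strict comonoid homomorphism, then obtain the reverse inequalities from the colax structure on $F^{*}$ together with $(F\otimes F)^{*}=F^{*}\otimes F^{*}$ and the adjunction. Your explicit unit/counit manipulations are just the unpacked form of the paper's ``by adjunction'' mate equivalences, and your preliminary checks (closure of maps under $\otimes$, the comonoid structure living in $\mathbf{Map(\mathcal{B})}$) are details the paper leaves implicit.
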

\begin{proof}
By \Cref{lemma_fox} and since $d_X$ and $t_X$ are maps, the tensor product will be the product in $\mathbf{Map (\mathcal{B})}$ if every map is a comonoid homomorphism. Let $f:X\rightarrow A$ be a map. We know that $f$ and $f^*$ are lax comonoid homomorphisms. The latter means that $d_X f^* \le (f^* \otimes f^*) d_A$ and $t_X f^* \le t_A$. Since $\otimes$ is a pseudofunctor, $(f\otimes f)^* = f^* \otimes f^*$, so $d_X f^* \le (f\otimes f)^* d_A$. By adjunction we have $d_X f^* \le (f \otimes f)^* d_A \Leftrightarrow d_X \le (f \otimes f)^* d_A f \Leftrightarrow (f \otimes f) d_X \le d_A f$ and $t_X f^* \le t_A \Leftrightarrow t_X \le t_A f$. So $d_A f = (f \otimes f) d_X$ and $t_A f = t_X$, i.e. $f$ is a comonoid homomorphism.
\end{proof}

\begin{corollary}\cite{Carboni1987}
If $\mathcal{B}$ is a locally ordered Cartesian bicategory then the only comonoid structure on $X$ with structure arrows having right adjoints is $(X,d_X,t_X)$.
\end{corollary}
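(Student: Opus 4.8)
The plan is to read this off \Cref{lemma_Map_products}. That lemma says $\mathbf{Map}(\mathcal{B})$ has finite products, and (by its proof) these are computed by $\otimes$ and $I$; so the corollary reduces to the elementary observation that a category with finite products carries at most one comonoid structure on a given object, namely the one built from the diagonal and the unique map to the terminal object. Concretely, let $(X,\delta,\epsilon)$ be a cocommutative comonoid in $\mathcal{B}$ such that both $\delta\colon X\to X\otimes X$ and $\epsilon\colon X\to I$ admit right adjoints, i.e.\ are arrows of $\mathbf{Map}(\mathcal{B})$; I must show $\delta=d_X$ and $\epsilon=t_X$.

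For the counit, $I$ is the empty product, hence the terminal object of $\mathbf{Map}(\mathcal{B})$; since $\mathbf{Map}(\mathcal{B})$ is locally ordered, the hom-poset $\mathbf{Map}(\mathcal{B})(X,I)$ is equivalent to $\mathbf{1}$ and so has a single element, whence $\epsilon=t_X$. For the comultiplication, recall from the proof of \Cref{lemma_fox} that $X\otimes X$ is exhibited as the product of $X$ with itself in $\mathbf{Map}(\mathcal{B})$ by the projections $\pi_1=\rho^{-1}\circ(X\otimes t_X)$ and $\pi_2=\lambda^{-1}\circ(t_X\otimes X)$. The counit law for $(X,d_X,t_X)$ (the second of the three comonoid diagrams in \Cref{CW}) says precisely that $(X\otimes t_X)\circ d_X=\rho$ and $(t_X\otimes X)\circ d_X=\lambda$, hence $\pi_1 d_X=\pi_2 d_X=1_X$; the same axiom for $(X,\delta,\epsilon)$, together with $\epsilon=t_X$, gives $(X\otimes t_X)\circ\delta=\rho$ and $(t_X\otimes X)\circ\delta=\lambda$, hence $\pi_1\delta=\pi_2\delta=1_X$. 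Thus $d_X$ and $\delta$ are both arrows of $\mathbf{Map}(\mathcal{B})$ whose composites with the two projections equal $1_X$, and the universal property of the product $X\otimes X$ forces $\delta=d_X$. (In the locally ordered setting the comparison functor $\mathbf{Map}(\mathcal{B})(X,X\otimes X)\to\mathbf{Map}(\mathcal{B})(X,X)\times\mathbf{Map}(\mathcal{B})(X,X)$ is an equivalence of posets, hence a bijection on objects, so this is genuine uniqueness, not merely uniqueness up to isomorphism.)

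There is no real obstacle here: the work has already been done in \Cref{lemma_fox} and \Cref{lemma_Map_products}, and what remains is the standard uniqueness of comonoid structures over a Cartesian product. The only points that need attention are invoking the hypothesis that $\delta$ and $\epsilon$ are maps, so that all the reasoning can take place inside $\mathbf{Map}(\mathcal{B})$, and recalling the explicit form of the product projections established in the proof of \Cref{lemma_fox}. It is worth noting that neither coassociativity nor cocommutativity of $(X,\delta,\epsilon)$ is actually used in the argument — only the two counit equations — so the conclusion in fact holds for any comultiplication-with-counit on $X$ whose two legs admit right adjoints.
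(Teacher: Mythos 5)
Your proof is correct and follows essentially the same route as the paper's: terminality of $I$ in $\mathbf{Map}(\mathcal{B})$ gives $\epsilon=t_X$, and the counit laws combined with the universal property of the product $X\otimes X$ in $\mathbf{Map}(\mathcal{B})$ force $\delta=d_X$ (the paper shows $p\delta=pd_X$ and $r\delta=rd_X$ directly, while you compute both composites to be $1_X$, which is the same computation). Your closing remark that coassociativity and cocommutativity are never used is accurate and a nice observation, but does not change the argument.
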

\begin{proof}
Suppose that $(X, \delta, \tau)$ is another comonoid structure on $X$ with right adjoints $\delta^*$, $\tau^*$. We showed in \Cref{lemma_Map_products} that $\mathbf{Map(\mathcal{B})}$ has finite products. Since $I$ is the terminal, we have $\tau=t_X$. Also the diagram
$$\xymatrix{ X \ar[rr]^{d_X} \ar[dd]_{\delta} \ar[dr]^{\rho} & & X\otimes X \ar[dl]^{X\otimes t_X} \ar[dd]^{p} \\
& X \otimes I \ar[dr]_p \\
X\otimes X \ar[ur]^{X \otimes t_X} \ar[rr]_p & & X }$$
commutes and similarly $r \delta = r d_X$. So $\delta = d_X$.
\end{proof}

The above lemmas lead to the theorem below. This theorem leads to the second paper on Cartesian bicategories since it ensures that a locally ordered bicategory has a Cartesian structure if and only if it is Cartesian as defined later in \cite{Carboni2008} by Carboni, Kelly, Walters and Wood. 

\begin{theorem} \label{CW_1.6}\cite{Carboni1987}
If $\mathcal{B}$ has a Cartesian structure then:
\begin{enumerate}[label=\roman*.]
\item $\mathbf{Map(\mathcal{B})}$ has finite products. In particular, the tensor product is the product on $\mathbf{Map(\mathcal{B})}$. We will denote this product by $\times$ and its projections by $p$ and $r$. The identity $I$ plays the role of the terminal.
\item Each hom-category $\mathcal{B}(X,A)$ has finite products. We will use $\wedge$ and $\top$ for this product and terminal respectively.
\item For any arrows $F$ and $G$ in $\mathcal{B}$, $$F \otimes G = (p^* F p)\wedge (r^* Gr).$$ Moreover $\top_{I,I}=1_I$.
\end{enumerate}
Conversely, if $\mathbf{Map(\mathcal{B})}$ has finite products, each $\mathcal{B}(X,A)$ has finite products and the formula in iii. defines a functorial tensor product on $\mathcal{B}$, then $\mathcal{B}$ has a Cartesian structure.
\end{theorem}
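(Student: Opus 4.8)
The plan is to prove the two implications separately, and within the forward one to establish (i), (ii) and (iii) in that order, since each uses the previous. For (i) there is little to prove: \Cref{lemma_Map_products} already says $\mathbf{Map}(\mathcal{B})$ has finite products realised by the tensor, and its Corollary identifies the comonoid; I would simply record the explicit projections $p_{X,Y}=\rho_X^{-1}(1_X\otimes t_Y)$ and $r_{X,Y}=\lambda_Y^{-1}(t_X\otimes 1_Y)$, and note that $I$ is terminal because $t_X$ is forced to be the unique map $X\to I$ in $\mathbf{Map}(\mathcal{B})$.

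For (ii) I would propose, for $F,G\colon X\to A$, the formulas
$$F\wedge G:=d_A^{*}\,(F\otimes G)\,d_X,\qquad \top_{X,A}:=t_A^{*}\,t_X,$$
and verify that these are the binary meet and the top in the poset $\mathcal{B}(X,A)$. Using the adjunction $d_A\dashv d_A^{*}$, the inequality $H\le F\wedge G$ is equivalent to $d_AH\le(F\otimes G)d_X$. If $H\le F$ and $H\le G$, this follows from the colax comonoid law $d_AH\le(H\otimes H)d_X$ together with monotonicity of $\otimes$; conversely, postcomposing $d_AH\le(F\otimes G)d_X$ with $p_{A,A}$, using $p_{A,A}d_A=1_A$ together with the colax law $t_AG\le t_X$ and naturality of $\rho$ to rewrite $p_{A,A}(F\otimes G)\,d_X\le F$, gives $H\le F$, and symmetrically $H\le G$. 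The top is immediate from $t_A\dashv t_A^{*}$ and $t_AH\le t_X$, and $\top_{I,I}=t_I^{*}t_I=1_I$ since $t_I=1_I$ by terminality of $I$. I expect this to be the technical core of the forward direction; the one delicate point is pushing the projection through the tensor $F\otimes G$.

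Granting (i) and (ii), part (iii) is then a short computation. For $F\colon X\to A$ and $G\colon Y\to B$, applying the meet formula of (ii) to $p_{A,B}^{*}Fp_{X,Y}$ and $r_{A,B}^{*}Gr_{X,Y}$ and using functoriality of $\otimes$ yields
$$(p^{*}Fp)\wedge(r^{*}Gr)=d_{A\otimes B}^{*}\,(p_{A,B}^{*}\otimes r_{A,B}^{*})\,(F\otimes G)\,(p_{X,Y}\otimes r_{X,Y})\,d_{X\otimes Y}.$$
Now $(p_{X,Y}\otimes r_{X,Y})\,d_{X\otimes Y}=1_{X\otimes Y}$ by the universal property of the product $X\otimes Y$ in $\mathbf{Map}(\mathcal{B})$, and taking right adjoints of the corresponding identity for $A,B$ (using $(u\otimes v)^{*}=u^{*}\otimes v^{*}$, which holds since $\otimes$ is a pseudofunctor) gives $d_{A\otimes B}^{*}(p_{A,B}^{*}\otimes r_{A,B}^{*})=1_{A\otimes B}$; so the right-hand side collapses to $F\otimes G$, and $\top_{I,I}=1_I$ is re-read off the same computation.

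For the converse I would run this in reverse. Starting from the finite products of $\mathbf{Map}(\mathcal{B})$, take $\otimes$ on objects to be $\times$, let $I$ be the terminal object, and let $\rho,\lambda,\gamma,\alpha$ be the canonical coherence isomorphisms of the product (they are isos, hence maps, hence $1$-cells of $\mathcal{B}$), whose tensor-product axioms are the standard ones for a monoidal structure arising from products; take $d_X\colon X\to X\times X$ the diagonal and $t_X\colon X\to I$ the unique map, which form cocommutative comonoids again by the universal property. On arrows $\otimes$ is the given functorial assignment $(F,G)\mapsto(p^{*}Fp)\wedge(r^{*}Gr)$. What remains is to check the two axioms of a Cartesian structure: that every $F$ is a colax comonoid homomorphism, and that $d_X,t_X$ have right adjoints; for the latter I would propose $d_X^{*}:=p_{X,X}\wedge r_{X,X}$ and $t_X^{*}:=\top_{I,X}$ and verify $1\le d_X^{*}d_X$, $d_Xd_X^{*}\le1$, $1\le t_X^{*}t_X$, $t_Xt_X^{*}\le1$. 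I expect this last step, together with the colaxness of a general $F$, to be the main obstacle of the proof: both must be squeezed out of the hypothesis that $(F,G)\mapsto(p^{*}Fp)\wedge(r^{*}Gr)$ is \emph{functorial}, which is precisely the bridge between the product structure on $\mathbf{Map}(\mathcal{B})$ and the local products on the hom-posets.
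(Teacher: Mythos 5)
The paper itself does not prove this theorem --- it is quoted from \cite{Carboni1987} --- so there is no internal proof to compare against; I am judging your argument on its own terms. The forward direction is essentially right and complete. Part (i) is indeed just \Cref{lemma_Map_products} plus the observation that $I$ is terminal; for (ii) the formulas $F\wedge G=d_A^*(F\otimes G)d_X$ and $\top_{X,A}=t_A^*t_X$ are the correct ones, and your two-sided verification (the adjunction $d_A\dashv d_A^*$ in one direction, postcomposition with $p_{A,A}$ plus $t_AG\le t_X$ and the counit axiom $\rho_X=(X\otimes t_X)d_X$ in the other) is sound; the collapse in (iii) via $(p\otimes r)d=1$ and $(u\otimes v)^*=u^*\otimes v^*$ is also correct. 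One cosmetic point: in (iii) you should say explicitly that you are using antisymmetry of the local order to turn the pseudofunctoriality isomorphisms into equalities.

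The converse, however, is not a proof but a statement of what remains to be proved, and what remains is precisely the hard part. You propose $d_X^*:=p_{X,X}\wedge r_{X,X}$ and $t_X^*:=\top_{I,X}$ (which are the right candidates) and then say the adjunction inequalities and the colaxness of a general $F$ ``must be squeezed out of'' the functoriality hypothesis --- but you do not squeeze them out. The gap is genuine: for instance, the unit inequality $1_X\le(p\wedge r)d_X$ cannot be obtained by formal order manipulations, since $p\wedge r\le p$ and $p\wedge r\le r$ only give $(p\wedge r)d_X\le 1_X$; one would like to say $(p\wedge r)d_X=pd_X\wedge rd_X$, but precomposition with $d_X$ preserves meets only if $d_X$ has a right adjoint, which is what is being proved. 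Likewise $1_X\le\top_{I,X}t_X$ and $t_X\top_{I,X}\le 1_I$ need $\top_{I,I}=1_I$ and some consequence of the naturality of $\lambda,\rho$ at non-map arrows, and the colax inequalities $d_YF\le(F\otimes F)d_X$, $t_YF\le t_X$ must be extracted from the interchange law $(F'\otimes G')(F\otimes G)=F'F\otimes G'G$ applied to suitable decompositions of $F$. Until those derivations are written down, the converse half of the theorem is unproven.
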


\section{The Karoubi Bicategory of  Locally Ordered Cartesian Bicategories} \label{The Karoubi Bicategory of  Locally Ordered Cartesian Bicategories}

The Karoubi envelope is a special case of the Cauchy completion of a category which was first introduced in \cite{Metric}. In this section we prove that the Karoubi envelope of a locally ordered Cartesian bicategory is Cartesian as well which, as far as we know, does not exist in the literature so far. This will help us later prove that the bicategory of modules over monads in a locally ordered Cartesian bicategory is Cartesian too.

\begin{definition}\cite{Borceux1986}
For a locally ordered bicategory $\mathcal{B}$, we define its \textbf{Karoubi bicategory} to be the bicategory $\mathbf{Kar(\mathcal{B})}$ that consists of the following data:
\begin{enumerate}[label=\roman*.]
\item An object in $\mathbf{Kar(\mathcal{B})}$ is a pair $(A,a)$, where $A$ is an object in $\mathcal{B}$ and $a$ an idempotent on $A$, i.e. an arrow $a:A \rightarrow A$ such that $aa=a$.
\item If $(X,x)$ and $(A,a)$ are idempotents then an arrow between them is an arrow $R:X \rightarrow A$ in $\mathcal{B}$ such that $a R x = R$. Such an arrow will be called a module between idempotents.
\item 2-cells in $\mathbf{Kar(\mathcal{B})}$ are inequalities, as in $\mathcal{B}$.
\item For an idempotent $(X,x)$, the identity on it is the arrow $x$ itself.
\item The composition of arrows is the composition of arrows in $\mathcal{B}$.
\end{enumerate}
\end{definition}

\begin{remark}
\begin{enumerate}
\item Note that if $a R x = R$ then $a R = aaRx = aRx = R = aRx = aRxx=R x$ and conversely, if $aR=R=Rx$ then $aRx = R$.
\item If $(X,x)$ and $(A,a)$ are idempotents, then every arrow $F:X \rightarrow A$ gives an arrow $aFx:(X,x) \rightarrow (A,a)$ between idempotents, since $aaFxx=aFx$.
\end{enumerate}
\end{remark}

\begin{example}
Consider the locally ordered bicategory $\mathbf{Rel}$. An idempotent relation on a set $X$ is a transitive relation $< : \srarrow{X}{X}$ for which $x < y \Rightarrow (\exists z)(x <z <y)$. An arrow $R:\srarrow{(X,<_X)}{(A,<_A)}$ is a relation $R:\srarrow{X}{A}$ with $R <_X \; = R \; =\; <_A R$. That is, $(\exists y)(aRy \text{ and } y<x) \; \text{ iff }\; aRx \; \text{ iff }\; (\exists b)(a<b \text{ and } bRx)$.
\end{example}

\begin{lemma}\label{lemma_sharp_adj}
If $(X,x)$ and $(A,a)$ are idempotents and $F \dashv F^*$, $F:X \rightarrow A$ is an adjunction in $\mathcal{B}$ such that
$$\xymatrixcolsep{0.4cm} \xymatrixrowsep{0.4cm} \xymatrix{ X \ar[rr]^F \ar[dd]_x & & A \ar[dd]^a\\
& \le \\
X \ar[rr]_F & & A, }$$
then we have an adjunction $aFx \dashv xF^*a$ in $\mathbf{Kar(\mathcal{B})}$.
\end{lemma}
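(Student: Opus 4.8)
The plan is to verify the two required inequalities directly. The key structural observation is that the hom-categories of $\mathbf{Kar(\mathcal{B})}$ are again posets (its 2-cells are just inequalities of $\mathcal{B}$), so an adjunction in $\mathbf{Kar(\mathcal{B})}$ is nothing more than a unit inequality together with a counit inequality: the triangle identities hold automatically. So it suffices to produce these two inequalities.

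First the bookkeeping. Composition in $\mathbf{Kar(\mathcal{B})}$ is composition in $\mathcal{B}$, and $aFx$ really is an arrow $(X,x)\to(A,a)$, since $a(aFx)x=(aa)F(xx)=aFx$; dually $xF^*a$ is an arrow $(A,a)\to(X,x)$. The identity of $(X,x)$ in $\mathbf{Kar(\mathcal{B})}$ is $x$ and that of $(A,a)$ is $a$, and using idempotency again the two relevant composites collapse to $(xF^*a)\circ(aFx)=xF^*aFx$ and $(aFx)\circ(xF^*a)=aFxF^*a$. Hence the lemma reduces to proving, in $\mathcal{B}$, the unit inequality $x\le xF^*aFx$ and the counit inequality $aFxF^*a\le a$.

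For the unit: composing the hypothesised 2-cell $Fx\le aF$ on the right with $x$ and using $xx=x$ gives $Fx\le aFx$; composing this on the left with $xF^*$ gives $xF^*Fx\le xF^*aFx$; and composing the unit $1_X\le F^*F$ of $F\dashv F^*$ with $x$ on both sides gives $x\le xF^*Fx$. Chaining the last two yields $x\le xF^*aFx$. For the counit, symmetrically: composing $Fx\le aF$ on the left with $a$ and using $aa=a$ gives $aFx\le aF$; composing on the right with $F^*a$ gives $aFxF^*a\le aFF^*a$; and composing the counit $FF^*\le 1_A$ of $F\dashv F^*$ with $a$ on both sides gives $aFF^*a\le a$. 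Chaining gives $aFxF^*a\le a$. Together with the remark about poset-enrichment this establishes $aFx\dashv xF^*a$ in $\mathbf{Kar(\mathcal{B})}$.

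There is no genuine obstacle here. The only points worth flagging are that the single compatibility 2-cell $Fx\le aF$ feeds both the unit and the counit — so no dual compatibility condition on $F^*$ is needed — and that idempotency of $x$ and $a$ is exactly what absorbs the extra copies of $x$ and $a$ produced by composing in $\mathbf{Kar(\mathcal{B})}$; the one thing requiring care is keeping the order of horizontal composition straight.
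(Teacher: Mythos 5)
Your proof is correct and follows essentially the same route as the paper: both reduce the adjunction to the unit inequality $x\le xF^*aFx$ and the counit inequality $aFxF^*a\le a$, obtained by whiskering the unit/counit of $F\dashv F^*$ and the compatibility cell $Fx\le aF$ with the idempotents. Your explicit remarks that the triangle identities are automatic in a poset-enriched setting and that idempotency absorbs the duplicated $x$'s and $a$'s are implicit in the paper but correctly supplied.
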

\begin{proof}
By the adjunction $F \dashv F^*$ we get $1_X \le F^* F $ and $FF^* \le 1_A$. The unit and counit of the adjunction $aFx \dashv xF^* a$ will be given by the following inequalities:
$$1_{(X,x)} = x=x1_X x \le x F^* F x \le x F^* a Fx = (xF^*a)(aFx) \text{ and}$$
$$(aFx)(xF^*a) \le aaFxF^*a = aFx F^* a \le aaFF^* a \le a1_A a =a = 1_{(A,a)}.$$
\end{proof}

\begin{lemma}\label{lemma_sharp_iso}
Consider idempotents $(X,x)$ and $(A,a)$ and an isomorphism $R:X \rightarrow A$ in a locally ordered bicategory $\mathcal{B}$. If $Rx =aR$, then $aRx :(X,x) \rightarrow (A,a)$ is an isomorphism in $\mathbf{Kar(\mathcal{B})}$.
\end{lemma}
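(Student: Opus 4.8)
The plan is to exhibit an explicit two-sided inverse. Since $R$ is an isomorphism in $\mathcal{B}$, write $R^{-1}$ for its inverse, so $R^{-1}R=1_X$ and $RR^{-1}=1_A$. My candidate for the inverse of $aRx$ in $\mathbf{Kar(\mathcal{B})}$ is the module $S=xR^{-1}a:(A,a)\to(X,x)$. The first, purely formal, step is to check that $aRx$ and $S$ are genuinely arrows between the stated idempotents: one needs $a(aRx)x=aRx$ and $x(xR^{-1}a)a=xR^{-1}a$, both immediate from $aa=a$ and $xx=x$.

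The second step extracts from the hypothesis $Rx=aR$ the only two consequences I will use. Pre-composing with $R^{-1}$ gives $x=(R^{-1}R)x=R^{-1}aR$, and post-composing with $R^{-1}$ gives $a=a(RR^{-1})=RxR^{-1}$.

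The third step is the verification that the two round-trip composites are the identities of $\mathbf{Kar(\mathcal{B})}$ — keeping in mind that composition in $\mathbf{Kar(\mathcal{B})}$ is composition in $\mathcal{B}$ and that the identity on $(X,x)$ is $x$ (resp. $a$ on $(A,a)$). Using $xx=x$, $aa=a$ and the two identities just derived, one computes $S\cdot(aRx)=xR^{-1}aRx=x\,x\,x=x=1_{(X,x)}$ and $(aRx)\cdot S=aRxR^{-1}a=a\,a\,a=a=1_{(A,a)}$, so $aRx$ is an isomorphism with inverse $S$.

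I do not anticipate a real obstacle: the whole content is the bookkeeping that the candidate inverse lies in the correct hom-set and that ``identity'' in $\mathbf{Kar(\mathcal{B})}$ means the relevant idempotent rather than $1_X$ or $1_A$. As an alternative route, one could note that an isomorphism $R$ is in particular an adjunction $R\dashv R^{-1}$ whose unit and counit are identities, that $Rx=aR$ supplies (as an equality, hence both inequalities) the square required by \Cref{lemma_sharp_adj}, and hence $aRx\dashv xR^{-1}a$ with unit and counit that one then checks are identities; but the direct computation above is shorter and self-contained.
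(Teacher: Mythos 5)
Your proof is correct and follows essentially the same route as the paper: both exhibit $xR^{-1}a$ as an explicit two-sided inverse of $aRx$ and verify the composites equal the idempotents $x$ and $a$, the only cosmetic difference being that you first extract the conjugation identities $x=R^{-1}aR$ and $a=RxR^{-1}$ whereas the paper substitutes $Rx=aR$ directly inside the products.
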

\begin{proof}
The inverse of $aRx$ is the module $x R^{-1} a$ : $$(aRx)(x R^{-1} a) = a R x R^{-1} a =a a R R^{-1} a =a 1_A a =a=1_{(A,a)} $$
$$(x R^{-1} a)(aRx) = x R^{-1} a R x =x R^{-1} R x x = x1_X x = x = 1_{(X,x)}.$$
\end{proof}

\begin{definition}
Define $\mathbf{LoBicat}$ to be the 2-category with objects the locally ordered bicategories, arrows the pseudofunctors between them, and 2-cells the pseudonatural transformations.
\end{definition}

To show that the Karoubi envelope of a locally ordered Cartesian bicategory is Cartesian, we will build a 2-functor from $\mathbf{LoBicat}$ to itself. To start, we prove the following lemma:

\begin{lemma}\label{lemma_sharp_functor}
For locally ordered bicategories $\mathcal{B}$ and $\mathcal{D}$, consider a pseudofunctor $F:\mathcal{B} \rightarrow \mathcal{D}$. Then the following mapping produces a functor $F_\# : \mathbf{Kar(\mathcal{B})} \rightarrow \mathbf{Kar(\mathcal{D})}$:
$$(A,a) \mapsto (FA,Fa) \text{, for an idempotent } (A,a) \text{ and}$$
$$ R : (X,x) \rightarrow (A,a) \mapsto FR : (FX, Fx) \rightarrow (FA,Fa) \text{, for a module } R.$$
\end{lemma}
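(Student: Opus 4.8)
The plan is to check, in order, that $F_\#$ is well defined on objects, that it is well defined on arrows (modules), and that it respects identities and composition. The guiding principle throughout is that $\mathcal{B}$ and $\mathcal{D}$ are locally ordered, so every invertible $2$-cell is an identity (by antisymmetry); hence each verification amounts to transporting an equality of $1$-cells across the compositor/unitor $2$-cells of the pseudofunctor $F$ and then observing that the resulting isomorphism is forced to be an equality. Since a locally ordered bicategory is in fact a $2$-category, there is moreover no associativity bookkeeping to do on the source side.

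For objects: given an idempotent $(A,a)$, applying $F$ to $aa=a$ and pasting with the (invertible) compositor $Fa\cdot Fa\Rightarrow F(aa)$ yields $Fa\cdot Fa=F(aa)=Fa$ in the poset $\mathcal{D}(FA,FA)$, so $(FA,Fa)$ is an idempotent and $F_\#$ lands in $\mathbf{Kar(\mathcal{D})}$. For arrows: if $R\colon(X,x)\to(A,a)$ is a module, i.e. $aRx=R$, then whiskering the compositors of $F$ gives an invertible $2$-cell $Fa\cdot FR\cdot Fx\Rightarrow F(aRx)=FR$, which antisymmetry upgrades to the equality $Fa\cdot FR\cdot Fx=FR$; thus $FR$ is a module $(FX,Fx)\to(FA,Fa)$.

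For the functoriality: the identity on $(X,x)$ in $\mathbf{Kar(\mathcal{B})}$ is the $1$-cell $x$ itself, and $F_\#x=Fx$ is by definition the identity on $(FX,Fx)$, so units are preserved on the nose. For composable modules $R\colon(X,x)\to(A,a)$ and $S\colon(A,a)\to(B,b)$, their composite in $\mathbf{Kar(\mathcal{B})}$ is the $1$-cell composite $SR$ in $\mathcal{B}$ (already known to be a module from the definition of $\mathbf{Kar}$), and the compositor of $F$ supplies an invertible $2$-cell $FS\cdot FR\Rightarrow F(SR)$, hence the equality $F_\#S\cdot F_\#R=F_\#(SR)$. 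So $F_\#$ preserves composition strictly; on $2$-cells it sends an inequality to an inequality because $F$ is locally monotone, and its own coherence constraints may be taken to be those of $F$ (all identities here).

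I do not anticipate a genuine obstacle: the only point requiring care is to remember to insert the constraint $2$-cells of $F$ each time $a$, $x$, $R$, or $S$ is moved through $F$, and to invoke antisymmetry of the local order at each step to collapse the ensuing isomorphisms to equalities — after which everything is a direct, one-line computation.
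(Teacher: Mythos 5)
Your proposal is correct and follows essentially the same route as the paper's proof: verify $FaFa = F(aa) = Fa$ and $Fa\,FR\,Fx = F(aRx) = FR$, then check preservation of identities, composition, and the local order. The only difference is presentational — you make explicit that antisymmetry of the local order collapses the pseudofunctor's constraint isomorphisms to equalities, which the paper simply writes as equalities without comment.
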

\begin{proof}
$F_\#$ is well defined since $Fa Fa = F aa =Fa$, which means that $(FA,Fa)$ is an idempotent and $Fa FR Fx = FaRx=FR$, which shows that $FR$ is a module between idempotents. Also the following relations for a module $R:(X,x) \rightarrow (A,a)$ imply that $F_\#$ is indeed a functor:
$$ R \le S \Rightarrow FR \le FS,$$
$$F_\# S \: F_\# R = FS \: FR = FSR = F_\# SR \text{ and}$$
$$1_{F_\# (A,a)} = 1_{(FA,Fa)} = Fa = F_\# a = F_\# 1_{(A,a)}.$$
\end{proof}

\begin{lemma}\label{lemma_sharp_transf}
For two locally ordered bicategories $\mathcal{B}$ and $\mathcal{D}$, consider a pseudonatural transformation $\phi :F \Rightarrow G : \mathcal{B} \rightarrow \mathcal{D}$. Then we can define a natural transformation $\phi_\# :F_\# \Rightarrow G_\# :  \mathbf{Kar(\mathcal{B})} \rightarrow \mathbf{Kar(\mathcal{D})}$ with components $\phi_\# (A,a) : Ga \: \phi A\: Fa$. Moreover, $\phi_\#$ is an isomorphism if $\phi$ is an isomorphism.
\end{lemma}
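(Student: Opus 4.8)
The plan is to verify, in order, that each $\phi_\#(A,a)$ is a well-defined module $F_\#(A,a)\to G_\#(A,a)$, that the family $\{\phi_\#(A,a)\}_{(A,a)}$ satisfies the naturality equation, and that it is pointwise invertible when $\phi$ is. The one observation that makes all of this routine is that $\mathcal{D}$ is locally ordered, so every structural invertible $2$-cell in sight---the compositors of $F$ and $G$ and the naturality constraints of $\phi$---lives in a partially ordered hom-category and is therefore an identity. Thus $F(gh)=Fg\,Fh$ and $F1=1$ (and likewise for $G$, so in particular $Fa\,Fa=Fa$ for an idempotent $a$), and pseudonaturality of $\phi$ becomes the bare equation $Gh\,\phi X=\phi A\,Fh$ for every $1$-cell $h\colon X\to A$ of $\mathcal{B}$. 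I will also use the module identities $aR=R=Rx$ valid for any module $R\colon(X,x)\to(A,a)$, from the remark following the definition of $\mathbf{Kar(\mathcal{B})}$.

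First I would check well-definedness. By \Cref{lemma_sharp_functor}, $F_\#(A,a)=(FA,Fa)$ and $G_\#(A,a)=(GA,Ga)$, so $\phi_\#(A,a)=Ga\,\phi A\,Fa$ has the correct source and target, and that it is a module, i.e. $Ga\,(\phi_\#(A,a))\,Fa=\phi_\#(A,a)$, is immediate from $Ga\,Ga=Ga$ and $Fa\,Fa=Fa$. Next, for the naturality equation $G_\#R\cdot\phi_\#(X,x)=\phi_\#(A,a)\cdot F_\#R$ attached to a module $R\colon(X,x)\to(A,a)$ --- explicitly $GR\,Gx\,\phi X\,Fx=Ga\,\phi A\,Fa\,FR$ --- I would rewrite both sides using $GR\,Gx=GR$ and $Fa\,FR=FR$ (consequences of $Rx=R$, $aR=R$ plus pseudofunctoriality) together with pseudonaturality of $\phi$ applied at $R$ and at $a$; both sides then collapse to $\phi A\,FR$. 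This is a two- or three-step calculation in $\mathcal{D}$ with no coherence data to track, precisely because of local orderedness.

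Finally, for the ``isomorphism'' clause: if every $\phi A$ is an isomorphism of $\mathcal{D}$, then pseudonaturality of $\phi$ at the idempotent $a$ says exactly $\phi A\,Fa=Ga\,\phi A$, which is the hypothesis ``$Rx=aR$'' of \Cref{lemma_sharp_iso} with $R=\phi A$ and the idempotents $(FA,Fa)$, $(GA,Ga)$. That lemma then yields that $Ga\,\phi A\,Fa=\phi_\#(A,a)$ is an isomorphism in $\mathbf{Kar(\mathcal{D})}$ (with inverse $Fa\,(\phi A)^{-1}\,Ga$), and since this holds componentwise, $\phi_\#$ is a natural isomorphism. I do not expect any genuine obstacle here; the only thing requiring care is keeping the module equations $aR=R=Rx$ and the idempotency relations straight while collapsing the pseudofunctor and pseudonaturality constraints to identities.
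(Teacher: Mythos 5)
Your proposal is correct and follows essentially the same route as the paper's proof: components are modules by idempotency of $a$ and pseudofunctoriality, naturality follows from the module equations $aR=R=Rx$ together with the naturality of $\phi$, and invertibility is obtained by feeding the equality $Ga\,\phi A=\phi A\,Fa$ into \Cref{lemma_sharp_iso}. The only divergence is that you read the pseudonaturality constraint as an on-the-nose equality from the start (legitimate, since an invertible $2$-cell in an antisymmetric locally ordered hom-category is an identity), whereas the paper works with the single inequality $GR\,\phi X\le\phi A\,FR$ in the naturality step and only upgrades to the equality $Ga\,\phi A=\phi A\,Fa$ in the isomorphism case by exploiting invertibility of the components.
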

\begin{proof}
The components $\phi_\#(A,a):(FA,Fa) \rightarrow (GA,Ga)$ are modules since
$$Ga\:Ga \:\phi A\: Fa\:Fa = Gaa\: \phi A \:F aa = Ga\: \phi A \:F a.$$
Also, $\phi_\#$ becomes a pseudonatural transformation by considering the inequalities $$ G_\# R \: \phi_\# (X,x) = GR \: Gx \: \phi X \: Fx = GRx \: \phi X \: Fx = $$
$$ GaR \: \phi X \: Fx = Ga \: GR \: \phi X \: Fx \le Ga \: \phi A \: FR \: Fx = Ga \: \phi A \: F Rx = $$
$$Ga \: \phi A \: FaR = Ga \: \phi A \: Fa \: FR = \phi_\# (A,a) \: F_\# R.$$
If now $\phi$ is an isomorphism, then by naturality we have $\phi A \: Fa \le Ga \: \phi A$ and $Ga \: \phi A = \phi A \: \phi^{-1} A \: Ga \: \phi A \le \phi A \: F a \: \phi^{-1} A \: \phi A = \phi A \: F a$, so by \Cref{lemma_sharp_iso}, $\phi^{-1}_\#(A,a)$ is the inverse of $\phi_\# (A,a)$.
\end{proof}

\begin{proposition}
There is a (strict) 2-functor $\--_{\#} : \mathbf{LoBicat} \rightarrow \mathbf{LoBicat}$ that maps each locally ordered bicategory $\mathcal{B}$ to its Karoubi bicategory $\mathcal{B}_{\#} := \mathbf{Kar(\mathcal{B})}$, each pseudofunctor $F$ to $F_\#$, and each pseudonatural transformation $\phi$ to $\phi_{\#}$.
\end{proposition}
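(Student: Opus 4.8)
The plan is simply to assemble the pieces built in the previous lemmas and then check the axioms of a strict $2$-functor. \Cref{lemma_sharp_functor} already gives the action on objects and $1$-cells, namely the functor $F_\#\colon\mathbf{Kar(\mathcal{B})}\to\mathbf{Kar(\mathcal{D})}$, and \Cref{lemma_sharp_transf} gives the action on $2$-cells, namely the pseudonatural transformation $\phi_\#\colon F_\#\Rightarrow G_\#$, together with the fact that it is invertible when $\phi$ is. So what remains is to verify that this assignment preserves---strictly, with no mediating comparison cells---identity pseudofunctors and their composition, identity pseudonatural transformations, and vertical and horizontal composition of pseudonatural transformations.

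The pseudofunctor clauses are immediate from the definitions: $(\mathrm{Id}_{\mathcal{B}})_\#$ fixes every idempotent $(A,a)$ and every module $R$, hence equals $\mathrm{Id}_{\mathbf{Kar(\mathcal{B})}}$; and $(GF)_\#$ sends $(A,a)\mapsto(GFA,GFa)$ and $R\mapsto GFR$, which is precisely $G_\#\circ F_\#$. For the identity transformation $1_F\colon F\Rightarrow F$, whose component at $A$ is $1_{FA}$, one has $(1_F)_\#(A,a)=Fa\cdot 1_{FA}\cdot Fa=Fa=1_{(FA,Fa)}$, the identity $2$-cell on $F_\#$.

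Vertical and horizontal composition are the only clauses requiring a genuine, though short, computation, and it rests on three observations: (i) the arrows $Fa,Ga,Ha$ are idempotent, so a copy of an idempotent adjacent to itself can be absorbed, e.g.\ $Fa\cdot Fa=F(aa)=Fa$; (ii) in a locally ordered bicategory the naturality constraint of a pseudonatural transformation is an invertible $2$-cell in a poset, hence an equality, so $\phi A\cdot Fa=Ga\cdot\phi A$; and (iii) for left whiskering by a pseudofunctor $K\colon\mathcal{D}\to\mathcal{E}$, the composition constraints of $K$ are forced to be identities for the same reason, so $K(Ga)\cdot K(\phi A)\cdot K(Fa)=K(Ga\cdot\phi A\cdot Fa)$. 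Using (i) and (ii), for $\phi\colon F\Rightarrow G$ and $\psi\colon G\Rightarrow H$ the product $\psi_\#(A,a)\cdot\phi_\#(A,a)=(Ha\cdot\psi A\cdot Ga)(Ga\cdot\phi A\cdot Fa)$ collapses to $Ha\cdot\psi A\cdot\phi A\cdot Fa=(\psi\bullet\phi)_\#(A,a)$; the right-whiskering identity $(\phi H')_\#=\phi_\#\circ H'_\#$ follows directly from the definitions and \Cref{lemma_sharp_functor}, and the left-whiskering identity $(K\phi)_\#=K_\#\circ\phi_\#$ uses (iii). Since every horizontal composite of $2$-cells factors as a whiskering followed by a vertical composite, its preservation then follows.

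I expect the only real obstacle to be bookkeeping: keeping straight which idempotent sits on which object as the structure arrows $Fa,Ga,Ha$ are slid past components and past the (trivial) constraints of the $F_\#$'s, and confirming that each absorption step is licensed. Strictness itself comes for free, since any comparison $2$-cell that might otherwise intervene lives in a locally ordered---hence locally posetal---bicategory, and is therefore an identity.
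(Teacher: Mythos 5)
Your proposal is correct and follows essentially the same route as the paper's proof: assemble the data from the preceding lemmas, verify $(1_F)_\#=1_{F_\#}$ and preservation of vertical composition by absorbing idempotents and using that the pseudonaturality constraint $Ga\,\phi A=\phi A\,Fa$ is an equality in the locally ordered setting, and observe that $(1_{\mathcal{B}})_\#=1_{\mathcal{B}_\#}$ and $(GF)_\#=G_\#F_\#$ hold on the nose by definition. Your additional remarks on whiskering and horizontal composition of $2$-cells make the verification slightly more complete than the paper's, which leaves that point implicit, but the argument is the same.
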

\begin{proof}
Consider two locally ordered bicategories $\mathcal{B}$ and $\mathcal{D}$. Then the mapping $F \mapsto F_{\#}$ and $\phi \mapsto \phi_{\#}$ will give a functor $\-- _{\#}:\mathbf{LoBicat}(\mathcal{B},\mathcal{D}) \rightarrow \mathbf{LoBicat}(\mathcal{B},\mathcal{D})$. Indeed, if $F:\mathcal{B} \rightarrow \mathcal{D}$ is a pseudofunctor, then the component of $(1_F)_{\#}$ on the idempotent $(A,a)$ of $\mathcal{B}$ is going to be $Fa\: 1_F(A) \: Fa=Fa \: 1_{FA} \:Fa =Fa \:Fa = Fa$ and the component of $1_{F_{\#}}$ is $1_{F_{\#}(A,a)} = 1_{(FA,Fa)} =Fa$. So $(1_F)_{\#} = 1_{F_{\#}}$. \\
Also, for pseudonatural transformations as in the diagram
$$\xymatrixcolsep{2cm}
\xymatrix{
  \mathcal{B} \ruppertwocell^F{\phi}
    \rlowertwocell_H{\psi}
    \ar[r]|{D}
  &\mathcal{D}\\ }$$
and for an idempotent $(A,a)$ in $\mathcal{B}$ we have $$\psi_{\#} \phi_{\#}(A,a) = \psi_{\#}(A,a) \phi_{\#}(A,a)= Ha\: \psi A \: Ga \: Ga \: \phi A \: Fa = Ha \: \psi A \: Ga \: \phi A \: Fa$$and
$$(\psi \phi)_{\#} (A,a) = Ha \: \psi \phi A \: Fa = Ha \: \psi A \: \phi A \: Fa.$$
However, by naturality, $Ga \: \phi A = \phi A \: Fa$, so
$$\psi_{\#} \phi_{\#}(A,a) = Ha \: \psi A \: \phi A \: Fa \: Fa = Ha \: \psi A \: \phi A \: Fa = (\psi \phi )_{\#} (A,a).$$
By the definition of $F_{\#}$ for any pseudofunctor $F$ it's clear that $(1_{\mathcal{B}})_{\#} = 1_{\mathcal{B}_{\#}}$ and $G_{\#} F_{\#} = (GF)_{\#}$. So $\-- _{\#}: \mathbf{LoBicat} \rightarrow \mathbf{LoBicat}$ is a 2-functor.
\end{proof}

\begin{lemma}\label{lemma_Kar_prod}
For two locally ordered bicategories $\mathcal{B}$ and $\mathcal{D}$, $\mathbf{Kar(\mathcal{B})} \times \mathbf{Kar(\mathcal{D})} \simeq \mathbf{Kar(\mathcal{B\times D})}$
\end{lemma}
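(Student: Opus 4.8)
The plan is to exhibit an explicit isomorphism of bicategories between $\mathbf{Kar(\mathcal{B})} \times \mathbf{Kar(\mathcal{D})}$ and $\mathbf{Kar(\mathcal{B}\times\mathcal{D})}$, which in particular yields the claimed equivalence. Define a pseudofunctor
\[
\Phi : \mathbf{Kar(\mathcal{B})} \times \mathbf{Kar(\mathcal{D})} \longrightarrow \mathbf{Kar(\mathcal{B}\times\mathcal{D})}
\]
by sending a pair of idempotents $\big((A,a),(B,b)\big)$ to $\big((A,B),(a,b)\big)$, a pair of modules $(R,S):\big((X,x),(Y,y)\big)\to\big((A,a),(B,b)\big)$ to the single arrow $(R,S):(X,Y)\to(A,B)$, and a pair of inequalities $(R,S)\le(R',S')$ to the corresponding $2$-cell in $\mathbf{Kar(\mathcal{B}\times\mathcal{D})}$. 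First I would check that $\Phi$ is well defined: $(a,b)$ is idempotent in $\mathcal{B}\times\mathcal{D}$ since $(a,b)(a,b)=(aa,bb)=(a,b)$, and $(R,S)$ is a module between $\big((X,Y),(x,y)\big)$ and $\big((A,B),(a,b)\big)$ since $(a,b)(R,S)(x,y)=(aRx,\,bSy)=(R,S)$.

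Next I would verify that $\Phi$ is a strict homomorphism. Because $\mathcal{B}$ and $\mathcal{D}$ are locally ordered, the product $\mathcal{B}\times\mathcal{D}$ carries the componentwise bicategory structure, and hence so does $\mathbf{Kar(\mathcal{B}\times\mathcal{D})}$; composition of modules there is composition in $\mathcal{B}\times\mathcal{D}$, so $(R,S)\,(R',S')=(RR',SS')$, which is precisely the composite computed in $\mathbf{Kar(\mathcal{B})}\times\mathbf{Kar(\mathcal{D})}$, and the identity on $\big((X,Y),(x,y)\big)$ is $(x,y)$, which is $\Phi$ of the identity pair. The associativity and unit constraints on both sides are identities, hence trivially preserved, and $\Phi$ is an order isomorphism on each hom-poset. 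Then I would describe the evident functor $\Psi$ in the other direction, regrouping $\big((A,B),(a,b)\big)$ as $\big((A,a),(B,b)\big)$ and $(R,S)$ as $(R,S)$, using that an idempotent on $(A,B)$ in $\mathcal{B}\times\mathcal{D}$ is exactly a pair of idempotents and a module condition on $(R,S)$ there is exactly a pair of module conditions. A direct check shows $\Psi\Phi$ and $\Phi\Psi$ are the identity $2$-functors, so $\Phi$ is an isomorphism of bicategories, a fortiori a biequivalence.

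I do not expect a genuine obstacle: the whole argument amounts to unwinding the definition of $\mathbf{Kar}$ together with the definition of the product of two locally ordered bicategories, and observing that both constructions operate componentwise so that objects, modules, $2$-cells, composites and identities on the two sides literally match after the regrouping $\big((A,a),(B,b)\big)\leftrightarrow\big((A,B),(a,b)\big)$. The only point needing a moment's care is confirming that the bicategory structure on $\mathcal{B}\times\mathcal{D}$, and hence on its Karoubi bicategory, is indeed the componentwise one, so that composition of modules and identity modules agree on both sides.
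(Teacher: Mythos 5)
Your proof is correct and is essentially the same argument as the paper's: both rest on the observation that idempotents and modules in $\mathcal{B}\times\mathcal{D}$ are exactly pairs of idempotents and modules, so the regrouping is an equivalence (the paper phrases it via the induced functor $\langle p_1,p_2\rangle_{\#}$ in the opposite direction, but the content is identical).
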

\begin{proof}
A pair $((A,B), (a,b))$ in $\mathcal{B\times D}$ is an idempotent if and only if $(A,a)$ and $(B,b)$ are idempotents in $\mathcal{B}$ and $\mathcal{D}$ respectively: $(a,b)(a,b)=(a,b)$ if and only if $(aa, bb) = (a,b)$ if and only if $aa=a$ and $bb=b$.

Also an arrow $(S,T):(X,Y) \rightarrow (A,B)$ is an arrow between the idempotents $((X,Y), (x,y))$ and $((A,B),(a,b))$ in $\mathcal{B\times D}$ if and only if $S:(X,x) \rightarrow (A,a)$ and $T:(Y,y) \rightarrow (B,b)$ are arrows between idempotents: $(a,b)(S,T)(x,y) =(S,T)$ if and only if (aSx, bTy)=(S,T) if and only if $aSx = S$ and $bTy=T$.

Since everything is defined componentwise, it's easy to see that the functor $\langle p_1, p_2 \rangle_{\#}:\mathbf{Kar(\mathcal{B\times D})} \rightarrow \mathbf{Kar(\mathcal{B})} \times \mathbf{Kar(\mathcal{D})}$, where $p_1$ and $p_2$ are the projections of $\mathcal{B}$ and $\mathcal{D}$, is an equivalence.
\end{proof}

In the following consider a locally ordered Cartesian bicategory $\mathcal{B}$. We will denote the tensor product by $\otimes$, the product, the projections and terminal in $\mathbf{Map(\mathcal{B})}$ by $\times$, $p$, $r$ and $I$, and the local product and local terminal in $\mathcal{B}$ by $\wedge$ and $\top$. Remember that the product $\times$ in $\mathbf{Map(\mathcal{B})}$ is the tensor product and $I$ is the identity. Also, for the comonoid structure on an object $X$, we will write $d_X :X \rightarrow X \times X$ and $t_X :X \rightarrow I$ and we will use capital letters F,G,S,T,... for general arrows in $\mathcal{B}$ and small letters $a,b,x,y,\dots$ for idempotents.

\begin{lemma} \label{Kar_loc_prod}
If each $\mathcal{B}(X,A)$ has finite products, then each $\mathbf{Kar(\mathcal{B})}((X,x),(A,a))$ has finite products too.
\end{lemma}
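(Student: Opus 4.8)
The plan is to show that finite products in the hom-category $\mathbf{Kar}(\mathcal{B})((X,x),(A,a))$ are computed by transporting the products of $\mathcal{B}(X,A)$ through the idempotent-conjugation operation $R \mapsto aRx$. Concretely, given modules $R, S : (X,x) \rightarrow (A,a)$, I claim their product in $\mathbf{Kar}(\mathcal{B})$ is $a(R \wedge S)x$, where $\wedge$ is the local product in $\mathcal{B}(X,A)$, and the terminal object is $a\top_{X,A}x$. Since $R = aRx$ and $S = aSx$ already (they are modules), and since conjugation by idempotents is order-preserving, the projection 2-cells $a(R\wedge S)x \le aRx = R$ and $a(R\wedge S)x \le aSx = S$ come for free from the projections $R \wedge S \le R$ and $R \wedge S \le S$ in $\mathcal{B}$ together with monotonicity. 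Likewise $R \le a\top_{X,A}x$ follows from $R \le \top_{X,A}$.

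The key steps, in order, are: (1) verify that $a(R\wedge S)x$ and $a\top_{X,A}x$ are genuinely modules between the idempotents $(X,x)$ and $(A,a)$ — this is immediate from $aa = a$, $xx = x$; (2) produce the projection/terminal 2-cells as above using monotonicity of pre- and post-composition; (3) check the universal property. For (3), suppose $T : (X,x) \rightarrow (A,a)$ is a module with $T \le R$ and $T \le S$ in $\mathbf{Kar}(\mathcal{B})$; since 2-cells in $\mathbf{Kar}(\mathcal{B})$ are just inequalities in $\mathcal{B}$, this means $T \le R$ and $T \le S$ in $\mathcal{B}(X,A)$, hence $T \le R \wedge S$ by the universal property in $\mathcal{B}$; conjugating by the idempotents, $T = aTx \le a(R\wedge S)x$, which is the required factorization, and it is unique because the hom-poset is a poset (antisymmetry). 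The terminal case is analogous and even simpler: $T \le \top_{X,A}$ always, so $T = aTx \le a\top_{X,A}x$.

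I expect the only mild subtlety — rather than a genuine obstacle — to be making sure the binary product I have written down is correctly the product \emph{in the Karoubi hom-category} and not merely an object below both $R$ and $S$; but this is handled cleanly by the observation that modules are fixed points of conjugation ($aRx = R$), so that the counit-like inequality $a(R\wedge S)x \le R \wedge S$ need not hold and is not needed — all that matters is the comparison with $R$ and $S$ themselves, which are fixed points. No new machinery is required beyond \Cref{CW_1.6}(ii) (giving the local products $\wedge$, $\top$ in $\mathcal{B}$) and the poset structure of the hom-categories; the whole argument is a short monotonicity-and-universal-property check.
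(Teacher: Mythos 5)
Your proposal is correct and follows essentially the same route as the paper's own proof: both take the product of $F,G$ to be $a(F\wedge G)x$ and the terminal to be $a\top_{X,A}x$, obtain the projections from monotonicity of conjugation together with the fixed-point identity $aFx=F$, and verify the universal property by passing to $F\wedge G$ in $\mathcal{B}(X,A)$ and conjugating back. No discrepancies to report.
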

\begin{proof}
Let $(X,x)$ and $(A,a)$ be idempotents and $\top_{X,A} :X \rightarrow A$ the terminal in $\mathcal{B}(X,A)$. Then the arrow $a\top_{X,A} x :(X,x) \rightarrow (A,a)$ is the terminal in the category $\mathbf{Kar(\mathcal{B})}((X,x),(A,a))$. Indeed, if we consider an arrow $F:(X,x) \rightarrow (A,a)$ between idempotents, then $F = aFx \le a \top_{X,A} x$. \par
Suppose now that we have arrows $F,G:(X,x) \rightarrow (A,a)$. Then $a (F \wedge G) x$ is the product of $F,G$ in $\mathbf{Kar(\mathcal{B})}$ : For the projections we have $a (F\wedge G) x \le aFx = F$ and $a (F \wedge G) x \le aGx = G$. If $H:(X,x) \rightarrow (A,a)$ is an arrow in  $\mathbf{Kar(\mathcal{B})}$ such that $H \le F$ and $H\le G$, then $H \le F \wedge G$ in $\mathcal{B}$ and so $H = aHx \le a (F\wedge G) x$.
\end{proof}

Henceforth we will denote the product of $F$ and $G$ in  $\mathbf{Kar(\mathcal{B})}((X,x),(A,a))$by $F \sqcap G$.

\begin{lemma}
If $\mathcal{B}$ is a locally ordered Cartesian bicategory, then $\mathbf{Map(Kar(\mathcal{B}))}$ has a terminal object.
\end{lemma}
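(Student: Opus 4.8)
The plan is to show that the object $(I,1_I)$ of $\mathbf{Kar(\mathcal B)}$, where $I$ is the identity object of the tensor product on $\mathcal B$, is terminal in $\mathbf{Map(Kar(\mathcal B))}$. Since every hom-category in sight is a poset, this amounts to producing, for each idempotent $(X,x)$, a map $(X,x)\to(I,1_I)$ in $\mathbf{Kar(\mathcal B)}$ and checking it is the only one.

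The first step is an auxiliary identity in $\mathcal B$ itself: $\top_{X,I}=t_X$ for every object $X$. The inequality $t_X\le\top_{X,I}$ is immediate from terminality of $\top_{X,I}$ in $\mathcal B(X,I)$; for the reverse I would use the unit $1_X\le t_X^*t_X$ of the adjunction $t_X\dashv t_X^*$ together with $\top_{I,I}=1_I$ from \Cref{CW_1.6}, computing $\top_{X,I}\le\top_{X,I}\,t_X^*t_X\le\top_{I,I}\,t_X=t_X$ (here $\top_{X,I}\,t_X^*$ is an endomorphism of $I$, hence lies below $\top_{I,I}=1_I$). Two consequences will be used: first, for every $1$-cell $G\colon X\to X$ we get $t_XG=\top_{X,I}G\le\top_{X,I}=t_X$; second, the colax comonoid law $t_X\le t_Xx$ applied to $x\colon X\to X$ then forces $t_Xx=t_X$. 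In particular the case $X=I$ gives $t_I=1_I$.

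For existence I would apply \Cref{lemma_sharp_adj} to the adjunction $t_X\dashv t_X^*$: its hypothesis is exactly the comparison square $1_It_X=t_X\le t_Xx$, which is the colax comonoid law for $x$, so \Cref{lemma_sharp_adj} yields an adjunction $t_Xx\dashv xt_X^*$ in $\mathbf{Kar(\mathcal B)}$; since $t_Xx=t_X$, this exhibits $t_X\colon(X,x)\to(I,1_I)$ as a map. The only bookkeeping is that $t_X$ and $xt_X^*$ really are modules between the stated idempotents, which reduces to $t_Xx=t_X$ and $xx=x$. For uniqueness, suppose $F\colon(X,x)\to(I,1_I)$ is any map with right adjoint $F^*\colon(I,1_I)\to(X,x)$. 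The module conditions give $F=Fx$; the colax law for the $1$-cell $F\colon X\to I$ gives $t_X\le t_IF=F$; and the colax law for $F^*\colon I\to X$ gives $1_I=t_I\le t_XF^*$. Chaining these, $F=1_IF\le t_X(F^*F)\le t_X$, the last step being the first consequence above with $G=F^*F$. Together with $t_X\le F$ this forces $F=t_X$, so the map is unique.

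The single real obstacle is the auxiliary identity $\top_{X,I}=t_X$ — equivalently, that $t_XG\le t_X$ for every endo-$1$-cell $G$ on $X$; once this is in hand the rest is a short chain of one-line inequalities, with care needed only to check that the $1$-cells appearing are morphisms of $\mathbf{Kar(\mathcal B)}$.
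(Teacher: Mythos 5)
Your auxiliary identity $\top_{X,I}=t_X$ is correct and cleanly proved (it uses only $1_X\le t_X^*t_X$ and $\top_{I,I}=1_I$), and it makes explicit a step the paper only asserts; likewise the adjunction $t_Xx\dashv xt_X^*$ in $\mathbf{Kar(\mathcal{B})}$ obtained from \Cref{lemma_sharp_adj} is fine. The genuine problem is that you have reversed the colax comonoid law throughout. For an arrow $F:X\rightarrow Y$ the axiom in \Cref{CW} reads $t_YF\le t_X$, not $t_X\le t_YF$: this is how it is used in the proof of \Cref{lemma_Map_products} (where it gives $t_Xf^*\le t_A$ for $f^*:A\rightarrow X$), and your own identity $\top_{X,I}=t_X$ forces this orientation, since every arrow $X\rightarrow I$ must lie \emph{below} $t_X$. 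With the correct orientation the law applied to $x:X\rightarrow X$ gives $t_Xx\le t_X$ — the same inequality as your first consequence — while the reverse inequality $t_X\le t_Xx$ is false in general: in $\mathbf{Rel}$ take $x=\emptyset$ on a nonempty $X$, so that $t_Xx=\emptyset\neq t_X$. Hence $t_Xx\neq t_X$, the arrow $t_X$ need not even be a module $(X,x)\rightarrow(I,1_I)$ (the module condition is precisely $t_Xx=t_X$), and the terminal map has to be $t_Xx$, as in the paper.

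The same reversal breaks both pillars of your uniqueness argument: $t_X\le t_IF=F$ and $1_I=t_I\le t_XF^*$ are the wrong-way colax laws and fail in the example above ($F=F^*=\emptyset$). A symptom worth noting is that your uniqueness never invokes the unit or counit of $F\dashv F^*$, only colax laws that hold for \emph{all} arrows — but arbitrary modules $(X,x)\rightarrow(I,1_I)$ are not unique, so some use of the adjunction is unavoidable. The repair is the paper's route: $F\le\top_{X,I}=t_X$ gives $F=Fx\le t_Xx$; conversely the unit $x\le F^*F$ of the adjunction in $\mathbf{Kar(\mathcal{B})}$ gives $t_Xx\le t_XF^*F$, and since $t_XF^*:I\rightarrow I$ lies below $\top_{I,I}=1_I$ this is $\le F$, whence $F=t_Xx$.
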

\begin{proof}
Let $I$ be the terminal object in $\mathbf{Map(\mathcal{B})}$. Then $1_I$ is the local terminal object in $\mathcal{B}(I,I)$. We will show that $(I,1_I)$ is the terminal object in $\mathbf{Map(Kar(\mathcal{B}))}$. Let $(A,a)$ be an object in $\mathbf{Kar(\mathcal{B})}$. Since $\mathcal{B}$ is Cartesian, we have a unique map $t:A \rightarrow I$. If $t^*$ is its right adjoint,then $1_A \le t^*t$ and $tt^* \le 1_I$. The arrows $ta:A \rightarrow I$ and $at^*:I\rightarrow A$ are modules between the idempotents $(A,a)$ and $(I,1_I)$ and $ta \dashv at^*$ in $Kar(\mathcal{B})$ :

For the unit we have $a=aa=a1_Aa \le at^*ta = (at^*)(ta)$ and for the counit we have $(ta)(at^*)=taat^*=tat^* \le \top_{I,I} =1_I$, because $\mathcal{B}$ is Cartesian.

Suppose now that $S:(A,a)\rightarrow (I,1_I)$ is a map in $\mathbf{Kar(\mathcal{B})}$, with right adjoint $R: (I,1_I) \rightarrow (A,a)$. By definition, $Sa=S$ and $aR=R$. By the adjunction we have $A\le RS$ and $SR \le 1_I$. We will show that $S=ta$. First, $S\le \top_{A,I} =t$, because $\mathcal{B}$ is Cartesian. So $S=Sa\le ta$. On the other hand, $ta \le tRS$. But $tR \le \top_{I,I} =1$ implies $R\le r$ by adjunction. So $ta \le tRS \le trS \le 1_I S =S$.
\end{proof}

\begin{proposition}
If $\mathcal{B}$ is a locally ordered Cartesian bicategory, the tensor product $\otimes$ on $\mathcal{B}$ produces a functor $\otimes_\# : \mathbf{Kar(\mathcal{B})} \times \mathbf{Kar(\mathcal{B})} \rightarrow \mathbf{Kar(\mathcal{B})}$ and the natural isomorphisms $\rho, \lambda, \gamma$ and $\alpha$ on $\mathcal{B}$ produce natural isomorphisms
$$\rho_\# : (X,x) \rightarrow (X,x) \otimes_\# (I, 1_I), $$
$$\lambda_\# : (X,x) \rightarrow (I,1_I) \otimes_\# (X,x), $$
$$\gamma_\# : (X,x) \otimes_\# (Y,y) \rightarrow (Y,y) \otimes_\# (X,x) \text{ and} $$
$$\alpha_\# : ( (X,x) \otimes_\# (Y,y) ) \otimes_\# (Z,z) \rightarrow (X,x) \otimes_\# ((Y,y) \otimes_\# (Z,z) )$$
on $\mathbf{Kar(\mathcal{B})}$. Moreover, the above data satisfies the coherence conditions in order for $\otimes_\#$ to be a tensor product on $\mathbf{Kar(\mathcal{B})}$.
\end{proposition}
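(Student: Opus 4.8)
The plan is to derive all of the monoidal data on $\mathbf{Kar(\mathcal{B})}$ by feeding the tensor data on $\mathcal{B}$ --- read as $1$- and $2$-cells of $\mathbf{LoBicat}$ --- through the $2$-functor $(-)_\#$ constructed above, and then transporting along the equivalences of \Cref{lemma_Kar_prod}. For the functor, I would apply \Cref{lemma_sharp_functor} to $\otimes : \mathcal{B}\times\mathcal{B}\to\mathcal{B}$ to obtain $(\otimes)_\# : \mathbf{Kar(\mathcal{B}\times\mathcal{B})}\to\mathbf{Kar(\mathcal{B})}$ and precompose it with the equivalence $\mathbf{Kar(\mathcal{B})}\times\mathbf{Kar(\mathcal{B})}\simeq\mathbf{Kar(\mathcal{B}\times\mathcal{B})}$; the resulting $\otimes_\#$ sends $\big((X,x),(Y,y)\big)$ to $(X\otimes Y,\,x\otimes y)$ and a pair of modules $(R,S)$ to $R\otimes S$. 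I would check that $x\otimes y$ is an idempotent using that $\otimes$ is a pseudofunctor, so $(x\otimes y)(x\otimes y)\cong(xx)\otimes(yy)=x\otimes y$, with the constraint isomorphism forced to be an equality by local antisymmetry; similarly $R\otimes S$ is a module between the evident idempotents. Since the codomain is locally a poset, $\otimes_\#$ is automatically a $2$-functor, in particular a pseudofunctor.

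For the unit and the constraint cells, note first that the identity object $(I,1_I)$ is the value of the $1$-cell $I_\# : \mathbf{Kar(1)}\simeq\mathbf{1}\to\mathbf{Kar(\mathcal{B})}$ coming from $I:\mathbf{1}\to\mathcal{B}$. I would then regard $\rho,\lambda,\gamma,\alpha$ as $2$-cells of $\mathbf{LoBicat}$ between the relevant composites of $\otimes$, $I$, projections and diagonals out of $\mathcal{B}$, $\mathcal{B}^2$, $\mathcal{B}^3$ --- for instance $\rho : \mathrm{Id}_\mathcal{B}\Rightarrow\otimes\circ\langle\mathrm{Id}_\mathcal{B},\,I\circ{!}\rangle$ --- and apply $(-)_\#$ to them. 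After transporting along \Cref{lemma_Kar_prod}, the resulting $\rho_\#,\lambda_\#,\gamma_\#,\alpha_\#$ have exactly the sources and targets in the statement, and by \Cref{lemma_sharp_transf} each is invertible because $\rho,\lambda,\gamma,\alpha$ are. Also by \Cref{lemma_sharp_transf}, the component of $\rho_\#$ at $(X,x)$ is $(x\otimes 1_I)\cdot\rho X\cdot x$, which collapses by naturality of $\rho$ to the restriction $\rho X\cdot x$ of $\rho X$ to a module, and similarly for $\lambda_\#,\gamma_\#,\alpha_\#$.

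For the coherence conditions (1)--(4), the point is that each is an equality of pasting composites of whiskered copies of $\rho,\lambda,\gamma,\alpha$, i.e.\ an equality of $2$-cells between composite $1$-cells $\mathcal{B}^n\to\mathcal{B}$ in $\mathbf{LoBicat}$; since $(-)_\#$ is a \emph{strict} $2$-functor it carries each such equality to the corresponding equality built from $\rho_\#,\lambda_\#,\gamma_\#,\alpha_\#$, so $\big(\otimes_\#,(I,1_I),\rho_\#,\lambda_\#,\gamma_\#,\alpha_\#\big)$ satisfies the same axioms and is a tensor product on $\mathbf{Kar(\mathcal{B})}$. The main obstacle I anticipate is bookkeeping: the coherence axioms contain whiskerings such as $\alpha\otimes W$ and $X\otimes\alpha$, so one must verify that applying $(-)_\#$ to $\otimes\circ(\otimes\times\mathrm{Id})$ and its permutations agrees, via the equivalences of \Cref{lemma_Kar_prod}, with $\otimes_\#\circ(\otimes_\#\times\mathrm{Id})$ on $\mathbf{Kar(\mathcal{B})}^3$. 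This should go through because the equivalence $\mathbf{Kar(\mathcal{B})}\times\mathbf{Kar(\mathcal{D})}\simeq\mathbf{Kar(\mathcal{B}\times\mathcal{D})}$ is the identity on objects and componentwise on modules, hence strictly compatible with the pairings $\langle-,-\rangle$ so that no spurious coherence cells appear; alternatively one can simply invoke that a product-preserving $2$-functor sends pseudo-monoidal structures to pseudo-monoidal structures.
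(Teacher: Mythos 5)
Your proposal is correct, and it assembles the data exactly as the paper does (via \Cref{lemma_sharp_functor}, \Cref{lemma_sharp_transf}, and \Cref{lemma_Kar_prod}), but it verifies the coherence axioms by a genuinely different route. The paper checks each axiom by an explicit pasting decomposition: each coherence diagram in $\mathbf{Kar(\mathcal{B})}$ is cut into triangles that commute because objects such as $(x\otimes 1_I)\otimes y$ are idempotents, squares that commute by naturality of $\rho,\lambda,\gamma,\alpha$, and a central cell which is the corresponding axiom in $\mathcal{B}$. You instead read each axiom as an equality of pasting composites of $2$-cells in $\mathbf{LoBicat}$ and push it through the strict $2$-functor $(-)_\#$; your component computation $(x\otimes 1_I)\,\rho X\,x=\rho X\, x$ and the observation that the equivalence of \Cref{lemma_Kar_prod} is componentwise (indeed a strict isomorphism of $2$-categories) are precisely what make the transported equality land on the diagram one wants. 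The abstract argument buys uniformity --- all four axioms follow at once, and the paper's idempotency triangles and naturality squares are exactly the componentwise shadow of your appeal to $2$-functoriality --- but it leans on one point the paper's $2$-functor proposition does not spell out: preservation of whiskering, i.e.\ $(G\phi)_\#=G_\#\phi_\#$ and $(\phi H)_\#=\phi_\# H_\#$. Both identities are one-line computations from the definition of $\phi_\#(A,a)=Ga\cdot\phi A\cdot Fa$ (e.g.\ $(G\phi)_\#(A,a)=G(F'a\cdot\phi A\cdot Fa)=G_\#(\phi_\#(A,a))$), so the argument is sound, but you should record that check rather than citing strictness of $(-)_\#$ as a black box.
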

\begin{proof}
Considering \Cref{lemma_sharp_functor}, \Cref{lemma_sharp_transf}, and \Cref{lemma_Kar_prod} in the previous section, it suffices to show that $\otimes_\#$ satisfies the coherence laws. These will follow by naturality, by the definition of idempotents and by the respective laws in $\mathcal{B}$. For instance, in the following diagram, the triangles 1 and 4 commute because $(x \otimes 1_I) \otimes y$ and $x \otimes (1_I \otimes y)$ are idempotents, the squares 2 and 3 commute by naturality of $\rho$ and $\lambda$, and 5 holds in $\mathcal{B}$.
$$\xymatrixcolsep{0.2cm} \xymatrix{ &  (X,x) \otimes_\# (Y,y) \ar[dl]_{ \rho_\# \otimes_\# (Y,y) } \ar[dr]^{(X,x) \otimes_\# \lambda_\# } \\
((X,x) \otimes_\# (I, 1_I) ) \otimes_\# (Y,y) \ar[rr]_{\alpha_\#} & & (X,x) \otimes_\# ( (I,1_I) \otimes_\# (Y,y) ) } $$
$$ = $$
$$\xymatrixcolsep{0.1cm} \xymatrixrowsep{0.2cm} \xymatrix{
& & X \otimes Y \ar[ddll]_{\rho \otimes Y} \ar[ddrr]^{x\otimes y} & & X\otimes Y \ar[ll]_{x\otimes y} \ar[rr]^{x\otimes y} & & X \otimes Y \ar[ddll]_{x\otimes y} \ar[ddrr]^{X\otimes \lambda} \\
\\
(X \otimes I) \otimes Y \ar[dd]_{(x \otimes 1_I) \otimes y} \ar[ddrr]^{x\otimes (1_I \otimes y)} & & 2 & & X\otimes Y \ar[ddll]_{\rho \otimes X} \ar[ddrr]^{X\otimes \lambda} & & 3 & & X\otimes (I \otimes Y) \ar[ddll]_{x\otimes (1_I \otimes y)} \ar[dd]^{x \otimes (1_I \otimes y )} \\
 \quad \quad \quad 1& & & & 5 & & & & 4 \quad \quad \quad \\
(X \otimes I) \otimes Y \ar[rr]_{(x \otimes 1_I) \otimes y} & & (X \otimes I) \otimes Y \ar[rrrr]_{\alpha} & & & & X\otimes (I \otimes Y) \ar[rr]_{x \otimes (1_I \otimes y)} & & X \otimes (I\otimes Y) } $$
Similarly we have the triangle diagram for the symmetry
$$\xymatrixcolsep{0.2cm}  \xymatrix{ & (Y,y) \otimes_\# (X,x) \ar[dl]_{ \gamma_\#} \ar[dr]^{\gamma_\# }\\
(X,x) \otimes_\# (Y,y) \ar[rr]_{1_{(X,x) \otimes_\# (Y,y)} } & & (X,x) \otimes_\# (Y,y) } $$
$$ = $$
$$\xymatrixcolsep{0.3cm} \xymatrixrowsep{0.2cm} \xymatrix{
& &Y \otimes X \ar[ddll]_{\gamma} \ar[ddrr]^{y \otimes x} & & Y \otimes X \ar[ll]_{y \otimes x} \ar[rr]^{y \otimes x} & & Y \otimes X \ar[ddll]_{y \otimes x} \ar[ddrr]^{\gamma} \\
\\
X \otimes Y \ar[dd]_{x \otimes y} \ar[ddrr]^{x \otimes y} & & & & Y \otimes X \ar[ddll]_{\gamma} \ar[ddrr]^{\gamma} & & & & X\otimes Y \ar[ddll]_{x \otimes y} \ar[dd]^{x \otimes y} \\
\\
X \otimes Y \ar[rr]_{x \otimes y} & & X \otimes Y \ar[rrrr]_{1} & & & & X\otimes Y \ar[rr]_{x \otimes y} & & X \otimes Y, } $$
or the pentagon and the hexagon diagrams.
\end{proof}

\begin{proposition} \label{Map_Kar_prod}
If $\mathbf{Map\mathcal{B}}$ has finite products, $\mathbf{Map(Kar\mathcal{B})}$ has finite products too.
\end{proposition}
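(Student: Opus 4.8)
The plan is to realize the binary product in $\mathbf{Map(Kar(\mathcal{B}))}$ by the tensor product $\otimes_\#$ and then apply \Cref{lemma_fox}, using the lemma above that $(I,1_I)$ is terminal in $\mathbf{Map(Kar(\mathcal{B}))}$. First I would check that $\otimes_\#$ restricts to a tensor product on $\mathbf{Map(Kar(\mathcal{B}))}$: being a pseudofunctor it preserves adjunctions, so $R\otimes_\# S$ is a map whenever $R$ and $S$ are, and the coherence $2$-cells $\rho_\#,\lambda_\#,\gamma_\#,\alpha_\#$ are componentwise isomorphisms in $\mathbf{Kar(\mathcal{B})}$, hence maps. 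As a sub-bicategory of the locally ordered $\mathbf{Kar(\mathcal{B})}$, the bicategory $\mathbf{Map(Kar(\mathcal{B}))}$ is itself locally ordered, so by \Cref{lemma_fox} it remains to equip each object with a cocommutative comonoid structure whose structure arrows are maps, and to check that every map is a comonoid homomorphism; the tensor is then the bicategorical product, which in this locally ordered setting is a genuine binary product, and together with $(I,1_I)$ this gives all finite products.

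For the comonoid on $(X,x)$ I would take the modules induced by the comonoid structure of $X$ in $\mathcal{B}$, namely $d_{(X,x)} := (x\otimes x)\,d_X\,x$ and $t_{(X,x)} := t_X\,x$ (note $(X,x)\otimes_\#(X,x) = (X\otimes X,\; x\otimes x)$). The colax comonoid homomorphism axiom of $\mathcal{B}$ applied to the arrow $x$ yields exactly the $2$-cells $d_X x \le (x\otimes x)d_X$ and $t_X x \le t_X$, which are the hypotheses of \Cref{lemma_sharp_adj} for $F = d_X$ and $F = t_X$ respectively; hence $d_{(X,x)}$ and $t_{(X,x)}$ are maps, with right adjoints $x\,d_X^*\,(x\otimes x)$ and $x\,t_X^*$. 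That $(d_{(X,x)}, t_{(X,x)})$ satisfies coassociativity, the counit law and cocommutativity is a diagram chase that reduces to the corresponding laws in $\mathcal{B}$ together with naturality of the coherence isomorphisms and idempotency of the arrows involved --- in exactly the spirit of the coherence verification in the preceding proposition.

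The crux is to show that every map $R : (X,x)\to(A,a)$ of $\mathbf{Kar(\mathcal{B})}$, with right adjoint $R^*$, is a comonoid homomorphism, following the argument of \Cref{lemma_Map_products}. Since I may not yet assume $\mathbf{Kar(\mathcal{B})}$ is Cartesian, I would first derive by hand that $R$ and $R^*$ are colax comonoid homomorphisms in $\mathbf{Kar(\mathcal{B})}$ from the fact that they are so in $\mathcal{B}$: expanding $d_{(A,a)}R = (a\otimes a)d_A R$ and $(R\otimes_\# R)d_{(X,x)} = (R\otimes R)d_X x$ and inserting the idempotents via the module identities $aR = R = Rx$ and $xR^* = R^* = R^*a$ at the right spots, the $\mathcal{B}$-level colax inequalities for $R$ and $R^*$ whisker to the required $\mathbf{Kar(\mathcal{B})}$-level ones. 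Granting this, $\otimes_\#$ being a pseudofunctor gives $(R\otimes_\# R)^* = R^*\otimes_\# R^*$, so transposing the colax inequality for $R^*$ across $R\otimes_\# R \dashv (R\otimes_\# R)^*$ and whiskering with the unit and counit of $R\dashv R^*$ yields $(R\otimes_\# R)d_{(X,x)} \le d_{(A,a)}R$; the reverse inequality is the colax inequality for $R$, so equality follows, and $t_{(A,a)}R = t_{(X,x)}$ similarly (or simply from terminality of $(I,1_I)$). Then \Cref{lemma_fox} finishes the proof.

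I expect the main obstacle to be precisely the bookkeeping in the crux step: a naive whiskering of the $\mathcal{B}$-level colax inequalities does not land in the correct hom-poset of $\mathbf{Kar(\mathcal{B})}$, so one must insert both idempotent identities $aR = R$ and $Rx = R$ (and their analogues for $R^*$) at the right moments. The comonoid-axiom diagram chases of the second step are routine but tedious, mirroring the preceding proposition.
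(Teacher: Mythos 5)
Your proof is correct, but it takes a genuinely different route from the one in the text. The paper does not go through \Cref{lemma_fox} at all: it directly exhibits the adjunction $\Delta \dashv \otimes_\#$ on $\mathbf{Map(Kar\mathcal{B})}$, taking the same unit $(d_X)_\# = (x\otimes x)d_X x$ that you propose as your comultiplication, together with the counit $(p_\#, r_\#)$ with $p_\# = ap(a\otimes b)$ and $r_\# = br(a\otimes b)$, and then verifies only the two triangle identities, each by sandwiching between a pair of inequalities. That route is leaner because it never needs coassociativity, the counit law, cocommutativity, or the fact that every map is a comonoid homomorphism. Your route, by contrast, front-loads the comonoid data and the ``every map is a comonoid homomorphism'' argument of \Cref{lemma_Map_products}, and then lets \Cref{lemma_fox} do the universal-property work; this costs more verification but recovers along the way exactly the structure that the subsequent theorem (via \Cref{CW_1.6}) wants anyway, and your reduction of ``$d_\#$ and $t_\#$ are maps'' to \Cref{lemma_sharp_adj} applied to the colax $2$-cells $d_X x \le (x\otimes x)d_X$ and $t_X x \le t_X$ is a nice touch not made explicit in the text. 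One caveat on your second step: the comonoid axioms for $\big((X,x),\,(x\otimes x)d_X x,\,t_X x\big)$ do \emph{not} reduce merely to the laws of $\mathcal{B}$ plus naturality and idempotency, because $d$ is only colax natural; you must sandwich each side between $d_X x \le (x\otimes x)d_X x \le (x\otimes x)d_X$ (and $t_X x \le t_X$), use that both sides are modules and hence absorbed by the ambient idempotents, and invoke antisymmetry of the local order --- the same insertion-of-idempotents bookkeeping you correctly anticipate for your crux step. With that adjustment the argument goes through.
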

\begin{proof}
It suffices to show that the restriction of the pseudofunctor $\otimes_\#$ on $\mathbf{Map(Kar\mathcal{B})}$ is right adjoint to the pseudofunctor $\Delta: \mathbf{Map(Kar\mathcal{B})} \rightarrow \mathbf{Map(Kar\mathcal{B})} \times \mathbf{Map(Kar\mathcal{B})}$, $(X,x) \mapsto ((X,x),(X,x))$. For each idempotent $(X,x)$ we have the arrow between idempotents $(d_X)_\# = (x \otimes x ) d_X x : (X,x) \rightarrow (X\times X, x \otimes x)$ which we will prove to be the unit of the adjunction. For the counit, first notice that for each idempotent of the form $(A\times B, a\otimes b)$ we have the arrows between idempotents $p_\# =a p (a \otimes b)$ and $r_\# = b r (a\otimes b)$, where $p$ and $r$ are the projection maps of the product $A\times B$ in $\mathbf{Map\mathcal{B}}$. We will show that $(p_\#, r_\#)$ is the counit of the adjunction.

To show that $\otimes_\#$ is right adjoint to $\Delta$ it is to show that $(p_\# \otimes_\# r_\#) (d_{A\times B})_\# = 1_{(A,a) \otimes_\# (B,b)}= 1_{(A\times B , a \otimes b)}=a\otimes b$ and $(p_\#,r_\#)((d_X)_\#,(d_X)_\#)=1_{(X,x),(X,x))}=(x,x)$. For the former we see in the following diagram that $a\otimes b \le (p_\# \otimes_\# r_\#) (d_{A\times B})_\#$:
$$\xymatrixcolsep{1cm} \xymatrixrowsep{0.4cm} \xymatrix{A\times B \ar[r]^{a\times b} \ar@/_1.5pc/[rrrddddd]_{a\otimes b} & A\times B \ar[r]^-{d_{A\times B}} \ar[ddr]_{a\otimes b} & (A\times B) \times (A\times B) \ar[r]^{(a\otimes b)\otimes (a\otimes b)} \ar[ddr]_{(a\otimes b)\otimes (a\otimes b)} & (A\times B)\times (A\times B) \ar[dd]^{(a\otimes b)\otimes (a\otimes b)} \\
& & \le & = \quad \quad \quad \quad \\
& & A\times B \ar[r]^-{d_{A\times B}} \ar[ddr]_1 & (A\times B) \times (A\times B) \ar[dd]^{p\times r} \\
& & & = \quad \quad \quad \quad \\
& & & A \times B \ar[d]^{a\otimes b} \\
& & & A\times B }$$
For the opposite inequality we have $$d_\# = ((a\otimes b)\otimes (a\otimes b)) \: d \: (a \otimes b) \le (a\otimes b)\otimes (a\otimes b) (p^* \otimes r^*) (a\otimes b) = $$ $$((a\otimes b) p^* a) \otimes ((a\otimes b) r^* b )= (p_\#)^* \otimes(r_\#)^* =(p_\# \otimes_\# r_\#)^*,$$so $d_\# \le (p_\# \otimes_\# r_\#)^* = (p_\# \otimes_\# r_\#)^* (a\otimes b)$, which is equivalent to $(p_\# \otimes_\# r_\#) d_\# \le a \otimes b$.\\
For the second identity the diagram
$$\xymatrixcolsep{0.8cm} \xymatrixrowsep{0.4cm} \xymatrix{X \ar[r]^x \ar@/_1.5pc/[rrrddddd]_x & X \ar[r]^-{d_X} \ar[ddr]_x &X \times X \ar[r]^{x\otimes x} \ar[ddr]_{x\otimes x} & X\times X \ar[dd]^{x \otimes x} \\
& & \le & = \quad \quad \quad \quad \\
& & X \ar[r]^-{d_X} \ar[ddr]_1 & X \times X \ar[dd]^p \\
& & & = \quad \quad \quad \quad \\
& & & X \ar[d]^x \\
& & & X }$$
will give $x \le p_\# (d_X)_\#$. But we also have $$d_X \le p^* \Rightarrow (x \otimes x ) d_X x \le (x \otimes x) p^* x \Rightarrow$$ $$ (d_X)_\# \le (p_\#)^* = (p_\#)^* x \Rightarrow p_\# (d_X)_\# \le x.$$So $p_\# (d_X)_\# = x$ and similarly $r_\# d_X = x $.
\end{proof}

We can now prove the following theorem:

\begin{theorem}
If $\mathcal{B}$ is a locally ordered Cartesian bicategory then $\mathbf{Kar\mathcal{B}}$ is a locally ordered Cartesian bicategory too.
\end{theorem}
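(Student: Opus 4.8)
The plan is to verify that $\mathbf{Kar\mathcal{B}}$ carries a Cartesian structure in the sense of \Cref{CW}, assembling the results proved above. First, $\mathbf{Kar\mathcal{B}}$ is locally ordered, its $2$-cells being the inequalities of $\mathcal{B}$. For \Cref{CW}(i) I take the tensor product $\otimes_\#$ with its coherence data $\rho_\#,\lambda_\#,\gamma_\#,\alpha_\#$, supplied by the Proposition showing $\otimes_\#$ is a tensor product on $\mathbf{Kar\mathcal{B}}$. For \Cref{CW}(ii), on each object $(X,x)$ I use the comonoid $\bigl((X,x),(d_X)_\#,(t_X)_\#\bigr)$ with $(d_X)_\#=(x\otimes x)\,d_X\,x$ and $(t_X)_\#=t_X\,x$: by \Cref{Map_Kar_prod} the arrow $(d_X)_\#$ is the diagonal of the finite product in $\mathbf{Map(Kar\mathcal{B})}$, and by the lemma identifying the terminal object of $\mathbf{Map(Kar\mathcal{B})}$ the arrow $(t_X)_\#$ is the unique map $(X,x)\to(I,1_I)$; the three diagrams (a)--(c) of \Cref{CW}(ii) then hold because they are the comonoid laws for the canonical cocommutative comonoid on an object of a bicategory with finite products, or, if one prefers, by expanding each term and reducing to the corresponding diagram for $\mathcal{B}$, absorbing the extra idempotents using the pseudofunctoriality of $\otimes$.

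It remains to check the two axioms. For axiom~1, let $R:(X,x)\to(A,a)$ be a module, so $aR=R=Rx$. Since $R\otimes_\# R=R\otimes R$, the pseudofunctoriality of $\otimes$ gives $(a\otimes a)(R\otimes R)=(aR)\otimes(aR)=R\otimes R$, and one computes $(R\otimes_\# R)(d_X)_\#=(R\otimes R)\,d_X\,x$, $(d_A)_\#\,R=(a\otimes a)\,d_A\,R$, $(t_A)_\#\,R=t_A\,R$, $(t_X)_\#=t_X\,x$; the colax inequalities $(d_A)_\#\,R\le(R\otimes_\# R)(d_X)_\#$ and $(t_A)_\#\,R\le(t_X)_\#$ then follow by post-composing with $a\otimes a$ and pre-composing with $x$ from the inequalities $d_A R\le(R\otimes R)d_X$ and $t_A R\le t_X$, which are axiom~1 of \Cref{CW} for $R$ seen as an arrow of $\mathcal{B}$. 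For axiom~2, I apply \Cref{lemma_sharp_adj} to the adjunctions $d_X\dashv d_X^{*}$ and $t_X\dashv t_X^{*}$ available in $\mathcal{B}$ (axiom~2 of \Cref{CW} for $\mathcal{B}$): the compatibility hypothesis needed for $d_X$ is the inequality $d_X\,x\le(x\otimes x)\,d_X$, which is exactly axiom~1 of \Cref{CW} for the idempotent $x:X\to X$, and for $t_X$ it is $t_X\,x\le t_X$, again an instance of that axiom; \Cref{lemma_sharp_adj} then yields $(d_X)_\#\dashv x\,d_X^{*}\,(x\otimes x)$ and $(t_X)_\#\dashv x\,t_X^{*}$ in $\mathbf{Kar\mathcal{B}}$, completing the verification of \Cref{CW}.

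An equivalent route is through the converse half of \Cref{CW_1.6}: \Cref{Map_Kar_prod} and \Cref{Kar_loc_prod} supply the two product hypotheses ($\mathbf{Map(Kar\mathcal{B})}$ has finite products and each $\mathbf{Kar(\mathcal{B})}\bigl((X,x),(A,a)\bigr)$ has finite products), after which one checks that $\otimes_\#$ agrees with the formula $R\otimes_\# S=(p_\#^{*}\,R\,p_\#)\sqcap(r_\#^{*}\,S\,r_\#)$ of part~iii of \Cref{CW_1.6}, using part~iii of \Cref{CW_1.6} for $\mathcal{B}$ together with the explicit expressions $p_\#=a\,p\,(a\otimes b)$, $r_\#=b\,r\,(a\otimes b)$, $p_\#^{*}=(a\otimes b)\,p^{*}\,a$, $r_\#^{*}=(a\otimes b)\,r^{*}\,b$ from the proof of \Cref{Map_Kar_prod}. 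In either route the real content is bookkeeping; I expect the main obstacle to be keeping track of where each idempotent ($x$, $a$, $x\otimes x$, $a\otimes b$, \dots) acts and absorbing the spurious copies by repeated use of idempotency and the pseudofunctoriality of $\otimes$, while taking care over the directions of the inequalities. The formula identification in the second route is the fussier point, since $a\otimes b$ is an idempotent but need not be a map, so I would favour the direct verification of \Cref{CW} above.
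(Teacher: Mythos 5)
Your proposal is correct, but your preferred route is not the one the paper takes. The paper's proof is exactly your ``equivalent route'': it invokes the converse half of \Cref{CW_1.6} together with \Cref{Kar_loc_prod} and \Cref{Map_Kar_prod}, so that the only thing left to verify is the formula $F\otimes_\# G=(p_\#^*Fp_\#)\sqcap(r_\#^*Gr_\#)$, which it establishes by a two-sided chain of inequalities using $F\otimes G=(p^*Fp)\wedge(r^*Gr)$ in $\mathcal{B}$ and the observation $a\otimes b\le p^*ap$ to move the idempotents past $p^*$ and $p$ --- precisely the ``fussier'' bookkeeping you identified, including the point that $a\otimes b$ need not be a map. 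Your primary route instead verifies \Cref{CW} directly: the tensor $\otimes_\#$ with its coherence data is already supplied, the comonoid $((X,x),(x\otimes x)d_Xx,\,t_Xx)$ is the canonical one attached to the product structure on $\mathbf{Map(Kar\mathcal{B})}$ (and in a locally ordered bicategory the coherence isomorphisms of $\otimes_\#$ automatically agree with those induced by that product, since parallel $1$-cells admit at most one $2$-cell, so the diagrams (a)--(c) do reduce to standard facts about diagonals), axiom~1 follows by pre-composing with $x$ and post-composing with $a\otimes a$ as you say (the order matters --- precompose first, then absorb $Rx=R$), and axiom~2 is a clean application of \Cref{lemma_sharp_adj} whose compatibility hypothesis is exactly axiom~1 for the idempotent itself. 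Both arguments are sound; the paper's is shorter because \Cref{CW_1.6} packages the comonoid construction and both axioms into the single formula check, while yours is more self-contained and makes visible where each hypothesis of \Cref{CW} comes from, at the cost of re-deriving what \Cref{CW_1.6} already encapsulates.
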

\begin{proof}
By \Cref{CW_1.6}, \Cref{Kar_loc_prod}  and \Cref{Map_Kar_prod}, it suffices to show that for any arrows between idempotents $F:(X,x) \rightarrow (A,a)$ and $G:(Y,y) \rightarrow (B,b)$, $$F \otimes_\# G = F \otimes G = (p_\#^* F p_\#) \sqcap (r_\#^* G r_\#).$$ 
Since $\mathcal{B}$ is a locally ordered Cartesian bicategory, we have $F\otimes G = (p^* F p) \wedge (r^* G r)$. So $F \otimes G \le p^* F p$ and $F \otimes G \le r^* G r$. It follows that $$F \otimes G = (a\otimes b)(F \otimes G) (x\otimes y) \le (a \otimes b ) p^*F p (x\otimes y) =$$ $$ (a\otimes b) p^* (a F x) p (x\otimes y) = p_\#^* F p_\#$$
and similarly $F \otimes G \le r_\#^* G r_\#$. So $$F \otimes G \le (p_\#^* F p_\#) \sqcap (r_\#^* G r_\#).$$
On the other hand, notice that $$a \otimes b = (p^* a p)\wedge (r^* b r) \Rightarrow a\otimes b \le p^* a p \Rightarrow (a\otimes b) p^* \le p^* a$$ and similarly $p (x \otimes y) \le x p$. So we have
$$ p_\#^* F p_\# = ((a\otimes b) p^* a) F (x p (x\otimes y)) \le p^* a a F x x p = p^* F p.$$ Also, $r_\#^* G r_\# \le r^* G r$. So $$(p_\#^* F p_\#) \wedge (r_\#^* G r_\#) \le F \otimes G,$$ which implies that $$(p_\#^* F p_\#) \sqcap (r_\#^* G r_\#) =(a\otimes b) ((p_\#^* F p_\#) \wedge (r_\#^* G r_\#)) (x\otimes y)$$ $$ \le (a\otimes b) (F\otimes G) ( x\otimes y) = F\otimes G.$$
\end{proof}

\section{The Bicategory $\mathbf{Mod\mathcal{B}}$ for Locally Ordered Cartesian Bicategories} \label{The Bicategory ModB for Locally Ordered Cartesian Bicategories}

In a locally ordered bicategory $\mathcal{B}$, a monad $(A,a)$ is an arrow $a:A\rightarrow A$ together with inequalities $1_A \le a$ and $aa \le a$. Also, a module $m:(A,a) \rightarrow (B,b)$ is an arrow $m:A\rightarrow B$, equipped with two inequalities $ma \le m$ and $bm \le m$. \par
Note that for any monad $a:A\rightarrow A$ we also have $a =a 1_A \le aa \le a$, so $aa=a$. That is, an arrow $a:A \rightarrow A$ is a monad if and only if it is an idempotent with $1_A \le a$. Moreover, if $m:(A,a) \rightarrow (B,b)$ is a module, then $m = m1_A \le ma$ and $m =1_B m \le bm$, so $m = ma = bm$. That means that $m$ is a module if and only if it is a module between idempotents. We can regard then $\mathbf{Mod\mathcal{B}}$ as the full subcategory of $\mathbf{Kar\mathcal{B}}$, consisting of the idempotents $a:A\rightarrow A$ with $1_A \le a$.\par
Suppose again that $\mathcal{B}$ is a locally ordered Cartesian bicategory. In the previous section we proved that $\mathbf{Kar\mathcal{B}}$ is a locally ordered Cartesian bicategory as well. We will use this fact to show that the same is true for $\mathbf{Mod\mathcal{B}}$:

\begin{theorem}\label{mod_lo_Cartesian}
If $\mathcal{B}$ is a locally ordered Cartesian bicategory then $\mathbf{Mod\mathcal{B}}$ is a locally ordered Cartesian bicategory too.
\end{theorem}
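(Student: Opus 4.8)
The plan is to exploit the observation made just above the statement: $\mathbf{Mod\mathcal{B}}$ is the full sub-bicategory of $\mathbf{Kar\mathcal{B}}$ spanned by the idempotents $(A,a)$ with $1_A \le a$ (the monads), and we have just shown that $\mathbf{Kar\mathcal{B}}$ is a locally ordered Cartesian bicategory. By the converse half of \Cref{CW_1.6}, it therefore suffices to check three things for $\mathbf{Mod\mathcal{B}}$: that $\mathbf{Map(Mod\mathcal{B})}$ has finite products, that each hom-poset $\mathbf{Mod\mathcal{B}}((A,a),(B,b))$ has finite products, and that the formula $F \otimes G = (p^{*}Fp)\wedge(r^{*}Gr)$ defines a functorial tensor product. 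Since $\mathbf{Kar\mathcal{B}}$ already satisfies the corresponding statements, the whole argument reduces to verifying that each relevant piece of $\mathbf{Kar}$-structure stays inside the sub-collection of monads and then quoting fullness.

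First I would check that the tensor product $\otimes_\#$ and the unit restrict. The unit $(I,1_I)$ is trivially a monad. For objects, if $(A,a)$ and $(B,b)$ are monads then so is $(A\times B,\, a\otimes b)$: since the projections $p,r$ of $A\times B$ in $\mathbf{Map\mathcal{B}}$ are maps we have $1_{A\times B}\le p^{*}p\le p^{*}ap$ and likewise $1_{A\times B}\le r^{*}br$, whence $1_{A\times B}\le (p^{*}ap)\wedge(r^{*}br)=a\otimes b$. Consequently $\otimes_\#$ sends pairs of modules between monads to modules between monads, and the components of the coherence isomorphisms $\rho_\#,\lambda_\#,\gamma_\#,\alpha_\#$ all live among the monads, so $\otimes_\#$ restricts to a tensor product on $\mathbf{Mod\mathcal{B}}$.

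Next, the hom-posets: $\mathbf{Mod\mathcal{B}}((A,a),(B,b))$ is literally the same poset as $\mathbf{Kar\mathcal{B}}((A,a),(B,b))$ whenever $(A,a),(B,b)$ are monads, so it inherits the finite products $\sqcap$ and $\top$ of \Cref{Kar_loc_prod}, and the identity $F\otimes_\# G=(p_\#^{*}Fp_\#)\sqcap(r_\#^{*}Gr_\#)$ established for $\mathbf{Kar\mathcal{B}}$ holds verbatim (as does $\top_{I,I}=1_I$, which transfers from the Cartesianness of $\mathbf{Kar\mathcal{B}}$). For $\mathbf{Map(Mod\mathcal{B})}$: if $F:(A,a)\to(B,b)$ is a module between monads with right adjoint $F^{*}$ in $\mathbf{Kar\mathcal{B}}$, then $F^{*}:(B,b)\to(A,a)$ is again a module between monads, so by fullness $F\dashv F^{*}$ in $\mathbf{Mod\mathcal{B}}$ and $\mathbf{Map(Mod\mathcal{B})}$ is precisely the full sub-bicategory of $\mathbf{Map(Kar\mathcal{B})}$ on the monads. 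The terminal object $(I,1_I)$ and the binary products $(A\times B,\,a\otimes b)$ of \Cref{Map_Kar_prod}, computed in $\mathbf{Map(Kar\mathcal{B})}$, are monads, so by fullness they remain terminal and product, respectively, in $\mathbf{Map(Mod\mathcal{B})}$. Applying the converse of \Cref{CW_1.6} then finishes the proof.

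The only step that is not a pure appeal to fullness is the closure of the tensor product on objects, i.e. the inequality $1_{A\times B}\le a\otimes b$; this is where the hypothesis that $\mathcal{B}$ is Cartesian is genuinely used, via $1\le p^{*}p$ and $1\le r^{*}r$ because the projections are maps. I expect this to be the main — and essentially only — obstacle; everything else transfers mechanically from the already-established Cartesianness of $\mathbf{Kar\mathcal{B}}$.
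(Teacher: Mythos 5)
Your proposal is correct and follows essentially the same route as the paper: both reduce the problem to checking that the structure of $\mathbf{Kar\mathcal{B}}$ restricts to the full sub-bicategory of monads, and both identify the single nontrivial point as the inequality $1_{A\times B}\le a\otimes b$, verified identically via $1_{A\times B}\le p^{*}p\le p^{*}ap$ and $1_{A\times B}\le r^{*}r\le r^{*}br$ using the formula $a\otimes b=(p^{*}ap)\wedge(r^{*}br)$. The paper simply leaves the "everything else transfers by fullness" part implicit, whereas you spell it out.
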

\begin{proof}
By looking at the \Cref{CW}, it suffices to show that for every pair of monads $(A,a)$, $(B,b)$, their tensor product $(A,a) \otimes_\# (B,b) = (A\times B, a\otimes b)$ is not only an idempotent, but it also satisfies $1_{A\times B} \le a \otimes b$. By \Cref{CW_1.6}, $a\otimes b$ is given by the product $(p^* a p) \wedge (r^* b r)$ and we have $$1_{A\times B} \le p^*p = p^* 1_A p \le p^* a p \text{ and}$$ $$ 1_{A\times B} \le r^* r = r^* 1_B r \le r^* b r.$$ So $1_{A\times B} \le a \otimes b$ and the proposition is true.
\end{proof}

Even though Carboni and Walters used that the locally ordered bicategory of ordered objects and ordered ideals is Cartesian, a proof of it was not found in \cite{Carboni1987}. The theorem above and the following example make that clear.

\begin{example}
Consider an exact category $\mathcal{E}$, that is, a regular category where every equivalence relation is the kernel pair of some morphism. We ask for this extra condition because it will allow us to have coequalizers for an equivalence relation of the form $\xy \morphism(0,0)|a|/@{>}@<3pt>/<300,0>[a`b;f]
\morphism(0,0)|b|/@{>}@<-3pt>/<300,0>[a`b;g]\endxy$, which we need in defining the composition for the bicategory $\mathbf{Mod(\mathbf{Rel}\mathcal{E})}$ below.
If $r: A \rightarrow B$ is a relation in $\mathcal{E}$ and $a,b$ are arrows from some object $U$ to $A$ and $B$ respectively, then we write $a(r)b$ if there exists an arrow $U\rightarrow R$ that makes the following diagram commute
$$\xymatrix{
& & U \ar[dl]_a \ar[d] \ar[dr]^b \\
& A & R \ar[l]_{r_0} \ar[r]^{r_1} & B &. } $$
An \textbf{ordered object} $A$ in $\mathcal{E}$ is an object $A$ together with a relation $r_A:A\rightarrow A$,
$$ \xymatrix{ A & R_A \ar[l]_{r_{A,0}} \ar[r]^{r_{A,1}} & A } $$
with the properties
\begin{enumerate}
\item if $a:U\to A$ then $a(r_A)a$, and
\item if $a,b,c:U \to A$ with $a (r_A) b$ and $b (r_A) c$, then $ a (r_A) c$.
\end{enumerate}
Let $A$ and $B$ be two ordered objects in $\mathcal{E}$. An \textbf{ordered ideal} from $A$ to $B$ is a relation $s:A\rightarrow B$ such that for arrows $a,a':U \rightarrow A, b,b':U \rightarrow B$ with $a'(r_A) a, a(s) b, b(r_A)b'$, then $a'(s)b'$
$$ \xymatrixrowsep{0.3cm} \xymatrixcolsep{0.5cm} \xymatrix{
& R_A \ar[dl]_{r_{A,0}} \ar[dr]^{r_{A,1}} & & S \ar[dl]_{s_0} \ar[dr]^{s_1} & & R_B \ar[dl]_{r_{B,0}} \ar[dr]^{r_{B,1}} \\
A & & A & & B & & B \\
\\
& & & U \ar[uulll]^{a'} \ar[uul]^a \ar[uur]_b \ar[uurrr]_{b'} & & .
} $$
For an ordered object $A$, the identity ideal is the relation $r_A :A\rightarrow A$. This is an ideal since for $a,a',b,b':U\rightarrow A$, if $a' (r_A) a$ and $a (r_A) b$, then $a' (r_A) b$. If in addition $b (r_A) b'$, then $a' (r_A) b'$. The morphisms between ideals are just morphisms between relations. \par
We have formed a bicategory with objects the ordered objects in an exact category $\mathcal{E}$, 1-cells the order ideals, and 2-cells the morphisms between them. We denote this bicategory by $\mathbf{Idl}\mathcal{E}$, as in \cite{Order}.
We can observe now that a monad in the bicategory of relations in $\mathcal{E}$ is exactly the same as an ordered object and a module is the same as an ordered ideal. It follows that
$$\mathbf{Mod(\mathbf{Rel}\mathcal{E})} \simeq \mathbf{Idl}\mathcal{E}$$
and by \Cref{mod_lo_Cartesian} and \Cref{relations}, $\mathbf{Idl}\mathcal{E}$ is Cartesian.
\end{example}

\section{Cartesian Bicategories} \label{Cartesian Bicategories}

In \cite{Carboni2008}, prior to Cartesian bicategories, the authors define precartesian bicategories. They do not ask for finite bicategorical products throughout the whole bicategory, only throughout the full sub-bicategory of left adjoints. As in the previous section, we denote the latter with $\mathbf{Map}(\mathcal{B})$, and we call the left adjoints \emph{\textbf{maps}}. They prove that in any precartesian bicategory we can build a canonical lax tensor product. They then proceed to call a precartesian bicategory Cartesian in the case that this tensor product is pseudo. We also see that in this paper, in constrast to its prequel \cite{Carboni1987}, they introduce Cartesian bicategories as ones that have various properties rather than ones with some extra structure satisfying coherence conditions. We start with a review of bicategorical products:

\begin{definition}\cite{Benabou}\label{biproducts}
Consider a bicategory $\mathcal{B}$ and objects $A,B$ in it. An object $A\times B$, together with 1-cells $p_{A,B}:A\times B \rightarrow A$ and $r_{A,B}:A\times B \rightarrow B$ is the \textbf{bicategorical product} of $A$ and $B$ if the following functor is an equivalence $\forall C \in \mathcal{B}$:
$$ \begin{array}{cr}
\Gamma : \mathcal{B}(C,A\times B) \rightarrow \mathcal{B}(C,A)\times \mathcal{B}(C,B) \\
(f:C\rightarrow A\times B) \mapsto \langle p_{A,B} \circ f, r_{A,B} \circ f \rangle \\
(\alpha :f \rightarrow g ) \mapsto \langle p_{A,B} \alpha , r_{A,B} \alpha \rangle \end{array} $$
Or equivalently, if the functor $\Gamma$ is fully faithful and essentially surjective.
A \textbf{terminal object} in $\mathcal{B}$ is an object $1$ such that  $\mathcal{B}(A,1)$ is equivalent to the category $\mathbbm{1}$. In other words, for each $A$ in $\mathcal{B}$, there is a 1-cell $A \rightarrow 1$, and for two 1-cells $f,g :A\rightarrow 1$ there is a unique isomorphism between them. 
\end{definition}

To explain a little more what a bicategorical product is, consider first 1-cells \\ $f:C\rightarrow A$ and $g:C\rightarrow B$. Then there exists a 1-cell $\langle f,g\rangle:C \rightarrow A\times B $ such that $$p_{A,B} \circ \langle f,g \rangle \cong f \text{ and } r_{A,B} \circ \langle f,g \rangle \cong g.$$ We will denote these isomorphisms by $\mu_{f,g}$ and $\nu_{f,g}$ respectively. If now $f,g: C \rightarrow A\times B$ are 1-cells in $\mathcal{B}(C,A \times B)$ and $$\langle \beta, \gamma\rangle : \langle p_{A,B} \circ f, r_{A,B} \circ f \rangle \rightarrow \langle p_{A,B} \circ g, r_{A,B} \circ g \rangle$$ is a 2-cell in $\mathcal{B}(C,A)\times \mathcal{B}(C,B)$, then there exists a unique $\alpha : f\rightarrow g$ such that $$\Gamma (\alpha)=\langle \beta,\gamma \rangle \text{, i.e. } p_{A,B} \alpha = \beta \text{ and } r_{A,B} \alpha = \gamma.$$

We will show that the isomorphisms $\mu_{f,g}$ and $\nu_{f,g}$ are natural. Consider 2-cells $\phi : f \rightarrow f'$ and $\psi :g \rightarrow g'$, for $f,f':C\rightarrow A$ and $g,g':C \rightarrow B$. Then for the 2-cell
$$\langle \mu_{f',g'}^{-1} \circ \phi \circ \mu_{f,g},\nu_{f',g'}^{-1} \circ \psi \circ \nu_{f,g}\rangle : \big\langle p_{A,B} \circ \langle f,g\rangle , r_{A,B} \circ \langle f,g\rangle \big\rangle \rightarrow \big\langle p_{A,B} \circ \langle f',g'\rangle , r_{A,B} \circ \langle f',g'\rangle \big\rangle$$
there will be a unique 2-cell $\langle \phi, \psi \rangle : \langle f,g \rangle \rightarrow \langle f',g' \rangle $ which makes the following diagrams commute.
$$\xymatrix{ p_{A,B} \circ \langle f,g \rangle \ar[d]_{ p_{A,B} \circ \langle \phi, \psi \rangle} \ar[rr]^{\: \mu_{f,g}} &&f \ar[d]^{\phi} \\  p_{A,B} \circ \langle f',g' \rangle \ar[rr]_{\: \mu_{f',g'}} && f'}$$
$$\xymatrix{ r_{A,B} \circ \langle f,g \rangle \ar[d]_{ r_{A,B} \circ \langle \phi, \psi \rangle} \ar[rr]^{\: \nu_{f,g}} &&g \ar[d]^{\psi} \\  r_{A,B} \circ \langle f',g' \rangle \ar[rr]_{\: \nu_{f',g'}} && g'}$$

\begin{proposition}\cite{Carboni2008} A bicategory with bicategorical products and a terminal object, has finite bicategorical products. That is, for every natural number $n$ and objects $A_1,\dots A_n$ in $\mathcal{B}$, there is an object $P$, together with 1-cells $p_i:P \rightarrow A_i, \: i=1,\dots,n$, such that for every $C \in \mathcal{B}$, the functor
$$ \begin{array}{cr}
\Gamma : \mathcal{B}(C,P) \rightarrow \prod_i \mathcal{B}(C,A_i), \\
(f:C\rightarrow P) \mapsto \langle p_i \circ f\rangle_i \\
(\alpha :f \rightarrow g ) \mapsto \langle p_i \alpha \rangle_i \end{array} $$
is an equivalence.

Moreover, a bicategory with the above assumptions admits a symmetric monoidal bicategorical structure.
\end{proposition}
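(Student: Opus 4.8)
The plan is to prove the first assertion by induction on $n$, and then to extract the symmetric monoidal bicategory structure from the universal property recorded in \Cref{biproducts}.

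For the inductive part I would take $n=0$ to be the terminal object $1$ (with the empty family of projections), $n=1$ to be $A_1$ itself equipped with $p_1:=1_{A_1}$, and $n=2$ to be the hypothesis. For the inductive step, given $A_1,\dots,A_{n+1}$, I would choose a product $Q$ of $A_1,\dots,A_n$ with projections $q_i:Q\to A_i$ (inductive hypothesis), form $P:=Q\times A_{n+1}$ with its binary projections $p_{Q,A_{n+1}}$, $r_{Q,A_{n+1}}$, and set $p_i:=q_i\circ p_{Q,A_{n+1}}$ for $i\le n$ and $p_{n+1}:=r_{Q,A_{n+1}}$. Then for each $C$ the functor $\mathcal{B}(C,P)\to\prod_{i=1}^{n+1}\mathcal{B}(C,A_i)$ agrees, up to a natural isomorphism supplied by the associativity constraints for composition, with the composite of the binary-product equivalence $\mathcal{B}(C,P)\simeq\mathcal{B}(C,Q)\times\mathcal{B}(C,A_{n+1})$, the inductive-hypothesis equivalence applied in the first factor, and the evident isomorphism of categories onto $\prod_{i=1}^{n+1}\mathcal{B}(C,A_i)$; a composite of equivalences is an equivalence, so $P$ is the required product. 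The only point requiring care is that this mediating natural isomorphism is compatible with the action of $\Gamma$ on $2$-cells, which is immediate from the formula $\Gamma(\alpha)=\langle p_i\,\alpha\rangle_i$.

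For the symmetric monoidal structure I would set $X\otimes Y:=X\times Y$ and $I:=1$, define $f\otimes g:=\langle f\circ p_{X,Y},\,g\circ r_{X,Y}\rangle$ on $1$-cells and use the pairing $\langle\phi,\psi\rangle$ of $2$-cells described after \Cref{biproducts} on $2$-cells, and obtain the pseudofunctoriality constraints of $\otimes$ from uniqueness of mediating $1$-cells up to canonical invertible $2$-cell. The associator, the two unitors and the braiding would be defined pointwise as the canonical comparison equivalences $(X\otimes Y)\otimes Z\simeq X\otimes(Y\otimes Z)$, $I\otimes X\simeq X\simeq X\otimes I$ and $X\otimes Y\simeq Y\otimes X$, which exist and are essentially unique because in each case both sides are bicategorical products of the same finite family. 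Their pseudonaturality, the invertibility of the modifications relating the various composites, and all of the coherence axioms of a symmetric monoidal bicategory then follow from a single $2$-dimensional universal-property principle: parallel $1$-cells into a product that induce isomorphic functors on every hom-category are canonically equivalent, and parallel $2$-cells with the same projections coincide.

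The main obstacle is the monoidal part: although morally this is just ``finite products are symmetric monoidal'', spelling out the full coherence data of a symmetric monoidal bicategory (the pentagonator, the hexagonators, and the higher coherence cells) explicitly is lengthy, so instead of grinding through those cells I would discharge them uniformly by appealing to the universal property above, referring to the treatment of cartesian bicategories in \cite{Carboni2008}.
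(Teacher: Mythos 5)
The paper itself offers no proof of this proposition --- it is stated as a citation to \cite{Carboni2008} --- so there is nothing internal to compare your argument against. On its own terms, your argument is correct and is the standard one. The induction is fine: the base cases are right, and in the inductive step the functor $\Gamma$ for the $(n+1)$-fold family is indeed naturally isomorphic (via the associativity constraints of $\mathcal{B}$, since $p_i = q_i\circ p_{Q,A_{n+1}}$ only appears inside composites $p_i\circ f \cong q_i\circ(p_{Q,A_{n+1}}\circ f)$) to the composite of the binary equivalence $\mathcal{B}(C,Q\times A_{n+1})\simeq\mathcal{B}(C,Q)\times\mathcal{B}(C,A_{n+1})$ with the inductive equivalence, and a composite of equivalences is an equivalence. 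For the second assertion, your outline correctly identifies where all the content lives: the comparison $1$-cells (associator, unitors, braiding) exist and are equivalences because both sides are bicategorical products of the same finite family, and every higher coherence cell and axiom is forced by the two-dimensional universal property (parallel $2$-cells into a product with equal projections coincide). Writing out the pentagonator, hexagonators, and their axioms explicitly is genuinely lengthy, and deferring that verification to the uniform universal-property principle, as \cite{Carboni2008} also does, is a legitimate way to close the argument rather than a gap.
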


\begin{definition} \cite{Carboni2008}
A bicategory $\mathcal{B}$ is said to be \textbf{precartesian} if
\begin{enumerate}[label=\roman*.]
\item the bicategory $\mathbf{Map}(\mathcal{B})$ has finite products
\item each category $\mathcal{B}(A,B)$ has finite products.
\end{enumerate}
\end{definition}

One of the examples mentioned in \cite{Carboni2008} was the bicategory of spans in a finitely complete category. We would like to provide a proof  that this is indeed precartesian, since one was not found in either \cite{Carboni1987} or \cite{SpansLWW}.

\begin{proposition}
The bicategory $\mathbf{Span}(\mathcal{E})$, for $\mathcal{E}$ finitely complete category, is precartesian.
\end{proposition}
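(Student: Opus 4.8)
The plan is to verify the two conditions in the definition of \textbf{precartesian} separately, reducing each to finite completeness of $\mathcal{E}$.

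For condition ii, I will identify the hom-category $\mathbf{Span}(\mathcal{E})(A,B)$ with the slice category $\mathcal{E}/(A\times B)$. Since $\mathcal{E}$ is finitely complete the product $A\times B$ exists, and a span $A \xleftarrow{s_0} S \xrightarrow{s_1} B$ is precisely an object $\langle s_0,s_1\rangle\colon S\to A\times B$ of $\mathcal{E}/(A\times B)$, while a $2$-cell of spans is exactly a morphism over $A\times B$; this gives an isomorphism of categories $\mathbf{Span}(\mathcal{E})(A,B)\cong\mathcal{E}/(A\times B)$. A slice of a category that has pullbacks and a terminal object has finite products — the terminal object is $\mathrm{id}_{A\times B}$ and the binary product is the pullback over $A\times B$ — and finite completeness of $\mathcal{E}$ supplies both. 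Hence each $\mathbf{Span}(\mathcal{E})(A,B)$ has finite products; concretely the local terminal span $A\to B$ is $A \xleftarrow{p} A\times B \xrightarrow{r} B$, and the local product of $S$ and $T$ has apex $S\times_{A\times B}T$.

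For condition i, the key input is the characterization of the \emph{maps} (left adjoints) in $\mathbf{Span}(\mathcal{E})$: a span $A \xleftarrow{s_0} S \xrightarrow{s_1} B$ has a right adjoint if and only if $s_0$ is invertible, equivalently if and only if it is isomorphic, as an object of $\mathbf{Span}(\mathcal{E})(A,B)$, to a span of the form $A \xleftarrow{1_A} A \xrightarrow{f} B$ for a necessarily unique $f\colon A\to B$ in $\mathcal{E}$. The ``if'' direction is a direct verification that $(1_A,f)\colon A\to B$ is left adjoint to $(f,1_A)\colon B\to A$, with unit built from the diagonal $A\to A\times_B A$ and counit $f$ itself, the triangle identities then being immediate. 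For the ``only if'' direction one reads the unit of a putative adjunction off as a morphism of spans to see that $s_0$ is a split epimorphism, and then uses one of the triangle identities to upgrade this to invertibility. Using this characterization, the identity-on-objects assignment $f\mapsto (1_A,f)$ exhibits a biequivalence $\mathcal{E}\simeq \mathbf{Map}(\mathbf{Span}(\mathcal{E}))$: it is the identity on objects, and on hom-categories it identifies the discrete category $\mathcal{E}(A,B)$ with $\mathbf{Map}(\mathbf{Span}(\mathcal{E}))(A,B)$, since the only $2$-cells between maps $(1_A,f)$ and $(1_A,g)$ are identities (forcing $f=g$), so the comparison functor is fully faithful, and it is essentially surjective precisely by the ``only if'' direction.

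Finally, finite completeness gives $\mathcal{E}$ finite products, bicategorical finite products (\Cref{biproducts}) are invariant under biequivalence, and transporting the products of $\mathcal{E}$ across the biequivalence shows $\mathbf{Map}(\mathbf{Span}(\mathcal{E}))$ has finite products: the terminal object is the terminal object $1$ of $\mathcal{E}$, and the product of $A$ and $B$ is $A\times B$ with projection spans $(1,p)$ and $(1,r)$. Together with condition ii this shows $\mathbf{Span}(\mathcal{E})$ is precartesian. I expect the main obstacle to be the ``only if'' half of the characterization of maps: passing from the mere existence of a right adjoint to invertibility of $s_0$ requires careful bookkeeping of the pullbacks computing the composites $R^{*}\circ R$ and $R\circ R^{*}$ and of the triangle identities, whereas everything else is a routine translation into slice categories and a transport of structure along a biequivalence.
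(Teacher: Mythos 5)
Your proof is correct, but it is organized genuinely differently from the paper's. For the local products you identify $\mathbf{Span}(\mathcal{E})(A,B)$ with the slice $\mathcal{E}/(A\times B)$ and quote the standard fact that a slice of a finitely complete category has finite products; the paper instead writes down the terminal span $(p,A\times B,r)$ and the binary product as the pullback over $A\times B$ and checks the universal properties directly. The two routes produce the same objects, and your packaging is cleaner. For $\mathbf{Map}(\mathbf{Span}(\mathcal{E}))$ the paper verifies by hand that $A\times B$ with the projection spans $(1,p)$ and $(1,r)$ makes the comparison functor $\Gamma$ of \Cref{biproducts} essentially surjective and fully faithful; you instead exhibit a biequivalence $\mathcal{E}\simeq\mathbf{Map}(\mathbf{Span}(\mathcal{E}))$ (identity on objects, $f\mapsto(1_A,A,f)$, locally an equivalence onto hom-categories that are essentially discrete) and transport the finite products of $\mathcal{E}$ across it. This spares you the $2$-cell bookkeeping of the paper's fully-faithfulness check, at the price of two auxiliary facts you should at least cite: that ordinary products in a category are bicategorical products in the associated locally discrete bicategory, and that bicategorical products are transported along biequivalences. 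Both proofs rest on the same characterization of maps as spans with invertible left leg, which the paper asserts without proof; your sketch of the ``only if'' direction (the unit yields a section of $s_0$, a triangle identity upgrades it to a two-sided inverse) is the standard argument and is indeed where the real work would go if one insisted on full detail, so you are not at a disadvantage there relative to the paper.
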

\begin{proof}
A span $r=(r_0,R,r_1):A \xleftarrow{r_0} R \xrightarrow{r_1} B$ in $\mathcal{E}$ has a right adjoint if and only if $r_0:R\rightarrow A$ is invertible in $\mathcal{E}$. Let $1$ be the terminal object in $\mathcal{E}$. Then for every $A$ in $\mathbf{MapSpan}(\mathcal{E})$, the span $A \xleftarrow{1_A} A \xrightarrow{t} 1$, where $t$ is the unique arrow to the terminal $1$, is a map since $1_A$ is invertible. If $A \xleftarrow{r_0} R \xrightarrow{r_1} 1$ is another map from $A$ to $1$ then the diagram
$$\xymatrix{&A \ar[dl]_1 \ar[dr]^t \\ A & & 1 \\ & R \ar[ul]^{r_0} \ar[ur]_{r_1} \ar[uu]_{r_0} }$$
commutes by the universal property of $1$. Also $r_0$ is invertible, so the two spans are isomorphic and then the terminal object $1$ becomes a terminal object in $\mathbf{MapSpan}(\mathcal{E})$.

For objects $A,B$ we can form their product $A\times B$ with projections $p,r$ in $\mathcal{E}$. We claim that $A\times B$ together with the maps $p_{A,B} =(1,A\times B,p) $ and $r_{A,B}=(1,A\times B,r)$ is the product of $A,B$.

Consider maps $C\xleftarrow{f_0} R \xrightarrow{f_1}A$ and $C \xleftarrow{g_0} S \xrightarrow{g_1}B$ and let $f=f_1f_0^{-1},\: g=g_1g_0^{-1}$. Then, we have the unique arrow $\langle f,g \rangle :C \rightarrow A\times B$ with $p\langle f,g \rangle=f$ and $r\langle f,g \rangle=g$. In the following diagrams we see the compositions of what we claim to be the projections of $A\times B$ with the span $\langle f,g \rangle := C \xleftarrow{1} C \xrightarrow{\langle f,g \rangle} A\times B$:
$$\xymatrixcolsep{0.2cm} \xymatrixrowsep{0.2cm}  \xymatrix{& & C \ar[dl]_1 \ar[dr]^{\langle f,g \rangle}\\
& C \quad \ar[dl]_1 \ar[dr]^{\langle f,g \rangle} & & A\times B \ar[dl]_1 \ar[dr]^p\\
C\quad & & A\times B & & A\\
\\
& & R \ar[uull]^{f_0} \ar[uurr]_{f_1} }$$

$$\xymatrixcolsep{0.2cm} \xymatrixrowsep{0.2cm} \xymatrix{& & C \ar[dl]_1 \ar[dr]^{\langle f,g \rangle}\\
& C \quad \ar[dl]_1 \ar[dr]^{\langle f,g \rangle} & & B\times B \ar[dl]_1 \ar[dr]^r\\
C\quad & & A\times B & & A\\
\\
& & S \ar[uull]^{g_0} \ar[uurr]_{g_1} }$$
In the first case the arrow $f_0^{-1}:C\rightarrow B$ is an isomorphism between the maps $p_{A,B} \circ \langle f,g \rangle$ and $(f_0,R,f_1)$ and in the second, the arrow $g_0^{-1} :C\rightarrow S$ is an isomorphism between $r_{A,B} \circ \langle f,g \rangle$ and $(g_0,S,g_1)$. We proved that the functor $\Gamma$ in the \Cref{biproducts} is essentially surjective. It is also fully faithful, as we show below.

For two maps
$$\xymatrix{ & R\ar[dl]_{f_0} \ar[dr]^{f_1} \\ C \quad & & A\times B & (1) \\ & S\ar[ul]^{g_0} \ar[ur]_{g_1} }$$
and arrows $\beta, \gamma :R\rightarrow S$ that make the following diagrams commute
$$\xymatrix{ & R\ar[dl]_{f_0} \ar[dr]^{p \circ f_1} \ar[dd]^{\beta} & & & R \ar[dl]_{f_0} \ar[dd]^{\gamma} \ar[dr]^{r\circ f_1} \\
C & & A, & C & & B & (2) \\
& S\ar[ul]^{g_0} \ar[ur]_{p\circ g_1} & & & S \ar[ul]^{g_0} \ar[ur]_{r\circ g_1} }$$
we have $\beta=g^{-1}_0 f_0 =\gamma$. Also $p\circ f_1 = p \circ g_1 \circ \beta $ and $r\circ f_1 =r\circ g_1 \circ \beta$, so $f_1 = g_1 \circ \beta$, i.e. $\beta$ is an arrow between the two maps in (1).\\
Also the diagrams (2) imply that $\Gamma (\beta)=\langle \beta,\beta \rangle = \langle \beta,\gamma \rangle$. If $\alpha$ is another arrow with $\Gamma (\alpha)=\langle \beta,\gamma \rangle$ then $\alpha =g_0^{-1} f_0=\beta$, so $\beta$ is unique with this property. This proves that $A \times B$ with maps $p_{A,B}, r_{A,B}$ is indeed the product of $A$ and $B$ in $\mathbf{MapSpan}(\mathcal{E})$. Hence, by the proposition 2.3.4. $\mathbf{MapSpan}(\mathcal{E})$ has finite products.\par It remains to show that for every two objects $A,B$, the category $\mathbf{Span}(\mathcal{E})(A,B)$ has finite products. The span $(p,A\times B, r)$ will play the role of the terminal object: If $f=(f_0,R,f_1):A\rightarrow B$ then we have the unique arrow $\langle f_0,f_1 \rangle :R\rightarrow A\times B$ for which the following diagram commutes.
$$\xymatrix{ & R\ar[dl]_{f_0} \ar[dr]^{f_1} \ar[dd]^{\langle f_0,f_1 \rangle} \\ A & & B \\ &A\times B \ar[ul]^p \ar[ur]_r }$$
Consider now two spans $f=(f_0,R,f_1)$ and $g=(g_0,S,g_1)$. The following pullback exists in $\mathcal{E}$:
$$\bfig
\square[R\times_{A\times B} S`S`R`A\times B;\rho`\pi`\langle g_0,g_1 \rangle`\langle f_0,f_1\rangle]
\efig$$
and the product of $f,g$ is the span $(p\circ \langle f_0,f_1\rangle \circ \pi, R\times_{A\times B} S, r\circ \langle f_0,f_1\rangle \circ \pi )$ with projections $\pi$ and $\rho$. Indeed, the diagrams
$$\xymatrix{ & R\times_{A\times B} S \ar[dl]_{p\circ \langle f_0,f_1 \rangle \circ \pi} \ar[dr]^{r\circ \langle f_0,f_1 \rangle \circ \pi} \ar[dd]^{\pi} \\ A & & B \\ & R \ar[ul]^{f_0} \ar[ur]_{f_1} }$$
$$\xymatrix{ & R\times_{A\times B} S \ar[dl]_{p\circ \langle f_0,f_1 \rangle \circ \rho} \ar[dr]^{r\circ \langle f_0,f_1 \rangle \circ \rho} \ar[dd]^{\rho} \\ A & & B \\ & S \ar[ul]^{g_0} \ar[ur]_{g_1} }$$
commute and the universal property of the above pullback will give the universal property of the product.

It follows that the category $\mathbf{Span}(\mathcal{E})(A,B)$ fas finite products and that $\mathbf{Span}(\mathcal{E})$ is indeed a precartesian bicategory.
\end{proof}

For any bicategory $\mathcal{B}$ we can consider the Grothendieck construction $\mathbf{G}$ for the pseudofunctor
$$\mathbf{Map}(\mathcal{B})^{op}\times \mathbf{Map}(\mathcal{B}) \xrightarrow{i^{op} \times i} \mathcal{B}^{op} \times \mathcal{B} \xrightarrow{\mathcal{B}(\--,\--)} \mathcal{C}at$$
where $i$ is the inclusion, $\mathcal{B}(\--,\--)$ the usual hom-pseudofunctor and $\mathcal{C}at$ the 2-category of categories. All details for this construction can be found in Street's paper \cite{Fibrations} or in Verity's PhD thesis \cite{Verity11}. An object of $\mathbf{G}$ is a triple of the form $(X, R:X\rightarrow A, A)$ in $\mathcal{B}$, an arrow from $(X, R:X\rightarrow A, A)$ to $(Y, S:Y\rightarrow B, B)$ of $\mathbf{G}$ is a triple $(f,\alpha, u)$ in $\mathcal{B}$ as in the diagram:
$$\xymatrixcolsep{0.2cm} \xymatrixrowsep{0.2cm} \xymatrix{ X \ar[rr]^R \ar[dd]_f & & A \ar[dd]^u\\
& \Downarrow \alpha\\
Y \ar[rr]_S & & B,}$$
where $f$ and $u$ are maps. A 2-cell from $(f,\alpha,u)$ to $(f',\alpha',u')$ is a pair of 2-cells $(\phi, \psi)$ in $\mathcal{B}$ such that the following diagram commutes:
$$\xymatrix{
uR \ar[r]^{\alpha} \ar[d]_{\psi R} & Sf \ar[d]^{S\phi}\\
u'R \ar[r]_{\alpha'} & u'R.}$$

\begin{proposition}\cite{Carboni2008}\label{Grothendieck_has_products}
If $\mathcal{B}$ is precartesian then the above bicategory $\mathbf{G}$ has finite bicategorical producs.
\end{proposition}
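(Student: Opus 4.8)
The plan is to verify directly that the Grothendieck bicategory $\mathbf{G}$ has a terminal object and binary bicategorical products, and then invoke Proposition~2.3.4 to conclude that it has all finite bicategorical products. Throughout I will use the two pieces of data furnished by the hypothesis that $\mathcal{B}$ is precartesian: first, that $\mathbf{Map}(\mathcal{B})$ has finite products (so I may form $A\times B$ with projections $p_{A,B}, r_{A,B}$ and terminal object $1$ inside $\mathbf{Map}(\mathcal{B})$), and second, that each hom-category $\mathcal{B}(A,B)$ has finite products (terminal $\top_{A,B}$, binary product $\wedge$). These are exactly the two "coordinates" in which an object $(X, R\colon X\to A, A)$ of $\mathbf{G}$ lives — the pair of maps $(X,A)$ and the $1$-cell $R$ between them — so the construction should proceed coordinatewise.

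For the terminal object, I would take $\mathbf{1}_{\mathbf{G}} := (1, \mathrm{id}_1\colon 1\to 1, 1)$, where $1$ is terminal in $\mathbf{Map}(\mathcal{B})$. Given any object $(X,R,A)$, the map $t_X\colon X\to 1$ and the map $t_A\colon A\to 1$ are essentially unique, and the required $2$-cell $\alpha\colon \mathrm{id}_1\circ R \Rightarrow \mathrm{id}_1\circ t_X$ — equivalently a $2$-cell into the terminal object of $\mathcal{B}(X,1)$ — is essentially unique as well; one then checks that any two such arrows $(X,R,A)\to\mathbf{1}_{\mathbf{G}}$ are connected by a unique invertible $2$-cell $(\phi,\psi)$, which is forced by the terminality in each coordinate. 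For binary products, given $(X,R,A)$ and $(Y,S,B)$, I would propose
$$(X,R,A)\times(Y,S,B) \;:=\; \bigl(X\times Y,\; (p_{A,B}^{*}\,\text{-pullback of }R)\wedge(\cdots S\cdots)\colon X\times Y \to A\times B,\; A\times B\bigr),$$
i.e. take $X\times Y$ and $A\times B$ in $\mathbf{Map}(\mathcal{B})$, and for the middle $1$-cell take the local product in $\mathcal{B}(X\times Y, A\times B)$ of the two reindexed copies of $R$ and $S$ along the projections; the two projection arrows of $\mathbf{G}$ are $(\mathrm{pr}_X,\ \text{(projection $2$-cell)},\ p_{A,B})$ and similarly for $Y$. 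The comparison functor $\Gamma\colon \mathbf{G}(C,(X,R,A)\times(Y,S,B)) \to \mathbf{G}(C,(X,R,A))\times\mathbf{G}(C,(Y,S,B))$ is then shown to be essentially surjective and fully faithful by decomposing an arrow $(f,\alpha,u)$ into its $\mathbf{Map}$-component $u$ (handled by the product in $\mathbf{Map}(\mathcal{B})$) and its $2$-cell component $\alpha$ (handled by the local product in the relevant hom-category), and likewise for $2$-cells $(\phi,\psi)$.

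The main obstacle I expect is bookkeeping the $2$-cell coordinate coherently: unlike in the strict ($2$-categorical) or locally ordered settings, the projections $p_{A,B}$, $r_{A,B}$ only satisfy $p_{A,B}\circ\langle f,g\rangle\cong f$ up to the invertible $2$-cells $\mu_{f,g},\nu_{f,g}$, and the hom-pseudofunctor $\mathcal{B}(-,-)$ composes with nontrivial associativity/unit constraints, so the candidate middle $1$-cell and the projection $2$-cells are only defined up to a web of canonical isomorphisms. Making $\Gamma$ fully faithful therefore requires checking that the induced comparison $2$-cell is unique and natural after conjugating by these constraints — essentially re-running the naturality-of-$\mu,\nu$ argument sketched just before Definition~2.3.5, but now with the extra $\alpha$-layer. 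Everything else (well-definedness of composition in $\mathbf{G}$, that the proposed projections are genuine arrows of $\mathbf{G}$, that the universal properties in each coordinate assemble) is routine once the coordinatewise description is in place. Finally, having produced a terminal object and binary products, Proposition~2.3.4 upgrades this to finite bicategorical products, completing the proof.
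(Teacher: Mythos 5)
Your candidate terminal object is wrong, and this is not a bookkeeping issue. You take $(1,\mathrm{id}_1,1)$, and assert that a $2$-cell $t_A\circ R\Rightarrow \mathrm{id}_1\circ t_X\cong t_X$ is ``a $2$-cell into the terminal object of $\mathcal{B}(X,1)$''. It is not: $t_X$ is the essentially unique \emph{map} $X\to 1$, which need not be the terminal object $\top_{X,1}$ of the hom-category $\mathcal{B}(X,1)$ --- precartesianness imposes no relation between the terminal of $\mathbf{Map}(\mathcal{B})$ and the local terminals. Concretely, the one-object bicategory on $(\mathbf{Set},+,\emptyset)$ is precartesian (the only map is $\emptyset$, and $\mathbf{Set}$ has finite products), but there $\mathrm{id}_I=\emptyset$ while $\top_{I,I}=1$, and for a nonempty $R$ there is no $2$-cell $R\Rightarrow\emptyset$, so $(I,\mathrm{id}_I,I)$ receives no arrow from $(I,R,I)$ in $\mathbf{G}$. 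The correct terminal object, as recorded in the text right after the proposition, is $(I,\top_{I,I},I)$; your binary product formula $p^*Rp\wedge r^*Sr$ is the right one, and the terminal object is the nullary instance of the same pattern (local terminal, not identity).

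Even after this correction there is a missing idea that your ``coordinatewise'' plan does not supply: you must explain why a $2$-cell $uR\Rightarrow Nf$ valued in the proposed product or terminal exists uniquely, and this does not follow from terminality/products in each coordinate separately. The mechanism is that for maps $f$ and $u$ the conjugation functor $u^*(-)f\colon\mathcal{B}(X',A')\to\mathcal{B}(X,A)$ is right adjoint to $u(-)f^*$, hence preserves the local finite products. Since arrows of $\mathbf{G}$ over a fixed pair $(f,u)$ correspond, by the adjunction $u\circ(-)\dashv u^*\circ(-)$, to $2$-cells $R\Rightarrow u^*Nf$, one gets $t_A^*\,\top_{I,I}\,t_X\cong\top_{X,A}$ for the terminal, and
$$\langle u,v\rangle^*\bigl(p^*Mp\wedge r^*Nr\bigr)\langle f,g\rangle\;\cong\;(u^*Mf)\wedge(v^*Ng)$$
for binary products, which is exactly the bijection making $\Gamma$ fully faithful. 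The obstacle you flag (conjugating by the constraint cells $\mu,\nu$ and the associators) is genuinely routine; the adjunction above is the substantive step, and without it the universal property of the candidate objects is unproved.
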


We use $\otimes$ for the products in $\mathbf{G}$. It is true that for two objects $(X, R:X\rightarrow A, A)$ and $(Y, S:Y\rightarrow B, B)$, their product $R \otimes S$ is given by $p^* R p \wedge r^* S r$, where  $p^*, r^*$ are the right adjoints of the projections in $\mathbf{Map}(\mathcal{B})$, and $\wedge$ denotes the products in the hom-categories of $\mathcal{B}$. Also, the terminal object in $\mathbf{G}$ is given by the terminal object $\top_{I,I}$ in the category $\mathcal{B}(I,I)$, where $I$ is the terminal in $\mathbf{Map}(\mathcal{B})$. This tensor extends to give the following proposition.

\begin{proposition}\cite{Carboni2008}\label{Carboni_lax_functors}
For a precartesian bicategory $\mathcal{B}$, there are lax functors $\otimes: \mathcal{B} \times \mathcal{B} \rightarrow \mathcal{B}$ and $I:\mathbf{1} \rightarrow \mathcal{B}$.
\end{proposition}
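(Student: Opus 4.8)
The plan is to read off explicit formulas from the description of the bicategorical products on $\mathbf{G}$ given after \Cref{Grothendieck_has_products}, use them to define $\otimes$ and $I$ on objects, $1$-cells and $2$-cells, and then build the lax constraints by hand out of the units and counits of the projection adjunctions together with the universal property of the local products $\wedge$. For objects $A,B$ of $\mathcal{B}$ write $p=p_{A,B}$, $r=r_{A,B}$ for the product projections of $A\times B$ in $\mathbf{Map}(\mathcal{B})$, and recall that both are maps, so we have adjunctions $p\dashv p^{*}$, $r\dashv r^{*}$ with units $\eta^{p},\eta^{r}$ and counits $\epsilon^{p},\epsilon^{r}$.

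Define $\otimes:\mathcal{B}\times\mathcal{B}\to\mathcal{B}$ by $A\otimes B:=A\times B$ on objects; by $F\otimes G:=(p^{*}\,F\,p)\wedge(r^{*}\,G\,r)$ on $1$-cells $F:A\to A'$, $G:B\to B'$, where the outer $p,r$ are the projections of $A'\times B'$ and the inner ones those of $A\times B$; and by $\phi\otimes\psi:=(p^{*}\phi\,p)\wedge(r^{*}\psi\,r)$ on $2$-cells, using whiskering and functoriality of the binary product $\wedge$ on each hom-category. Functoriality of each $\otimes:\mathcal{B}(A,A')\times\mathcal{B}(B,B')\to\mathcal{B}(A\times B,A'\times B')$ is then immediate. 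These formulas are precisely the bicategorical products of $\mathbf{G}$ restricted to the objects of $\mathbf{G}$ of the form $(A,F,A')$, which is why they assemble coherently. For $I:\mathbf{1}\to\mathcal{B}$, send the object to $I$ (the terminal of $\mathbf{Map}(\mathcal{B})$) and the unique $1$-cell to the terminal object $\top_{I,I}$ of the category $\mathcal{B}(I,I)$.

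Next, the lax constraints for $\otimes$. The unit constraint is the $2$-cell $\iota_{A,B}:1_{A\times B}\Rightarrow 1_{A}\otimes 1_{B}=(p^{*}1_{A}p)\wedge(r^{*}1_{B}r)$ obtained by pairing, through the product projections of $\mathcal{B}(A\times B,A\times B)$, the composite of $\eta^{p}:1_{A\times B}\Rightarrow p^{*}p$ with a coherence isomorphism $p^{*}p\cong p^{*}(1_{A}p)$ of $\mathcal{B}$, together with its $r$-analogue. The composition constraint $\delta:(F'\otimes G')(F\otimes G)\Rightarrow(F'F)\otimes(G'G)$, for composable $F:A\to A'$, $F':A'\to A''$ and likewise $G,G'$, is built by post-composing the product projections $(F'\otimes G')\Rightarrow p^{*}F'p$ and $(F\otimes G)\Rightarrow p^{*}Fp$ to obtain $(F'\otimes G')(F\otimes G)\Rightarrow(p^{*}F'p)(p^{*}Fp)$, then reassociating and inserting the counit $\epsilon^{p}$ (together with the coherence isomorphisms of $\mathcal{B}$) to land in $p^{*}(F'F)p$, doing the symmetric thing over $r$, and pairing the two results through the product of $\mathcal{B}(A\times B,A''\times B'')$. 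For $I$, the unit constraint $1_{I}\Rightarrow\top_{I,I}$ and the composition constraint $\top_{I,I}\circ\top_{I,I}\Rightarrow\top_{I,I}$ are the unique such $2$-cells, and all the lax-functor coherence axioms for $I$ hold automatically because every $2$-cell involved lands in the terminal object $\top_{I,I}$.

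The one substantive task is to verify the lax-functor axioms for $\otimes$: associativity of $\delta$ relative to composition of three $1$-cells, and the two unit laws relating $\delta$ to $\iota$. I expect this to be the main obstacle. Each axiom is an equality of $2$-cells whose codomain has the form $X\wedge Y$, so by the universal property of the local products it suffices to check it separately on the $p$-component and the $r$-component; in each component the equation reduces, using naturality of the units and counits and the coherence isomorphisms of $\mathcal{B}$, to a standard diagram chase. The two unit laws come down to the triangle identities for $p\dashv p^{*}$ and $r\dashv r^{*}$, while associativity of $\delta$ is obtained by combining two applications of the counits with reassociation. Since everything reduces to these componentwise manipulations, nothing beyond careful bookkeeping is needed, and so $\mathcal{B}$ carries the asserted lax functors $\otimes$ and $I$.
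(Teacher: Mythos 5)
Your construction is correct and follows exactly the route the paper indicates: the proposition is stated as a citation of Carboni--Kelly--Walters--Wood, and the paragraph immediately preceding it already records the formula $R\otimes S = p^*Rp \wedge r^*Sr$ and the terminal $\top_{I,I}$ from which the lax functors are to be built. Your explicit unit and composition constraints (pairing the adjunction units $\eta^p,\eta^r$, respectively collapsing $pp^*$ with the counits and pairing through the local product $\wedge$), together with the componentwise verification of the coherence axioms via the triangle identities and interchange, are precisely the details that the cited source supplies and that the thesis omits.
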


\begin{definition}\cite{Carboni2008}
A precartesian bicategory $\mathcal{B}$ is said to be \textbf{Cartesian} when the lax functors $\otimes: \mathcal{B} \times \mathcal{B} \rightarrow \mathcal{B}$ and $I:\mathbf{1} \rightarrow \mathcal{B}$ of \Cref{Carboni_lax_functors} are pseudofunctors.
\end{definition}

 In the same paper, the authors proceed to show that for a Cartesian bicategory, if we restrict $\otimes$ and $I$ to $\mathbf{Map}(\mathcal{B})$, then they provide right adjoints in the bicategorical sense to the diagonal and unique to the terminal bicategory $\mathbf{1}$ pseudofunctors: $$\Delta:\mathbf{Map}(\mathcal{B})\rightarrow\mathbf{Map}(\mathcal{B})\times \mathbf{Map}(\mathcal{B}) \text{ and } !:\mathbf{Map}(\mathcal{B})\rightarrow \mathbf{1}.$$ However, adjunctions between bicategories are rather complicated notions. We refer the reader to Gray's book \cite{Gray1974}, where he uses the name ``transendental quasi-adjunctions'', or to Fiore's paper \cite{Fiore2006}, where he describes the weaker ``biadjunctions''. We will see later that in the case of double categories, this is going to be our starting point. That is, we by start with defining a Cartesian double category as one for which the corresponding diagonal and unique functors admit right adjoints in the usual sense, and we will discuss why we are actually able to do that.

\section{Modules over Monads} \label{Modules over Monads}

In this section we study the bicategory of modules over monads in a bicategory. Our main result here is that if a bicategory $\mathcal{B}$ has finite products locally, then the bicategory of monads and modules in $\mathcal{B}$ also has finite products locally. This shows that if $\mathcal{B}$ is Cartesian, then its bicategory of monads and modules satisfies the first condition in order to be Cartesian as well. We believe though that $\mathcal{B}$ being Cartesian is not enough for showing that the bicategory of modules that have right-adjoints has finite products. We do not know what extra conditions we might need, so we leave it as a question for future investigation.

\begin{definition}
Let $\mathcal{B}$ be a bicategory. A \textbf{monad} on an object $A$ is an arrow $t:A\rightarrow A $ together with 2-cells $\eta_t:1_A\Rightarrow t$ and $\mu_t:t t \Rightarrow t$ called unit and multiplication respectively such that the following diagrams commute:
$$\xymatrix{ t \ar[r]^{\lambda^{-1}} \ar[drr]_1 & 1_A t \ar[r]^{\eta_t t} & tt \ar[d]^{\mu_t} & t1_A \ar[l]_{t\eta_t} & t \ar[l]_{\rho^{-1}} \ar[dll]^1 \\ & & t }$$
$$\xymatrix{ & (tt)t \ar[r]^{\mu_t t} & tt \ar[dd]^{\mu_t} \\
t(tt) \ar[ur]^{\alpha} \ar[d]_{t\mu_t}\\
tt \ar[rr]_{\mu_t} & & t }$$
\end{definition}

\begin{definition}
Let $A,B$ be objects in $\mathcal{B}$ and $(s:A\rightarrow A, \eta_s,\mu_s),(t:B\rightarrow B,\eta_t, \mu_t)$ monads. An $s,t$-bimodule is an arrow $m:A\rightarrow B$ equipped with two 2-cells $\rho_m:ms\Rightarrow m$ and $\lambda_m:tm\Rightarrow m$ so that the following diagrams commute:
\begin{enumerate}
\item
$$\xymatrix{ m \ar[r]^{m\eta_s } \ar[dr]_1 & ms \ar[d]^{\rho_m} & & m(ss) \ar[dl]_{\alpha} \ar[r]^{m\mu_s} & ms \ar[dd]^{\rho_m} \\
& m & (ms)s \ar[d]_{\rho_ms}\\
& & ms \ar[rr]_{\rho_m} & & m }$$
($m$ is a right $s$-module)
\item
$$\xymatrix{ m \ar[r]^{\eta_tm } \ar[dr]_1 & tm \ar[d]^{\lambda_m} & & (tt)m \ar[r]^{\mu_t m} & tm \ar[dd]^{\lambda_m} \\
& m & t(tm) \ar[ur]^{\alpha} \ar[d]_{t\lambda_m}\\
& & tm \ar[rr]_{\lambda_m} & & m }$$
($m$ is a left $t$-module)
\item
$$\xymatrix{& t(ms)\ar[dl]_{\alpha} \ar[r]^{t\rho_m} & tm \ar[dd]^{\lambda_m}\\
(tm)s \ar[d]_{\lambda_m s}\\
ms \ar[rr]_{\rho_m}& & m}$$
(left and right actions are compatible).
\end{enumerate}
\end{definition}

\begin{definition}
A \textbf{morphism} between two $s,t$-bimodules $(m,\rho_m,\lambda_m)$ and $(n,\rho_n,\lambda_n)$ is a 2-cell $\alpha :m\Rightarrow n$ such that the diagrams
$$\xymatrix{ tm \ar[r]^{t\alpha}\ar[d]_{\lambda_m} & tn \ar[d]^{\lambda_n} & ms \ar[d]_{\rho_m} \ar[r]^{\alpha s} & ns \ar[d]^{\rho_n} \\
m \ar[r]_{\alpha} & n & m \ar[r]_{\alpha} & n}$$
commute.
\end{definition}

\begin{proposition}
Consider a bicategory $\mathcal{B}$ such that every $\mathcal{B}(A,B)$ has reflexive coequalizers that are preserved by composition. Then the monads in $\mathcal{B}$ together with the bimodules and the morphisms between them form a bicategory, which will be denoted by $\mathbf{Mod}\mathcal{B}$.
\end{proposition}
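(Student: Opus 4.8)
\section*{Proof proposal}

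The plan is to recognize this as the usual ``bimodule (tensor over a monad)'' construction and to check that the hypothesis on reflexive coequalizers is exactly what makes it go through. Horizontal composition will be a relative tensor product. Given an $s,t$-bimodule $m:A\to B$ and a $t,u$-bimodule $n:B\to C$, I would form the parallel pair $(nt)m\rightrightarrows nm$ whose two legs are $\rho_n m$ (the right $t$-action on $n$) and $n\lambda_m$ precomposed with the associator $(nt)m\cong n(tm)$ (the left $t$-action on $m$). This pair is reflexive: a common section is $n\,\eta_t\,m:nm\cong n\,1_B\,m\to ntm$, since the unit axioms for $\rho_n$ and for $\lambda_m$ both collapse the composite to $1_{nm}$. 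Hence its coequalizer $q:nm\to n\odot m$ exists in $\mathcal{B}(A,C)$ by hypothesis, and I set $n\circ m:=n\odot m$. Horizontal composition of $2$-cells $\alpha:m\Rightarrow m'$, $\beta:n\Rightarrow n'$ is the map on coequalizers induced by $\beta\alpha:nm\Rightarrow n'm'$ through the universal property; functoriality of $\circ$ on $2$-cells and the interchange law with vertical composition are then forced by uniqueness of such factorizations.

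The real content is that $n\circ m$ carries a canonical $s,u$-bimodule structure, and this is where ``coequalizers preserved by composition'' is used. For the right $s$-action, whisker the coequalizer $q$ by $s$ on the left: since $(-)\circ s$ preserves reflexive coequalizers, $qs:(nm)s\to (n\circ m)s$ is again a coequalizer of $(ntm)s\rightrightarrows (nm)s$. The composite $(nm)s\xrightarrow{n\rho_m}nm\xrightarrow{q}n\circ m$ coequalizes that pair --- a short diagram chase using the compatibility axiom between the left $t$- and right $s$-actions on $m$, plus naturality of the associators --- so it factors uniquely through $qs$, giving $\rho_{n\circ m}:(n\circ m)s\to n\circ m$. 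The left $u$-action $\lambda_{n\circ m}$ is obtained symmetrically from $\lambda_n$, and the three bimodule axioms for $(n\circ m,\rho_{n\circ m},\lambda_{n\circ m})$ all follow by the uniqueness of factorizations through the relevant (iterated) coequalizers.

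It remains to produce the identities and coherence data. The horizontal identity on a monad $(A,s)$ is $s:A\to A$ itself, regarded as an $s,s$-bimodule with $\rho_s=\lambda_s=\mu_s$; its bimodule axioms are literally the monad axioms. The right unitor $m\circ s\cong m$ holds because $\rho_m:ms\to m$ is a split (indeed absolute) coequalizer of $m(ss)\rightrightarrows ms$: the second right-module axiom for $m$ says $\rho_m$ coequalizes the pair, and the morphisms $m\eta_s$ and $ms\eta_s$ supply the splittings. The left unitor $s\circ m\cong m$ is dual. For the associator $(p\circ n)\circ m\cong p\circ(n\circ m)$, both sides are computed by nested reflexive coequalizers, and using preservation under composition one identifies each with the coequalizer of the single reflexive diagram obtained by dividing by both middle monad actions simultaneously, so the two iterated colimits are canonically isomorphic. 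Naturality of all these isomorphisms, and the pentagon and triangle identities, are checked after composing with the appropriate coequalizer projections, where they reduce to the associativity and unit axioms of the monads and bimodules.

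The main obstacle is the combination of the second and third steps: once horizontal composition is well defined on $1$-cells and $2$-cells and an associator is in hand, the bicategory axioms come for free, but establishing well-definedness and building the associator both require repeatedly invoking preservation of reflexive coequalizers so that iterated coequalizers may be formed in either order. The bookkeeping --- verifying reflexivity of each pair that arises and tracking associators through the whiskerings --- is the bulk of the (routine) work.
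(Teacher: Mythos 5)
Your proposal is correct and follows essentially the same route as the paper: horizontal composition is the coequalizer of the reflexive pair built from the two middle actions, 2-cells are composed via the universal property, and the identity bimodule on a monad is the monad itself with $\mu$ as both actions. In fact you supply more detail than the paper does on the points it leaves implicit (the induced bimodule structure on the composite, the split-coequalizer unitors, and the associator via interchange of iterated coequalizers), and those details are handled correctly.
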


\begin{proof}
1. Let $(s:A\rightarrow A,\eta_s,\mu_s)$,$(t:B\rightarrow B,\eta_t,\mu_t)$ be monads in $\mathcal{B}$. Then the collection $\mathbf{Mod}\mathcal{B}(s,t)$ of $s,t$-bimodules and the morphisms between them is a category:\\
If $l,m,n$ are $s,t$-bimodules and $\alpha, \beta$ morphisms as in the diagram,
$$\xymatrixcolsep{0.3cm} \xymatrixrowsep{0.02cm} \xymatrix{ & \quad \Downarrow \alpha \\
s\ar@/^2pc/[rr]^l \ar@/_2pc/[rr]_n \ar[rr]^m & & t\\
& \quad \Downarrow \beta}$$
then the composite $\beta \circ \alpha $ in $\mathcal{B}(A,B)$ is a morphism of $s,t$-bimodules because the diagrams
$$\xymatrix{ tl \ar[d]_{\lambda_l} \ar[r]^{t\alpha} & tm \ar[d]_{\lambda_m} \ar[r]^{t\beta} & tn\ar[d]^{\lambda_n}\\
l \ar[r]_{\alpha} & m \ar[r]_{\beta} & n}$$
$$\xymatrix{ ls \ar[d]_{\rho_l} \ar[r]^{\alpha s} & ms \ar[d]_{\rho_m} \ar[r]^{\beta s} & ns \ar[d]^{\rho_n}\\
l \ar[r]_{\alpha} & m \ar[r]_{\beta} & n}$$
commute and $t(\beta \circ \alpha) =t\beta \circ t\alpha, (\beta \circ \alpha ) s = \beta s \circ \alpha s $ by the interchange law in $\mathcal{B}$. The identiy $1_m$ in $\mathcal{B}(A,B)$ is the identity in $\mathbf{Mod}\mathcal{B}(s,t)$.
\vspace{4mm} \\
2. Consider monads $s,t$ and $u$ on the objects $A,B$ and $C$ respectively. If $m:t\rightarrow u$, $ n:s\rightarrow t$ are bimodules then we define their composite to be the coequalizer $m \otimes n$ of the following reflexive pair:
$$\xymatrix{m*t*n \ar@<1ex>[r]^{\rho_m * n} \ar@<-1ex>[r]_{m* \lambda_n} & m* n \ar[r] & m \otimes n.}$$
One can show that $m\otimes n$ is a right $s$-module and a left $u$-module.

Then we can extend the definition to morphisms between bimodules. That is, if $\beta_1$ and $\beta_2$ are morphisms between bimodules as in the diagram
$$\xymatrixcolsep{0.2cm} \xymatrix{ s \ar@/^1pc/[rr]^{m_1}  \ar@/_1pc/[rr]_{n_1} & \quad \Downarrow \beta_1 & t \ar@/^1pc/[rr]^{m_2}  \ar@/_1pc/[rr]_{n_2} & \quad \Downarrow \beta_2 & u ,}$$
then we can consider $\beta_2 \otimes \beta_1$ to be the unique arrow we can get from the universal property of the coequalizer $m_2\otimes m_1$, as in the following diagram:
	$$\xymatrixcolsep{0.4cm} \xymatrixrowsep{0.4cm} \xymatrix{m_2*t*m_1 \ar@<1ex>[rr]^{\rho * m_1} \ar@<-1ex>[rr]_{m_2* \lambda} & & m_2* m_1 \ar[dr]_{\beta_2 *\beta1} \ar[rr] & & m_2 \otimes m_1 \ar@{-->}[dd]^{\beta_2 \times \beta_1}\\
& & & n_2 * n_1 \ar[dr] \\
& & & & n_2 \otimes n_1.}$$
One can show here that $\beta_2 \otimes \beta_1$ preserves the right and the left action.

3. For a monad $(s:A\rightarrow A, \eta_s ,\mu_s)$ the identity $s,s$-bimodule is the same $s:A\rightarrow A$ with the 2-cell $\mu_s :ss \Rightarrow s $ in the role of both left and right action. The associative and the unit laws for the monad $s$ imply that $s$ is indeed an $s,s$-bimodule.
\end{proof}

\begin{example} For a finitely complete category $\mathcal{E}$, with reflexive coequalizers preserved by pullback functors, the bicategory $\mathbf{Prof(\mathcal{E})}$ can be also defined as the bicategory $\mathbf{Mod}(\mathbf{Span(\mathcal{E})})$.
\end{example}

\begin{proposition}
If $\mathcal{B}(A,B)$ has finite products for all objects $A,B$ in $\mathcal{B}$, then for every pair of monads $(s:A\rightarrow A , \eta_s, \mu_s), (t:B \rightarrow B, \eta_t ,\mu_t)$, the category $\mathbf{Mod}\mathcal{B}(s,t)$ has finite products.
\end{proposition}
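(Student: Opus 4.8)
The plan is to build finite products in $\mathbf{Mod}\mathcal{B}(s,t)$ by lifting the finite products of the hom-category $\mathcal{B}(A,B)$ along the forgetful functor that sends an $s,t$-bimodule $(m,\rho_m,\lambda_m)$ to its underlying $1$-cell $m:A\rightarrow B$. So in each case I take the product (or terminal object) in $\mathcal{B}(A,B)$, equip it with induced actions, and check that the resulting object and the projections from $\mathcal{B}(A,B)$ have the required universal property in $\mathbf{Mod}\mathcal{B}(s,t)$.

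First I would handle the terminal object. Let $\top=\top_{A,B}$ be the terminal object of $\mathcal{B}(A,B)$, and equip it with the unique $2$-cells $\rho_\top:\top s\Rightarrow\top$ and $\lambda_\top:t\top\Rightarrow\top$. The three bimodule axioms (right $s$-module, left $t$-module, compatibility) are all commutativities of diagrams with common codomain $\top$, hence hold automatically since $\mathcal{B}(A,B)(-,\top)$ is a one-element set. Likewise every bimodule $m$ admits a unique $2$-cell $m\Rightarrow\top$ in $\mathcal{B}(A,B)$, and it is vacuously a bimodule morphism, so $(\top,\rho_\top,\lambda_\top)$ is terminal in $\mathbf{Mod}\mathcal{B}(s,t)$.

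Next, binary products. Given bimodules $(m,\rho_m,\lambda_m)$ and $(n,\rho_n,\lambda_n)$, form $m\wedge n$ in $\mathcal{B}(A,B)$ with projections $\pi_1:m\wedge n\Rightarrow m$ and $\pi_2:m\wedge n\Rightarrow n$. I define the right action $\rho_{m\wedge n}:(m\wedge n)s\Rightarrow m\wedge n$ as the pairing of $\rho_m\circ(\pi_1 s)$ and $\rho_n\circ(\pi_2 s)$, and the left action $\lambda_{m\wedge n}:t(m\wedge n)\Rightarrow m\wedge n$ as the pairing of $\lambda_m\circ(t\pi_1)$ and $\lambda_n\circ(t\pi_2)$; this uses only that $m\wedge n$ is a product in $\mathcal{B}(A,B)$, and in particular it does not require $(m\wedge n)s$ to be $ms\wedge ns$. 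By the defining equations of these pairings, $\pi_1$ and $\pi_2$ are morphisms of bimodules. For the universal property, given bimodule morphisms $\alpha:l\Rightarrow m$ and $\beta:l\Rightarrow n$, the pairing $\langle\alpha,\beta\rangle:l\Rightarrow m\wedge n$ taken in $\mathcal{B}(A,B)$ is checked to respect both actions by composing with $\pi_1$ and $\pi_2$, and it is the unique bimodule morphism over $\langle\alpha,\beta\rangle$ since uniqueness already holds in $\mathcal{B}(A,B)$.

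The remaining content is the verification that $(m\wedge n,\rho_{m\wedge n},\lambda_{m\wedge n})$ actually satisfies the bimodule axioms. Here I would use repeatedly that two $2$-cells into $m\wedge n$ coincide once their composites with $\pi_1$ and with $\pi_2$ coincide; composing each axiom diagram with $\pi_i$ and using the naturality of $\pi_i$, the functoriality of the operations $m\mapsto ms$ and $m\mapsto tm$ on $\mathcal{B}(A,B)$, and the interchange law reduces each axiom for $m\wedge n$ to the corresponding axiom for $m$ (resp. $n$). This chain of diagram chases is the step I expect to be the main obstacle: it is purely routine but must be done with some care precisely because the operations $(-)s$ and $t(-)$ do not preserve products, so the bimodule structure on $m\wedge n$ is genuinely induced rather than ``componentwise.''
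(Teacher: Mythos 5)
Your proposal is correct and follows essentially the same route as the paper: the terminal object is the local terminal $\top_{A,B}$ with the unique actions, and the binary product is $m\wedge n$ equipped with the pairings $\langle\rho_m\circ(\pi_1 s),\rho_n\circ(\pi_2 s)\rangle$ and $\langle\lambda_m\circ(t\pi_1),\lambda_n\circ(t\pi_2)\rangle$, with all axioms checked by postcomposing with the projections and using naturality, interchange, and the axioms for $m$ and $n$. Your explicit remark that the actions are induced via pairing rather than componentwise (since $(-)s$ and $t(-)$ need not preserve products) is exactly the point, and your additional verification that the projections and pairings are equivariant fills in a step the paper leaves implicit.
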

\begin{proof}
1) Let $m,n$ be two $s,t$-bimodules. Then we have the product $m \wedge n$  in $\mathcal{B}(A,B)$. We claim that $m\wedge n$ is an $s,t$-bimodule.

We have the 2-cells $$(m\wedge n) s \xRightarrow{\pi s} ms \xRightarrow{\rho_m} m \text{ and }$$
$$ (m\wedge n) s \xRightarrow{\rho s} ns \xRightarrow{\rho_n} n.$$
Define $\rho_{m\wedge n}$ to be the 2-cell $< \rho_m \circ \pi s, \rho_n \circ \rho s >$. Similarly, define $\lambda_{m\wedge n}$ to be $<\lambda_m \circ t \pi, \lambda_n \circ t\rho >$. As we see in the diagram
$$\xymatrixcolsep{0.2cm} \xymatrixrowsep{0.35cm} \xymatrix{ & \Downarrow \eta_s & & \Downarrow 1\\
A \ar@/^4pc/[rr]^{1_A} \ar@/_1pc/[rr]_s \ar@/^1pc/[rr]^s & \Downarrow 1  & A \ar@/^4pc/[rr]^{m\wedge n} \ar@/^1pc/[rr]^{m\wedge n} \ar@/_1pc/[rr]_m \ar@/_4pc/[rr]_m & \Downarrow \pi & B,\\
& & & \Downarrow 1 }$$
we have $\pi s \circ (m\wedge n) \eta_s =m \eta_s \circ \pi$. Since also $m$ is an $s,t$-bimodule, the following diagram commutes.
$$\xymatrix{ m\wedge n \ar[r]^{(m\wedge n) \eta_s \:\: } \ar[d]_{\pi} & (m\wedge n) s \ar[d]^{\pi s} \\
m \ar[r]^{m\eta_s} \ar[dr]_1 & ms \ar[d]^{\rho_m} \\
& m} $$
Similarly the diagram
$$\xymatrix{ m\wedge n \ar[r]^{(m\wedge n) \eta_s \:\: } \ar[d]_{\rho} & (m\wedge n) s \ar[d]^{\rho s} \\
n \ar[r]^{n\eta_s} \ar[dr]_1 & ns \ar[d]^{\rho_n} \\
& n} $$
commutes, and so does the triangle
$$\xymatrix{ m\wedge n \ar[r]^{(m \wedge n) \eta_s\:\:} \ar[dr]_1 & (m\wedge n)s \ar[d]^{\rho_{m\wedge n}} \\ & m\wedge n.}$$
The diagram below commutes because of the naturality  of $\alpha$, the interchange law, the product properties and the fact that $m$ is a right $s$-module.
$$\xymatrix{ & & (m\wedge n)(ss) \ar[rr]^{(m\wedge n)\mu_s} \ar[dd]^{\pi(ss)} \ar[ddll]_{\alpha} & & (m\wedge n) s \ar[dd]^{\pi s} \\
\\
((m\wedge n)s)s \ar[dd]_{<\rho_m\circ \pi s, \rho_n \circ \rho s>s} \ar[dr]^{(\pi s)s} & & m(ss) \ar[dl]_{\alpha} \ar[rr]^{m \mu_s} & & ms \ar[dd]^{\rho_m} \\
& (ms)s \ar[d]^{\rho_m s} \\
(m\wedge n) s \ar[r]_{\pi s} & ms \ar[rrr]_{\rho_m} & & & m}$$
Similarly we have the commutativity of the diagram
$$\xymatrix{& (m\wedge n)(ss) \ar[dl]^{\alpha} \ar[rr]^{(m\wedge n)\mu_s} & & (m\wedge n)s \ar[d]^{\rho s} \\
((m\wedge n)s)s \ar[d]_{\rho_{m\wedge n}s} & & & ns \ar[d]^{\rho_n} \\
(m\wedge n)s \ar[r]_{\rho s} & ns \ar[rr]_{\rho_n} & & n, }$$
and then of the diagram
$$\xymatrix{& (m\wedge n)(ss) \ar[dl]^{\alpha} \ar[rr]^{(m\wedge n)\mu_s} & & (m\wedge n)s \ar[dd]^{\rho_{m\wedge n}} \\
((m\wedge n)s)s \ar[d]_{\rho_{m\wedge n}s} \\
(m\wedge n)s \ar[rrr]_{\rho_{m\wedge n}} & & & n. }$$
So $m\wedge n$ is a right $s$-module. We can also show that it is a left $t$-module. Finally the diagrams 
$$\xymatrix{ & & t((m\wedge n)s) \ar[dd] \ar[rr]^{t \rho_{m\wedge n}} \ar[ddll]_{\alpha} & & t(m\wedge n) \ar[dd]^{t\pi} \\
\\
(t(m\wedge n))s \ar[dd]_{\lambda_{m\wedge n}s} \ar[dr] & & t(ms) \ar[dl]_{\alpha} \ar[rr]^{t\rho_m} & & tm \ar[dd]^{\lambda_m} \\
& (tm)s \ar[d]^{\lambda_m s} \\
(m\wedge n) s \ar[r]_{\pi s} & ms \ar[rrr]_{\rho_m} & & & m}$$
and
$$\xymatrix{ & & t((m\wedge n)s) \ar[dd]_{t(\rho s)} \ar[rr]^{t \rho_{m\wedge n}} \ar[ddll]_{\alpha} & & t(m\wedge n) \ar[dd]^{t\rho} \\
\\
(t(m\wedge n))s \ar[dd]_{\lambda_{m\wedge n}s} \ar[dr]^{(t\rho)s} & & t(ns) \ar[dl]_{\alpha} \ar[rr]^{t\rho_n} & & tn \ar[dd]^{\lambda_n} \\
& (tn)s \ar[d]^{\lambda_n s} \\
(m\wedge n) s \ar[r]_{\rho s} & ns \ar[rrr]_{\rho_n} & & & n}$$
commute because of the naturality of $\alpha$, the definition of $\rho_{m\wedge n}$ and $\lambda_{m\wedge n}$, the fact that $m$ and $n$ are $s,t$-bimodules. So the two actions are compatible and $m\wedge n$ is an $s,t$-bimodule.
\vspace{0.1cm}\\
2) We will show now that $\mathbf{Mod}\mathcal{B}(s,t)$ has a terminal object.

Let $\top:A \rightarrow B$ be the terminal of $\mathcal{B}(A,B)$. Define $\rho_{\top}$ to be the unique 2-cell from $\top s$ to $\top$ and $\lambda_{\top}$ the unique 2-cell from $t\top$ to $\top$. Then $\top$ becomes an $s,t$-bimodule with these actions, since all the diagrams in the definition commute by the universal property of the terminal object.
\end{proof}

\begin{lemma}
If $F:\mathcal{B}\rightarrow \mathcal{D}$ is a lax functor and $(s:A\rightarrow A, \eta_s, \mu_s)$ a monad in $\mathcal{B}$, then $(Fs:FA\rightarrow FA,F\eta_s \circ \iota, F\mu_s \circ \sigma)$ is a monad in $\mathcal{B}$.
\end{lemma}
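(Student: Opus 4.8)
The plan is to verify the three monad axioms for the claimed structure on $Fs$ directly, transporting the monad axioms for $s$ along $F$ and using only the coherence data of the lax functor. Recall that $F:\mathcal{B}\to\mathcal{D}$ comes equipped, for each object $A$, with a structure $2$-cell $\iota_A:1_{FA}\Rightarrow F1_A$, and for each composable pair $f,g$ with a structure $2$-cell $\sigma_{f,g}:Fg\circ Ff\Rightarrow F(g\circ f)$, the family $\sigma$ being natural in $f$ and $g$; these satisfy one associativity coherence, relating $\sigma$ to the associators of $\mathcal{B}$ and $\mathcal{D}$, and two unit coherences, relating $\iota$ and $\sigma$ to the left and right unitors. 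In the statement $\iota$ abbreviates $\iota_A$ and $\sigma$ abbreviates $\sigma_{s,s}$, so $F\eta_s\circ\iota:1_{FA}\Rightarrow Fs$ and $F\mu_s\circ\sigma:Fs\circ Fs\Rightarrow Fs$ have the right source and target to serve as unit and multiplication.

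First I would dispatch the two unit laws. For the left unit law one whiskers $F\eta_s\circ\iota$ by $Fs$ on the right and composes with $F\mu_s\circ\sigma$ and the unitor $\lambda^{-1}$. Naturality of $\sigma$ lets one push the whiskered $F\eta_s$ across $\sigma$, replacing $\sigma\circ(F\eta_s\ast Fs)$ by an instance of $F(\eta_s\ast s)$ precomposed with $\sigma_{1_A,s}$; the relevant unit coherence axiom for $F$ then rewrites $\sigma_{1_A,s}\circ(\iota\ast Fs)$ in terms of $F\lambda_s$ and the unitor of $\mathcal{D}$; and finally $F$ applied to the monad left unit law $\mu_s\circ(\eta_s\ast s)\circ\lambda_s^{-1}=1_s$ collapses the composite to the identity on $Fs$. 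The right unit law is entirely symmetric, using naturality of $\sigma$ in the other variable and the other unit coherence.

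The associativity law is the crux, and I expect the diagram bookkeeping there to be the main obstacle. One must show that the two legs of the associativity pentagon for $(Fs,\,F\eta_s\circ\iota,\,F\mu_s\circ\sigma)$ agree. On the two legs the ``inner'' multiplication $F\mu_s\circ\sigma$ is whiskered by $Fs$ on opposite sides; naturality of $\sigma$ slides it past the ``outer'' $\sigma$ on each side, producing $\sigma_{s\circ s,\,s}$ on one leg and $\sigma_{s,\,s\circ s}$ on the other. The associativity coherence axiom of the lax functor reconciles these two composites of structure $2$-cells, inserting precisely the associators $\alpha$ of $\mathcal{B}$ and $\mathcal{D}$ where they are needed. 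After this reduction what remains is $F$ applied to the monad associativity square for $s$, which commutes by hypothesis, so the pentagon commutes. Every step is a pasting-diagram chase; the only real care is tracking whiskerings and the directions of the structural isomorphisms, and in the strict $2$-categorical setting (which covers the locally ordered examples relevant to the rest of the thesis) the associators and unitors are identities and the computation is immediate.
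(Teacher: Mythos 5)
Your proposal is correct and follows essentially the same route as the paper: the paper's proof consists of exactly the two pasting diagrams you describe, whose sub-cells commute by naturality of $\sigma$, the unit and associativity coherence axioms of the lax functor, and $F$ applied to the monad axioms for $s$. Nothing is missing.
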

\begin{proof}
It follows by the commutativity of the diagrams:
$$\xymatrix{ Fs \ar[r]^{\lambda^{-1}} \ar[dr]_1 & 1_{FA}Fs \ar[r]^{\iota Fs} & F1_A Fs \ar[r]^{F\eta_s Fs} \ar[d]_{\sigma} & FsFs \ar[d]^{\sigma} \\
& Fs \ar[r]^{F\lambda} \ar[rrdd]_1 & F1_A s \ar[r]^{F\eta_s s} & Fss \ar[dd]^{F\mu_s} \\
\\ & & & Fs }$$
and
$$\xymatrix{ & (FsFs)Fs \ar[r]^{\sigma Fs} & FssFs \ar[d]^{\sigma} \ar[r]^{F\mu_s Fs} & FsFs \ar[d]^{\sigma} \\
Fs(FsFs) \ar[ru]^{\alpha} \ar[d]_{Fs\: \sigma} & & FssFs \ar[r]^{F\mu_s s} & Fss \ar[dd]^{F\mu_s} \\
FsFss \ar[d]_{FsF\mu_s} \ar[r]^{\sigma} & Fs(ss) \ar[ru]^{\alpha} \ar[d]^{Fs \mu_s} \\
FsFs \ar[r]_{\sigma} & Fss \ar[rr]_{F\mu_s} & & Fs. }$$
\end{proof}

\section{Matrices} \label{Matrices}

The bicategory of $V$-matrices for a monoidal category $V$ was studied in \cite{Benabou73}. Later, in \cite{Variation}, the authors considered the more general case were instead of just a one-object bicategory, aka a monoidal category, we have matrices enriched in a general bicategory.

Consider a monoidal category $V= (V,\otimes, I, a,r,l)$ with coproducts such that the tensor distributes over coproducts. We can construct the bicategory $V\mbox{-}\mathbf{Mat}$ as follows.
\begin{enumerate}
\item The objects of $V\mbox{-}\mathbf{Mat}$ are (small) sets.
\item If $S$ and $T$ are sets, then an arrow $M:S \nrightarrow T$ is a functor $M:T\times S \rightarrow V$, i.e. a family of objects of $V$. We call the arrows of $V\mbox{-}\mathbf{Mat}$ \textbf{matrices}.
\item A 2-cell from $M:S \nrightarrow T$ to $N:S \nrightarrow T$ is a natural transformation from the functor $M$ to $N$.
\item The identity matrix $I_S$ on $S$ is the family $I_S(s,s')=\begin{cases} I & \text{ if } s=s' \\ 0 & \text{ otherwise }\\ \end{cases}$, where $0$ is the initial object.
\item If $M:S\nrightarrow T$ and $N:T\nrightarrow U$ are matrices then their composite $NM:S\nrightarrow U$ is given by $NM(u,s)= \bigoplus_{t\in T} N(u,t)\otimes M(t,s)$.
\end{enumerate}

\begin{proposition}
$V\mbox{-}\mathbf{Mat}$ is indeed a bicategory.
\end{proposition}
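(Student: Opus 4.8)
The plan is to verify the bicategory axioms directly, pushing everything through the distributivity of $\otimes$ over coproducts in $V$. For each pair of sets $S,T$ the hom $V\text{-}\mathbf{Mat}(S,T)$ is just the functor category $[T\times S,V]$, so it is automatically a category with vertical composition of natural transformations and the evident identities. Next I would check that horizontal composition is functorial in both variables: given 2-cells $\beta\colon N\Rightarrow N'$ between matrices $T\nrightarrow U$ and $\alpha\colon M\Rightarrow M'$ between matrices $S\nrightarrow T$, the components $\bigoplus_{t\in T}\beta_{u,t}\otimes\alpha_{t,s}$ assemble into a natural transformation $NM\Rightarrow N'M'$, and preservation of vertical identities and composites follows from bifunctoriality of $\otimes$ and functoriality of the coproduct. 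The identity matrices $I_S$ are already given in the definition.

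For the associativity constraint, given $M\colon S\nrightarrow T$, $N\colon T\nrightarrow U$, $P\colon U\nrightarrow W$ I would compute
$$((PN)M)(w,s)=\bigoplus_{t}\Bigl(\bigoplus_{u}P(w,u)\otimes N(u,t)\Bigr)\otimes M(t,s),\qquad (P(NM))(w,s)=\bigoplus_{u}P(w,u)\otimes\Bigl(\bigoplus_{t}N(u,t)\otimes M(t,s)\Bigr).$$
Distributivity in each variable rewrites the left side as $\bigoplus_{(u,t)}\bigl(P(w,u)\otimes N(u,t)\bigr)\otimes M(t,s)$ and the right side as $\bigoplus_{(u,t)}P(w,u)\otimes\bigl(N(u,t)\otimes M(t,s)\bigr)$, and applying the associator $a$ of $V$ in each summand gives an isomorphism $\alpha_{P,N,M}$; naturality in $P,N,M$ follows summand-wise from naturality of $a$. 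For the unitors, since $I_U(u,u')=I$ when $u=u'$ and $0$ otherwise, and $0\otimes X\cong 0\cong X\otimes 0$ (distributivity over the empty coproduct), the coproduct defining $I_UN$ collapses to the single term $I\otimes N(u,s)$, so the left unitor $l$ of $V$ gives a natural isomorphism $\ell_N\colon I_UN\Rightarrow N$; dually $r$ gives $\mathrm{r}_M\colon MI_S\Rightarrow M$.

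Finally I would verify the pentagon and triangle coherence axioms. After transporting all four (resp. three) composites across the relevant nested distributivity isomorphisms so that everything is written as a single coproduct over a product of index sets, both axioms reduce summand by summand to the pentagon and triangle identities for $V$, together with the compatibility of the distributivity isomorphisms with reindexing of coproducts. Mac Lane's coherence theorem for monoidal categories then finishes the argument. I expect this last step to be the main obstacle, not because anything deep is involved, but because of the bookkeeping needed to line up the iterated distributivity isomorphisms and the reindexed double coproducts against the coherence cells of $V$.
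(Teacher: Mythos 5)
Your proposal is correct and takes essentially the same route as the paper: the paper likewise builds the associator by pushing the double coproducts through distributivity and applying the associator of $V$ summand-wise, collapses the unit coproducts using $X\otimes 0\cong 0$, and verifies the pentagon by decomposing it into an inner pentagon (the pentagon of $V$ plus functoriality of $\bigoplus$) surrounded by squares witnessing compatibility of the distributivity isomorphisms with the reindexing. The one small caveat is that your closing appeal to Mac Lane's coherence theorem does not by itself handle the distributivity isomorphisms (which are not part of the monoidal coherence data of $V$); but your preceding sentence already identifies the needed check, which is exactly the ``outer squares'' argument the paper carries out explicitly.
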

\begin{proof}
The vertical composition of the 2-cells is the usual vertical composition of the natural transformations. The horizontal composition is as follows: for matrices
$$\xymatrixcolsep{0.2cm} \xymatrix{ S \ar@/^1pc/[rr]^M  \ar@/_1pc/[rr]_{M'} & \quad \Downarrow \phi & T \ar@/^1pc/[rr]^ N \ar@/_1pc/[rr]_{N'} & \quad \Downarrow \psi & U , }$$
we have $\forall t \in T$ the arrows $\phi (t,s):M(t,s) \rightarrow M'(t,s)$ and $\psi (u,t):N(u,t)\rightarrow N'(u,t)$. So we define $\psi \circ \phi (u,s)$ to be the canonical arrow $\bigoplus_{t\in T} \psi (u,t) \otimes \phi(t,s)$.

We define now the associator: Consider matrices as in $$\xymatrix{ S \ar[r]^M & T \ar[r]^N & U \ar[r]^P & V.}$$
Then, $\forall(v,s)\in V\times S$, we have $$P(NM)(v,s)=\bigoplus_{u\in U} P(v,u)\otimes (\bigoplus_{t\in T}N(u,t)\otimes M(t,s))$$ and $$(PN)M(v,s)=\bigoplus_{t\in T} (\bigoplus_{u\in U} P(v,u)\otimes N(u,t))\otimes M(t,s).$$
But by the distributivity of the tensor product we have $$P(NM)(v,s) \xrightarrow{\cong} \bigoplus_{u\in U} \bigoplus_{t\in T} P(v,u) \otimes (N(u,t)\otimes M(t,s)) \xrightarrow{\cong}$$ $$ \bigoplus_{t\in T} \bigoplus_{u\in U} P(v,u) \otimes (N(u,t)\otimes M(t,s)) \xrightarrow{\alpha} \bigoplus_{t\in T} \bigoplus_{u\in U}( P(v,u) \otimes N(u,t))\otimes M(t,s)$$ $$ \xleftarrow{\cong} \bigoplus_{t\in T} (\bigoplus_{u\in U} P(v,u) \otimes N(u,t))\otimes M(t,s) = (PN)M(v,s).$$
So there is an isomorphism $P(NM)(v,s)\cong(PN)M(v,s)$ and this will be our associator $\alpha$. For the right and the left unitors we have $$ MI_S(t,s) = \bigoplus_{s'\in S} M(t,s') \otimes I(s',s) = (M(t,s)\otimes I)\bigoplus_{s'\in S, s'\neq s} (M(t,s)\otimes 0)$$ $$ \cong (M(t,s) \oplus 0 \cong M(t,s)\oplus I \cong M(t,s),$$
and similarly $I_TM(t,s) \cong M(t,s)$.

In the following diagram we write :
$$ A \text{ for } \bigoplus_{v\in V, u\in U, t\in T} Q(w,v)\otimes[ P(v,u) \otimes[ N(u,t) \otimes M(t,s)]],$$
$$ B \text{ for } \bigoplus_{v\in V, u\in U, t\in T}[Q(w,v)\otimes P(v,u)] \otimes [N(u,t) \otimes M(t,s)],$$
$$ C \text{ for } \bigoplus_{v\in V, u\in U, t\in T}Q(w,v)\otimes [[P(v,u) \otimes N(u,t)] \otimes M(t,s)],$$
$$ D \text{ for } \bigoplus_{v\in V, u\in U, t\in T} [Q(w,v)\otimes[ P(v,u) \otimes N(u,t)]] \otimes M(t,s), \text{ and}$$
$$ E \text{ for } \bigoplus_{v\in V, u\in U, t\in T} [[Q(w,v)\otimes P(v,u)] \otimes N(u,t)] \otimes M(t,s).$$
$$\xymatrixcolsep{0.2cm} \xymatrix{ & & & (QP)(NM) \ar[d]^{\cong} \ar[rrrdd]^{\alpha} \\
& & & B \ar[drr]^{\oplus \alpha}\\
Q(P(NM)) \ar[uurrr]^{\alpha} \ar[r]^{\cong} \ar[dddr]_{Q\alpha} & A \ar[urr]^{\oplus \alpha} \ar[ddr]_{\oplus Q(w,v)\otimes \alpha} & & & & E & ((QP)N)M \ar[l]_{\cong} \\
\\
& & C \ar[rr]_{\oplus \alpha} & & D \ar[uur]_{\oplus \alpha \otimes M(t,s)} \\
& Q((PN) M) \ar[ur]_{\cong} \ar[rrrr]_{\alpha} & & & & (Q(PN))M \ar[ul]_{\cong} \ar[ruuu]_{\alpha M} }$$
The inner pentagon commutes by the pentagon identity of $\alpha$ in $V$ and the functoriality of the coproduct. Each of the outer squares commutes as we see in the construction of $\alpha$ above. So the pentagon identity holds. Similarly, the triangle identity holds and $V\mbox{-}\mathbf{Mat}$ is indeed a bicategory.
\end{proof}

If $V$ is a category with finite products then it becomes a monoidal category with tensor the product and unit object the terminal. In that case we call $V$ a Cartesian monoidal category.

The next lemma shows that the discrete bicategory of sets and functions is embedded in the bicategory of maps in $V\mbox{-}\mathbf{Mat}$:

\begin{lemma}\label{matrices_maps}
For $V$ a Cartesian monoidal category with coproducts such that the tensor distributes over them, there is a pseudofunctor $M:\mathbf{Set} \rightarrow \mathbf{Map}(V\mbox{-}\mathbf{Mat})$ which is locally fully faithful.
\end{lemma}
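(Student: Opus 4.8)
The plan is to define $M$ explicitly, exhibit a right adjoint inside $V\mbox{-}\mathbf{Mat}$ for each functional matrix, verify the pseudofunctor constraints, and then read off local full faithfulness from a pointwise computation in $V$. On objects put $M(S)=S$; for a function $f\colon S\to T$ let $M_f\colon S\nrightarrow T$ be the \emph{graph matrix}, $M_f(t,s)=I$ when $t=f(s)$ and $M_f(t,s)=0$ otherwise, where $0$ is the initial object of $V$ and $I$ its terminal object (terminal because $V$ is Cartesian monoidal). On $2$-cells, which in $\mathbf{Set}$ are all identities, $M$ acts as the identity. The unit constraint is the equality $M_{1_S}=I_S$, and the composition constraint $M_g\circ M_f\cong M_{gf}$ comes from noting that in $(M_gM_f)(u,s)=\bigoplus_t M_g(u,t)\otimes M_f(t,s)$ every summand with $t\neq f(s)$ vanishes (by distributivity $X\otimes 0\cong 0$, since $0$ is the empty coproduct) and the single remaining summand is $I\otimes I\cong I$, nonzero exactly when $u=g(f(s))$; the isomorphism itself is assembled from the unitors of $V$ together with the canonical distributivity and empty-coproduct isomorphisms.

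Next I would show each $M_f$ is a map by taking its right adjoint to be the transpose $M_f^{*}\colon T\nrightarrow S$ with $M_f^{*}(s,t)=I$ if $f(s)=t$ and $0$ otherwise. The composition formula and distributivity give $(M_f^{*}M_f)(s,s')\cong I$ when $f(s)=f(s')$ and $0$ otherwise, while $(M_fM_f^{*})(t,t')\cong 0$ for $t\neq t'$ and $(M_fM_f^{*})(t,t)\cong\bigoplus_{s\in f^{-1}(t)}I$. I would take the unit $\eta_f\colon I_S\Rightarrow M_f^{*}M_f$ to be the identity of $I$ on the diagonal and the unique arrow out of $0$ off it, and the counit $\epsilon_f\colon M_fM_f^{*}\Rightarrow I_T$ to be the unique arrow into $I_T(t,t')$ off the diagonal and the codiagonal (fold) arrow $\bigoplus_{s\in f^{-1}(t)}I\to I$ on it. The two triangle identities then reduce to finitely many checks at each matrix entry, using naturality of the coherence isomorphisms of $V$ and the defining property of the codiagonal.

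For pseudofunctoriality it remains to check that the constraint cells satisfy the associativity and unit axioms; since every such cell is built only from the unitors of $V$ and the distributivity isomorphisms, and since at most one summand survives in each iterated composite, these axioms follow entrywise from the corresponding coherence laws in $V$. I expect this to be routine bookkeeping rather than a genuine obstacle.

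Finally, for local full faithfulness, observe that $\mathbf{Map}(V\mbox{-}\mathbf{Mat})$ is a full sub-bicategory, so a $2$-cell $M_f\Rightarrow M_g$ there is just a natural transformation of the functors $T\times S\to V$, whence $\mathbf{Map}(V\mbox{-}\mathbf{Mat})(S,T)(M_f,M_g)\cong\prod_{(t,s)}V\bigl(M_f(t,s),M_g(t,s)\bigr)$. For an index $s$ with $f(s)=g(s)$ the factors are all of the form $V(I,I)$ or $V(0,0)$, each a singleton since $I$ is terminal and $0$ initial; for an index $s$ with $f(s)\neq g(s)$ the factor at $t=f(s)$ is $V(I,0)$, which is empty, since a map $I\to 0$ would force $0\cong I$. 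Hence this hom-set is a singleton when $f=g$ and empty otherwise, matching $\mathbf{Set}(S,T)(f,g)$ exactly, so $M_{S,T}$ is fully faithful. The main thing to get right is not any one hard step but the honest bookkeeping: choosing the adjunction $2$-cells and the constraint isomorphisms so that the triangle and coherence identities hold on the nose at every entry, and in the last step being careful that the only way $V(I,0)$ can fail to be empty is the degenerate collapse of $V$.
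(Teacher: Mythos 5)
Your proposal is correct and follows essentially the same route as the paper: define the graph matrix $Mf$, exhibit its transpose as a right adjoint, check the pseudofunctor constraints entrywise via distributivity, and compute the hom-sets pointwise for local full faithfulness. In fact you are more careful than the paper in two places where its entrywise computations are only literally valid for injective $f$ and for the endo-hom-sets: you correctly identify $(M_fM_f^{*})(t,t)\cong\bigoplus_{s\in f^{-1}(t)}I$ and use the codiagonal for the counit, and you verify emptiness of the hom-set $\mathbf{Map}(V\mbox{-}\mathbf{Mat})(S,T)(M_f,M_g)$ for $f\neq g$ (modulo the degenerate $I\cong 0$ collapse you rightly flag), which the paper's fully-faithfulness check omits.
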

\begin{proof}
We define a pseudofunctor $M:\mathbf{Set} \rightarrow \mathbf{Map}(V\mbox{-}\mathbf{Mat})$.

If $S$ is a set, $MS=S$.

If $f:S\rightarrow T$ is a function, define $Mf:S \nrightarrow T$ such that $$Mf (t,s) = \begin{cases}1, & \text{if } f(s)=t \\ 0, & \text{otherwise} \\ \end{cases}\quad .$$ Since $\mathbf{Set}(S,T)$ is discrete we only have $M1_f (t,s) = \begin{cases}1_1, & \text{if } f(s)=t \\ 1_0, & \text{otherwise} \\ \end{cases}.$

We will show that $Mf$ is a map. Define the matrix $Mf^* :T \nrightarrow S$ with $Mf^* (s,t)=Mf(t,s)$. The components of the unit $\eta$ for $(s,s')\in S\times S$, $\eta(s,s'):I_S (s,s') \rightarrow Mf^* Mf (s,s')$ are defined as follows.

If $s=s'$, then $I_S (s,s')=1$ and $Mf^* Mf (s,s)= \bigoplus_{t\in T} Mf^* (s,t) \otimes Mf(t,s) \cong Mf^* (s, f(s)) \otimes Mf(f(s),s) \cong 1 \otimes 1 \cong 1$.\\
If $s\neq s'$, then $I_S (s,s') =0$ and $Mf^* Mf (s,s') = \bigoplus_{t\in T} Mf^* (s,t) \otimes Mf(t,s') \cong [Mf^*(s,f(s)) \otimes 0 ] \oplus [0 \otimes Mf(f(s'),s')] \cong 0$.\\
So define $$\eta (s,s') = \begin{cases}1_1, & \text{if } s=s' \\ 1_0, & \text{otherwise} \\ \end{cases}\quad .$$
For the counit $\epsilon : Mf\:Mf^* \rightarrow I_T$, let $(t,t') \in T\times T$.

Then if $t=t'$, $I_T (t,t) = 1$. If there exists an $s\in S$ with $f(s)=t$, then $Mf \: Mf^* (t,t) \cong 1$, while, if there is no such $s$, $Mf \: Mf^* (t,t) \cong 0$. \\
If $t\neq t'$, $Mf \: Mf^* (t,t') = \bigoplus _{s \in S} Mf (t,s) \otimes Mf^* (s,t')$ and $\forall s$, $ Mf (t,s) \otimes Mf^* (s,t') \cong 0$.\\
So we define $$ \epsilon (t,t') = \begin{cases} 1_1, & \text{if } t =t' =f(s) \text{ for some } s\in S \\
!:0\rightarrow 1, &\text{if } t=t' \text{ and } \forall s, f(s)\neq t\\
1_0, & \text{otherwise} \\ \end{cases} \quad.$$
For the triangle identities we have $$Mf \eta (t,s) = \bigoplus_{s' \in S} 1_{Mf} (t,s') \otimes \eta (s',s) \cong 1_{Mf}(t,s) \otimes 1_1 \cong 1_{Mf(t,s)} $$ and $$\epsilon Mf (t,s) = \bigoplus_{t'\in T} \epsilon (t,t') \otimes 1_{Mf}(t',s) \cong \epsilon(t,t) \otimes 1_{Mf} (t,s)$$ $$ \cong  \begin{cases} 1_1\otimes 1_1, & \text{if } f(s)=t \\
1_1 \otimes 1_0, &\text{if } \exists s'\in S, s'\neq s, \text{ with } f(s')=t \\
! \otimes 1_0, & \text{otherwise} \\ \end{cases} $$ $$ \cong \begin{cases} 1_1, & \text{if } f(s)=t \\ 1_0, & \text{otherwise} \\ \end{cases} \cong 1_{Mf(t,s)} .$$ So $(\epsilon Mf ) (Mf \eta ) = 1_{Mf}$ and similarly $(Mf^* \epsilon)(\eta Mf^*)=1_{Mf^*}$.

We now show that $M:\mathbf{Set}(S,T) \rightarrow \mathbf{Map}(V\mbox{-}\mathbf{Mat})(S,T)$ is fully faithfull:

For a function $f:S\rightarrow T$ and the matrix $Mf (t,s) = \begin{cases}1, & \text{if } f(s)=t \\ 0, & \text{otherwise} \\ \end{cases}$, if $\beta: Mf \rightarrow Mf$, then $\beta(t,s) = \begin{cases} 1_1 , & \text{if } f(s)=t \\ 1_0, & \text{otherwise} \\ \end{cases} = M1_f$.

\end{proof}

\begin{proposition}
If $V=(V,\times, 1,q,p)$ is a cartesian monoidal category then the bicategory $V\mbox{-}\mathbf{Mat}$ has finite products locally.
\end{proposition}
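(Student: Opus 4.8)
The plan is to recognize each hom-category of $V\mbox{-}\mathbf{Mat}$ as a power of $V$ and then to read off its finite products componentwise from those of $V$. Since $S$ and $T$ are sets, an arrow $M:S\nrightarrow T$ is precisely a family $(M(t,s))_{(t,s)\in T\times S}$ of objects of $V$, and a 2-cell $M\Rightarrow N$ is a family of morphisms $(M(t,s)\to N(t,s))_{(t,s)}$ subject to no condition, because $T\times S$, viewed as a discrete category, has only identity morphisms. Hence $V\mbox{-}\mathbf{Mat}(S,T)\cong V^{T\times S}$, and it suffices to invoke the standard fact that a power of a category with finite products again has finite products, computed in each coordinate.

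Carrying this out explicitly, I would first take the terminal matrix $\top_{S,T}$ to be the constant family $\top_{S,T}(t,s)=1$, where $1$ is the terminal object of $V$; for any $M:S\nrightarrow T$ the componentwise unique arrows $M(t,s)\to 1$ assemble into the unique 2-cell $M\Rightarrow\top_{S,T}$. For binary products, given $M,N:S\nrightarrow T$ I would set $(M\wedge N)(t,s)=M(t,s)\times N(t,s)$ with projection 2-cells $\pi_1:M\wedge N\Rightarrow M$ and $\pi_2:M\wedge N\Rightarrow N$ whose $(t,s)$-components are the product projections of $V$. Then for any $P:S\nrightarrow T$ with 2-cells $\phi:P\Rightarrow M$ and $\psi:P\Rightarrow N$, the componentwise pairings $\langle\phi(t,s),\psi(t,s)\rangle$ constitute a 2-cell $\langle\phi,\psi\rangle:P\Rightarrow M\wedge N$ with $\pi_1\circ\langle\phi,\psi\rangle=\phi$ and $\pi_2\circ\langle\phi,\psi\rangle=\psi$, and uniqueness follows because both identities, and the uniqueness of the pairing, already hold at each $(t,s)$ in $V$. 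A category with a terminal object and binary products has all finite products, which completes the argument.

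The main point requiring care is not any calculation but the structural observation that opens the proof: one must note that the $2$-cells of $V\mbox{-}\mathbf{Mat}$ carry no naturality constraint, so that the hom-category genuinely is the power $V^{T\times S}$ rather than a more complicated functor category. Once this is in place there is no real obstacle; in particular, in contrast with the horizontal composition of matrices, which relies on the coproducts of $V$ and the distributivity of $\times$ over them, the local product structure needs nothing about $V$ beyond its finite products.
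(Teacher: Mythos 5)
Your proposal is correct and follows essentially the same route as the paper: the terminal matrix is the constant family at $1$ and binary products are formed componentwise via the products of $V$, with projections and pairings read off coordinatewise. Your additional framing of the hom-category as the power $V^{T\times S}$ (justified by the discreteness of $T\times S$, so that 2-cells carry no naturality constraint) is a clean way to package the same argument.
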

\begin{proof}
1) To prove that $V\mbox{-}\mathbf{Mat}(S,T)$ has finite products, it suffices to prove that it has terminal and products. Consider sets $S$ and $T$. Then define $\top_{S,T}:S\nrightarrow T$ with $\top_{S,T} (t,s)= 1, \: \forall (t,s)\in T\times S$.

If $M:S\nrightarrow T$ is a matrix, then $\forall (t,s)$ there exists a unique arrow $\tau (t,s):M(t,s) \rightarrow 1$ which will work as a component for a unique 2-cell $\tau :M \Rightarrow \top_{S,T}$.\\
For matrices $M:S\nrightarrow T, \: N:S\nrightarrow T$ define $M\otimes N:S\nrightarrow T$ with $M\otimes N(t,s)=M(t,s)\times N(t,s), \: \forall (t,s)\in T\times S $. The projections $\pi :M\otimes N \Rightarrow M$ and $\rho:M\otimes N \Rightarrow N$ are given by the projections in $V$, $\pi(t,s):M(t,s)\times N(t,s) \rightarrow M(t,s)$ and $ \rho(t,s):M(t,s)\times N(t,s) \rightarrow N(t,s), \: \forall(t,s)\in T\times S$.
\end{proof}

Similarly to the case of modules and monads in a bicategory, we can see now that every hom-category in $V\mbox{-}\mathbf{Mat}$ has finite products. Again though, we don't know if $\mathbf{Map}(V\mbox{-}\mathbf{Mat})$ has finite bicategorical products, and for this to be true we may need more conditions than $V$ being Cartesian. If for example the pseudofunctor $M$ of \Cref{matrices_maps} is moreover locally essential surjective, then this would imply that $\mathbf{Map}(V\mbox{-}\mathbf{Mat}) \simeq \mathbf{Set}$, and so $\mathbf{Map}(V\mbox{-}\mathbf{Mat})$ would have finite products.

\chapter{Double Categories} \label{Double Categories}

\section{Introduction to Double Categories} \label{Introduction to Double Categories}

In this chapter we study double categories, we extend some of the concepts presented in the classic ``Formal Theory of Monads'' paper to double categories, and lastly, we talk about fibrant double categories.

The purpose of this first section is to introduce the reader to the theory of double categories. Double categories first appeared in Ehresmann's \cite{ehresmann65}. In 1989, Par\'e introduced double limits in his CT meeting presentation \cite{Pare1989} in Bangor. Later, together with Grandis, they wrote a series of four papers on the subject: \cite{LimitsGP}, \cite{AdjointsGP}, \cite{LaxKanGP}, and \cite{KanGP}. Other standard references are Shulman's papers \cite{Shulman2008} or \cite{Shulman2010}, as well as Niefield's \cite{SusanGlue} and \cite{SusanExp}. In the earlier literature, double categories were defined as internal categories in the 2-category of categories $\mathcal{C}at$. This leads to a stronger notion of double categories than the one we are using in this thesis. Essentially, according to this definition, a double category has two types of arrows $\--$called vertical and horizontal$\--$ and in both directions we have strict associativity and strict identities. In our case, we allow one of the directions, in particular the horizontal one, to be associative and unitary up to isomorphism. These are often called pseudo or weak double categories, but for us here they are just called double categories.

\begin{definition}
A \textbf{(pseudo) double category} $\mathbb{D}$ consists of:
\begin{enumerate}[label=\roman*.]
\item A category $D_0$, whose objects are called \textbf{objects} of the double category and its arrows are called \textbf{vertical arrows}. We will be using uppercase letters towards the beginning and the end of the Latin alphabet $A, B, X, Y, \dots$ for the objects of $\mathbb{D}$, and lowercase letters $f,g,h, \dots$ for the vertical arrows. We call $D_0$ the \textbf{vertical category} of $\mathbb{D}$.
\item A category $D_1$, whose objects are called \textbf{horizontal arrows} and its arrows are called \textbf{cells}. We will be using uppercase letters in the middle of the Latin alphabet $F, G, M, N, \dots$ for the horizontal arrows, and Greek letters $\alpha, \beta, \gamma, \dots$ for the cells.
\item Functors $$U:D_0 \rightarrow D_1: A \mapsto U_A, f \mapsto U_f$$ $$S,T:D_1 \rightarrow D_0 \text{ and}$$ $$\odot:D_1 \times_{D_0} D_1 \rightarrow D_1,$$ where $D_1 \times_{D_0}D_1$ is the pullback of $(S,T)$, such that $S(U_A) = A = T(U_A)$, $S(M \odot N) = SN$ and $T(M\odot N)=TM$.
\item Natural isomorphisms with components the cells $$\alpha: (M \odot N) \odot P \rightarrow M \odot (N \odot P), $$ $$\lambda: U_{TM} \odot M \rightarrow M \text{ and}$$ $$ \rho: M \odot U_{SM} \rightarrow M,$$ for horizontal arrows $M$, $N$ and $P$, such that $S(\alpha), T(\alpha), S(\lambda), T(\lambda), S(\rho) \text{ and } T(\rho)$ are the identity vertical arrows and the following diagrams commute:
$$\xymatrixcolsep{0.3cm} \xymatrix{
& &(M \odot N) \odot (P \odot Q) \ar[rd]^{\alpha} \\
& ((M \odot N) \odot P) \odot Q \ar[ru]^{\alpha} \ar[d]_{\alpha \odot Q} & & M \odot (N  \odot( P  \odot Q))\\
& (M\odot (N \odot P)) \odot Q \ar[rr]_{\alpha} & & M \odot ((N \odot P )  \odot Q) \ar[u]_{M \odot \alpha} &,
} $$
$$\xymatrixcolsep{0.3cm} \xymatrix{
(M \odot U) \odot N \ar[rr]^{\alpha} \ar[rd]_{\rho \odot N} & & M \odot (U \odot N) \ar[dl]^{M \odot \lambda}\\
& M \odot N &. } $$
\end{enumerate}
\end{definition}

\begin{remark}
Throughout this thesis, we will be using juxtaposition and $1$ for the composition and identity in $D_0$, and the following diagrammatic notation for vertical arrows, vertical identity, horizontal arrows, and cells respectively:
$$\xymatrixcolsep{0.2cm} \xymatrixrowsep{0.2cm} \xymatrix{
{} \ar[dd] & & {}\ar@{=}[dd] & & & & & & {} \ar[dd] \haru{} & & {} \ar[dd]\\
&, & &, & {} \haru{} & & {} & \text{, and}\\
{}& & {} & & & & & & {} \hard{} & & {}.}$$
 Also, when the source and target arrows are understood from context, we simply write $\frac{\alpha}{\beta}$ for the vertical composite of composable in $D_0$ cells $\alpha$ and $\beta$, and $\alpha|\beta$ for the horizontal composite of composable in $D_1$ cells.
\end{remark}

\begin{remark} The definition above shows that a strict double category, i.e. a double category where instead of isomorphisms $a$, $l$ and $r$, we have equalities, can be seen as an internal category in the category $\mathbf{Cat}$.
\end{remark}

\begin{definition}
A cell $\phi$ such that $S(\phi)=1$ and $T(\phi)=1$ will be called \textbf{globular}.
\end{definition}

\begin{definition}
Consider a double category $\mathbb{D}$. Then we can form a bicategory with objects the objects of $\mathbb{D}$, arrows the horizontal arrows and 2-cells all the globular cells. This will be called the \textbf{horizontal bicategory} of $\mathbb{D}$ and will be denoted by $\mathcal{H(\mathbb{D})}$.
\end{definition}

\begin{example}
The double category $\mathbb{S}\mathrm{et}$, with objects the sets, vertical arrows the functions and horizontal arrows the relations. We say that there is a cell of the form
$$\xymatrixcolsep{0.2cm} \xymatrixrowsep{0.2cm} \xymatrix{ A \haru{R} \ar[dd]_f & & B \ar[dd]^g \\
\\ X \hard{S} & & Y ,}$$ where $f$ and $g$ are functions and $R$ and $S$ are relations, if for every $(a,b) \in A \times B$, $aRb \Rightarrow (fa)S(gb)$. The horizontal composite of relations $\xymatrixcolsep{0.2cm} \xymatrixrowsep{0.2cm} \xymatrix{ A \haru{R} & & B \haru{S} & & C}$ is the relation $a(S \odot R)c \Leftrightarrow (\exists b)(a R b \wedge b S c)$, and the horizontal unit $U_A : \srarrow{A}{A}$ is the diagonal relation.
\end{example}

\begin{example}
The double category $\mathbb{S}\mathrm{pan}(\mathcal{E})$, for $\mathcal{E}$ a category with pullbacks, with objects the objects of $\mathcal{E}$, vertical arrows the arrows of $\mathcal{E}$ and horizontal arrows spans between objects of $\mathcal{E}$, i.e. a horizontal arrow $R:\srarrow{A}{B}$ is a pair of arrows of the form $\xymatrixcolsep{0.4cm} \xymatrix{ A & R \ar[l]_{r_0} \ar[r]^{r_1} & B}$. A cell
$$\xymatrixcolsep{0.2cm} \xymatrixrowsep{0.2cm} \xymatrix{
& & R \ar[dll]_{r_0} \ar[drr]^{r_1} \\
A \ar[dd]_f & & & & B \ar[dd]^g \\
\\ X & & & & Y \\
& & S \ar[ull]^{s_0} \ar[urr]_{s_1} }$$
is an arrow $\alpha: R \rightarrow S$ of $\mathcal{E}$, such that $s_0 \alpha = f r_0$ and $s_1 \alpha = g r_1$. The horizontal composition is given by pullbacks and the identity span $U_A$ is the pair $\xymatrixcolsep{0.4cm} \xymatrix{ A & A \ar[l]_{1} \ar[r]^{1} & A}$.
\end{example}

\begin{example}
The double category $V\mbox{-}\mathbb{M}\mathrm{at}$, for $V$ a monoidal category with coproducts such that the tensor distributes over coproducts. The objects of $V\mbox{-}\mathbb{M}\mathrm{at}$ are sets, vertical arrows are functions and horizontal arrows are $V$-matrices between sets, i.e. functions from $T\times S$ to the objects of $V$, for $T$ and $S$ sets. If $M$ and $N$ are matrices and $f, \: g$ functions as in the square
$$\xymatrixcolsep{0.2cm} \xymatrixrowsep{0.2cm} \xymatrix{ S \haru{M} \ar[dd]_f & & T \ar[dd]^g \\
\\ X \hard{N} & & Y ,}$$ then a cell $\phi:M \rightarrow N$ is a family of arrows $\phi(t,s):M(t,s) \rightarrow N(g\: t,fs)$ in $V$.
\end{example} 

\begin{definition}
Let $\mathbb{B}$ and $\mathbb{D}$ be double categories. A \textbf{lax double functor} $F:\mathbb{B}\rightarrow \mathbb{D}$ consists of:
\begin{enumerate}[label=\roman*.]
\item two functors $F_0 : B_0 \rightarrow D_0$ and $F_1 :B_1 \rightarrow D_1$, such that $S \circ F_1 = F_0 \circ S$ and $T\circ F_1 = F_0 \circ T$, and
\item natural transformations $$F_{\odot}: F_1 M \odot F_1 N \rightarrow F_1 (M \odot N) \text{ and}$$ $$F_U : U_{F_0 A} \rightarrow F_1 (U_A),$$ with globular components, i.e. $S(F_{\odot}), T(F_{\odot}), S(F_U) \text{ and } T(F_U)$ are all identities, such that the following diagrams commute:
\end{enumerate}
$$\xymatrixcolsep{0.4cm} \xymatrix{
& & & F_1M \odot U_{F_0 SM} \ar[d]^{F_1 M \odot F_U} \ar[ddlll]_{\rho} & & U_{F_0 TM} \odot F_1 M \ar[d]_{F_U \odot F_1 M} \ar[ddrrr]^{\lambda} \\
& & & F_1M \odot F_1 U_{SM} \ar[d]^{F_{\odot}} & & F_1 U_{TM} \odot F_1 M \ar[d]_{F_{\odot}} \\
 F_1 M & & & F_1(M \odot U_{SM}) \ar[lll]^{F_1 r \qquad} &, &  F_1(U_{TM} \odot M) \ar[rrr]_{F l} & & & F_1 M }
$$
and
$$ \xymatrix{ \left( F_1 M \odot F_1 N \right) \odot F_1 P \ar[r]^{\alpha} \ar[d]_{F_{\odot} \odot F_1 P } & F_1 M \odot \left( F_1 N \odot F_1 P \right) \ar[d]^{F_1 M \odot F_{\odot}} \\
F_1(M \odot N) \odot F_1 P \ar[d]_{F_{\odot}} & F_1 M \odot F_1 (N\odot P) \ar[d]^{F_{\odot}} \\
F_1 \big( (M\odot N) \odot P \big) \ar[r]_{F_1\alpha} & F \big( M \odot (N \odot P ) \big) }
$$
\end{definition}

\begin{definition}\label{double_functor}
A lax double functor $F:\mathbb{B}\rightarrow \mathbb{D}$ between double categories is said to be \textbf{normal} if the natural transformation $F_U$ is an isomorphism. If in addition $F_{\odot}$ is an isomorphism we say that $F$ is a \textbf{(pseudo) double functor}.
\end{definition}

We can dually define the notions of \textbf{\textit{oplax double functor}} and \textbf{\textit{opnormal double functor}}.

\begin{definition} \label{vertical_transf}
Consider two double categories $\mathbb{B}$ and $\mathbb{D}$ and two lax double functors between them, $F, G : \mathbb{B} \rightarrow \mathbb{D}$. A \textbf{vertical natural transformation} $\phi : F \rightarrow G$ consists of natural transformations $\phi_0 :F_0 \rightarrow G_0$ and $\phi_1 :F_1 \rightarrow G_1$ such that $S(\phi_M) = \phi_{SM}$ and $T(\phi_M)= \phi_{TM}$ and the following hold for horizontal arrows $M:\srarrow{A}{B}, N: \srarrow{B}{C}$:
$$\xymatrixcolsep{0.2cm} \xymatrixrowsep{0.2cm} \xymatrix{ FA \haru{FM} \ar@{=}[dd] & & FB \haru{FN} & & FC \ar@{=}[dd] & & FA \ar[dd]_{{\phi_0}_A} \haru{FM} & & FB \ar[dd]_{{\phi_0}_B} \haru{FN} & & FC \ar[dd]^{{\phi_0}_C}\\
& & F_{\odot} & & & & & {\phi_1}_M & & {\phi_1}_N\\
FA \ar[dd]_{{\phi_0}_A}\ar[rrrr]|{\vstretch{0.60}{|}} \ar[rrrr]^{F(N\odot M)} & & & & FC \ar[dd]^{{\phi_0}_C} & = & GA \ar@{=}[dd] \hard{GM} & & GB \hard{GN} & & GC \ar@{=}[dd]\\
& & {\phi_1}_{N \odot M} & & & & & & G_{\odot}\\
GA \ar[rrrr]|{\vstretch{0.60}{|}} \ar[rrrr]_{G(N\odot M)} & & & & GC & & GA \ar[rrrr]|{\vstretch{0.60}{|}} \ar[rrrr]_{G(N \odot M)} & & & & GC }$$
and
$$\xymatrixcolsep{0.2cm} \xymatrixrowsep{0.2cm} \xymatrix{ FA \haru{U_{FA}} \ar@{=}[dd] & & FA \ar@{=}[dd] & & FA \ar[dd]_{{\phi_0}_A} \haru{U_{FA}} & & FA \ar[dd]^{{\phi_0}_A} \\
& F_U & & & & U_{{\phi_0}_A}\\
FA \ar[dd]_{{\phi_0}_A}\haru{F(U_A)} & & FA \ar[dd]^{{\phi_0}_A} & = & GA \ar@{=}[dd] \hard{U_{GA}} & & GA \ar@{=}[dd]\\
& {\phi_1}_{U_A} & & & & G_U\\
GA \hard{G(U_A)} & & GA & & GA \hard{G(U_A)} & & GA }$$
\end{definition}

\begin{remark}
Essentially, a vertical natural transformation assigns to each object a vertical arrow and to each horizontal arrow a cell, whose source and target are the vertical components on the source and target of the horizontal arrow. We will often write just $\phi$ for bor $\phi_0$ and $\phi_1$.
\end{remark}

We can form a 2-category with objects the double categories, arrows the double functors and 2-cells the vertical transformations (\cite{AdjointsGP}). It will be denoted by $\mathbf{DblCat}$. The composition of double functors is the usual composition of the two functors in the definition of a double functor. Similarly, the vertical and the horizontal composition of vertical natural transformations comes from the horizontal and vertical composition of natural transformations. Moreover we can define a 2-category with objects the double categories, arrows the lax double functors and 2-cells the vertical transformation, which will be denoted by $\mathbf{DblCat}_{\mathcal{L}}$.

If $\mathbb{B}$ and $\mathbb{D}$ are double categories, their \textbf{\textit{product double category}} $\mathbb{B} \times \mathbb{D}$ consists of the categories $B_0 \times D_0$  and $B_1 \times D_1$, the functors $U_{\mathbb{B}} \times U_{\mathbb{D}}, \: S_{\mathbb{B}} \times S_{\mathbb{D}},\: T_{\mathbb{B}} \times T_{\mathbb{D}}, \: \odot_{\mathbb{B}} \times \odot_{\mathbb{D}}$ and the natural transformations $\alpha_{\mathbb{B}} \times \alpha_{\mathbb{D}}, \: \lambda_{\mathbb{B}} \times \lambda_{\mathbb{D}}, \: \rho_{\mathbb{B}} \times \rho_{\mathbb{D}}$. This is exactly the product of $\mathbb{B}$ and $\mathbb{D}$ in the 2-category $\mathbf{DblCat}$.

Note now that, for any double category $\mathbb{D}$, we can build a double functor $\Delta: \mathbb{D} \rightarrow \mathbb{D} \times \mathbb{D}$ such that $\Delta_0(A)=(A,A), \: \Delta_0(f)=(f,f), \: \Delta_1(M)=(M,M) \text{ and } \Delta_1 (\phi)=(\phi,\phi)$. We will call $\Delta$ the \textbf{\textit{diagonal functor}} on $\mathbb{D}$.\par
Also, we can construct the double category $\mathbbm{1}$ which has only one object, one vertical arrow, one horizontal arrow and one cell, and for every double category $\mathbb{D}$ we have a unique functor $!: \mathbb{D} \rightarrow \mathbbm{1}$.

\begin{definition}
Let $F:\mathbb{B}\rightarrow \mathbb{D}$ be a double functor. We say that $F$ has a \textbf{right adjoint} $G:\mathbb{D}\rightarrow \mathbb{B}$, or that we have a \textbf{(double) adjunction} $F \dashv G$, if there is an adjunction $(F \dashv G :\mathbb{D}\rightarrow \mathbb{B})$ in the 2-category $\mathbf{DblCat}$. If $F$ and $G$ are lax double functors instead and we have an adjunction $(F \dashv G :\mathbb{D}\rightarrow \mathbb{B})$ in $\mathbf{DblCat}_{\mathcal{L}}$, then we say that $F$ has a \textbf{lax right adjoint} $G$, or that $F \dashv G$ is a \textbf{lax (double) adjunction}.
\end{definition}

\begin{definition}
Let $F:\mathbb{B}\rightarrow \mathbb{D}$ be a double functor and $M: \srarrow{C}{D} $ a horizontal arrow in $\mathbb{D}$. A \textbf{universal cell from $F$ to $M$} is a cell
$$\xymatrixcolsep{0.2cm} \xymatrixrowsep{0.2cm} \xymatrix{ FA \haru{FN} \ar[dd]_f & & FB \ar[dd]^g \\ & \alpha \\ C \hard{M} & & D }$$
for some horizontal arrow $N:\srarrow{A}{B}$ in $\mathbb{B}$, so that the vertical arrows $f$ and $g$ are universal arrows from $F_0$ to $C$ and $D$, respectively, in $D_0$ and every other cell of the form
$$\xymatrixcolsep{0.2cm} \xymatrixrowsep{0.2cm} \xymatrix{ FA' \haru{FN'} \ar[dd]_{f'} & & FB' \ar[dd]^{g'} \\ & \alpha' \\ C \haru{M} & & D }$$ factors through $\alpha$ via a unique cell $\beta : N' \rightarrow N$. That is, if $f'$ factors through $f$ via the vertical arrow $h$ and $g'$ factors through $g$ via the vertical arrow $k$ then the following holds:
$$\xymatrixcolsep{0.2cm} \xymatrixrowsep{0.2cm} \xymatrix{ & & & & FA' \haru{FN'} \ar[dd]_{Fh} & & FB' \ar[dd]^{Fk}\\
FA' \haru{FN'} \ar[dd]_{f'} & & FB' \ar[dd]^{g'} & & & F\beta \\
 & \alpha' & & = & FA \haru{FN} \ar[dd]_f & & FB \ar[dd]^g \\
C \hard{M} & & D & & & \alpha \\
& & & & C \hard{M} & & D
}$$
\end{definition}

We can also define the dual, i.e. a \textbf{universal cell from $M$ to $F$}.

\begin{remark}
Consider an adjunction $F \dashv G :\mathbb{D}\rightarrow \mathbb{B}$ between double categories. That means that we have vertical natural transformations $\eta: 1_{\mathbb{B}} \rightarrow GF$ and $\epsilon : FG \rightarrow 1_{\mathbb{D}}$ such that $(G\epsilon)(\eta G)=1_G$ and $(\epsilon F)(F \eta)=1_F$. In other words, the functors $F_0:B_0\rightarrow D_0$ and $G_0:D_0\rightarrow B_0$ are equipped with the natural transformations $\eta_0: 1_{B_0} \rightarrow G_0F_0$ and $\epsilon_0 : F_0G_0 \rightarrow 1_{D_0}$, for which the triangle identities $(G_0\epsilon_0)(\eta_0 G_0)=1_{G_0}$ and $(\epsilon_0 F_0)(F_0 \eta_0)=1_{F_0}$ hold. So the functor $G_0$ is the right adjoint of $F_0$. Similarly, we see that the functor $G_1$ is the right adjoint of the functor $F_1$. Also, the following hold:
\begin{enumerate}
\item Each vertical arrow $\epsilon_D$ is universal from $F_0$ to $D$.
\item Each vertical arrow $\eta_B$ is universal from $B$ to $G_0$.
\item Each cell $\epsilon_M$ is universal from $F$ to $M$.
\item Each cell $\eta_N$ is universal from $N$ to $G$.
\end{enumerate}
\end{remark}

\begin{theorem}\cite{LimitsGP}\label{adjoint_grandis_pare}
An oplax double functor $F:\mathbb{B} \rightarrow \mathbb{D}$ has a lax double right adjoint $G$ if and only if the following hold:
\begin{enumerate}[label=\roman*.]
\item For every object $D$ in $\mathbb{D}$ there is a universal vertical arrow $(G_0D, \epsilon_D: F(G_0D) \rightarrow D)$ from the functor $F_0$ to $D$.
\item For every horizontal arrow $M:\srarrow{C}{D}$ in $\mathbb{D}$ there is a universal cell $(G_1M, \epsilon_M:F(G_1M) \rightarrow M)$ from the functor $F_1$ to $M$, such that $S(\epsilon_M)=\epsilon_C$ and $T(\epsilon_M)=\epsilon_D$.
\end{enumerate}
\end{theorem}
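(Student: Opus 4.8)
The statement is a biconditional, so the plan is to prove the two implications separately, with almost all of the work going into the harder direction. For the forward direction, suppose $F \dashv G$ is a double adjunction with counit $\epsilon$. By the remark preceding the theorem, the component $\epsilon_D : F_0(G_0 D) \to D$ is a universal vertical arrow from $F_0$ to $D$, and $\epsilon_M : F_1(G_1 M) \to M$ is a universal cell from $F_1$ to $M$; and since $\epsilon$ is a vertical natural transformation we have $S(\epsilon_M) = \epsilon_{SM}$ and $T(\epsilon_M) = \epsilon_{TM}$, which are exactly the universal arrows required at the source and target objects. This yields (i) and (ii) directly.

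For the converse, assume (i) and (ii); the plan is to manufacture $G$, its lax structure, and the unit, and then read off $F \dashv G$. First I would build the underlying functors. Writing $G_0 D$ for the domain of the chosen universal arrow to $D$ and $\epsilon_D$ for that arrow, the usual characterization of right adjoints by universal arrows turns $G_0$ into a functor $D_0 \to B_0$ with natural family $\epsilon_0 = (\epsilon_D)_D$ and $F_0 \dashv G_0$ in $\mathbf{Cat}$: on a vertical arrow $v : D \to D'$, $G_0 v$ is the unique arrow with $\epsilon_{D'} \circ F_0(G_0 v) = v \circ \epsilon_D$, and functoriality is forced by uniqueness. The identical argument applied to the universal cells of (ii) produces a functor $G_1 : D_1 \to B_1$ with natural counit $\epsilon_1 = (\epsilon_M)_M$ and $F_1 \dashv G_1$. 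The source and target clauses of (ii) give $S \circ G_1 = G_0 \circ S$ and $T \circ G_1 = G_0 \circ T$ on objects, and uniqueness propagates the equalities to cells, so $(G_0, G_1)$ is the functorial part of a would-be lax double functor.

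Next I would install the lax comparison cells. Given composable horizontal arrows $M, N$ in $\mathbb{D}$, I would paste the oplax comparison cell of $F$ at $(G_1 M, G_1 N)$ with $\epsilon_M \odot \epsilon_N$ to obtain a cell $F_1(G_1 M \odot G_1 N) \to M \odot N$ whose source and target are $\epsilon_{S(M \odot N)}$ and $\epsilon_{T(M \odot N)}$; the universal property of $\epsilon_{M \odot N}$ then factors it through a unique globular cell $G_{\odot} : G_1 M \odot G_1 N \to G_1(M \odot N)$. Dually, pasting the oplax unit cell of $F$ at $G_0 A$ with $U_{\epsilon_A}$ and factoring through $\epsilon_{U_A}$ yields $G_U : U_{G_0 A} \to G_1(U_A)$. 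Naturality of $G_{\odot}$ and $G_U$, and the associativity and unit coherence diagrams for a lax double functor, each reduce to ``two cells with equal composite with a universal cell are equal,'' the required pasted identities being assembled from the oplax coherence of $F$ and the structure isomorphisms of $\mathbb{B}$ and $\mathbb{D}$. Finally, I would define $\eta_B : B \to G_0 F_0 B$ as the unique vertical arrow with $\epsilon_{F_0 B} \circ F_0(\eta_B) = 1_{F_0 B}$ and $\eta_N : N \to G_1 F_1 N$ as the unique cell factoring $1_{F_1 N}$ through $\epsilon_{F_1 N}$; uniqueness again shows $\eta$ is a vertical natural transformation $1_{\mathbb{B}} \to GF$ compatible with the structure cells, and the triangle identities hold componentwise because they already hold for the ordinary adjunctions $F_0 \dashv G_0$ and $F_1 \dashv G_1$. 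Hence $F \dashv G$.

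The bookkeeping at the level of $D_0$ and $D_1$ is essentially the classical adjoint-functor-via-universal-arrows argument. The step I expect to be the main obstacle is the construction of $G_{\odot}$ and $G_U$ and the verification of their naturality and coherence: one must keep careful track of the mixed situation ($F$ oplax, $G$ lax) and paste together the oplax comparison cells of $F$ and the associativity and unit isomorphisms of $\mathbb{B}$ and $\mathbb{D}$ in exactly the right order. Each individual check collapses to uniqueness of factorization through a universal cell, so the difficulty is organizational rather than conceptual.
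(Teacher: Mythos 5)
The thesis states this theorem as a citation to Grandis and Par\'e and gives no proof of its own, so there is no internal argument to compare yours against; judged on its own terms, your outline is the standard one and is essentially the argument of the cited source. Both directions are organized correctly: the forward direction reads (i) and (ii) off the counit, and the converse builds $G_0$ and $G_1$ by the classical universal-arrow characterization of right adjoints, induces $G_{\odot}$ and $G_U$ by factoring the pastings $(\epsilon_N \odot \epsilon_M)\circ F_{\odot}$ and $U_{\epsilon_A}\circ F_U$ through the universal cells $\epsilon_{N\odot M}$ and $\epsilon_{U_A}$ (these comparison cells point the right way precisely because $F$ is oplax), and discharges naturality, coherence, and the triangle identities by uniqueness of factorizations. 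That is the right decomposition, and the coherence checks do collapse to ``two cells equalized by a universal cell are equal.''

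The one point you should not gloss over is what ``$F\dashv G$'' means when $F$ is oplax and $G$ is lax. The composites $GF$ and $FG$ are then neither lax nor oplax in general, so the unit and counit cannot literally be vertical natural transformations in $\mathbf{DblCat}_{\mathcal{L}}$ or its oplax analogue, and the thesis's own definition of a lax adjunction (both functors lax, adjunction in $\mathbf{DblCat}_{\mathcal{L}}$) does not cover the mixed case appearing in this theorem. Grandis and Par\'e resolve this by defining a colax/lax adjunction as an orthogonal adjunction in a strict double category whose horizontal arrows are lax double functors and whose vertical arrows are colax ones; the unit and counit are then cells of that double category, and the compatibilities you invoke in passing (``since $\epsilon$ is a vertical natural transformation we have $S(\epsilon_M)=\epsilon_{SM}$'') are part of the data of such a cell, subject to axioms involving $F_{\odot}$, $F_U$, $G_{\odot}$, $G_U$, rather than consequences of ordinary naturality. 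Once that definition is fixed your construction goes through essentially verbatim, but without it the statement being proved is not well posed; you should either adopt the Grandis--Par\'e formulation explicitly or restrict to the case where $F$ is pseudo, which is the only case the thesis actually uses.
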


\section{Monads, Comonads and Co-pointed Arrows in Double Categories} \label{Monads and Comonads in Double Categories}

In \cite{Fiore2011b}, Fiore, Gambino and Kock extend the basic concepts of Street's formal theory of monads \cite{Street1972} to double categories. Here we review these notions and we also consider, instead of comonads, arrows equipped only with a counit. We will call these co-pointed arrows, and they will play a central role in Chapter 5 for our characterization theorem. In the following consider a double category $\mathbb{D}$. All the double categories that we discuss in this section are special cases of the following:

\begin{definition}\cite{Fiore2011b}
The double category $\mathbf{End(\mathbb{D})}$ consists of:
\begin{enumerate}
\item Objects are horizontal endomorphisms $(A, S:\srarrow{A}{A})$ in $\mathbb{D}$.
\item A vertical arrow from $(A,S)$ to $(B,T)$ is a vertical morphism $(f,\phi)$, i.e. a vertical arrow $f:A\rightarrow B$ in $\mathbb{D}$, equipped with a cell
$$\xymatrixcolsep{0.2cm} \xymatrixrowsep{0.2cm} \xymatrix{ A \haru{S} \ar[dd]_f & & A \ar[dd]^f \\ & \phi \\ B \hard{T} & & B. }$$
\item Horizontal arrows are horizontal maps, i.e. pairs of the form $(F, \alpha):(A,S) \rightarrow (A',S')$, where $F$ is a horizontal arrow $\srarrow{A}{A'}$ and $\alpha$ is a cell
$$\xymatrixcolsep{0.2cm} \xymatrixrowsep{0.2cm} \xymatrix{ A \ar@{=}[dd] \haru{S} & & A \haru{F} & & A' \ar@{=}[dd] \\
& & \alpha \\
A \hard{F} & & A' \hard{S'} & & A'. }$$
\item A cell
$$ \xymatrixcolsep{0.2cm} \xymatrixrowsep{0.2cm} \xymatrix{
(A,S) \haru{(F,\alpha_F)} \ar[dd]_{(f,\phi)} & & (A',S') \ar[dd]^{(f',\phi')} \\
& \theta \\
(B,T) \hard{(G,\alpha_G)} & & (B',T') }$$
is a cell
$$ \xymatrixcolsep{0.2cm} \xymatrixrowsep{0.2cm} \xymatrix{
A \haru{F} \ar[dd]_{f} & &A' \ar[dd]^{f'} \\
& \theta \\
B \hard{G} & & B' }$$
such that
$$ \xymatrixcolsep{0.2cm} \xymatrixrowsep{0.2cm} \xymatrix{
A \ar@{=}[dd] \haru{S} & & A \haru{F} & & A' \ar@{=}[dd] & & A \ar[dd]_f \haru{S} & & A \ar[dd]_f \haru{F} & & A' \ar[dd]^{f'} \\
& & \alpha_F & & & & & \phi & & \theta \\
A \ar[dd]_f \haru{F} & & A' \ar[dd]_{f'} \haru{S'} & & A' \ar[dd]^{f'} & = & B \hard{T} \ar@{=}[dd] & & B \hard{G} & & B' \ar@{=}[dd] \\
& \theta & & \phi' & & & & & \alpha_G \\
B \hard{G} & & B' \hard{T'} & & B' & & B \hard{G} & & B' \hard{T'} & & B'. }$$
\end{enumerate}
\end{definition}

We now introduce the basic definitions for monads and the morphisms between them. We will not use them in the next 2 chapters, however, they will come in handy in the 6th one, when we will need to talk about modules in a double category.

\begin{definition}\cite{Fiore2011b}
A \textbf{monad} in $\mathbb{D}$ is an endomorphism $(A, S:\srarrow{A}{A})$ on an object $A$, equipped with globular cells $\eta:U_A \rightarrow S$ and $\mu:S\odot S \rightarrow S$ called unit and multiplication respectively, such that the following hold:
$$\xymatrixcolsep{0.2cm} \xymatrixrowsep{0.2cm} \xymatrix{ A \haru{U_A} \ar@{=}[dd] & & A \haru{S} \ar@{=}[dd] & & A \ar@{=}[dd] \\
& \eta & & 1_S & & & A \ar@{=}[dd] \haru{U_A} & & A \haru{S}  & & A \ar@{=}[dd] \\
A \ar@{=}[dd] \haru{S} & & A \haru{S} & & A \ar@{=}[dd] & = & & & r\\
& & \mu & & & & A \ar[rrrr]_S \ar[rrrr]|{\vstretch{0.60}{|}} & & & & A,\\
A \ar[rrrr]_S \ar[rrrr]|{\vstretch{0.60}{|}} & & & & A }$$
$$\xymatrixcolsep{0.2cm} \xymatrixrowsep{0.2cm} \xymatrix{ A \haru{S}  \ar@{=}[dd] & & A \haru{U_A} \ar@{=}[dd] & & A \ar@{=}[dd] \\
& 1_S & & \eta & & & A \ar@{=}[dd]\haru{S} & & A \haru{U_A} & & A \ar@{=}[dd]\\
A \ar@{=}[dd] \haru{S} & & A \haru{S} & & A \ar@{=}[dd] & = & & & l\\
& & \mu & & & & A \ar[rrrr]_S \ar[rrrr]|{\vstretch{0.60}{|}} & & & & A, \\
A \ar[rrrr]_S \ar[rrrr]|{\vstretch{0.60}{|}} & & & & A }$$
$$\xymatrixcolsep{0.2cm} \xymatrixrowsep{0.2cm} \xymatrix{A \ar@{=}[dd] \haru{S} & & A \ar@{=}[dd] \haru{S} & & A \haru{S} & & A \ar@{=}[dd] & & A \ar@{=}[dd] \haru{S} & & A \haru{S} & & A \ar@{=}[dd] \haru{S} & & A \ar@{=}[dd] \\
& 1_S & & & \mu & & & & & & \mu & & & 1_S \\
A \ar@{=}[dd] \hard{S} & & A \ar[rrrr]_S \ar[rrrr]|{\vstretch{0.60}{|}} & & & & A \ar@{=}[dd] & = & A \ar@{=}[dd] \ar[rrrr]_S \ar[rrrr]|{\vstretch{0.60}{|}}& & & & A \hard{S} & & A \ar@{=}[dd] \\
& & & \mu & & & & & &  & & \mu \\
A \ar[rrrrrr]_S \ar[rrrrrr]|{\vstretch{0.60}{|}} & & & & & & A & & A \ar[rrrrrr]_S \ar[rrrrrr]|{\vstretch{0.60}{|}} & & & & & & A }$$

A \textbf{monad morphism} from a monad $(A,S)$ to a monad $(B,T)$ is a vertical morphism $(f,\phi):(A,S) \rightarrow (B,T)$ such that:
$$\xymatrixcolsep{0.2cm} \xymatrixrowsep{0.2cm} \xymatrix{ A \haru{U_A} \ar@{=}[dd] & & A \ar@{=}[dd] & & A \ar[dd]_f \haru{U_A} & & A \ar[dd]^f \\
& \eta & & & & U_f\\
A \ar[dd]_f \haru{S} & & A \ar[dd]^f & = & B \ar@{=}[dd] \hard{U_B} & & B \ar@{=}[dd]\\
& \phi & & & & \eta \\
B \hard{T} & & B & & B \hard{T} & & B }$$
$$\xymatrixcolsep{0.2cm} \xymatrixrowsep{0.2cm} \xymatrix{ A \haru{S} \ar@{=}[dd] & & A \haru{S} & & A \ar@{=}[dd] & & A \ar[dd]_f \haru{S} & & A \ar[dd]_f \haru{S} & &A \ar[dd]^f\\
& & \mu & & & & & \phi & & \phi\\
A \ar[dd]_f \ar[rrrr]^S \ar[rrrr]|{\vstretch{0.60}{|}} & & & & A \ar[dd]^f & = & B \ar@{=}[dd] \hard{T} & & B \hard{T} & & B \ar@{=}[dd]\\
& & \phi & & & & & & \mu \\
B \ar[rrrr]_T \ar[rrrr]|{\vstretch{0.60}{|}} & & & & B & & B \ar[rrrr]_T \ar[rrrr]|{\vstretch{0.60}{|}} & & & & B. }$$

A \textbf{horizontal monad map} from a monad $(A,S)$ to a monad $(A',S')$ is a horizontal arrow $F:\srarrow{A}{A'}$, together with a cell
$$\xymatrixcolsep{0.2cm} \xymatrixrowsep{0.2cm} \xymatrix{ A \ar@{=}[dd] \haru{S} & & A \haru{F} & & A' \ar@{=}[dd] \\
& & \alpha \\
A \hard{F} & & A' \hard{S'} & & A', }$$
such that the following hold:
$$ \xymatrixcolsep{0.2cm} \xymatrixrowsep{0.2cm} \xymatrix{
& & & & & & & & A \ar@{=}[dd] \haru{S} & & A \ar@{=}[dd] \haru{S} & & A \haru{F} & & A' \ar@{=}[dd] \\
A \ar@{=}[dd] \haru{S} & & A \haru{S} & & A \ar@{=}[dd] \haru{F} & & A' \ar@{=}[dd] & & & & & & \alpha \\
& & \mu & & & & & & A \ar@{=}[dd] \haru{S} & & A \haru{F} & & A' \ar@{=}[dd] \haru{S'} & & A' \ar@{=}[dd] \\
A \ar@{=}[dd] \ar[rrrr]^{S} \ar[rrrr]|{\vstretch{0.60}{|}} & & & & A \haru{F} & & A' \ar@{=}[dd] & = & & & \alpha \\
& & & \alpha & & & & & A \ar@{=}[dd] \hard{F} & & A' \ar@{=}[dd] \hard{S'} & & A' \hard{S'} & & A' \ar@{=}[dd] \\
A \ar[rrrr]_{F} \ar[rrrr]|{\vstretch{0.60}{|}} & & & & A' \hard{S'} & & A & & & & & & \mu' \\
& & & & & & & & A \hard{F} & & A' \ar[rrrr]_{S'} \ar[rrrr]|{\vstretch{0.60}{|}} & & & & A' }$$
$$ \xymatrixcolsep{0.2cm} \xymatrixrowsep{0.2cm} \xymatrix{
A \ar@{=}[dd] \haru{U_A} & & A \ar@{=}[dd] \haru{F} & & A' \ar@{=}[dd] \\
& \eta & & & & & A \ar@{=}[dd] \haru{F} & & A' \haru{U_{A'}} \ar@{=}[dd] & & A' \ar@{=}[dd] \\
A \ar@{=}[dd] \haru{S} & & A \haru{F} & & A' \ar@{=}[dd] & = & & & & \eta' \\
& & \alpha & & & & A \hard{F} & & A' \hard{S'} & & A'. \\
A \hard{F} & & A' \hard{S'} & & A' }$$

We can now define the double category $\mathbf{Mnd(\mathbb{D})}$ with objects the monads, vertical arrows the monad morphisms, horizontal arrows the horizontal monad maps and cells as in $\mathbf{End(\mathbb{D})}$.
\end{definition}

\begin{definition}
Dually to the above, we define a \textbf{comonad} to be an endomorphism $(X, P: \srarrow{X}{X})$, equipped with globular cells $\epsilon: P \rightarrow U_X$ and $\delta: P \rightarrow P \odot P$, called counit and comultiplication respectively, that satisfy the suitable conditions. A \textbf{comonad morphism} from a comonad $(X,P)$ to a comonad $(Y,R)$ is a vertical morphism $(g,\psi):(X,P) \rightarrow (Y,R)$ which is compatible with the counit and comultiplication. A \textbf{horizontal comonad map} from a comonad $(X,P)$ to a comonad $(X',P')$ is a horizontal arrow $F:\srarrow{X}{X'}$, together with a cell
$$\xymatrixcolsep{0.2cm} \xymatrixrowsep{0.2cm} \xymatrix{ X \ar@{=}[dd] \haru{F} & & X' \haru{P'} & & X' \ar@{=}[dd] \\
& & \alpha \\
X \hard{P} & & X \hard{F} & & X', }$$
compatible with the counit and the comultiplication too and lastly, we define the double category $\mathbf{Com(\mathbb{D})}$ with objects the comonads, vertical arrows the comonad morphisms, horizontal arrows the horizontal comonad maps and cells as in $\mathbf{End(\mathbb{D})}$.
\end{definition}

\begin{definition}
A horizontal endomorphism $P:\srarrow{A}{A}$ is called \textbf{co-pointed} if it is equipped with a globular cell $\epsilon:P \rightarrow U_A$ called counit. The double category $\mathbf{Copt(\mathbb{D})}$ is the sub-double category of $\mathbf{End(\mathbb{D})}$ with objects the co-pointed endomorphisms, and the vertical morphisms and horizontal maps that are compatible with the counit in the usual sense.
\end{definition}

\begin{lemma}
For a double category $\mathbb{D}$ we have $(\mathbf{Mnd(\mathbb{D}^{\mathrm{vop}})}) \simeq (\mathbf{Com(\mathbb{D})})^{\mathrm{vop}}$, where $\mathbb{D}^{\mathrm{vop}}$ is the double category where we invert the vertical arrows.
\end{lemma}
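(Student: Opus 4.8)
The plan is to produce a strictly structure-preserving functor $\Phi : \mathbf{Mnd(\mathbb{D}^{\mathrm{vop}})} \to (\mathbf{Com(\mathbb{D})})^{\mathrm{vop}}$ which is the identity on all underlying data and bijective on objects, vertical arrows, horizontal arrows and cells, hence an isomorphism of double categories and in particular the claimed equivalence. First I would unwind $\mathbb{D}^{\mathrm{vop}}$: its objects and horizontal arrows are those of $\mathbb{D}$; its vertical arrows are those of $\mathbb{D}$ with source and target interchanged; its horizontal composition $\odot$ and horizontal units $U$ agree with those of $\mathbb{D}$; and a cell of $\mathbb{D}^{\mathrm{vop}}$ is a cell of $\mathbb{D}$ in which the two vertical sides now point the other way. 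In particular a globular cell $P \to Q$ of $\mathbb{D}^{\mathrm{vop}}$ is exactly a globular cell $Q \to P$ of $\mathbb{D}$, and the coherence isomorphisms $\alpha,\lambda,\rho$ of $\mathbb{D}^{\mathrm{vop}}$ are $\alpha^{-1},\lambda^{-1},\rho^{-1}$ of $\mathbb{D}$. That these data still satisfy the pentagon and triangle identities, so that $\mathbb{D}^{\mathrm{vop}}$ is again a pseudo double category, follows formally from the corresponding axioms in $\mathbb{D}$; this small point should be dispatched first, since it is the source of the inverted coherence cells.

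Next I would read off the correspondence level by level through the definitions of $\mathbf{End}$ and $\mathbf{Mnd}$. A monad $(A,S)$ in $\mathbb{D}^{\mathrm{vop}}$ is a horizontal endomorphism $S:\srarrow{A}{A}$ of $\mathbb{D}$ together with globular cells $\eta : U_A \to S$ and $\mu : S \odot S \to S$ of $\mathbb{D}^{\mathrm{vop}}$, that is, globular cells $\epsilon := \eta : S \to U_A$ and $\delta := \mu : S \to S\odot S$ of $\mathbb{D}$; transposing the unit and associativity pastings, and remembering that $\alpha,\lambda,\rho$ have been inverted, turns them into precisely the counit and coassociativity axioms, so a monad in $\mathbb{D}^{\mathrm{vop}}$ is exactly a comonad in $\mathbb{D}$. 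Running the same translation: a monad morphism $(f,\phi):(A,S)\to(B,T)$ in $\mathbb{D}^{\mathrm{vop}}$ is a vertical arrow $f:B\to A$ of $\mathbb{D}$ with a cell transposing to exactly the data of a comonad morphism $(B,T)\to(A,S)$ in $\mathbb{D}$; a horizontal monad map $(F,\alpha):(A,S)\to(A',S')$ in $\mathbb{D}^{\mathrm{vop}}$ is a horizontal arrow $F:\srarrow{A}{A'}$ of $\mathbb{D}$ with a cell transposing to exactly a horizontal comonad map $(A,S)\to(A',S')$ in $\mathbb{D}$ (the horizontal direction is unchanged, as it must be); and a cell of $\mathbf{Mnd(\mathbb{D}^{\mathrm{vop}})}$ transposes to a cell of $\mathbf{Com(\mathbb{D})}$ with vertical sides reversed. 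The reversal of the vertical arrows and cells is precisely what the outer $\mathrm{vop}$ on $(\mathbf{Com(\mathbb{D})})^{\mathrm{vop}}$ records, while the horizontal arrows, the composition $\odot$, and the units $U$ of $\mathbf{Mnd(\mathbb{D}^{\mathrm{vop}})}$ match those of $\mathbf{Com(\mathbb{D})}$ on the nose.

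These identifications assemble into $\Phi$, which by construction is the identity on underlying data and therefore strictly preserves $S$, $T$, $U$, $\odot$ and the coherence cells, and is bijective at every level; hence it is an isomorphism, a fortiori an equivalence. I expect the only genuine work to be the translation of the axioms: checking that the monad unit and associativity diagrams in $\mathbb{D}^{\mathrm{vop}}$ (in which $\alpha,\lambda,\rho$ occur inverted) become literally the counit and coassociativity diagrams of the ``dual'' definition of comonad, and that the monad-morphism and horizontal-monad-map coherence squares transpose to the stated comonad-morphism and horizontal-comonad-map squares with the correct orientations. All of this is routine diagram bookkeeping once the behaviour of the three opposites under $(-)^{\mathrm{vop}}$ has been pinned down.
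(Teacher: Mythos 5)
Your proposal is correct and takes the same route as the paper: a direct unwinding of the definitions showing that a monad structure in $\mathbb{D}^{\mathrm{vop}}$ (globular cells $U_A\to S$ and $S\odot S\to S$ there) is literally a comonad structure in $\mathbb{D}$, with the outer $\mathrm{vop}$ absorbing the reversal of vertical morphisms and cells. The paper's proof only spells out the object level and waves at ``the suitable conditions,'' so your more systematic check of the vertical arrows, horizontal maps, cells, and inverted coherence isomorphisms is a welcome completion of the same argument rather than a different one.
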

\begin{proof}
An object in $(\mathbf{Mnd(\mathbb{D}^{\mathrm{vop}})})$ is a pair $(A,S:\srarrow{A}{A})$ together with globular cells $\eta:U_A \rightarrow S$ and $\mu:S \odot S \rightarrow S$ in $\mathbb{D}^{\mathrm{vop}}$, i.e. globular cells $\eta:S \rightarrow U_A$ and $\mu: S \rightarrow S\odot S $ in $\mathbb{D}$, satisfying the suitable conditions. This is exactly a commonad structure in $\mathbb{D}$.
\end{proof}

\section{Eilenberg-Moore Objects} \label{Eilenberg-Moore Objects}

In the paper \cite{Fiore2011b}, the authors concentrated on the construction of free monads, and they treated the case for Eilenberg-Moore objects in the last two pages of the sequel \cite{Fiore2011c}. Here we will present their definition and we will prove that the double category of spans admits Eilenberg-Moore objects for comonads. However, we will consider at the end a much simpler definition and this is what we will use later in Chapter 5. This is because we noticed that in our case we didn't need the generality of their definition.

Consider the double functor $I:\mathbb{D} \rightarrow \mathbf{Com(\mathbb{D})}$ such that
$$\xymatrixcolsep{0.2cm} \xymatrixrowsep{0.2cm} \xymatrix{
X & \mapsto & (X,U_X), }$$
$$\xymatrixcolsep{0.2cm} \xymatrixrowsep{0.2cm} \xymatrix{
X \ar[dd]_f & & X \ar[dd]_f & & X \ar[dd]_f \haru{U_X} & & X \ar[dd]^f \\
& \mapsto & & , & & U_f \\
Y& & Y & & Y \hard{U_Y} & & Y, }$$
$$\xymatrixcolsep{0.2cm} \xymatrixrowsep{0.2cm} \xymatrix{
& & & & & & & & X \ar@{=}[dd] \haru{U_X} & & X \haru{M} & & X' \ar@{=}[dd] \\
X \haru{M} & & X' & \mapsto & X \haru{M} & & X'& , & & & \cong & & & \text{and} \\
& & & & & & & & X \hard{M} & & X' \hard{U_{X'}} & & X' }$$
$$\xymatrixcolsep{0.2cm} \xymatrixrowsep{0.2cm} \xymatrix{
X \ar[dd]_f \haru{M} & & X' \ar[dd]^g & & X \ar[dd]_f \haru{M} & & X' \ar[dd]^g \\
& \alpha & & \mapsto & & \alpha \\
Y \hard{N} & & Y' & & Y \hard{N} & & Y'. }$$ 

The authors in \cite{Fiore2011c} say that a double category $\mathbb{D}$ has Eilenberg-Moore objects for comonads if the double functor $I:\mathbb{D} \rightarrow \mathbf{Com(\mathbb{D})}$ has a right adjoint in $\mathbf{DblCat}$. Dually a double category $\mathbb{D}$ has Eilenberg-Moore objects for monads if the double functor $I:\mathbb{D} \rightarrow \mathbf{Mon(\mathbb{D})}$ has a right adjoint. Then the following proposition is a consequence of \Cref{adjoint_grandis_pare}.

\begin{proposition}\label{Eilenberg_Moore}
A double category $\mathbb{D}$ has Eilenberg-Moore objects for comonads if and only if the following hold:
\begin{enumerate}[label=\roman*.]
\item For each comonad $(X,P)$ there is an object $EM(X,P)$ and a universal comonad morphism $\theta_{(X,P)}:(EM(X,P),U) \rightarrow (X,P)$ from $I$ to $(X,P)$.
\item For each horizontal comonad map $(F,\alpha): \srarrow{(X,P)}{(X',P')}$ there is a horizontal arrow $EM(F):\srarrow{EM(X,P)}{EM(X'P')}$ and a universal cell
$$\xymatrixcolsep{0.2cm} \xymatrixrowsep{0.2cm} \xymatrix{ (EM(X,P) ,U)\haru{EM(F)} \ar[dd]_{\theta_{(X,P)}} & & (EM(X',P'),U) \ar[dd]^{\theta_{(X',P')}} \\
& \theta_F \\
(X,P) \hard{F} & & (X',P') }$$
in $\mathbf{Com(\mathbb{D})}$ from $I$ to $F$, with source and target given by the universal vertical arrows in \textit{i}.
\item The induced lax double functor $EM:\mathbf{Com(\mathbb{D})} \rightarrow \mathbb{D}$ which is right adjoint to $I$, is pseudo.
\end{enumerate}
\end{proposition}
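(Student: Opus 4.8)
The plan is to read off the proposition from \Cref{adjoint_grandis_pare} applied to the double functor $I:\mathbb{D}\rightarrow\mathbf{Com(\mathbb{D})}$. First I would record two preliminary observations: that $I$ is a genuine (pseudo) double functor, hence in particular oplax, so that \Cref{adjoint_grandis_pare} applies to it; and that, by definition, the statement ``$\mathbb{D}$ has Eilenberg--Moore objects for comonads'' means exactly that $I$ has a right adjoint \emph{in $\mathbf{DblCat}$}, i.e.\ a lax right adjoint $EM$ whose comparison cells $EM_{\odot}$ and $EM_U$ are invertible.

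For the forward implication, suppose $I$ has such a right adjoint $EM$ in $\mathbf{DblCat}$. Then $EM$ is a double functor, so iii.\ holds; and an adjunction in $\mathbf{DblCat}$ is in particular one in $\mathbf{DblCat}_{\mathcal{L}}$, so $EM$ is a lax right adjoint of the oplax functor $I$. By \Cref{adjoint_grandis_pare}, the counit $\epsilon:I\circ EM\rightarrow 1_{\mathbf{Com(\mathbb{D})}}$ then has each component $\epsilon_{(X,P)}$ universal from $I_0$ to $(X,P)$ and each component $\epsilon_{(F,\alpha)}$ universal from $I_1$ to $(F,\alpha)$ with the prescribed source and target; setting $\theta_{(X,P)}:=\epsilon_{(X,P)}$ and $\theta_F:=\epsilon_{(F,\alpha)}$ gives i.\ and ii. Here the only thing to verify is the translation of vocabulary: since $I_0$ sends $Y\mapsto(Y,U_Y)$, a universal arrow from $I_0$ to $(X,P)$ is precisely a universal comonad morphism from $I$ to $(X,P)$, and likewise a universal cell for $I_1$ into $(F,\alpha)$ is exactly a universal cell in $\mathbf{Com(\mathbb{D})}$ of the displayed shape.

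Conversely, given i.\ and ii., \Cref{adjoint_grandis_pare} --- applied to $I$ viewed as an oplax double functor --- manufactures a lax right adjoint $EM:\mathbf{Com(\mathbb{D})}\rightarrow\mathbb{D}$ in $\mathbf{DblCat}_{\mathcal{L}}$, whose counit is assembled from the chosen $\theta_{(X,P)}$ and $\theta_F$ and whose unit is forced by universality. By iii.\ this $EM$ is pseudo, hence an arrow of $\mathbf{DblCat}$; since the unit and counit are vertical natural transformations and both $I$ and $EM$ now live in $\mathbf{DblCat}$, the adjunction $I\dashv EM$ takes place in $\mathbf{DblCat}$, which is exactly what it means for $\mathbb{D}$ to have Eilenberg--Moore objects for comonads.

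I expect the one genuinely delicate point to be the gap between ``$I$ has a lax right adjoint'' --- which is all that \Cref{adjoint_grandis_pare} delivers, and which lives in $\mathbf{DblCat}_{\mathcal{L}}$ --- and ``$I$ has a right adjoint in $\mathbf{DblCat}$''; the two differ precisely by the requirement that the induced $EM$ be pseudo, which is why iii.\ has to be listed as a separate clause rather than following from i.\ and ii. Everything else is routine bookkeeping, and the dual statement for monads is obtained by running the same argument in $\mathbb{D}^{\mathrm{vop}}$.
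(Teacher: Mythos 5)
Your proposal is correct and follows exactly the route the paper takes: the paper simply observes that the proposition "is a consequence of" the Grandis--Par\'e characterization of lax right adjoints to oplax double functors (\Cref{adjoint_grandis_pare}), applied to the inclusion $I:\mathbb{D}\rightarrow\mathbf{Com(\mathbb{D})}$, with condition iii.\ supplying precisely the upgrade from an adjunction in $\mathbf{DblCat}_{\mathcal{L}}$ to one in $\mathbf{DblCat}$. Your write-up in fact spells out the bookkeeping (the translation of universal arrows/cells and the lax-versus-pseudo gap) more explicitly than the paper does.
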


\begin{proposition}\label{Eilenberg_Moore_Spans}
The double category $\mathrm{\mathbb{S}pan}\mathcal{E}$, for $\mathcal{E}$ a category with pullbacks and terminal object, has Eilenberg-Moore objects for comonads, in the sense of Fiore, Gambino, and Kock.
\end{proposition}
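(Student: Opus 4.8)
The plan is to verify the three conditions of \Cref{Eilenberg_Moore}. The decisive preliminary step, without which nothing is transparent, is to unwind what a comonad in $\mathbb{S}\mathrm{pan}\mathcal{E}$ actually is. Given a span $A\xleftarrow{p_0}P\xrightarrow{p_1}A$, a globular counit $\epsilon:P\to U_A$ is, by the description of cells in $\mathbb{S}\mathrm{pan}\mathcal{E}$, an arrow $P\to A$ that equals both $p_0$ and $p_1$; write $e:=\epsilon$. Then $P\odot P$ is the pullback $P\times_A P$ of $e$ along $e$, and the two counit axioms force the comultiplication $\delta:P\to P\odot P$ to be the diagonal $\langle 1_P,1_P\rangle$, whereupon coassociativity holds automatically. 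Hence a comonad on $A$ is exactly an arrow $e:P\to A$ of $\mathcal{E}$, with the resulting forced structure; likewise a comonad morphism amounts to a commuting square in $\mathcal{E}$, and a horizontal comonad map $(F,\alpha):(A,P)\to(A',P')$ consists of a span $F$ together with a globular cell $\alpha:P'\odot F\to F\odot P$, i.e.\ a map $F\times_{A'}P'\to P\times_A F$ over the appropriate legs.

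Condition (i) is then immediate: put $EM(A,P):=P$ and $\theta_{(A,P)}:=(e,1_P):(P,U_P)\to(A,P)$, a comonad morphism because all the comonad structures in sight are the forced ``diagonal'' ones. Since a comonad morphism $(Y,U_Y)\to(A,P)$ is precisely an arrow $\chi:Y\to P$ of $\mathcal{E}$ (its $D_0$-component being $e\chi$), composing with $\theta_{(A,P)}$ yields a bijection $\mathcal{E}(Y,P)\cong\mathbf{Com}(\mathbb{S}\mathrm{pan}\mathcal{E})_0\big(I(Y),(A,P)\big)$ natural in $Y$, so $\theta_{(A,P)}$ is universal from $I$ to $(A,P)$.

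For condition (ii), given $(F,\alpha):(A,P)\to(A',P')$ I would take $EM(F)$ to be the horizontal arrow $\srarrow{P}{P'}$ whose apex is the pullback $F\times_{A'}P'\;(=P'\odot F)$, with right leg $\mathrm{pr}_{P'}$ and left leg $\mathrm{pr}_P\circ\alpha$, and I would take the universal cell $\theta_F$ to be the projection $\mathrm{pr}_F:F\times_{A'}P'\to F$, read as a cell $EM(F)\to F$ over $\theta_{(A,P)}$ and $\theta_{(A',P')}$ --- its two leg identities being respectively the pullback equation and the globularity of $\alpha$. It remains to show that every cell from $I_1(N)$ (for a horizontal arrow $N:\srarrow{Y}{Y'}$ of $\mathbb{S}\mathrm{pan}\mathcal{E}$) to $F$ factors uniquely through $\theta_F$; if such a cell has underlying $\mathbb{S}\mathrm{pan}\mathcal{E}$-cell $\theta:N\to F$ and comonad-morphism sides $\chi,\chi'$, the factoring cell is $\langle\theta,\chi'n_1\rangle:N\to F\times_{A'}P'$, and one must check that its legs are $\chi n_0$ and $\chi'n_1$ --- which is exactly where the compatibility of the $\mathbf{Com}$-cell with $\alpha$ and the comonad-morphism equations for $\chi,\chi'$ get consumed. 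I expect this, together with spelling out the $\alpha$-compatibility of a $\mathbf{Com}$-cell in usable elementary form, to be the main obstacle: it is a finite pullback chase, but a laborious one.

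Finally, for condition (iii) the induced lax double functor $EM:\mathbf{Com}(\mathbb{S}\mathrm{pan}\mathcal{E})\to\mathbb{S}\mathrm{pan}\mathcal{E}$ is pseudo. Its unit comparison $U_{EM(A,P)}\to EM(U_{(A,P)})$ is the evident isomorphism $U_P\cong U_A\times_A P$, and its composition comparison $EM(F)\odot EM(G)\to EM(F\odot G)$ is an isomorphism by the usual pasting of pullbacks, since horizontal composition in both double categories is computed by pullbacks and $EM$ is assembled from pullbacks, which $\mathcal{E}$ has. Conceptually this is a shadow of the fact that, under the identification of comonads with arrows of $\mathcal{E}$, the functors $I$ and $EM$ are $\mathbb{S}\mathrm{pan}$ applied to the adjunction $(Y\mapsto 1_Y)\dashv\mathrm{dom}:\mathcal{E}^{\mathbf{2}}\to\mathcal{E}$, the right adjoint preserving pullbacks. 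With the three conditions established, \Cref{Eilenberg_Moore} gives the result; the terminal object of $\mathcal{E}$ plays no essential role beyond furnishing terminal objects in the auxiliary arrow categories, the working hypothesis being the existence of pullbacks.
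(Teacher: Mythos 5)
Your proposal is correct and follows essentially the same route as the paper's proof: identify a comonad in $\mathbb{S}\mathrm{pan}\mathcal{E}$ with a single arrow $e:P\to A$, set $EM(A,P)=P$ with $\theta_{(A,P)}=(e,1_P)$, build $EM(F)$ as the apex of the composite span on the domain side of the structure cell with one leg twisted by $\alpha$, verify universality by a pullback chase, and check that the induced functor is pseudo by pasting pullbacks. The only divergence is one of convention: you read $\alpha$ in the direction $P'\odot F\to F\odot P$ given in the paper's definition of a horizontal comonad map (so your $EM(F)$ has apex $F\times_{A'}P'$), whereas the paper's own proof silently uses the opposite direction $P\times_X F\to F\times_{X'}P'$ (apex $P\times_X F$) — a mirror image rather than a substantive difference — and your observation that the counit axioms force the comultiplication to be the diagonal is a point the paper leaves implicit.
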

\begin{proof}
Consider a comonad in $\mathrm{\mathbb{S}pan}\mathcal{E}$. Since we have a counit for it, this is exactly a span of the form
$$\xymatrixcolsep{0.2cm} \xymatrixrowsep{0.2cm} \xymatrix{
& P \ar[dl]_p \ar[dr]^p \\
X & & X.}$$
Then the comonad morphism 
$$\xymatrixcolsep{0.2cm} \xymatrixrowsep{0.2cm} \xymatrix{
& & & P \ar[dl]_1 \ar[dr]^1 \ar[dddd]^1 \\
P \ar[dd]_p & & P \ar[dd]_p & & P \ar[dd]^p \\
& , \\
X & & X & & X \\
& & & P \ar[ul]^p \ar[ur]_p }$$
is universal from the inclusion $I$ to $(X,P)$, since any comonad morphism
$$\xymatrixcolsep{0.2cm} \xymatrixrowsep{0.2cm} \xymatrix{
& & & R \ar[dl]_1 \ar[dr]^1 \ar[dddd]^{\psi} \\
R \ar[dd]_r & & R \ar[dd]_r & & R \ar[dd]^r \\
& , \\
X & & X & & X \\
& & & P \ar[ul]^p \ar[ur]_p }$$
can be written uniquely as
$$\xymatrixcolsep{0.2cm} \xymatrixrowsep{0.2cm} \xymatrix{
& & & R \ar[dl]_1 \ar[dr]^1 \ar[dd]^{\psi} \\
R \ar[dd]_{\psi} & & R \ar[dd]_{\psi} & & R \ar[dd]^{\psi} \\
& & & P \ar[dddd]^1 \ar[dl]_1 \ar[dr]^1 \\
P \ar[dd]_p & , & P \ar[dd]_p & & P \ar[dd]^p \\
\\
X & & X & & X \\
& & & P \ar[ul]^p \ar[ur]_p }$$
Consider now a horizontal comonad map $F:\srarrow{(X,P)}{(X,P)}$, which is a span
$$\xymatrixcolsep{0.2cm} \xymatrixrowsep{0.2cm} \xymatrix{
& F \ar[dl]_{f_1} \ar[dr]^{f_2} \\
X & & X' }$$
together with a cell
$$\xymatrixcolsep{0.2cm} \xymatrixrowsep{0.2cm} \xymatrix{
& & P\times_X F \ar[dl]_-{\pi_1} \ar[dr]^-{\pi_2} \\
& P \ar[dl]_p \ar[dr]^p & & F \ar[dl]_{f_1} \ar[dr]^{f_2} \\
X \ar@{=}[dd] & & X & & X' \ar@{=}[dd] \\
& & \alpha \\
X & & X' & & X' \\
& F \ar[ul]^{f_1} \ar[ur]_{f_2} & & P' \ar[ul]^{p'} \ar[ur]_{p'} \\
& & F \times_{X'} P' \ar[ul]^-{\pi_3} \ar[ur]_-{\pi_4} }$$
i.e. with an arrow $\alpha: P\times_X F \rightarrow F \times_X' P'$ such that $f_1 \pi_3 \alpha =p \pi_1$ and $p' \pi_4 \alpha = f_2 \pi_2$. Then the span
$$\xymatrixcolsep{0.2cm} \xymatrixrowsep{0.2cm} \xymatrix{
& P\times_X F \ar[dl]_-{\pi_1} \ar[dr]^-{\pi_4 \alpha} \\
P & & P' }$$
is a horizontal comonad map $\srarrow{(P,U_p)}{(P',U_{P'})}$, with structure cell the identity. We will show that the cell
$$\xymatrixcolsep{0.2cm} \xymatrixrowsep{0.2cm} \xymatrix{
& & P\times_X F \ar[dddd]_{\pi_2} \ar[dll]_-{\pi_1} \ar[drr]^-{\pi_4 \alpha}\\
P \ar[dd]_p & & & & P' \ar[dd]^{p'} \\
\\
X & & & & X' \\
& & F \ar[ull]^{f_1} \ar[urr]_{f_2} }$$
is universal from $I$ to $F$. Consider a cell
$$\xymatrixcolsep{0.2cm} \xymatrixrowsep{0.2cm} \xymatrix{
& G \ar[dl]_{g_1} \ar[dr]^{g_2} \ar[dddd]_{\theta} \\
R \ar[d]_-{\psi} & & R' \ar[d]^-{\psi'} \\
P \ar[d]_-p & & P' \ar[d]^-{p'} \\
X & \quad \quad & X' \\
& F \ar[ul]^{f_1} \ar[ur]_{f_2} }$$
and the unique arrow $\psi g_1 \times_X \theta$, taken from the universal property of the following pullback:
$$\xymatrixcolsep{0.4cm} \xymatrixrowsep{0.8cm} \xymatrix{ G \ar@{-->}[dr] \ar@/^2ex/[drr]^{\theta} \ar@/_2ex/[ddr]_{\psi g_1} \\
& P\times_X F \ar[d]_-{\pi_1} \ar[r]^-{\pi_2} & F \ar[d]^{f_1} \\
& P \ar[r]_p & X.}$$
Then we have
$$p' \pi_4 \alpha (\psi g_1 \times_X \theta) = f_2 \pi_2 (\psi g_1 \times_X \theta) = f_2 \theta = p' \psi' g_2$$
and by the universal property of $p'$ we get
$$\pi_4 \alpha (\psi g_1 \times_X \theta)=\psi' g_2.$$
So we can factor $\theta$ as follows:
$$\xymatrixcolsep{0.2cm} \xymatrixrowsep{0.2cm} \xymatrix{
& & & & & G \ar[dl]_{g_1} \ar[dr]^{g_2} \ar[dddd]|{\psi g_1 \times_X \theta} \\
& G \ar[dl]_{g_1} \ar[dr]^{g_2} \ar[dddd]_{\theta} & & & R \ar[dd]_{\psi} & & R' \ar[dd]^{\psi'} \\
R \ar[d]_-{\psi} & & R' \ar[d]^-{\psi'} \\
P \ar[d]_-p & & P' \ar[d]^-{p'} & = & P \ar[dd]_p & & P' \ar[dd]^{p'} \\
X & \quad \quad & X' & & & P \times_X F \ar[ul]_{\pi_1} \ar[ur]^{\pi_4 \alpha} \ar[dd]_{\pi_2} \\
& F \ar[ul]^{f_1} \ar[ur]_{f_2} & & & X & & X'. \\
& & & & & F \ar[ul]^{f_1} \ar[ur]_{f_2} }$$
We have proved conditions \textit{i} and \textit{ii} of \Cref{Eilenberg_Moore}. It remains to show that the induced lax double functor $EM:\mathbf{Com}(\mathbb{S}\mathrm{pan}(\mathcal{E}))\rightarrow \mathbb{S}\mathrm{pan}(\mathcal{E})$ defined by
$$\xymatrixcolsep{0.4cm} \xymatrixrowsep{0.02cm} \xymatrix{ X & P \ar[l]_-p \ar[r]^-p & X & {\mapsto} & P}$$
$$\xymatrixcolsep{0.02cm} \xymatrixrowsep{0.02cm} \xymatrix{ & \quad F \quad \ar[ddl]_{f_1} \ar[ddr]^{f_2} & & & & P\times_X F \ar[ddl]_{\pi_1} \ar[rdd]^{\pi_4 \alpha}\\
& & & {\mapsto} \\
X & & X' & & P & & P' }$$
is pseudo. It is normal because the unit component
$$\xymatrixcolsep{0.2cm} \xymatrixrowsep{0.2cm} \xymatrix{
& P \ar[dl]_1 \ar[dr]^1 \ar[ddd]^{1\times_X 1} \\
P \ar@{=}[d] & & P \ar@{=}[d] \\
P & & P \\
& P\times_X P \ar[ul]^-{\pi} \ar[ur]_-{\pi} }$$
is invertible.
Consider now two composable horizontal comonad maps
$$\xymatrixcolsep{0.2cm} \xymatrixrowsep{0.2cm} \xymatrix{
& & & & & & P\times_X F \ar[dl]_-{\pi_1} \ar[dr]^-{\pi_2} \\
& & & & & P \ar[dl]_p \ar[dr]^p & & F \ar[dl]_{f_1} \ar[dr]^{f_2} \\
& F \ar[ddl]_{f_1} \ar[ddr]^{f_2} & & & X \ar@{=}[dd] & & X & & X' \ar@{=}[dd] \\
& & & & & & \alpha \\
X & & X' & , & X & & X' & & X' \\
& & & & & F \ar[ul]^{f_1} \ar[ur]_{f_2} & & P' \ar[ul]^{p'} \ar[ur]_{p'} \\
& & & & & & F \times_{X'} P' \ar[ul]^-{\pi_3} \ar[ur]_-{\pi_4} }$$
and
$$\xymatrixcolsep{0.2cm} \xymatrixrowsep{0.2cm} \xymatrix{
& & & & & & P'\times_{X'} G \ar[dl]_-{\rho_1} \ar[dr]^-{\rho_2} \\
& & & & & P' \ar[dl]_{p'} \ar[dr]^{p'} & & G \ar[dl]_{g_1} \ar[dr]^{g_2} \\
& G \ar[ddl]_{g_1} \ar[ddr]^{g_2} & & & X' \ar@{=}[dd] & & X' & & X'' \ar@{=}[dd] \\
& & & & & & \beta \\
X' & & X'' & , & X' & & X'' & & X'.' \\
& & & & & G \ar[ul]^{g_1} \ar[ur]_{g_2} & & P'' \ar[ul]^{p''} \ar[ur]_{p''} \\
& & & & & & G \times_{X''} P'' \ar[ul]^-{\rho_3} \ar[ur]_-{\rho_4}}$$
Then their images through $EM$ will be
$$\xymatrixcolsep{0.02cm} \xymatrixrowsep{0.02cm} \xymatrix{ & P\times_X F \ar[ddl]_-{\pi_1} \ar[rdd]^-{\pi_4 \alpha} & & & & P'\times_{X'} G \ar[ddl]_-{\rho_1} \ar[rdd]^-{\rho_4 \beta}\\
\\
P & & P' & \text{ and } & P' & & P''}$$
respectively. Consider also the pullbacks
$$\xymatrixcolsep{0.6cm} \xymatrix{
F\times_{X'} G \ar[d]_-{\tau_1} \ar[r]^-{\tau_2} & G \ar[d]^{g_1} & P \times_X (F\times_{X'} G) \ar[d] \ar[r] & F\times_{X'} G \ar[d]^{f_1 \tau_1} & (P \times_X F) \times_{X'} G \ar[d]_{\sigma_1} \ar[r]^-{\sigma_2} & G \ar[d]^{g_1} \\
F \ar[r]_{f_2} & X', & P \ar[r]_p & X, & P\times_X F \ar[r]_-{\pi_4 \alpha} & X', }$$
and
$$ \xymatrix{
(P\times_X F)\times_{P'} (P'\times_{X'} G) \ar[d]_{\upsilon_1} \ar[r]^-{\upsilon_2} & P' \times_{X'} G \ar[d]^{\rho_1} \\
P\times_X F \ar[r]_-{\pi_4 \alpha} & P'. }$$
The functor $EM$ is lax exactly because we have the unique arrow in the diagram below
$$\xymatrixcolsep{0.6cm} \xymatrix{
(P\times_X F)\times_{P'} (P'\times_{X'} G) \ar@{-->}[dr] \ar@/^2ex/[drr]^{(\pi_2 \upsilon_1)\times_{X'}(\rho_2 \upsilon_2)} \ar@/_2ex/[ddr]_{\pi_1 \upsilon_1 }\\
& P \times_X (F\times_{X'} G) \ar[d] \ar[r] & F\times_{X'} G \ar[d]^{f_1 \tau_1} \\
& P \ar[r]_p & X.}$$
It is furthermore pseudo because of the unique arrow in the following diagram
$$\xymatrixcolsep{0.6cm} \xymatrix{
(P \times_X F)\times_{X'} G \ar@{-->}[dr] \ar@/^2ex/[drr]^{(\pi_4 \alpha \sigma_1) \times_{X'} \sigma_2} \ar@/_2ex/[ddr]_{\sigma_1}\\
& (P\times_X F)\times_{P'} (P'\times_{X'} G) \ar[d]_{\upsilon_1} \ar[r]^-{\upsilon_2} & P'\times_{X'}G \ar[d]^{\rho_1} \\
& P\times_X F \ar[r]_-{\pi_4 \alpha} & P'}$$
and the fact that $P \times_X (F\times_{X'} G) \cong (P \times_X F)\times_{X'} G$.
\end{proof}

We will now discuss Eilenberg-Moore constructions for endomorphisms that are only equipped with a counit. If we were to follow Fiore, Gambino and Kock's definition then we would say that $\mathbf{D}$ admits Eilenberg-Moore objects for co-pointed endomorphisms if the inclusion double functor $I:\mathbf{D}\rightarrow \mathbf{Copt(\mathbb{D})}$ has a right adjoint. However, we will only use the one-dimensional condition of it:

\begin{definition}
A double category $\mathbf{D}$ \textbf{admits Eilenberg-Moore objects for co-pointed endomorphisms} if the inclusion functor between categories $I:D_0\rightarrow (\mathbf{Copt(\mathbb{D})})_0$ has a right adjoint.
\end{definition}

\begin{remark}
The category $(\mathbf{Copt(\mathbb{D})})_0$ in the above definition is just the category of co-pointed arrows and the morphisms between them in the category $D_0$.
\end{remark}

\begin{remark}\label{Eilenberg_definition}
As in the case of comonads, the above translates to a cell
$$\xymatrixcolsep{0.2cm} \xymatrixrowsep{0.2cm} \xymatrix{ EM(P) \haru{U} \ar[dd]_{u} & & EM(P) \ar[dd]^{u} \\ & \theta_P \\ A \hard{P}  & & A}$$
for every co-pointed endomorphism $P$, which is universal from the functor $U$ to the object $P$.
\end{remark}

\begin{example}
One can see from the proof of \Cref{Eilenberg_Moore_Spans} that the double category $\mathrm{\mathbb{S}pan}\mathcal{E}$, for $\mathcal{E}$ a category with pullbacks and terminal object, has Eilenberg-Moore objects for co-pointed endomorphisms.
\end{example}

\section{Fibrant Double Categories} \label{Fibrant Double Categories}

In a general double category there is no guarantee that there is some sort of relation between its two types of arrows. This is exactly what a fibrant double category provides. In such a double category we can always associate two horizontal arrows to every vertical arrow in such a way that the horizontal ones are parts of an adjunction in $\mathcal{H(\mathbb{D})}$. Classic references for the subject are Shulman's \cite{Shulman2008}, where he uses the name ``framed bicategories" instead, or the earlier \cite{AdjointsGP}, where Par\'e and Grandis use the term ``double categories with companions and conjoints" to define a structure equivalent to what we have here. In the appendices of \cite{Shulman2008} in fact, Shulman shows that to any fibrant double category we can associate a proarrow equipment, as in \cite{Woo82} or \cite{Woo85}, and to every proarrow equipment we can associate a fibrant double category.

 In the first half of this section we review the basic definitions and some important properties. In the second half, we consider fibrant double categories that satisfy specific product related properties.

\begin{definition}\cite{Shulman2008}
Consider a cell
$$\xymatrixcolsep{0.2cm} \xymatrixrowsep{0.2cm} \xymatrix{ A \haru{M} \ar[dd]_f & & C \ar[dd]^g \\ & \alpha \\ B \hard{N}  & & D. }$$ Then
\begin{enumerate}[label=\roman*.]
\item $\alpha$ is called \textbf{Cartesian} if every cell of the form
$$\xymatrixcolsep{0.2cm} \xymatrixrowsep{0.2cm} \xymatrix{ E \haru{M'} \ar[dd]_{fh} & & F \ar[dd]^{gk} \\ & \beta \\ B \hard{N}  & & D }$$
can be factored as
$$\xymatrixcolsep{0.2cm} \xymatrixrowsep{0.2cm} \xymatrix{ E \haru{M'} \ar[dd]_h & & F \ar[dd]^k \\
& \gamma \\
A \ar[dd]_f \haru{M} & & C \ar[dd]^g \\
& \alpha \\
B \hard{N} & & D,} $$
for a unique cell $\gamma$.
\item $\alpha$ is called \textbf{op-Cartesian} if every cell of the form
$$\xymatrixcolsep{0.2cm} \xymatrixrowsep{0.2cm} \xymatrix{ A \haru{M} \ar[dd]_{hf} & & C \ar[dd]^{kg} \\ & \beta \\ E \hard{N'}  & & F }$$
can be factored as
$$\xymatrixcolsep{0.2cm} \xymatrixrowsep{0.2cm} \xymatrix{ A \haru{M} \ar[dd]_f & & C \ar[dd]^g \\
& \alpha \\
B \ar[dd]_h \hard{N} & & D \ar[dd]^k \\
& \gamma \\
E \hard{N'} & & F,} $$
for a unique cell $\gamma$.
\end{enumerate}
\end{definition}

By the above definition it is clear that we have the following lemma:

\begin{lemma} \label{vertical_composite}
\begin{enumerate}
\item The vertical composite of two Cartesian cells is Cartesian.
\item The vertical composite of two op-Cartesian cells is op-Cartesian.
\end{enumerate}
\end{lemma}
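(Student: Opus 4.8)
The plan is to prove both parts by a straightforward diagram-pasting argument, using only the universal factorization properties that define Cartesian and op-Cartesian cells. I will prove part (1) in detail; part (2) follows by the evident vertical-dual argument (or by passing to the double category $\mathbb{D}^{\mathrm{vop}}$, in which op-Cartesian cells become Cartesian).

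First I would set up notation: suppose we have two vertically composable Cartesian cells, say $\alpha$ sitting over vertical arrows $(f,g)$ from $M$ to $N$, and $\alpha'$ sitting over vertical arrows $(f',g')$ from $N$ to $N'$, so that the vertical composite $\frac{\alpha}{\alpha'}$ sits over $(f'f, g'g)$ from $M$ to $N'$. To check that $\frac{\alpha}{\alpha'}$ is Cartesian, I take an arbitrary cell $\beta$ of the form required by the definition, namely one with boundary $M' \to N'$ over vertical arrows of the shape $(f'f)h = f'(fh)$ and $(g'g)k = g'(gk)$. The idea is to factor $\beta$ in two stages. Since $\alpha'$ is Cartesian and the left/right legs of $\beta$ factor as $f'$ followed by $fh$ and $g'$ followed by $gk$, there is a unique cell $\gamma'$ with $\beta = \frac{\gamma'}{\alpha'}$, where $\gamma'$ has boundary $M' \to N$ over $fh$ and $gk$. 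Then, since $\alpha$ is Cartesian and the legs of $\gamma'$ factor as $f$ followed by $h$ and $g$ followed by $k$, there is a unique cell $\gamma$ with $\gamma' = \frac{\gamma}{\alpha}$, where $\gamma$ has boundary $M' \to M$ over $h$ and $k$. Composing, $\beta = \frac{\gamma}{\frac{\alpha}{\alpha'}}$, which is the required factorization through $\frac{\alpha}{\alpha'}$.

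It remains to argue uniqueness of $\gamma$. Suppose $\tilde\gamma$ is another cell with $\beta = \frac{\tilde\gamma}{\frac{\alpha}{\alpha'}} = \frac{(\frac{\tilde\gamma}{\alpha})}{\alpha'}$. By the uniqueness clause in the Cartesian property of $\alpha'$, we get $\frac{\tilde\gamma}{\alpha} = \gamma' = \frac{\gamma}{\alpha}$; then by the uniqueness clause in the Cartesian property of $\alpha$, we get $\tilde\gamma = \gamma$. This completes part (1). For part (2), one runs the dual argument: given two vertically composable op-Cartesian cells, an arbitrary cell to be factored has its target legs precomposed by the two bottom vertical arrows, and one peels these off one at a time using the op-Cartesian universal property of the top cell first and then the bottom cell, with uniqueness handled exactly as before.

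I do not expect any genuine obstacle here; the statement is essentially bookkeeping. The only point requiring a little care is making sure the intermediate cell $\gamma'$ has exactly the boundary data needed to invoke the second Cartesian property — that is, that the factorizations $f'f\cdot h = f'\cdot(f h)$ and $g'g\cdot k = g'\cdot(g k)$ are associated correctly so that the shapes match the definition — but this is immediate from associativity of vertical composition in $D_0$. So the write-up is just the two-stage factorization plus the two-stage uniqueness, and the dual remark for op-Cartesian cells.
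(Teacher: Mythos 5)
Your proof is correct, and it is the standard two-stage factorization/uniqueness argument; the paper itself gives no proof at all, stating only that the lemma is clear from the definition of (op-)Cartesian cells. Your write-up simply supplies the bookkeeping the paper leaves implicit, including the one point that genuinely needs checking, namely that the reassociations $(f'f)h = f'(fh)$ and $(g'g)k = g'(gk)$ put the boundary data in the shape required to invoke each universal property in turn.
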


\begin{proposition} \label{fibrant_equivalence}\cite{Shulman2008}
The following are equivalent for a double category $\mathbb{D}$:
\begin{enumerate}
\item For every vertical arrow $f:A\rightarrow B$, there exist horizontal arrows $f_*:\srarrow{A}{B}$ and $f^* : \srarrow{B}{A}$, together with cells
$$\xymatrixcolsep{0.2cm} \xymatrixrowsep{0.2cm} \xymatrix{
A \ar[dd]_f \haru{f_*} & & B \ar@{=}[dd] & & B \ar@{=}[dd] \haru{f^*} & & A \ar[dd]^f & & A \ar[dd]_f \haru{U_A} & & A \ar@{=}[dd] & & A \ar@{=}[dd] \haru{U_A} & & A \ar[dd]^f \\
& \quad & & , & & \quad & & , & & \quad & & \text{and} & & \quad \\
B \hard{U_B} & & B & & B \hard{U_B} & & B & & B \hard{f^*} & & A & & A \hard{f_*} & & B, } $$
such that
$$\xymatrixcolsep{0.2cm} \xymatrixrowsep{0.2cm} \xymatrix{
A \ar@{=}[dd] \haru{U_A} & & A \ar[dd]^f & & & & & & A \ar[dd]_f \haru{U_A} & & A \ar@{=}[dd] \\
& \quad & & &  A \ar[dd]_f \haru{U_A} & & A \ar[dd]^f & & & \quad \\
A \ar[dd]_f \hard{f_* } & & B \ar@{=}[dd] & = & & U_f & & = & B \ar@{=}[dd] \hard{f^*} & & A \: , \ar[dd]^f \\
& & & & B \hard{U_B} & & B \\
B \hard{U_B} & & B & & & & & & B \hard{U_B} & & B } $$
$$\xymatrixcolsep{0.2cm} \xymatrixrowsep{0.2cm} \xymatrix{
A \ar@{=}[dd] \ar[rrrr]^{f_*} \ar[rrrr]|{\vstretch{0.60}{|}} & & & & B \ar@{=}[dd] \\
& & \cong \\
A \ar@{=}[dd] \haru{U_A} & & A \ar[dd]^f \haru{f_* } & & B \ar@{=}[dd] & & A \ar@{=}[dd] \haru{f_*} & & B \ar@{=}[dd] \\
& \quad & & \quad & & = & & 1 \\
A \ar@{=}[dd] \hard{f_* } & & B \hard{U_B} & & B \ar@{=}[dd] & & A \hard{f_*} & & B \\
& & \cong \\
A \ar[rrrr]_{f_*} \ar[rrrr]|{\vstretch{0.60}{|}} & & & & B }$$
and
$$\xymatrixcolsep{0.2cm} \xymatrixrowsep{0.2cm} \xymatrix{
B \ar@{=}[dd] \ar[rrrr]^{f^*} \ar[rrrr]|{\vstretch{0.60}{|}} & & & & A \ar@{=}[dd] \\
& & \cong \\
B \ar@{=}[dd] \haru{U_A} & & A \ar[dd]^f \haru{f^* } & & A \ar@{=}[dd] & & B \ar@{=}[dd] \haru{f^*} & & A \ar@{=}[dd] \\
& \quad & & \quad & & = & & 1 \\
B \ar@{=}[dd] \hard{f^*} & & B \hard{U_B} & & A \ar@{=}[dd] & & B \hard{f^*} & & A \\
& & \cong \\
B \ar[rrrr]_{f^*} \ar[rrrr]|{\vstretch{0.60}{|}} & & & & A }$$
\item For every `niche' of the form
$$\xymatrixcolsep{0.2cm} \xymatrixrowsep{0.2cm} \xymatrix{ A \ar[dd]_f & & C \ar[dd]^g \\
\\ B \hard{M} & & D }$$
there is a horizontal arrow $g^*Mf_* : \srarrow{A}{C}$ and a Cartesian cell
$$\xymatrixcolsep{0.2cm} \xymatrixrowsep{0.2cm} \xymatrix{ A \ar[dd]_f \haru{g^*Mf_*} & & C \ar[dd]^g \\ 
\\ B \hard{M} & & D. }$$
\end{enumerate}
\end{proposition}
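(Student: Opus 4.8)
The plan is to prove the two implications separately, in each case building the data on one side from the data on the other and checking the axioms through the relevant universal properties. Throughout, for a vertical arrow $f\colon A\to B$ carrying the structure of part~1, write the four binding cells as $\iota_f\colon f_*\Rightarrow U_B$ (over $(f,1_B)$), $\bar\iota_f\colon U_A\Rightarrow f_*$ (over $(1_A,f)$), $\pi_f\colon f^*\Rightarrow U_B$ (over $(1_B,f)$), $\bar\pi_f\colon U_A\Rightarrow f^*$ (over $(f,1_A)$); call the two displayed identities of the form ``vertical composite $=U_f$'' the \emph{unit laws} and the remaining two (those built with the unit isomorphisms of $\mathbb D$) the \emph{counit laws}.

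For the implication $(2)\Rightarrow(1)$: fix $f\colon A\to B$. Apply condition~2 to the niche with left leg $f$, right leg $1_B$ and bottom $U_B\colon\srarrow{B}{B}$; call the restriction $f_*\colon\srarrow{A}{B}$ and its Cartesian cell $\iota_f\colon f_*\Rightarrow U_B$. Dually, apply condition~2 to the niche with left leg $1_B$, right leg $f$ and bottom $U_B$ to obtain $f^*\colon\srarrow{B}{A}$ with Cartesian cell $\pi_f\colon f^*\Rightarrow U_B$. Now $U_f\colon U_A\Rightarrow U_B$ is a cell over $(f,f)$: reading $(f,f)$ as $(f\circ 1_A,\,1_B\circ f)$, the Cartesian property of $\iota_f$ produces a unique cell $\bar\iota_f\colon U_A\Rightarrow f_*$ over $(1_A,f)$ whose composite below $\iota_f$ is $U_f$; reading $(f,f)$ as $(1_B\circ f,\,f\circ 1_A)$, the Cartesian property of $\pi_f$ produces a unique $\bar\pi_f\colon U_A\Rightarrow f^*$ over $(f,1_A)$ whose composite below $\pi_f$ is $U_f$. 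These four cells are exactly the data of part~1; the unit laws hold by construction, and each counit law, as well as each displayed coherence square, asserts an equality of two cells over a common boundary that become equal after pasting with $\iota_f$ (resp.\ $\pi_f$), hence coincide by the uniqueness clause of the Cartesian property. This last step is where one must be a little careful with the associativity and unit isomorphisms of $\mathbb D$, since $\mathbb D$ is only a pseudo double category.

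For the implication $(1)\Rightarrow(2)$: given a niche $(f\colon A\to B,\ M\colon\srarrow{B}{D},\ g\colon C\to D)$, set $g^*Mf_*:=(g^*\odot M)\odot f_*\colon\srarrow{A}{C}$ and take as its filling cell the composite $\chi$ of the horizontal pasting $\pi_g\odot 1_M\odot\iota_f\colon g^*\odot M\odot f_*\Rightarrow U_D\odot M\odot U_B$ (over $(f,g)$) with the canonical isomorphism $U_D\odot M\odot U_B\cong M$ built from $\lambda$ and $\rho$. To see that $\chi$ is Cartesian I would first isolate two lemmas. Lemma~(a): each binding cell $\iota_f\colon f_*\Rightarrow U_B$ and $\pi_f\colon f^*\Rightarrow U_B$ is itself Cartesian, i.e.\ exhibits $f_*$ and $f^*$ as the restrictions $U_B(f,1_B)$ and $U_B(1_B,f)$; this is a direct computation from the unit and counit laws. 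Lemma~(b): restrictions along composable niches are computed by vertical composition of Cartesian cells (legitimate by \Cref{vertical_composite}), so $M(f,g)$ may be taken to be $\bigl(M(1_B,g)\bigr)(f,1_C)$; moreover $M(1_B,g)\cong g^*\odot M$ and $\bigl(g^*\odot M\bigr)(f,1_C)\cong (g^*\odot M)\odot f_*$, both exhibited by pasting Lemma~(a) with identity cells and a unitor. Granting these, $\chi$ is, up to the canonical isomorphisms, a vertical composite of Cartesian cells and hence Cartesian, which is condition~2.

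The main obstacle is Lemma~(b) in the direction $(1)\Rightarrow(2)$: the claim amounts to showing that whiskering a Cartesian cell by a fixed horizontal arrow on the appropriate side again yields a Cartesian cell, which requires, for a test cell $\beta\colon N\Rightarrow M$ over $(fh,gk)$, producing the factorization through $\chi$ by pasting $\bar\iota_f$, $\bar\pi_g$ and the binding cells of $h$ and $k$ onto $\beta$, and then checking uniqueness from the binding identities. This is routine but notationally heavy because the unitors and associators of $\mathbb D$ are only isomorphisms; organizing the proof around Lemmas~(a) and~(b) is what keeps the coherence bookkeeping manageable.
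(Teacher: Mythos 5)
Your proposal is correct and follows essentially the same route as the paper: in $(2\Rightarrow 1)$ you take $f_*$ and $f^*$ to be the restrictions of $U_B$ along $(f,1_B)$ and $(1_B,f)$, obtain the remaining two cells by factoring $U_f$ through the Cartesian cells, and get the counit laws from uniqueness of factorization; in $(1\Rightarrow 2)$ you define $g^*Mf_*=(g^*\odot M)\odot f_*$ and verify Cartesianness by the same pasting of $\bar\iota_f$, $\bar\pi_g$, the unit cells of $h$ and $k$, and the unitors, followed by the same uniqueness check. The only difference is organizational --- you package the pasting argument into Lemmas (a) and (b) on Cartesianness of binding cells and of whiskered restrictions, where the paper writes out the single composite factorization directly --- and this does not change the substance of the proof.
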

\begin{proof}
$\mathit{(1 \Rightarrow 2)}$ For a niche of the form
$$\xymatrixcolsep{0.2cm} \xymatrixrowsep{0.2cm} \xymatrix{ A \ar[dd]_f & & C \ar[dd]^g \\
\\ B \hard{M} & & D, }$$
consider the horizontal arrow $g^*Mf_*=(g^* \odot M) \odot f_*$, as in
$$\xymatrixcolsep{0.2cm} \xymatrixrowsep{0.2cm} \xymatrix{
A \ar[dd]_f \haru{f_* } & & B \ar@{=}[dd] \haru{M} & & D \ar@{=}[dd] \haru{g^*} & & C \ar[dd]^g \\
& & & 1 \\
B \ar@{=}[dd] \hard{U_B} & & B \hard{M} & & D \hard{U_D} & & D \ar@{=}[dd] \\
& & & \cong \\ 
B \ar[rrrrrr]|{\vstretch{0.60}{|}} \ar[rrrrrr]_M & & & & & & D .} $$
To show that this cell is Cartesian, consider a cell of the form
$$\xymatrixcolsep{0.2cm} \xymatrixrowsep{0.2cm} \xymatrix{ E \ar[dd]_{fh} \haru{M'} & & F \ar[dd]^{gk} \\
& \alpha \\
B \hard{M} & & D }$$
and note that we can factor it as follows :
$$\xymatrixcolsep{0.2cm} \xymatrixrowsep{0.2cm} \xymatrix{
& & & & E \ar@{=}[dd] \ar[rrrrrr]|{\vstretch{0.60}{|}} \ar[rrrrrr]^{M'} & & & & & & F \ar@{=}[dd] \\
& & & & & & & \cong \\
E \ar[dd]_{fh} \haru{M'} & & F \ar[dd]^{gk} & & E \ar[dd]_{fh} \haru{U_E} & & E \ar[dd]_{fh} \haru{M'} & & F \ar[dd]^{gk} \haru{U_F} & & F \ar[dd]^{gk} \\
& \alpha & & \overset{\text{nat. of }l \text{ and }r}{=} & & U_{fh} & & \alpha & & U_{gk} & & = \\
B \hard{M} & & D & & B \ar@{=}[dd] \hard{U_B} & & B \hard{M} & & D \hard{U_D} & & D \ar@{=}[dd] \\
& & & & & & & \cong \\
& & & & B \ar[rrrrrr]|{\vstretch{0.60}{|}} \ar[rrrrrr]_M & & & & & & D } $$
$$\xymatrixcolsep{0.2cm} \xymatrixrowsep{0.2cm} \xymatrix{
& & & & & & & & E \ar@{=}[dd] \ar[rrrrrr]|{\vstretch{0.60}{|}} \ar[rrrrrr]^{M'} & & & & & & F \ar@{=}[dd] \\
E \ar@{=}[dd] \ar[rrrrrr]|{\vstretch{0.60}{|}} \ar[rrrrrr]^{M'} & & & & & & F \ar@{=}[dd] & & & & & \cong \\
& & & \cong & & & & & E \ar[dd]_h \haru{U_E} & & E \ar[dd]^h \haru{M'} & & F \ar[dd]_k \haru{U_F} & & F \ar[dd]^k \\
E \ar[dd]_h \haru{U_E} & & E \ar[dd]^h \haru{M'} & & F \ar[dd]_k \haru{U_F} & & F \ar[dd]^k & & & U_h & & & & U_k \\
& U_h & & & & U_k & & & A \ar@{=}[dd] \haru{U_A} & & A \ar[dd]^f & \alpha & C \ar[dd]_g \haru{U_C} & & C \ar@{=}[dd] \\
A \ar[dd]_f \haru{U_A} & & A \ar[dd]^f & \alpha & C \ar[dd]_g \haru{U_C} & & C \ar[dd]^g & = \\
& U_f & & & & U_g & & & A \ar[dd]_f \haru{f_*} & & B \ar@{=}[dd] \hard{M} & & D \ar@{=}[dd] \haru{g^*} & & C \ar[dd]^g \\
B \ar@{=}[dd] \hard{U_B} & & B \hard{M} & & D \hard{U_D} & & D \ar@{=}[dd] & & & & & 1 \\
& & & \cong & & & & & B \ar@{=}[dd] \hard{U_B} & & B \hard{M} & & D \hard{U_D} & & D \ar@{=}[dd] \\
B \ar[rrrrrr]|{\vstretch{0.60}{|}} \ar[rrrrrr]_M & & & & & & D & & & & & \cong \\
& & & & & & & & B \ar[rrrrrr]|{\vstretch{0.60}{|}} \ar[rrrrrr]_M & & & & & & D & . } $$
For the uniqueness of the factorization, if there is another cell $\beta$ with
$$\xymatrixcolsep{0.2cm} \xymatrixrowsep{0.2cm} \xymatrix{
& & & & E \ar[dd]_h \ar[rrrrrr]|{\vstretch{0.60}{|}} \ar[rrrrrr]^{M'} & & & & & & F \ar[dd]^k \\
& & & & & & & \beta \\
E \ar[dd]_{fh} \haru{M'} & & F \ar[dd]^{gk} & & A \ar[dd]_f \haru{f_*} & & B \ar@{=}[dd] \haru{M} & & D \ar@{=}[dd] \haru{g^*} & & C \ar[dd]^g \\
& \alpha & & = & & & & 1 \\
B \hard{M} & & D & & B \ar@{=}[dd] \hard{U_B} & & B \hard{M} & & D \hard{U_D} & & D \ar@{=}[dd] \\
& & & & & & & \cong \\
& & & & B \ar[rrrrrr]|{\vstretch{0.60}{|}} \ar[rrrrrr]_M & & & & & & D &, } $$
then, by substituting the latter in
$$\xymatrixcolsep{0.2cm} \xymatrixrowsep{0.2cm} \xymatrix{
E \ar@{=}[dd] \ar[rrrrrr]|{\vstretch{0.60}{|}} \ar[rrrrrr]^{M'} & & & & & & F \ar@{=}[dd] \\
& & & \cong \\
E \ar[dd]_h \haru{U_E} & & E \ar[dd]^h \haru{M'} & & F \ar[dd]_k \haru{U_F} & & F \ar[dd]^k \\
& U_h & & & & U_k \\
A \ar@{=}[dd] \haru{U_A} & & A \ar[dd]^f & \alpha & C \ar[dd]_g \haru{U_C} & & C \ar@{=}[dd] \\
\\ A \hard{f_*} & & B \hard{M} & & D \hard{g^*} & & C &, } $$
we get that the last cell is the same as $\beta$. So the factorization is indeed unique.
\par
$\mathit{(2 \Rightarrow 1)}$ Let $f_* =U_Bf_*$ and $f^* =f^*U_B$. Clearly, by $\mathit{(2)}$ we can take the two Cartesian cells we need. Then, by using their universal property for the cell $U_f$, we get the third and fourth cell and the first double equation. For the next equation, note that
$$\xymatrixcolsep{0.2cm} \xymatrixrowsep{0.2cm} \xymatrix{
A \ar@{=}[dd] \ar[rrrr]|{\vstretch{0.60}{|}} \ar[rrrr]^{f_*} & & & & B \ar@{=}[dd] & & A \ar@{=}[dd] \ar[rrrr]|{\vstretch{0.60}{|}} \ar[rrrr]^{f_*} & & & & B \ar@{=}[dd] \\
& & \cong & & & & & & \cong \\
A \ar@{=}[dd] \haru{U_A} & & A \ar[dd]^f \haru{f_*} & & B \ar@{=}[dd] & & A \ar@{=}[dd] \haru{U_A} & & A \ar[dd]^f \haru{f_*} & & B \ar@{=}[dd] \\
& \quad & & \quad & & & & \quad & & \quad \\
A \ar@{=}[dd] \hard{f_*} & & B \hard{U_B} & & B \ar@{=}[dd] & \overset{\text{nat. of }l}{=} & A \ar[dd]_f \hard{f_*} & & B \ar@{=}[dd] \hard{U_B} & & B \ar@{=}[dd] & = \\
& & \cong \\
A \ar[dd]_f \ar[rrrr]|{\vstretch{0.60}{|}} \ar[rrrr]_{f_*} & & & & B \ar@{=}[dd] & & B \ar@{=}[dd] \hard{U_B} & & B \hard{U_B} & & B \ar@{=}[dd]  \\
& & & & & & & & \cong \\
B \ar[rrrr]|{\vstretch{0.60}{|}} \ar[rrrr]_{U_B} & & & & B & & B \ar[rrrr]|{\vstretch{0.60}{|}} \ar[rrrr]_{U_B} & & & & B } $$
$$\xymatrixcolsep{0.2cm} \xymatrixrowsep{0.2cm} \xymatrix{
A \ar@{=}[dd] \ar[rrrr]|{\vstretch{0.60}{|}} \ar[rrrr]^{f_*} & & & & B \ar@{=}[dd] & & A \ar@{=}[dd] \ar@{=}[dd] \ar[rrrr]|{\vstretch{0.60}{|}} \ar[rrrr]^{f_*} & & & & B \ar@{=}[dd] \\
& & \cong & & & & & & \cong & & & & A \ar@{=}[dd] \haru{f_*} & & B \ar@{=}[dd] \\
A \ar[dd]_f \haru{U_A} & & A \ar[dd]_f \haru{f_*} & & B \ar@{=}[dd] & & A \ar@{=}[dd] \haru{U_A} & & A \haru{f_*} & & B \ar@{=}[dd] & & & 1 \\
& U_f & & \quad & & \overset{\text{nat. of }r}{=} & & \quad & \cong & \quad & & = & A \ar[dd]_f \haru{f_*} & & B \ar@{=}[dd] \\
B \ar@{=}[dd] \hard{U_f} & & B \hard{U_B} & & B \ar@{=}[dd] & & A \ar[dd]_f \ar[rrrr]|{\vstretch{0.60}{|}} \ar[rrrr]_{f_*} & & & & B \ar@{=}[dd] & & & \quad \\
& & \cong & & & & & & & & & & B \hard{U_B} & & B & . \\
B \ar[rrrr]|{\vstretch{0.60}{|}} \ar[rrrr]_{U_B} & & & & B & & B \ar[rrrr]|{\vstretch{0.60}{|}} \ar[rrrr]_{U_B} & & & & B } $$
So, by the uniqueness of the factorization, the equation holds. We can show the last equation in a similar way.
\end{proof}

\begin{definition}\cite{Shulman2008}
A double category $\mathbb{D}$ is said to be \textbf{fibrant} if any of the two conditions in \ref{fibrant_equivalence} holds.
\end{definition}

The full sub-2-categories of $\mathbf{DblCat}$ and $\mathbf{DblCat}_{\mathcal{L}}$ determined by the fibrant double categories will be denoted by $\mathbf{FbrCat}$ and $\mathbf{FbrCat}_{\mathcal{L}}$, respectively.

\begin{proposition}\cite{Shulman2008}
The following are equivalent for a double category $\mathbb{D}$:
\begin{enumerate}
\item $\mathbb{D}$ is a fibrant double category.
 \item For every `niche' of the form
$$\xymatrixcolsep{0.2cm} \xymatrixrowsep{0.2cm} \xymatrix{ A \ar[dd]_f \haru{M} & & C \ar[dd]^g \\
\\ B & & D }$$
there is a horizontal arrow $g_*Mf^* : \srarrow{B}{D}$ and an op-Cartesian cell
$$\xymatrixcolsep{0.2cm} \xymatrixrowsep{0.2cm} \xymatrix{ A \ar[dd]_f \haru{M} & & C \ar[dd]^g \\ 
\\ B \hard{g_*Mf^*} & & D.}$$
\end{enumerate}
\end{proposition}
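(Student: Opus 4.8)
The plan is to deduce this from \Cref{fibrant_equivalence} by a vertical-opposite duality argument, rather than repeating that proof in dualized form. Recall the vertical opposite $\mathbb{D}^{\mathrm{vop}}$ of a double category $\mathbb{D}$: it has the same objects, the same horizontal arrows, and reversed vertical arrows. First I would observe that $\mathbb{D}$ is fibrant if and only if $\mathbb{D}^{\mathrm{vop}}$ is fibrant. Indeed, passing to $\mathbb{D}^{\mathrm{vop}}$ interchanges the companion $f_*$ and the conjoint $f^*$ of each vertical arrow $f$ and permutes the four structure cells and the two coherence equations of condition~(1) of \Cref{fibrant_equivalence} amongst themselves, so that condition is invariant; hence so is fibrancy.

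Next I would record the translation dictionary. A cell of $\mathbb{D}$ with horizontal source $M$ and horizontal target $N$ corresponds to a cell of $\mathbb{D}^{\mathrm{vop}}$ with horizontal source $N$ and horizontal target $M$ (and reversed vertical boundary). Under this correspondence a niche of $\mathbb{D}$ with a horizontal arrow $M$ on top is exactly a niche of $\mathbb{D}^{\mathrm{vop}}$ with $M$ on the bottom, and an op-Cartesian cell of $\mathbb{D}$ is exactly a Cartesian cell of $\mathbb{D}^{\mathrm{vop}}$ --- this is immediate from the definitions, since postcomposing a cell of $\mathbb{D}$ with vertical arrows below it becomes precomposing the transported cell of $\mathbb{D}^{\mathrm{vop}}$ with vertical arrows above it. Moreover, since companions and conjoints are interchanged, the horizontal arrow $g^*Mf_*$ furnished by condition (2) of \Cref{fibrant_equivalence} for a bottom-niche in $\mathbb{D}^{\mathrm{vop}}$ becomes precisely $g_*Mf^*$ for the corresponding top-niche in $\mathbb{D}$.

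Putting these together, the proposition follows by applying \Cref{fibrant_equivalence} to $\mathbb{D}^{\mathrm{vop}}$: $\mathbb{D}$ is fibrant $\iff$ $\mathbb{D}^{\mathrm{vop}}$ is fibrant $\iff$ every bottom-niche of $\mathbb{D}^{\mathrm{vop}}$ admits a Cartesian filler with top arrow of the stated form $\iff$ every top-niche of $\mathbb{D}$ admits an op-Cartesian filler with bottom arrow $g_*Mf^*$. If instead one wants a self-contained argument, one simply mirrors the proof of \Cref{fibrant_equivalence}: for $(1 \Rightarrow 2)$, given a top-niche, set $g_*Mf^* = (g_* \odot M) \odot f^*$ and build the op-Cartesian cell by pasting the defining cells of the companion $g_*$ and the conjoint $f^*$, then check op-universality by the same unit-and-naturality pasting manipulation used there; for $(2 \Rightarrow 1)$, apply op-Cartesian fillers to the appropriate niches to produce candidates for $f_*$ and $f^*$ and recover the structure cells and coherence equations from their universal property.

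The only real content here is the self-duality of the fibrancy condition under $\mathrm{vop}$ together with a careful enough account of the dictionary that the formula $g_*Mf^*$ lands on the correct side; once that bookkeeping is in place the statement is formal, and the direct argument, if preferred, is line-for-line parallel to the proof of \Cref{fibrant_equivalence} with no new difficulty.
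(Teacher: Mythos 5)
Your proposal is correct and takes essentially the same route as the paper, which likewise obtains both directions by dualizing the proof of \Cref{fibrant_equivalence} and sets $g_*Mf^* = (g_*\odot M)\odot f^*$. The only difference is presentational: you package the duality as a formal reduction to $\mathbb{D}^{\mathrm{vop}}$ (correctly noting that fibrancy is $\mathrm{vop}$-invariant, that companions and conjoints swap, and that Cartesian and op-Cartesian cells interchange), whereas the paper simply invokes ``the dual argument'' and exhibits the op-Cartesian pasting directly.
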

\begin{proof}
If $\mathbb{D}$ is a fibrant double category and we have a niche
$$\xymatrixcolsep{0.2cm} \xymatrixrowsep{0.2cm} \xymatrix{ A \ar[dd]_f \haru{M} & & C \ar[dd]^g \\
\\ B & & D, }$$
then the cell
$$\xymatrixcolsep{0.2cm} \xymatrixrowsep{0.2cm} \xymatrix{
A \ar@{=}[dd] \ar[rrrrrr]|{\vstretch{0.60}{|}} \ar[rrrrrr]^M & & & & & & C \ar@{=}[dd] \\
& & & \cong \\
A \haru{U_A} \ar[dd]_f & & A \ar@{=}[dd] \haru{M} & & C \ar@{=}[dd] \haru{U_C} & & C \ar[dd]^g \\
& & & 1 \\
B \hard{f^*} & & A \hard{M} & & C \hard{g_*} & & D }$$
is op-Cartesian. To prove this we use the dual argument of that in \Cref{fibrant_equivalence}. So we just define $g_*Mf^*$ to be exactly the horizontal arrow $(g_* \odot M) \odot f^*$. \par
On the other hand, if $\mathit{2}$ is true, we can prove \Cref{fibrant_equivalence}-$\mathit{1}$, dually to the latter's proof.
\end{proof}

\begin{definition}
We call the horizontal arrow $g^*Mf_*$ the \textbf{Cartesian filling} of the niche
$$\xymatrixcolsep{0.2cm} \xymatrixrowsep{0.2cm} \xymatrix{ A \ar[dd]_f & & C \ar[dd]^g \\
\\ B \hard{M} & & D, }$$
and the horizontal arrow $g_*Mf^*$ the \textbf{op-Cartesian filling} of the niche
$$\xymatrixcolsep{0.2cm} \xymatrixrowsep{0.2cm} \xymatrix{ A \ar[dd]_f \haru{M} & & C \ar[dd]^g \\
\\ B & & D. }$$
\end{definition}

\begin{example}
The double category $\mathbf{Set}$ is fibrant. Indeed, for every niche of the form
$$\xymatrixcolsep{0.2cm} \xymatrixrowsep{0.2cm} \xymatrix{
A \ar[dd]_f & & B \ar[dd]^g \\
\\ X \hard{S} & & Y, }$$
with $f$ and $g$ functions and $S$ a relation, define the relation $g^*Sf_*:\srarrow{A}{B}$ so that $a (g^*Sf_*) b \Leftrightarrow (fa)S(gb)$. Then clearly we have a cell
$$\xymatrixcolsep{0.2cm} \xymatrixrowsep{0.2cm} \xymatrix{
A \ar[dd]_f \haru{g^*Sf_*} & & B \ar[dd]^g \\
\\ X \hard{S} & & Y. }$$
Also, every cell of the form
$$\xymatrixcolsep{0.2cm} \xymatrixrowsep{0.2cm} \xymatrix{
U \ar[dd]_{fh} \haru{T} & & V \ar[dd]^{gk} \\
\\ X \hard{S} & & Y }$$
can be written as
$$\xymatrixcolsep{0.2cm} \xymatrixrowsep{0.2cm} \xymatrix{
U \ar[dd]_h \haru{T} & & V \ar[dd]^k \\
\\
A \ar[dd]_f \haru{g^*Sf_*} & & B \ar[dd]^g \\
\\ X \hard{S} & & Y, }$$
since $\forall (u,v) \in U\times V$, $uT v \Rightarrow (fhu)S(gkv) \Rightarrow (hu)(g^*Sf_*)(kv)$.
\end{example}

\begin{example}
The double category $\mathbb{S}\mathrm{pan}(\mathcal{E})$, for $\mathcal{E}$ a category with pullbacks, is also fibrant. In order to show that, consider a niche
$$\xymatrixcolsep{0.2cm} \xymatrixrowsep{0.2cm} \xymatrix{
A \ar[dd]_f & & & & B \ar[dd]^g \\
\\ X & & & & Y \\
& & S \ar[ull]^{s_0} \ar[urr]_{s_1} }$$
and the pullbacks
$$\xymatrixcolsep{0.4cm} \xymatrixrowsep{0.4cm} \xymatrix{
A \times_X S \ar[r]^-{p_1} \ar[d]_{p_0} & S \ar[d]^{s_0} & & S \times_Y B \ar[r]^-{r_1} \ar[d]_{r_0} & B \ar[d]^g \\
A \ar[r]_-f & X & , & S \ar[r]_-{s_1} & Y, }$$
and
$$\xymatrixcolsep{0.6cm} \xymatrixrowsep{0.6cm} \xymatrix{
(A\times_X S)\times_S (S \times_Y B) \ar[d]_{t_0} \ar[r]^-{t_1} & S\times_Y B \ar[d]^{r_0} \\
A \times_X S \ar[r]_-{p_1} & S. }$$
The Cartesian filling of the above niche will be the span
$$\xymatrixcolsep{0.6cm} \xymatrix{
A & (A\times_X S)\times_S (A \times_Y B) \ar[l]_-{p_0 t_0} \ar[r]^-{r_1 t_1} & B, }$$
together with the cell defined by the arrow $$p_1 t_0 = r_0 t_1 : (A\times_X S)\times_S (A \times_Y B) \rightarrow S,$$
since $s_0 p_1 t_0 = f p_0 t_0 $ and $ s_1 r_0 t_1 = g r_1 t_1$. The latter is a Cartesian cell, since, if we have a cell of the form
$$\xymatrixcolsep{0.2cm} \xymatrixrowsep{0.2cm} \xymatrix{
& & U \ar[dddddd]^{\alpha} \ar[dll]_{u_0} \ar[drr]^{u_1} \\
E \ar[dd]_h & & & & F \ar[dd]^k \\
\\ A \ar[dd]_f & & & & B \ar[dd]^g \\
\\ X & & & & Y \\
& & S \ar[ull]^{s_0} \ar[urr]_{s_1} }$$
it can be factored uniquely as $\alpha = p_1 t_0 [ (hu_0 \times_X \alpha) \times_S (\alpha \times_Y ku_1) ]$, through $p_1 t_0$.
\end{example}

\begin{lemma} \label{identity_filling}
For every object $A$ in a fibrant double category, ${1_A}_*\cong {1_A}^* \cong U_A$.
\end{lemma}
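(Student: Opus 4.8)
The plan is to read off the companion and conjoint structure cells attached to the identity vertical arrow $1_A$ from the fibrancy hypothesis and to show that, in this case, they are already mutually inverse globular isomorphisms. I would start from the four structure cells of \Cref{fibrant_equivalence}(1) specialised to $f=1_A$: since then $B=A$ and $1_B=1_A$, the binding cell (top ${1_A}_*$, bottom $U_A$) and the cobinding cell (top $U_A$, bottom ${1_A}_*$) both have identity vertical edges, so they are globular cells of $\mathcal{H}(\mathbb{D})$; call them $c:{1_A}_*\to U_A$ and $c':U_A\to{1_A}_*$. Similarly the conjoint cells for $1_A$ give globular cells $d:{1_A}^*\to U_A$ and $d':U_A\to{1_A}^*$.

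Next I would extract the two companion identities of \Cref{fibrant_equivalence}(1) in this special case. The first one, which says that stacking the cobinding cell over the binding cell yields $U_f$, becomes ``$c$ after $c'$ equals $U_{1_A}$'', and $U_{1_A}=1_{U_A}$ since $U:D_0\to D_1$ is a functor; this is one of the two triangle identities. The second identity (the ``snake'': the horizontal composite of cobinding and binding, conjugated by the unitors $\lambda$ and $\rho$, equals $1_{{1_A}_*}$) is the substantive step. I would decompose the horizontal composite $c'\mid c$, via functoriality of $\odot$ (interchange), as a vertical composite of the two whiskerings $1_{U_A}\mid c$ and $c'\mid 1_{U_A}$, rewrite each of these by naturality of $\rho$ and of $\lambda$ respectively, and then use the standard coherence identity $\lambda_{U_A}=\rho_{U_A}$ for the unit $U_A$ to cancel the middle terms; what survives is precisely ``$c'$ after $c$ equals $1_{{1_A}_*}$''. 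Together the two identities show $c$ and $c'$ are inverse globular isomorphisms, so ${1_A}_*\cong U_A$; the identical computation with $d,d'$ gives ${1_A}^*\cong U_A$, hence ${1_A}_*\cong{1_A}^*\cong U_A$.

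The main obstacle is essentially the bookkeeping in the snake step: tracking the source and target data of all the cells, matching the correct unitor (and its argument) to each whiskering, and having the coherence identity $\lambda_{U_A}=\rho_{U_A}$ at hand. An alternative that sidesteps the unitor gymnastics is to observe that, by the construction in the proof of \Cref{fibrant_equivalence} $(2)\Rightarrow(1)$, the horizontal arrow ${1_A}_*$ is (up to isomorphism) a Cartesian filling of the niche both of whose legs are $1_A$ and whose base is $U_A$; but $(U_A,1_{U_A})$ is visibly another Cartesian filling of that same niche, since every cell with base $U_A$ factors through the identity cell $1_{U_A}$ uniquely, namely through itself. Uniqueness of Cartesian fillings then yields the globular isomorphism ${1_A}_*\cong U_A$ directly, and the dual argument with op-Cartesian fillings gives ${1_A}^*\cong U_A$.
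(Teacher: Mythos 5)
Your proposal is correct, and it reaches the lemma by a genuinely different route in the one substantive step. The paper works from the Cartesian-filling description of ${1_A}^*$ (condition (2) of \Cref{fibrant_equivalence}): like you, it obtains the composite $U_A \to {1_A}^* \to U_A$ equal to $U_{1_A}=1_{U_A}$ from the first companion identity, but for the other composite ${1_A}^* \to U_A \to {1_A}^*$ it invokes the uniqueness clause of the Cartesian cell's universal property (that composite and the identity both factor the same cell through the Cartesian cell, hence coincide). Your main route instead stays entirely inside the companion/conjoint data of condition (1) and extracts this second triangle from the snake identity via interchange, naturality of $\lambda$ and $\rho$, and the coherence $\lambda_{U_A}=\rho_{U_A}$; the computation does go through (the horizontal composite decomposes through the middle term $U_A\odot U_A$, naturality converts the two whiskerings into conjugates of $c$ and $c'$ by unitors, and $\lambda_{U_A}=\rho_{U_A}$ cancels what remains), but be aware that $\lambda_{U_A}=\rho_{U_A}$ is a coherence fact the paper nowhere records, so it should be cited or proved. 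What your route buys is independence from the universal property; what the paper's buys is avoiding unitor bookkeeping. Your alternative argument --- that ${1_A}_*$, ${1_A}^*$ and $(U_A,1_{U_A})$ are all Cartesian fillings of the same niche (legs $1_A$, base $U_A$), hence globularly isomorphic by uniqueness of Cartesian fillings --- is the cleanest of the three and is essentially the paper's proof repackaged: the paper's explicit display is exactly the verification that the two comparison cells between two such fillings are mutually inverse.
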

\begin{proof}
The horizontal arrow ${1_A}^*$ is defined to be the the Cartesian filling of the niche
$$\xymatrixcolsep{0.2cm} \xymatrixrowsep{0.2cm} \xymatrix{ A \ar@{=}[dd] & & A \ar@{=}[dd] \\
&\quad \\
A \hard{U_A} & & A} $$
and the following holds :
$$\xymatrixcolsep{0.2cm} \xymatrixrowsep{0.2cm} \xymatrix{
A \ar@{=}[dd] \haru{{1_A}^*} & & A \ar@{=}[dd] \\
& \quad & & & A \ar@{=}[dd] \haru{{1_A}^*} & & A \ar@{=}[dd] & & A \ar@{=}[dd] \haru{{1_A}^*} & & A \ar@{=}[dd] & & & & & & A \ar@{=}[dd] \haru{{1_A}^*} & & A \ar@{=}[dd] \\
A \ar@{=}[dd] \haru{U_A} & & A \ar@{=}[dd] & & & & & & & & & & A \ar@{=}[dd] \haru{1_A^* A} & & A \ar@{=}[dd] & & & 1 \\
& & & = & A \ar@{=}[dd] \haru{U_A} & & A \ar@{=}[dd] & = & A \ar@{=}[dd] \haru{U_A} & & A \ar@{=}[dd] & = & & & & = & A \ar@{=}[dd] \haru{{1_A}^*} & & A \ar@{=}[dd] \\
A \ar@{=}[dd] \haru{{1_A}^*} & & A \ar@{=}[dd] & & & U_{1_A} & & & & 1_{U_A} & & & A \hard{U_A} & & A \\
& & & & A \hard{U_A} & & A & & A \hard{U_A} & & A & & & & & & A \hard{U_A} & & A. \\
A \hard{U_A} & & A } $$
So by the universal property of ${1_A}_*$ we have
$$\xymatrixcolsep{0.2cm} \xymatrixrowsep{0.2cm} \xymatrix{
A \ar@{=}[dd] \haru{{1_A}^*} & & A \ar@{=}[dd] \\
& \quad & & & A \ar@{=}[dd] \haru{{1_A}^*} & & A \ar@{=}[dd] \\
A \ar@{=}[dd] \haru{U_A} & & A \ar@{=}[dd] & = \\
& & & & A \hard{{1_A}^*} & & A. \\
A \hard{{1_A}^*} & & A } $$
This, combined with the first identity of \Cref{fibrant_equivalence}, shows that ${1_A}^* \cong U_A$. Similarly, ${1_A}_* \cong U_A$.
\end{proof}

\begin{lemma}\cite{Shulman2008}
In a fibrant double category, for any niche of the form
$$\xymatrixcolsep{0.2cm} \xymatrixrowsep{0.2cm} \xymatrix{
A \ar[dd]_f & &  & & B \ar[dd]^g \\
\\ C \hard{N} & & D \hard{M} & & E, }$$
we have $g^* (M \odot N ) f_* \cong g^* \odot M \odot N \odot f_*$.
Also, for any
$$\xymatrixcolsep{0.2cm} \xymatrixrowsep{0.2cm} \xymatrix{
A \haru{N} \ar[dd]_f & & D \haru{M} & & B \ar[dd]^g \\
\\ C & & & & E, }$$
we have $g_* (M \odot N) f^* \cong g_* \odot M \odot N \odot f^*$.
\end{lemma}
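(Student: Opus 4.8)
The plan is to reduce both claims to the explicit construction of the Cartesian filling that already appears in the proof of Proposition~\ref{fibrant_equivalence}. Recall that there the Cartesian filling of a niche with a single horizontal arrow $L:\srarrow{B}{D}$ along the bottom was realised concretely as the horizontal composite $(g^*\odot L)\odot f_*$, equipped with the Cartesian cell assembled from the structure cells $f_*\to U_B$, $g^*\to U_D$ and the unit constraints $\lambda,\rho$. Taking $L:=M\odot N$, this produces the concrete Cartesian filling $g^*(M\odot N)f_* = (g^*\odot(M\odot N))\odot f_*$ of the niche in the statement.

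From here the first claim is an exercise in coherence. Since $\odot$ is associative and unital up to the coherent isomorphisms $\alpha,\lambda,\rho$, every bracketing of the string $g^*,M,N,f_*$ yields the same horizontal arrow up to a canonical globular isomorphism; in particular $(g^*\odot(M\odot N))\odot f_*\cong g^*\odot M\odot N\odot f_*$. Composing the Cartesian cell above with this globular isomorphism gives a cell from $g^*\odot M\odot N\odot f_*$ to $M\odot N$ over the niche $(f,g)$, and this cell is again Cartesian: an invertible globular cell is a Cartesian cell, and the vertical composite of Cartesian cells is Cartesian by Lemma~\ref{vertical_composite}. Hence $g^*\odot M\odot N\odot f_*$ is itself a Cartesian filling of the niche, so by uniqueness of Cartesian fillings up to (unique globular) isomorphism we obtain $g^*(M\odot N)f_*\cong g^*\odot M\odot N\odot f_*$.

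The second statement is proved dually, replacing Proposition~\ref{fibrant_equivalence} by its op-Cartesian counterpart: the op-Cartesian filling of a niche with a single arrow $L$ along the top was constructed as $(g_*\odot L)\odot f^*$ together with an op-Cartesian cell, so taking $L:=M\odot N$ and reassociating via coherence yields $g_*(M\odot N)f^*\cong g_*\odot M\odot N\odot f^*$, now using the second part of Lemma~\ref{vertical_composite}.

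The only thing to watch is the bookkeeping of the mediating constraints, in particular the unit padding ($f_*$ versus $U_B\odot f_*$, $g^*$ versus $g^*\odot U_D$) that appears in the two presentations of the filling; all of this is governed by the coherence diagrams that already hold in $\mathbb{D}$, so there is no substantive obstacle — the content of the lemma is simply that the bracket-free composite $g^*\odot M\odot N\odot f_*$ may legitimately be used as a Cartesian filling, and likewise on the op-Cartesian side.
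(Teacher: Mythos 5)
Your argument is correct and is exactly the route the paper takes: its entire proof is ``Clear by the proof of \Cref{fibrant_equivalence}'', i.e.\ the Cartesian (resp.\ op-Cartesian) filling is constructed there explicitly as $(g^*\odot L)\odot f_*$ (resp.\ $(g_*\odot L)\odot f^*$), so setting $L=M\odot N$ and reassociating gives the claim. You have merely spelled out the coherence and uniqueness-of-fillings bookkeeping that the paper leaves implicit.
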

\begin{proof}
Clear by the proof of \ref{fibrant_equivalence}.
\end{proof}

\begin{lemma}\label{f^*_functorial}\cite{Shulman2008}
For any vertical arrows $A \xrightarrow{f}B \xrightarrow{g}C$, we have $(gf)_* \cong g_* \odot f_*$ and $(gf)^* \cong f^* \odot g^*$.
\end{lemma}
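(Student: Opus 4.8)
The plan is to prove that $g_*\odot f_*$, equipped with suitable binding cells, is a companion of $gf$ in the sense of \Cref{fibrant_equivalence}(1), and dually that $f^*\odot g^*$ is a conjoint of $gf$. The stated isomorphisms then follow from the fact that companions and conjoints are unique up to a unique globular isomorphism, which is itself a consequence of the defining identities in \Cref{fibrant_equivalence}(1).

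Recall from \Cref{fibrant_equivalence}(1) that $f_*$ carries two cells: write $\vec f\colon f_*\to U_B$ for the one whose left and right legs are $f$ and $1_B$, and $\overleftarrow f\colon U_A\to f_*$ for the one whose legs are $1_A$ and $f$; these satisfy $\frac{\overleftarrow f}{\vec f}=U_f$ and, up to the unit constraints, $\vec f$ composed horizontally with $\overleftarrow f$ equals $1_{f_*}$. Likewise for $g$. First I would construct the binding cells for $g_*\odot f_*$ against $gf$: let $\overrightarrow{gf}$ be the horizontal composite of $\vec g$ with the vertical composite $\frac{\vec f}{U_g}$, followed by the unit constraint $U_C\odot U_C\cong U_C$; dually, build $\overleftarrow{gf}$ out of $\frac{U_f}{\overleftarrow g}$, $\overleftarrow f$ and the constraint $U_A\cong U_A\odot U_A$. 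Checking legs, $\overrightarrow{gf}$ goes from $g_*\odot f_*$ to $U_C$ with legs $gf$ and $1_C$, and $\overleftarrow{gf}$ goes from $U_A$ to $g_*\odot f_*$ with legs $1_A$ and $gf$, exactly as a companion structure requires.

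Next I would verify the two companion identities for the pair $(\overrightarrow{gf},\overleftarrow{gf})$. For $\frac{\overleftarrow{gf}}{\overrightarrow{gf}}=U_{gf}$: the interchange law, together with the companion identities $\frac{\overleftarrow f}{\vec f}=U_f$, $\frac{\overleftarrow g}{\vec g}=U_g$ and the functoriality of $U\colon D_0\to D_1$ (which gives $\frac{U_f}{U_g}=U_{gf}$), rewrites this composite as $U_{gf}\odot U_{gf}$, hence as $U_{gf}$ once the unit constraints are absorbed. The remaining identity, that $\overrightarrow{gf}$ composed horizontally with $\overleftarrow{gf}$ is $1_{g_*\odot f_*}$, is handled the same way, now also using the coherence of the associator to re-bracket the four cells that appear. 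This exhibits $g_*\odot f_*$ as a companion of $gf$, so $(gf)_*\cong g_*\odot f_*$. The dual argument applied to conjoints gives $(gf)^*\cong f^*\odot g^*$; the order is reversed because $f^*$ and $g^*$ are horizontal arrows $B\to A$ and $C\to B$, so that $f^*\odot g^*$ is the one running $C\to A$. (Equivalently, the conjoint statement is the companion statement for the horizontal dual of $\mathbb D$.)

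The only real obstacle is bookkeeping: both verifications are large pasting computations in which one must insert the unit and associativity constraints carefully and apply the interchange law in the order that lets the $f$-identities and the $g$-identities act independently on the two horizontal factors. Nothing in the argument is conceptually hard, and it uses fibrancy only through the existence of the binding cells and their two identities.
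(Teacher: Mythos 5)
Your argument is correct, but it takes a genuinely different route from the one in the text. The paper proves the lemma in two lines by working with condition (2) of \Cref{fibrant_equivalence}: the cell obtained by stacking the Cartesian cell $g_*\odot f_*\to g_*$ (with legs $f$ and $1_C$) on top of the Cartesian cell $g_*\to U_C$ defining $g_*$ is itself Cartesian by \Cref{vertical_composite}, so $g_*\odot f_*$ is a Cartesian filling of the niche that defines $(gf)_*$, and uniqueness of Cartesian fillings gives the isomorphism; dually for conjoints. You instead work with condition (1): you assemble explicit binding cells exhibiting $g_*\odot f_*$ as a companion of $gf$, verify the two companion identities by interchange together with $\frac{\overleftarrow f}{\vec f}=U_f$, $\frac{\overleftarrow g}{\vec g}=U_g$ and functoriality of $U$, and then invoke uniqueness of companions up to globular isomorphism. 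Both are standard; yours is essentially the ``companions compose'' argument of Grandis and Par\'e. What the paper's route buys is brevity, since the closure of Cartesian cells under vertical composition and the uniqueness of Cartesian fillings have already been established; what your route buys is self-containedness at the level of the binding cells (you never need \Cref{vertical_composite}), at the cost of the pasting computations you describe and of the uniqueness-of-companions fact, which you correctly note follows from the binding-cell identities (the paper proves the special case ${1_A}_*\cong U_A$ this way in \Cref{identity_filling}) but which you should state and prove explicitly if you want the write-up to be complete.
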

\begin{proof}
The cell
$$\xymatrixcolsep{0.2cm} \xymatrixrowsep{0.2cm} \xymatrix{ 
 A \ar[dd]_f \haru{g_* \odot f_*} & & C \ar@{=}[dd] \\
\\
B \ar[dd]_g \haru{g_*} & & C \ar@{=}[dd] \\
\\
C \hard{U} & & C}$$
is Cartesian as the vertical compisite of two Cartesian cells. So $(gf)_* \cong g_* \odot f_*$.\\
Similarly the cell
$$\xymatrixcolsep{0.2cm} \xymatrixrowsep{0.2cm} \xymatrix{ 
 C \ar@{=}[dd] \haru{f^* \odot g^*} & & A \ar[dd]^f \\
\\
C \ar@{=}[dd] \haru{g^*} & & B \ar[dd]^g \\
\\
C \hard{U} & & C}$$
is Cartesian, so $(gf)^* \cong f^* \odot g^*$.
\end{proof}

\begin{proposition}\cite{Shulman2008}
In a fibrant double category, for every vertical arrow $f:A\rightarrow B$, we have $f_* \dashv f^*$ in $\mathcal{H(\mathbb{D})}$.
\end{proposition}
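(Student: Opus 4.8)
The plan is to exhibit $f_* \dashv f^*$ as an honest adjunction in the bicategory $\mathcal{H}(\mathbb{D})$, by writing down a unit $\eta\colon U_A \Rightarrow f^* \odot f_*$ and a counit $\varepsilon\colon f_* \odot f^* \Rightarrow U_B$ (both globular, hence genuine $2$-cells of $\mathcal{H}(\mathbb{D})$) out of the four binding cells produced by \Cref{fibrant_equivalence}, and then checking the two triangle identities. Write $e_1\colon f_* \Rightarrow U_B$ (left leg $f$, right leg $1$), $e_2\colon f^* \Rightarrow U_B$ (left leg $1$, right leg $f$), $e_3\colon U_A \Rightarrow f^*$ (left leg $f$, right leg $1$), $e_4\colon U_A \Rightarrow f_*$ (left leg $1$, right leg $f$) for those four cells, in the order they appear in \Cref{fibrant_equivalence}(1), and recall the coherence equations supplied there: the vertical composites satisfy $\tfrac{e_4}{e_1} = U_f = \tfrac{e_3}{e_2}$, and in addition there are the two "zig--zag" equations, one for $f_*$ expressing $\lambda\circ(e_4\mid e_1)\circ\rho^{-1} = 1_{f_*}$, and one for $f^*$ expressing the analogous identity built from $e_2$ and $e_3$.

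First I would set $\eta$ to be the globular cell
$$ U_A \ \xrightarrow{\ \cong\ }\ U_A \odot U_A \ \xrightarrow{\ e_4 \mid e_3\ }\ f^* \odot f_* , $$
where $e_4 \mid e_3$ is the horizontal pasting of $e_4$ and $e_3$ along their common boundary $f\colon A \to B$ (composable since the right leg of $e_4$ and the left leg of $e_3$ both equal $f$, and with globular boundary since the other two legs are identities), and the first arrow is a structural isomorphism of $\mathbb{D}$. Dually, set
$$ \varepsilon\ \colon\ f_* \odot f^* \ \xrightarrow{\ e_2 \mid e_1\ }\ U_B \odot U_B \ \xrightarrow{\ \cong\ }\ U_B . $$
That $\eta$ and $\varepsilon$ land where claimed, and are globular, is immediate from the boundary data of the $e_i$.

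Next, the triangle identities. For the first one must show that the pasting
$$ f_* \ \cong\ f_* \odot U_A \ \xrightarrow{\,f_* \odot \eta\,}\ f_* \odot (f^* \odot f_*) \ \xrightarrow{\ a\ }\ (f_* \odot f^*) \odot f_* \ \xrightarrow{\,\varepsilon \odot f_*\,}\ U_B \odot f_* \ \cong\ f_* $$
equals $1_{f_*}$. Expanding $\eta$ and $\varepsilon$ into the $e_i$ and applying the interchange law of $\mathbb{D}$, the resulting pasting contains $e_3$ sitting directly above $e_2$ (they meet along the interior copy of $f^*$ created by $\eta$ and consumed by $\varepsilon$), so this column collapses to $U_f$ by the coherence equation $\tfrac{e_3}{e_2} = U_f$. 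Using naturality of the unitors $\lambda,\rho$ (equivalently: a $U$-cell slides past a unitor, since $U\colon D_0 \to D_1$ is a functor and $\lambda,\rho$ are natural), this $U_f$ is absorbed and the whole composite reduces to the $f_*$-zig--zag $\lambda\circ(e_4\mid e_1)\circ\rho^{-1}$, which is $1_{f_*}$; the remaining $2$-cells are associators and unitors of $\mathcal{H}(\mathbb{D})$ and are dealt with by bicategorical coherence. The second triangle identity is proved symmetrically: this time $e_4$ sits above $e_1$, the coherence $\tfrac{e_4}{e_1} = U_f$ collapses that column, and the surviving pasting is the $f^*$-zig--zag built from $e_2$ and $e_3$.

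The main obstacle is purely organizational: laying out the two-dimensional pasting diagrams and keeping precise track of where the associator and unitor isomorphisms of $\mathcal{H}(\mathbb{D})$ (and the structural isomorphisms of $\mathbb{D}$) are inserted, so that the interchange law genuinely applies at the step where the $f^*$-column (resp. $f_*$-column) is collapsed to $U_f$. There is no conceptual content beyond this point: all of the actual work is already packaged in the binding-cell coherence equations of \Cref{fibrant_equivalence}, and once the collapse is justified the zig--zag equations finish the argument verbatim.
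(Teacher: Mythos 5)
Your proof is correct and follows essentially the same route as the paper's: the unit and counit you build from the four binding cells of \Cref{fibrant_equivalence} are exactly the $\eta$ and $\epsilon$ used there, and the triangle identities are verified in the same way (interchange to collapse the interior column to $U_f$ via $\tfrac{e_3}{e_2}=U_f$, resp.\ $\tfrac{e_4}{e_1}=U_f$, absorb the $U_f$ by naturality of the unitors, and finish with the corresponding zig--zag equation). The only content you defer --- the bookkeeping of associators and unitors --- is precisely what the paper's long chain of pasting equalities carries out explicitly.
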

\begin{proof}
Consider the cells
$$\xymatrixcolsep{0.2cm} \xymatrixrowsep{0.2cm} \xymatrix{
& A \ar@{=}[dd] \ar[rrrr]^{U_A} \ar[rrrr]|{\vstretch{0.60}{|}} & & & & A \ar@{=}[dd] & & & B \ar@{=}[dd] \haru{f^*} & & A \ar[dd]^f \haru{f_*} & & B \ar@{=}[dd] \\
& & & \cong \\
\eta : & A \ar@{=}[dd] \haru{U_A} & & A \ar[dd]^f \haru{U_A} & & A \ar@{=}[dd] & \text{and} & \epsilon : & B \ar@{=}[dd] \hard{U_B} & & B \hard{U_B} & & B \ar@{=}[dd] \\
& & & & & & & & & & \cong \\
& A \hard{f_*} & & B \hard{f^*} & & A & & & B \ar[rrrr]_{U_B} \ar[rrrr]|{\vstretch{0.60}{|}} & & & & B. }$$
Then the triangle identity $(\epsilon \odot f_*) (f_* \odot \eta) = 1_{f_*}$ holds because of the following series of equalities:
$$\xymatrixcolsep{0.05cm} \xymatrixrowsep{0.2cm} \xymatrix{
A \ar@{=}[dd] \ar[rrrrrr]^{f^*} \ar[rrrrrr]|{\vstretch{0.60}{|}} & & & & & & B \ar@{=}[dd] \\
& & & \cong & & & & & A \ar@{=}[dd] \ar[rrrrrr]^{f_*} \ar[rrrrrr]|{\vstretch{0.60}{|}} & & & & & & B \ar@{=}[dd] \\
A \ar@{=}[dd] \ar[rrrr]^{U_A} \ar[rrrr]|{\vstretch{0.60}{|}} & & & &  A \ar@{=}[dd] \haru{f^*} & & B \ar@{=}[dddd] & & & & & \cong \\
& & \cong & & & & & & A \ar[rrrr]^{U_A} \ar[rrrr]|{\vstretch{0.60}{|}} \ar@{=}[dd] & & & & A \ar@{=}[dd] \haru{f_*} & & B \ar@{=}[dd] \\
A \haru{U_A} \ar@{=}[dd] & & A \ar[dd]_f \haru{U_A} & & A \ar@{=}[dd] & 1_{f_*} & & & & & \cong & & & 1_{f_*} \\
& & & \quad & & & & & A \haru{U_A} \ar@{=}[dd] & & A \ar[dd]_f \haru{U_A} & & A \ar[dd]^f \hard{f_*} & & B \ar@{=}[dd] \\
A \hard{f_*} \ar@{=}[dddd]&  & B \ar@{=}[dd] \hard{f^*} & & A \ar[dd]^f \hard{f_*} & & B \ar@{=}[dd] &\overset{\text{\Cref{fibrant_equivalence}}}{=} & & & & U_f & & & & \overset{\text{nat. of }\lambda}{=} \\
& & & & & & & & A \haru{f_*} \ar@{=}[dd] & & B \ar@{=}[dd] \hard{U_B} & & B \hard{U_B} & & B \ar@{=}[dd] \\
& 1_{f_*} & B \ar@{=}[dd] \hard{U_B} & & B \hard{U_B} & & B \ar@{=}[dd] & & & 1_{f_*} & & & \cong \\
& & & & \cong & & & & A \ar@{=}[dd] \hard{f_*} & & B \ar[rrrr]_{U_B} \ar[rrrr]|{\vstretch{0.60}{|}} & & & & B \ar@{=}[dd] \\
A \ar@{=}[dd] \hard{f_*} & & B \ar[rrrr]_{U_B} \ar[rrrr]|{\vstretch{0.60}{|}} & & & & B \ar@{=}[dd] & & & & & \cong \\
& & & \cong & & & & & A \ar[rrrrrr]_{f_*} \ar[rrrrrr]|{\vstretch{0.60}{|}} & & & & & & B \\
A \ar[rrrrrr]_{f_*} \ar[rrrrrr]|{\vstretch{0.60}{|}} & & & & & & B }$$
$$\xymatrixcolsep{0.05cm} \xymatrixrowsep{0.2cm} \xymatrix{
A \ar@{=}[dd] \ar[rrrrrr]^{f_*} \ar[rrrrrr]|{\vstretch{0.60}{|}} & & & & & & B \ar@{=}[dd] & & A \ar@{=}[dd] \ar[rrrrrr]^{f_*} \ar[rrrrrr]|{\vstretch{0.60}{|}} & & & & & & B \ar@{=}[dd] \\
& & & \cong & & & & & & & & \cong \\
A \ar[rrrr]^{U_A} \ar[rrrr]|{\vstretch{0.60}{|}} \ar@{=}[dd] & & & & A \ar[dd]^f \haru{f_*} & & B \ar@{=}[dd] & & A \ar[rrrr]^{U_A} \ar[rrrr]|{\vstretch{0.60}{|}} \ar@{=}[dd] & & & & A \ar[dd]^f \haru{f_*} & & B \ar@{=}[dd] \\
& & & \quad \\
A \ar@{=}[dd] \ar[rrrr]_{f_*} \ar[rrrr]|{\vstretch{0.60}{|}} & & & & B \ar@{=}[dd] \hard{U_B} & & B \ar@{=}[dd] & & A \ar@{=}[dd] \ar[rrrr]_{f_*} \ar[rrrr]|{\vstretch{0.60}{|}} & & & & B \ar@{=}[dd] \hard{U_B} & & B \ar@{=}[dd] \\
& & \cong & & & 1_{U_B} & & \overset{\text{unit id.}}{=} & & & \cong & & & 1_{U_B} & & = \\
A \haru{f_*} \ar@{=}[dd] & & B \ar@{=}[dd] \haru{U_B} & & B \haru{U_B} & & B \ar@{=}[dd] & & A \haru{f_*} \ar@{=}[dd] & & B \haru{U_B} & & B \ar@{=}[dd] \haru{U_B} & & B \ar@{=}[dd] \\
& 1_{f_*} & & & \cong & & & & & \quad & \cong & \quad & & 1_{U_B}  \\
A \ar@{=}[dd] \hard{f_*} & & B \ar[rrrr]_{U_B} \ar[rrrr]|{\vstretch{0.60}{|}} & & & & B \ar@{=}[dd] & & A \ar[rrrr]_{f_*} \ar[rrrr]|{\vstretch{0.60}{|}} \ar@{=}[dd] & & & & B \hard{U_B} & & B \ar@{=}[dd] \\
& & & \cong & & & & & & & & \cong \\
A \ar[rrrrrr]_{f_*} \ar[rrrrrr]|{\vstretch{0.60}{|}} & & & & & & B & & A \ar[rrrrrr]_{f_*} \ar[rrrrrr]|{\vstretch{0.60}{|}} & & & & & & B}$$
$$\xymatrixcolsep{0.2cm} \xymatrixrowsep{0.2cm} \xymatrix{
A \ar@{=}[dd] \ar[rrrr]^{f_*} \ar[rrrr]|{\vstretch{0.60}{|}} & & & & B \ar@{=}[dd] \\
& & \cong \\
A \ar@{=}[dd] \haru{U_A} & & A \ar[dd]^f \haru{f_*} & & B \ar@{=}[dd] & & A \ar@{=}[dd] \haru{f_*} & & B \ar@{=}[dd] \\
& \quad & & \quad & & \overset{\text{\Cref{fibrant_equivalence}}}{=} & & 1 \\
A \ar@{=}[dd] \hard{f_*} & & B \hard{U_B} & & B \ar@{=}[dd] & & A \hard{f_*} & & B & . \\
& & \cong \\
A \ar[rrrr]_{f_*} \ar[rrrr]|{\vstretch{0.60}{|}} & & & & B }$$
Similarly, we can also show that $(f^* \odot \epsilon) ( \eta \odot f^*) = 1_{f^*}$ is true.
\end{proof}

\vspace{0.3cm}
We will now consider fibrant double categories with some extra properties, regarding products in the vertical category and in the hom-categories of the horizontal bicategory. The following proposition is clearly inspired by the conditions that Carboni, Kelly, Walters, and Wood had in the definition of a precartesian bicategory.

\begin{proposition}\label{D1_products}
Consider a double category $\mathbb{D}$ such that:
\begin{enumerate}[label=\roman*.]
\item $\mathbb{D}$ is fibrant,
\item the vertical category $D_0$ has finite products and
\item the horizontal bicategory $\mathcal{H(\mathbb{D})}$ has finite products locally, i.e. every $\mathcal{H(\mathbb{D})(A,B)}$ has finite products.
\end{enumerate}
Then $D_1$ has finite products.
\end{proposition}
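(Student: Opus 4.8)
The plan is to realize $D_1$ as the total category of a fibration over $D_0\times D_0$ whose base and fibres have finite products and whose reindexing functors preserve them; finite products of such a total category can then be built by the familiar recipe. The functor in question is $(S,T)\colon D_1\to D_0\times D_0$. Its fibre over a pair $(A,B)$ is, by definition, $\mathcal{H}(\mathbb{D})(A,B)$, which has finite products by hypothesis (iii); and $D_0\times D_0$ has finite products by hypothesis (ii). First I would check that $(S,T)$ is a fibration: given a horizontal arrow $M$ with $SM=A$, $TM=B$ and a morphism $(f,g)\colon(A',B')\to(A,B)$, \Cref{fibrant_equivalence} supplies the Cartesian filling $g^*Mf_*=(g^*\odot M)\odot f_*$ together with a Cartesian cell over the niche $(f,g)$, and the defining universal property of a Cartesian cell is precisely that this cell is a Cartesian morphism for $(S,T)$ lying over $(f,g)$. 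So $(f,g)^*M:=(g^*\odot M)\odot f_*$ is a cleavage.

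Next I would show each reindexing functor $(f,g)^*\colon\mathcal{H}(\mathbb{D})(A,B)\to\mathcal{H}(\mathbb{D})(A',B')$ preserves finite products. It factors as $M\mapsto M\odot f_*$ followed by $N\mapsto g^*\odot N$. Using the adjunctions $f_*\dashv f^*$ and $g_*\dashv g^*$ in $\mathcal{H}(\mathbb{D})$ established earlier, whiskering on the appropriate side yields $(-)\odot f^*\dashv(-)\odot f_*$ and $g_*\odot(-)\dashv g^*\odot(-)$; hence both stages are right adjoints and so is their composite, so $(f,g)^*$ preserves every limit that exists in its domain, in particular finite products. I expect this to be the main obstacle: it requires making the whiskering of adjunctions inside the weak horizontal bicategory precise, handling the coherence isomorphisms of $\odot$ carefully, and using the identification $g^*Mf_*\cong(g^*\odot M)\odot f_*$ together with the compatibility of Cartesian fillings with horizontal composition noted above.

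With these ingredients in place I would construct the products in $D_1$ directly. For the terminal object, take the terminal object $\top$ of $\mathcal{H}(\mathbb{D})(1,1)$, where $1$ is terminal in $D_0$. Any cell in $D_1$ from a horizontal arrow $M\colon A\to B$ to $\top$ is forced to have legs the unique vertical arrows $!_A\colon A\to1$ and $!_B\colon B\to1$, so by the Cartesian property such cells are in bijection with globular cells $M\to(!_B)^*\,\top\,(!_A)_*$; since $(!_A,!_B)^*$ preserves the terminal object, $(!_B)^*\,\top\,(!_A)_*$ is terminal in $\mathcal{H}(\mathbb{D})(A,B)$, so there is exactly one. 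For binary products, given $M\colon A\to B$ and $N\colon C\to D$, form $A\times C$ and $B\times D$ in $D_0$ with projections $p_A,p_C,q_B,q_D$ and set $M\times N:=\bigl(q_B^*M(p_A)_*\bigr)\times\bigl(q_D^*N(p_C)_*\bigr)$, computed in $\mathcal{H}(\mathbb{D})(A\times C,B\times D)$, with projections the composites of the fibrewise projections with the Cartesian cells. For a horizontal arrow $L\colon X\to Y$ with cells $\phi\colon L\to M$ over $(u,v)$ and $\psi\colon L\to N$ over $(s,t)$, the induced cell $L\to M\times N$ is forced to have legs $\langle u,s\rangle$ and $\langle v,t\rangle$; by the Cartesian property it corresponds to a globular cell into $\langle v,t\rangle^*(M\times N)\langle u,s\rangle_*$, which, because reindexing preserves products and using \Cref{f^*_functorial} together with the rearrangement $\langle v,t\rangle^*\bigl(q_B^*M(p_A)_*\bigr)\langle u,s\rangle_*\cong v^*Mu_*$ (and similarly for $N$), is isomorphic to $(v^*Mu_*)\times(t^*Ns_*)$; globular cells into the latter are exactly pairs of globular cells, and via the Cartesian property these are exactly pairs $(\phi,\psi)$. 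A routine verification that the projections agree under these identifications then finishes the argument.
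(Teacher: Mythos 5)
Your proposal is correct, and although it lands on exactly the same explicit data as the paper --- the product $M\times N=({p_1}^*M{p_1}_*)\wedge({p_2}^*N{p_2}_*)$ and the terminal object $\top_{I,I}$ of $\mathcal{H}(\mathbb{D})(I,I)$ --- the route is genuinely different. You package the situation as the fibration $(S,T)\colon D_1\to D_0\times D_0$ with fibres $\mathcal{H}(\mathbb{D})(A,B)$, and reduce everything to the standard fact that the total category of a fibration has finite products once the base and the fibres do and reindexing preserves them. The key ingredient that replaces the paper's hands-on work is your observation that $(f,g)^*\cong(g^*\odot-)\circ(-\odot f_*)$ is a composite of right adjoints (by whiskering the adjunctions $f_*\dashv f^*$ and $g_*\dashv g^*$ in $\mathcal{H}(\mathbb{D})$), hence preserves all limits that exist in the fibres; the paper never needs or proves this, instead verifying the universal property of $M\times N$ directly by factoring a competitor cell first through the Cartesian cells $\gamma_M,\gamma_N$ and then through the op-Cartesian cell $\zeta$ over $(\langle f,h\rangle,\langle g,k\rangle)$ to reduce to a globular pairing. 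So the paper pushes the test object $L$ forward op-Cartesianly while you pull $M\times N$ back Cartesianly; the two are dual bookkeeping devices and yours avoids op-Cartesian cells entirely. What your approach buys is conceptual economy and reusability (the same argument gives all fibred limits, and the terminal-object case becomes a one-line corollary of terminal preservation by reindexing); what it costs is the coherence bookkeeping you correctly flag --- the whiskered adjunctions in the weak bicategory, the identification $\langle v,t\rangle^*\bigl(q_B^*M(p_A)_*\bigr)\langle u,s\rangle_*\cong v^*Mu_*$ via \Cref{f^*_functorial}, and the naturality checks ensuring the chain of bijections is implemented by composition with the chosen projections. None of these is a gap; they are the standard verifications underlying the fibrational limit theorem.
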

\begin{proof}
We use $\times, p_1, p_2$ for the product and the projections in $D_0$ and $\wedge, \pi_1, \pi_2$ for the local product and the projections in $\mathcal{H(\mathbb{D})}$. Also we use $I$ for the terminal in $D_0$ and $\top$ for the terminal in $\mathcal{H(\mathbb{D})}$. \par
Consider horizontal arrows $M:\srarrow{A}{B}$ and $N:\srarrow{X}{Y}$ and the Cartesian cells
$$\xymatrixcolsep{0.2cm} \xymatrixrowsep{0.2cm} \xymatrix{
A\times X \ar[dd]_{p_1} \haru{{p_1}^* M {p_1}_*} & & B\times Y \ar[dd]^{p_1} & & A\times X \ar[dd]_{p_2} \haru{{p_2}_* N {p_2}^*} & & B\times Y \ar[dd]^{p_2} \\
& \gamma_M & & \text{and} & & \gamma_N \\
A \hard{M} & & B & & X \hard{N} & & Y. } $$
Define $M \times N = ({p_1}^* M {p_1}_*) \wedge ({p_2}^* N {p_2}_* )$. We will show that this is the product in $D_1$ with projections 
$$\xymatrixcolsep{0.2cm} \xymatrixrowsep{0.2cm} \xymatrix{
A \times X \ar@{=}[dd] \haru{M\times N} & & B \times Y \ar@{=}[dd] & & A \times X \ar@{=}[dd] \haru{M\times N} & & B \times Y \ar@{=}[dd] \\
& \pi_1 & & & & \pi_2 \\
A\times X \ar[dd]_{p_1} \hard{{p_1}^* M {p_1}_* } & & B\times Y \ar[dd]^{p_1} & \text{and} & A \times X \ar[dd]_{p_2} \hard{{p_2}^* N{p_2}_*} & & B\times Y \ar[dd]^{p_2} \\
& \gamma_M & & & & \gamma_N \\
A \hard{M} & & B & & X \hard{N} & & Y. }$$
Consider cells
$$\xymatrixcolsep{0.2cm} \xymatrixrowsep{0.2cm} \xymatrix{
C \ar[dd]_f \haru{L} & & D \ar[dd]^g & & C \ar[dd]_h \haru{L} & & D \ar[dd]^k \\
& \alpha_M & & \text{and} & & \alpha_N \\
A \hard{M} & & B & & X \hard{N} & & Y, } $$
Then we have $f=p_1\langle f,h \rangle$, $g=p_1\langle g,k \rangle$, $h=p_2\langle f,h \rangle$, $k=p_2 \langle g,k \rangle$.
So there are unique cells
$$\xymatrixcolsep{0.2cm} \xymatrixrowsep{0.2cm} \xymatrix{
C \ar[dd]_{\langle f,h \rangle} \haru{L} & & D \ar[dd]^{\langle g,k \rangle} & & & & C \ar[dd]_{\langle f,h \rangle} \haru{L} & & D \ar[dd]^{\langle g,k \rangle} \\
& \beta_M & & & \text{ and } & & & \beta_N \\
A\times X \hard{{p_1}^* M {p_1}_* } & & B\times Y & & & & A\times X \hard{{p_2}^* N{p_2}_*} & & B\times Y, } $$
such that
\begin{figure}[!h]
\centering
\begin{tikzpicture}[scale=0.4]
\draw (0,0) node{$\alpha_M$} (1,0) node {=} (5,0) node {and} (8,0) node {$\alpha_N$} (9,0) node {=};
\draw	(2,1) node {$\beta_M$} (2,-1) node {$\gamma_M$} (10,1) node {$\beta_N$} (10,-1) node {$\gamma_N$};
\draw (2-0.4,0) -- (2+0.4,0) (10-0.4,0) -- (10+0.4,0);
\end{tikzpicture}
\end{figure}
\\
Consider also the op-Cartesian cell
$$\xymatrixcolsep{0.6cm} \xymatrixrowsep{0.2cm} \xymatrix{
C \ar[dd]_{\langle f,h \rangle} \haru{L} & & D \ar[dd]^{\langle g,k \rangle} \\
& \zeta \\
A \times X \hard{\langle g,k \rangle_* L \langle f,h \rangle^* } & & B \times Y. } $$
Then we can factor $\beta_M$ and $\beta_N$ as
\begin{figure}[!h]
\centering
\begin{tikzpicture}[scale=0.4]
\draw (0,0) node{$\beta_M$} (1,0) node {=} (5,0) node {and} (8,0) node {$\beta_N$} (9,0) node {=};
\draw	(2,1) node {$\zeta$} (2,-1) node {$\theta_M$} (10,1) node {$\zeta$} (10,-1) node {$\theta_N$};
\draw (2-0.4,0) -- (2+0.4,0) (10-0.4,0) -- (10+0.4,0);
\end{tikzpicture}
\end{figure}
\\
for unique cells
$$\xymatrixcolsep{0.4cm} \xymatrixrowsep{0.2cm} \xymatrix{
A\times X \ar@{=}[dd] \haru{\langle g,k \rangle_* L \langle f,h \rangle^*} & & B\times Y \ar@{=}[dd] & & & & A\times X \ar@{=}[dd] \haru{\langle g,k \rangle_* L \langle f,h \rangle^*} & & B\times Y \ar@{=}[dd] \\
& \theta_M & & & \text{ and } & & & \theta_N \\
A\times X \hard{{p_1}^* M {p_1}_* } & & B\times Y & & & & A\times X \hard{{p_2}^* N{p_2}_*} & & B\times Y. } $$
Now we can consider the cell $\langle \theta_M,\theta_N \rangle$, since they are both globular, and we can see that the diagram
$$\xymatrixcolsep{2cm} \xymatrixrowsep{1cm} \xymatrix{
& L \ar[dl]_{\alpha_M} \ar[dr]^{\alpha_N} \ar@{-->}[d]|-{\langle \theta_M, \theta_N \rangle \zeta} \\
M & M \times N \ar[l]^{\gamma_M \pi} \ar[r]_{\gamma_N \rho} & N }$$
commutes since
\begin{figure}[!h]
\centering
\begin{tikzpicture}[scale=0.4]
\draw (1,1) node{$\langle \theta_M , \theta_N \rangle$} (1,3) node{$\zeta$} (1,-1) node{$\pi_1$} (1,-3) node{$\gamma_M$} (3,0) node{=} (5,0) node{$\theta_M$} (5,2) node{$\zeta$} (5,-2) node{$\gamma_M$} (7,0) node{=} (9,1) node{$\beta_M$} (9,-1) node{$\gamma_M$} (11,0) node{=} (13,0) node{$\alpha_M$} (17,0) node{and} (21,1) node{$\langle \theta_M , \theta_N \rangle$} (21,3) node{$\zeta$} (21,-1) node{$\pi_2$} (21,-3) node{$\gamma_N$} (23,0) node{=} (25,0) node{$\theta_N$} (25,2) node{$\zeta$} (25,-2) node{$\gamma_N$} (27,0) node{=} (29,1) node{$\beta_N$} (29,-1) node{$\gamma_N$} (31,0) node{=} (33,0) node{$\alpha_N.$};
\draw (0,0)--(2,0) (8,0)--(10,0) (0,2)--(2,2) (0,-2)--(2,-2) (4,1)--(6,1) (4,-1)--(6,-1);
\draw (20,0)--(22,0) (28,0)--(30,0) (20,2)--(22,2) (20,-2)--(22,-2) (24,1)--(26,1) (24,-1)--(26,-1);
\end{tikzpicture}
\end{figure}
\\
Moreover, $\langle \theta_M, \theta_N \rangle \zeta$ is the unique cell with the above property. Indeed, if we have a cell
$$\xymatrixcolsep{0.2cm} \xymatrixrowsep{0.2cm} \xymatrix{
C \haru{L} \ar[dd]_{i} & & D \ar[dd]^{j} \\
& \lambda \\
A\times X \hard{M \times N} & & B\times Y}$$
with $\gamma_M \pi_1 \lambda = \alpha_M$ and $\gamma_N \pi_2 \lambda = \alpha_N$, then, by looking at the sources and the targets on these identites, we have $pi=f$, $ri=h$, $pj=g$ and $rj=k$, which implies that $i= \langle f,h \rangle$ and $j=\langle g,k \rangle$. So, since $\zeta$ is op-Cartesian, we can write $\lambda$ as
$$\xymatrixcolsep{0.6cm} \xymatrixrowsep{0.2cm} \xymatrix{
& & & & C \ar[dd]_{\langle f,h \rangle} \haru{L} & & D \ar[dd]^{\langle g,k \rangle} \\
C \ar[dd]_{\langle f,h \rangle} \haru{L} & & D \ar[dd]^{\langle g,k \rangle} & & & \zeta \\
& \lambda & & = & A \times X \ar@{=}[dd] \haru{\langle f,h \rangle ! L \langle g,k \rangle !} & & B \times Y \ar@{=}[dd] \\
A \times X \hard{M\times N} & & B \times Y & & & \kappa \\
& & & & A \times X \hard{M\times N} & & B \times Y, } $$
for a unique globular cell $\kappa$. It follows that
$$\begin{tikzpicture}[scale=0.4]
\draw (1,1) node{$\kappa$} (1,3) node{$\zeta$} (1,-1) node{$\pi_1$} (1,-3) node{$\gamma_M$} (3,0) node{=} (5,0) node{$\pi_1$} (5,2) node{$\lambda$} (5,-2) node{$\gamma_M$} (7,0) node{=} (9,0) node{$\alpha_M$} (11,0) node{=} (13,0) node{$\theta_M$} (13,-2) node{$\gamma_M$} (13,2) node{$\zeta$} (17,0) node{and} (21,1) node{$\kappa$} (21,3) node{$\zeta$} (21,-1) node{$\pi_2$} (21,-3) node{$\gamma_N$} (23,0) node{=} (25,0) node{$\pi_2$} (25,2) node{$\lambda$} (25,-2) node{$\gamma_N$} (27,0) node{=} (29,0) node{$\alpha_N$} (31,0) node{=} (33,-2) node{$\gamma_N$.} (33,0) node{$\theta_N$} (33,2) node{$\zeta$};
\draw (0,0)--(2,0) (0,2)--(2,2) (0,-2)--(2,-2) (4,1)--(6,1) (4,-1)--(6,-1) (12,-1)--(14,-1) (12,1)--(14,1);
\draw (20,0)--(22,0) (20,2)--(22,2) (20,-2)--(22,-2) (24,1)--(26,1) (24,-1)--(26,-1) (32,-1)--(34,-1) (32,1)--(34,1);
\end{tikzpicture}$$
So
$$\begin{tikzpicture}[scale=0.4]
\draw (0,0)--(2,0) (12,0)--(14,0);
\draw (1,1) node{$\kappa$} (1,-1) node{$\pi_1$} (3,0) node{=} (5,0) node{$\theta_M$} (9,0) node{and} (13,1) node{$\kappa$} (13,-1) node{$\pi_2$} (15,0) node{=} (17,0) node{$\theta_N$};
\end{tikzpicture}$$
by the universal property of $\gamma_M, \gamma_N, \zeta$. So $\kappa=\langle \theta_M, \theta_N \rangle$.
\par
\vspace{0.5cm}
For the terminal object in $D_1$, we will show that this is the terminal object $\top_{I,I}$ of $\mathcal{H(\mathbb{D})}(I,I)$. Indeed, consider a horizontal arrow $M: \srarrow{A}{B}$ and the unique vertical arrows $t_A:A \rightarrow I$ and $t_B: B \rightarrow I$, together with the op-Cartesian cell
$$\xymatrixcolsep{0.2cm} \xymatrixrowsep{0.2cm} \xymatrix{
A \haru{M} \ar[dd]_{t_A} & & B \ar[dd]^{t_B} \\
& \zeta \\
I \hard{{t_B}_* M t_A^*} & & I. }$$
Composing $\zeta$ with the unique cell to the terminal $\top_{I,I}$, 
$$\xymatrixcolsep{0.2cm} \xymatrixrowsep{0.2cm} \xymatrix{
I \haru{{t_B}_* M t_A^*} \ar@{=}[dd] & & I \ar@{=}[dd] \\
& \tau \\
I \hard{\top_{I,I}} & & I, }$$
we get a cell from $M$ to $\top_{I,I}$. Any cell of the form
$$\xymatrixcolsep{0.2cm} \xymatrixrowsep{0.2cm} \xymatrix{
A \haru{M} \ar[dd]_{t_A} & & B \ar[dd]^{t_B} \\
& \beta \\
I \hard{\top_{I,I}} & & I }$$
can be factored uniquely through $\zeta$ as
$$\xymatrixcolsep{0.2cm} \xymatrixrowsep{0.2cm} \xymatrix{
A \haru{M} \ar[dd]_{t_A} & & B \ar[dd]^{t_B} \\
& \zeta \\
I \ar@{=}[dd] \hard{{t_B}_* M t_A^*} & & I \ar@{=}[dd] \\
& \beta' \\
I \hard{\top{I,I}} & & I.
}$$
But now $\beta'$ is a cell in $\mathcal{H(\mathbb{D})}(I,I)$ to its terminal, so $\beta'=\tau$, which implies that $\tau \zeta$ is the unique cell from $M$ to $\top_{I,I}$.
\end{proof}

\begin{remark}
Note that from the proof above it follows that $$S \langle \alpha_M , \alpha_N \rangle = S (\langle \theta_M, \theta_N \rangle \zeta) = S \langle \theta_M, \theta_N \rangle S \zeta = S\zeta = \langle S \alpha_M , S\alpha_N \rangle $$ and similarly $$T (\langle \alpha_M , \alpha_N \rangle) = \langle T \alpha_M , T \alpha_N \rangle,$$
where $S$ and $T$ are the functors from $D_1$ to $D_0$ in the definition of a double category.
\end{remark}

\begin{lemma}\label{canonical_natural}
If $F:C\rightarrow D$ is a functor between categories and $C$ and $D$ have finite products, then the canonical arrow $\langle Fp_1, Fp_2 \rangle: F(A\times B) \rightarrow FA \times FB$, where $p_1$ and $p_2$ are the projections, is natural in both $A$ and $B$.
\end{lemma}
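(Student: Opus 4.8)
The plan is to invoke the universal property of the binary product $FA'\times FB'$ in $D$: two morphisms with codomain a product coincide if and only if their composites with the two product projections coincide. Fix morphisms $f:A\to A'$ and $g:B\to B'$ in $C$. Writing $p_1,p_2$ for the projections of $A\times B$, and $p_1',p_2'$ for those of $A'\times B'$, and $\pi_1,\pi_2$ (resp. $\pi_1',\pi_2'$) for the projections of $FA\times FB$ (resp. $FA'\times FB'$), the naturality square to be verified is
\[
(Ff\times Fg)\circ\langle Fp_1,Fp_2\rangle \;=\; \langle Fp_1',Fp_2'\rangle\circ F(f\times g).
\]

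First I would post-compose both sides with $\pi_1':FA'\times FB'\to FA'$. On the left, the defining equation $\pi_1'\circ(Ff\times Fg)=Ff\circ\pi_1$ together with $\pi_1\circ\langle Fp_1,Fp_2\rangle=Fp_1$ gives $Ff\circ Fp_1=F(f\circ p_1)$. On the right, $\pi_1'\circ\langle Fp_1',Fp_2'\rangle=Fp_1'$ gives $Fp_1'\circ F(f\times g)=F\big(p_1'\circ(f\times g)\big)$. Now $f\circ p_1=p_1'\circ(f\times g)$ by the very definition of $f\times g$, and $F$ is a functor, so the two composites agree. Repeating the identical computation with $\pi_2'$ in place of $\pi_1'$ yields the analogous equality, and the universal property of $FA'\times FB'$ then forces the two morphisms above to be equal. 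Since $A$ and $B$ play symmetric roles here — one may take $g=1_B$ to get naturality in the first variable and $f=1_A$ for the second — this establishes naturality in both arguments simultaneously.

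The argument is purely formal, using only the universal property of products and the functoriality of $F$, so I do not expect any genuine obstacle; the sole point requiring care is keeping track of which projections belong to which product and recalling that the canonical comparison map is by definition the pairing $\langle Fp_1,Fp_2\rangle$, whose defining property ($\pi_i\circ\langle Fp_1,Fp_2\rangle=Fp_i$) is exactly what drives the computation.
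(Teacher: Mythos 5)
Your proposal is correct and follows essentially the same approach as the paper: post-compose both sides of the naturality square with the two projections of $FA'\times FB'$, reduce everything to $F(f\circ p_1)$ (resp.\ $F(g\circ p_2)$) via the defining equations of the pairing and the functoriality of $F$, and conclude by the universal property of the product. The only (harmless) addition is your closing remark on specializing $f$ or $g$ to an identity.
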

\begin{proof}
We need to show that for arrows $f:A\rightarrow C$ and $g:B\rightarrow D$, the following diagram
$$\xymatrix{
F(A\times B) \ar[d]_{F(f\times g)} \ar[rr]^{\langle F{p_1}, F{p_2} \rangle} & & FA \times FB \ar[d]^{Ff \times Fg} \\
F(C\times D) \ar[rr]_{\langle F{p_1}, F{p_2} \rangle} & & FC \times FD }$$
commutes. Composing with the projection $p_1$ of $FC\times FD$ we take:
$$p_1 \langle Fp_1, Fp_2 \rangle F(f\times g) = Fp_1 F(f\times g) = F(p_1 (f\times g)) = F(fp_1)$$
and $$p_1 (Ff \times Fg) \langle Fp_1 , Fp_2 \rangle = (Ff)p_1 \langle Fp_1 , Fp_2 \rangle = Ff Fp_1 = F(fp_1).$$
Similarly, $$p_2 \langle Fp_1, Fp_2 \rangle F(f\times g)=p_2 (Ff \times Fg) \langle Fp_1 , Fp_2 \rangle.$$
So the above diagram commutes.
\end{proof}

\begin{proposition}\label{lax_cartesian}
Consider a double category $\mathbb{D}$ such that:
\begin{enumerate}[label=\roman*.]
\item $\mathbb{D}$ is fibrant,
\item the vertical category $D_0$ has finite products and
\item the horizontal bicategory $\mathcal{H(\mathbb{D})}$ has finite products locally.
\end{enumerate}
Then the products on $D_0$ and the products on $D_1$ define lax double functors $\times: \mathbb{D}\times \mathbb{D} \rightarrow \mathbb{D}$ and $I: \mathbbm{1} \rightarrow \mathbb{D}$.
\end{proposition}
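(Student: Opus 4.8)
The plan is to read the lax double functor $\times\colon\mathbb{D}\times\mathbb{D}\to\mathbb{D}$ directly off the two families of products we already possess, and to take $I\colon\mathbbm{1}\to\mathbb{D}$ to be the functor naming the terminal objects. On the vertical level, $(\times)_0\colon D_0\times D_0\to D_0$ is the product functor furnished by hypothesis~\textit{ii}. On the horizontal level, $(\times)_1\colon D_1\times D_1\to D_1$ is the product functor on $D_1$, which exists by \Cref{D1_products}; crucially, using the explicit construction of products in $D_1$ from the proof of \Cref{D1_products} (the object $M\times N={p_1}^*M{p_1}_*\wedge {p_2}^*N{p_2}_*$ there has source $SM\times SN$ and target $TM\times TN$) together with the remark following that proposition (which gives $S\langle-,-\rangle=\langle S(-),S(-)\rangle$ and dually for $T$), the functor $(\times)_1$ lies strictly over $(\times)_0$. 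Hence $((\times)_0,(\times)_1)$ is a pair of functors with $S\circ(\times)_1=(\times)_0\circ S$ and $T\circ(\times)_1=(\times)_0\circ T$, as a lax double functor requires; it remains to supply the globular comparison cells and to verify naturality and coherence.

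I would define the comparison cells by the universal property of products in $D_1$. For a composable pair of horizontal arrows $M=(M_1,M_2)$, $N=(N_1,N_2)$ of $\mathbb{D}\times\mathbb{D}$, the horizontal composites $\pi_i\mid\pi_i\colon(M_1\times M_2)\odot(N_1\times N_2)\to M_i\odot N_i$ of projection cells induce a unique cell $F_\odot=\langle\,\pi_1\mid\pi_1,\ \pi_2\mid\pi_2\,\rangle\colon(M_1\times M_2)\odot(N_1\times N_2)\to (M_1\odot N_1)\times(M_2\odot N_2)$; similarly, for an object $(A_1,A_2)$ the cells $U_{p_i}\colon U_{A_1\times A_2}\to U_{A_i}$ induce $F_U=\langle U_{p_1},U_{p_2}\rangle\colon U_{A_1\times A_2}\to U_{A_1}\times U_{A_2}$. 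Both are globular: by the remark following \Cref{D1_products}, $S(F_\odot)=\langle S(\pi_1\mid\pi_1),S(\pi_2\mid\pi_2)\rangle=\langle p_1,p_2\rangle=1$, and likewise for $T(F_\odot)$, $S(F_U)$, $T(F_U)$. Naturality of $F_U$ in $(A_1,A_2)$ is precisely \Cref{canonical_natural} applied to $U\colon D_0\to D_1$, and naturality of $F_\odot$ follows from the interchange law in $\mathbb{D}$, the naturality of the product projections of $D_1$, and uniqueness of maps into a product, by comparing the two sides after post-composition with $\pi_1$ and with $\pi_2$.

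The two coherence diagrams in the definition of a lax double functor are each an equality of cells whose common codomain is a product in $D_1$, so by uniqueness it suffices to check them after composing with $\pi_1$ and $\pi_2$. Doing so and using the defining identities $\pi_i\,F_\odot=\pi_i\mid\pi_i$, $\pi_i\,F_U=U_{p_i}$, the identity $\pi_i\,(\xi_1\times\xi_2)=\xi_i\,\pi_i$ for a product of cells, and the interchange law, the associativity diagram collapses to the naturality of the associator $\alpha$ of $\mathbb{D}$ with respect to projection cells, and each half of the unit diagram collapses to the naturality of the corresponding unitor $\lambda$ or $\rho$ of $\mathbb{D}$. This shows $\times\colon\mathbb{D}\times\mathbb{D}\to\mathbb{D}$ is a lax double functor. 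For $I\colon\mathbbm{1}\to\mathbb{D}$, let $I_0$ name the terminal object $I$ of $D_0$ and $I_1$ name the terminal object of $D_1$, which by the proof of \Cref{D1_products} is the terminal object $\top_{I,I}$ of $\mathcal{H}(\mathbb{D})(I,I)$; the comparison cells $I_U$ and $I_\odot$ are the unique cells into this terminal object, and all the lax-functor axioms hold automatically by terminality.

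There is no conceptual obstacle: everything is forced by the universal properties of the products in $D_0$ and $D_1$ together with interchange and the naturality of $\alpha$, $\lambda$, $\rho$ in $\mathbb{D}$. The point that needs a little care is the insistence that $(\times)_1$ lie \emph{strictly} over $(\times)_0$, so that $S\circ(\times)_1=(\times)_0\circ S$ and $T\circ(\times)_1=(\times)_0\circ T$ hold on the nose rather than up to isomorphism; this is why one uses the particular product on $D_1$ constructed in \Cref{D1_products}. The remaining work is purely the bookkeeping of cells in the two coherence diagrams once they have been reduced to the components $\pi_1$ and $\pi_2$.
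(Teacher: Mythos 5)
Your construction is exactly the paper's: the comparison cells $F_\odot=\langle\pi_1\,|\,\pi_1,\ \pi_2\,|\,\pi_2\rangle$ and $F_U=\langle U_{p_1},U_{p_2}\rangle$ are the canonical maps into the products of \Cref{D1_products}, globularity is checked via the remark following that proposition, and naturality of $F_U$ comes from \Cref{canonical_natural} applied to $U$. You go somewhat further than the paper's proof by explicitly reducing the two lax-functor coherence diagrams to the naturality of $\alpha$, $\lambda$, $\rho$ after post-composition with the projections, but the argument is the same one and is correct.
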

\begin{proof}
In \Cref{D1_products} we proved that, under the above conditions, $D_1$ has finite products so we can define a functor $\times:D_1 \times D_1 \rightarrow D_1$ with $M\times N = (p_1^*M{p_1}_*)\wedge (p_2^*N{p_2}_*)$. For cells
$$\xymatrixcolsep{0.2cm} \xymatrixrowsep{0.2cm} \xymatrix{
A \ar[dd]_f \haru{M} & & B \ar[dd]^g & & A' \ar[dd]_{f'} \haru{M'} & & B' \ar[dd]^{g'} \\
& \alpha & & \text{and} & & \alpha' \\
X \hard{N} & & Y & & X' \hard{N'} & & Y', } $$
their product $\alpha \times \alpha'$ is defined to be the unique cell that fits in the diagram
$$\xymatrixcolsep{0.6cm} \xymatrixrowsep{0.2cm} \xymatrix{
& & M\times M' \ar[dl]_-{\gamma_M \pi_1} \ar[dr]^-{\gamma_M' \pi_2} \ar@{-->}[dd] \\
& M \ar[dl]_{\alpha} & & M' \ar[dr]^{\alpha'} \\
N & & N\times N' \ar[ll]^{\gamma_N \pi_1} \ar[rr]_{\gamma_N' \pi_2} & & N'. } $$
Consider in addition the functor $\times :D_0 \times D_0 \rightarrow D_0$ defined by the product in $D_0$.\par
For horizontal arrows $M:\srarrow{A}{B}$ and $N:\srarrow{X}{Y}$, we have $S(M\times N)=A\times X=SM \times SN$ and $T(M\times N) =B \times Y = TM \times TN$. Also, for cells $\alpha$ and $\alpha'$ as above, we can see in \Cref{D1_products} that $S(\alpha \times \alpha') = \langle fp_1 , f'p_2 \rangle$ and $T(\alpha \times \alpha')= \langle gp_1 , g'p_2 \rangle$, which are respectively the arrows $f\times f'$ and $g \times g'$. I.e. $S(\alpha \times \alpha')=S(\alpha) \times S(\alpha')$ and $T(\alpha \times \alpha')=T(\alpha) \times T(\alpha')$.

Moreover, for horizontal arrows
$$\xymatrixcolsep{0.2cm} \xymatrixrowsep{0.2cm} \xymatrix{
A \haru{M} & & B \haru{M'} & & C \\
X \haru{N} & & Y \haru{N'} & & Z, }$$
the canonical cell $\langle \gamma_{M'} \pi_1 \odot \gamma_M \pi_1 , \gamma_{N'} \pi_2 \odot \gamma_N \pi_2 \rangle$ will provide us with a natural transformation 
$$\times_{\odot} :(M'\times N') \odot (M\times N) \rightarrow (M' \odot M )\times (N' \odot N),$$
and the cell $\langle U_{p_1} , U_{p_2}\rangle$ with a natural transformation $$\times_U : U_{A\times B} \rightarrow U_A \times U_B.$$
Note that these cells are globular since $$S \langle \gamma_{M'} \pi_1 \odot \gamma_M \pi_1 , \gamma_{N'} \pi_2 \odot \gamma_N \pi_2 \rangle = \langle S( \gamma_{M'} \pi_1 \odot \gamma_M \pi_1) ,S( \gamma_{N'} \pi_2 \odot \gamma_N \pi_2) \rangle =$$ $$ \langle S (\gamma_M \pi_1 ), S(\gamma_N \pi_2 ) \rangle = \langle p_1,p_2\rangle = 1,$$ $$S \langle U_{p_1} ,U_{p_2} \rangle = \langle SU_{p_1}, SU_{p_2} \rangle = \langle p_1,p_2\rangle = 1$$ and similarly for the targets.

We can similarly show that we have a lax double functor $I: \mathbbm{1} \rightarrow \mathbb{D}$ that maps the double category $\mathbbm{1}$ to the terminal object $I$, together with the vertical identity $1_I$, the horizontal arrow $\top_{I,I}$ and the idenity cell $1_{\top_{I,I}}$.
\end{proof}

In \Cref{adjoint_grandis_pare} we characterized an adjunction for general double categories $\mathbb{B}$ and $\mathbb{D}$. In the following we look at the case where $\mathbb{B}$ and $\mathbb{D}$ are fibrant.

\begin{theorem}\cite{Shulman2008} \label{fibrant_right_adjoint}
Consider fibrant double categories $\mathbb{D}$ and $\mathbb{E}$. A double functor $F:\mathbb{D} \rightarrow \mathbb{E}$ has right adjoint if and only if the following hold:
\begin{enumerate}[label=\roman*.]
\item For every object $A$ in $\mathbb{E}$, there is an object $GA$ in $\mathbb{D}$ and a universal vertical arrow $e_A: FGA \rightarrow A$ from $F$ to $A$.
\item For every horizontal arrow $M:\srarrow{A}{B}$ in $\mathbb{E}$, there is a horizontal arrow $GM:\srarrow{C}{D}$ in $\mathbb{D}$ and a unicersal cell $\epsilon_M: FGM \rightarrow M$ from $F$ to $M$.
\item If $\epsilon_M:FGM \rightarrow M$, $\epsilon_N: FGN \rightarrow N$ are universal cells, then so is the composite
$$\xymatrixcolsep{0.2cm} \xymatrixrowsep{0.2cm} \xymatrix{
\: \ar[rrrr]^{F(GN \odot GM)} \ar[rrrr]|{\vstretch{0.60}{|}} \ar@{=}[dd] & & & & \: \ar@{=}[dd] \\
& & \cong \\
\: \haru{FGM} \ar[dd] & & \: \haru{FGN} \ar[dd] & & \: \ar[dd] \\
& \epsilon_M & & \epsilon_N \\
\: \hard{M} & & \: \hard{N} & & \: } $$
\item If $e:FGA \rightarrow A$ is universal then so is the composite
$$\xymatrixcolsep{0.2cm} \xymatrixrowsep{0.2cm} \xymatrix{
FGA \ar@{=}[dd] \haru{U_{FGA}} & & FGA \ar@{=}[dd] \\
& \cong \\
FGA \ar[dd]_{e} \haru{FU_{GA}} & & FGA \ar[dd]^{e} \\
& U \\
A \hard{U_A} & & A }$$
\end{enumerate}
\end{theorem}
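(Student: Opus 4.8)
The plan is to reduce the statement to the Grandis--Par\'e characterization of double adjunctions (\Cref{adjoint_grandis_pare}) and then to identify the lax structure cells of the right adjoint it produces with the canonical composite cells written out in conditions (iii) and (iv). Since a (pseudo) double functor is in particular oplax (its constraint $F_\odot$ being invertible, its inverse serves as the oplax constraint), \Cref{adjoint_grandis_pare} applies to $F$; conditions (i)--(ii) then control the existence of a lax right adjoint, and conditions (iii)--(iv) are precisely what is needed to promote that lax adjoint to a genuine double functor.

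\emph{($\Rightarrow$)} Suppose $F \dashv G$ in $\mathbf{DblCat}$, with unit $\eta$ and counit $\epsilon : FG \Rightarrow 1_{\mathbb{E}}$. By the Remark recorded just before \Cref{adjoint_grandis_pare}, each component $e_A := \epsilon_A$ is a universal vertical arrow from $F_0$ to $A$, and each component $\epsilon_M$ is a universal cell from $F$ to $M$; this gives (i) and (ii). Since $G$ is a (pseudo) double functor (\Cref{double_functor}), the comparison cells $G_\odot$ and $G_U$ are invertible. Writing out the two axioms of \Cref{vertical_transf} expressing that $\epsilon$ is a vertical transformation, one sees that the composite cell displayed in (iii) is exactly $\epsilon_{N \odot M}$ pasted with $F(G_\odot)$ (after cancelling the constraint $F_\odot^{-1}$ of the diagram against $F_\odot$), and likewise the composite in (iv) is $\epsilon_{U_A}$ pasted with $F(G_U)$. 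Pasting a universal cell with $F$ applied to a globular isomorphism again yields a universal cell, since the unique-factorization property is unaffected; hence (iii) and (iv) hold.

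\emph{($\Leftarrow$)} Conversely, assume (i)--(iv). Conditions (i) and (ii) are exactly the hypotheses of \Cref{adjoint_grandis_pare} for the oplax functor $F$ (the source/target compatibility $S(\epsilon_M)=e_{SM}$, $T(\epsilon_M)=e_{TM}$ demanded there can be arranged because universal vertical arrows are unique up to canonical isomorphism, along which we transport the chosen $\epsilon_M$). So $F$ has a lax double right adjoint $G : \mathbb{E} \to \mathbb{D}$, whose values on objects and horizontal arrows are dictated by (i) and (ii) and whose counit components are the chosen $e_A$ and $\epsilon_M$. In the construction underlying \Cref{adjoint_grandis_pare}, the lax constraint $G_\odot : G_1 N \odot G_1 M \to G_1(N \odot M)$ is defined as the unique globular cell with $\epsilon_{N \odot M}$ pasted with $F(G_\odot)$ equal to the composite in (iii), and $G_U : U_{G_0 A} \to G_1 U_A$ is characterized similarly via (iv). But (iii) and (iv) assert that those composites are themselves universal cells (from $F$ to $N \odot M$, resp. to $U_A$); comparing the two universal cells to the same horizontal arrow, running their universal factorizations in both directions and invoking uniqueness, shows $G_\odot$ and $G_U$ are isomorphisms. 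Thus $G$ is a (pseudo) double functor, and since $\eta$ and $\epsilon$ are vertical transformations between double functors satisfying the triangle identities, $F \dashv G$ is an adjunction in $\mathbf{DblCat}$.

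The step I expect to be the main obstacle is the precise identification, in both directions, of the abstract comparison cells $G_\odot$, $G_U$ with the concrete composite cells drawn in (iii) and (iv): this demands careful bookkeeping of the pasting diagrams, of the invertible constraints $F_\odot$ and $F_U$ of $F$, and of the paper's horizontal-composition convention (where ``$M$ then $N$'' read left to right means $N \odot M$), so that the cells in sight are genuinely composable and the two universal cells really do have matching boundaries. Once this dictionary is set up, the rest is the familiar ``two universal objects differ by a unique isomorphism'' argument transported into the double category.
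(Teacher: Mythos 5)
The paper does not actually prove this theorem: it is stated with a citation to \cite{Shulman2008} and no argument is given, so there is no in-paper proof to compare against. Judged on its own terms, your strategy is sound and is essentially the standard one: conditions (i)--(ii) feed into the Grandis--Par\'e characterization (\Cref{adjoint_grandis_pare}) to produce a lax right adjoint, and conditions (iii)--(iv) identify the lax constraints $G_{\odot}$, $G_U$ as comparison cells between two universal cells over $N\odot M$ (resp.\ $U_A$), whence they are invertible by the usual ``mutually inverse unique factorizations'' argument; the forward direction is the vertical-transformation axioms for $\epsilon$ read backwards through the invertible constraints of $F$ and $G$.

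Two points deserve more care than your parenthetical remarks give them. First, the paper's definition of a universal cell from $F$ to $M$ already \emph{requires} its vertical boundaries to be universal vertical arrows; your argument silently relies on this when matching the boundaries of $\epsilon_M$ against the chosen $e_A$, $e_B$, so you should say so explicitly rather than treat it as automatic. Second, the ``transport along the canonical isomorphism'' that realigns $GM$ so that $S(GM)=G_0A$ and $T(GM)=G_0B$ is not available in an arbitrary double category: one must restrict $GM$ along the vertical isomorphisms $GA\cong C$, $GB\cong D$ using Cartesian fillings, and this is precisely where the hypothesis that $\mathbb{D}$ is fibrant enters your proof. As written, your argument never visibly uses fibrancy, which should be a warning sign; locating its use at this step closes the gap. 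With those two clarifications the proof is complete.
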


In his paper, Shulman proved the following proposition:

\begin{lemma}\cite{Shulman2008}\label{lax_preserves_Cartesian}
Any lax double functor between fibrant double categories preserves cartesian cells, and any oplax double functor preserves op-Cartesian cells.
\end{lemma}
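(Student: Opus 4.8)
The plan is to deduce this from the description of Cartesian cells via companions and conjoints contained in the proof of \Cref{fibrant_equivalence}; this is Shulman's argument, and I would organise it in three stages. Throughout I treat the lax/Cartesian half. The oplax/op\nobreakdash-Cartesian half is obtained by repeating everything with the roles of companions and conjoints --- equivalently of Cartesian and op\nobreakdash-Cartesian cells --- interchanged, the point being that the structure cells $F_{\odot}$ and $F_{U}$ of an oplax functor point the opposite way, so that exactly the \emph{other} variance of filling is the one that survives.

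\textbf{Reduction to canonical Cartesian cells.} Let $\alpha\colon M\to N$ be Cartesian over $(f\colon A\to B,\ g\colon C\to D)$ in $\mathbb{B}$. Since $\mathbb{B}$ is fibrant there is also the canonical Cartesian cell $\gamma\colon g^{*}\!\odot N\odot f_{*}\to N$ over $(f,g)$ produced in the proof of \Cref{fibrant_equivalence}. Feeding $\alpha$ and $\gamma$ into each other's universal properties yields a unique globular isomorphism $\theta\colon M\to g^{*}\!\odot N\odot f_{*}$ with $\alpha$ equal to the vertical composite of $\theta$ (on top) with $\gamma$. A globular isomorphism is Cartesian, and it is preserved by the functor $F_{1}$ (which also keeps it globular, $F_{0}$ being a functor); so $F\alpha$ is the vertical composite of the globular isomorphism $F\theta$ with $F\gamma$, and by \Cref{vertical_composite} it is enough to show that $F\gamma$ is Cartesian.

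\textbf{The canonical cell and the companion/conjoint sub-lemma.} By \Cref{fibrant_equivalence}, $\gamma$ is a canonical unit isomorphism composed vertically with the horizontal composite $\mathrm{cj}_{g}\odot 1_{N}\odot\mathrm{cp}_{f}$ of the Cartesian conjoint cell of $g$, the identity cell of $N$, and the Cartesian companion cell of $f$. The heart of the proof is the sub-lemma that $F$ carries $\mathrm{cp}_{f}\colon f_{*}\to U_{B}$ (over $(f,1_{B})$) to a Cartesian cell $F(\mathrm{cp}_{f})\colon F(f_{*})\to F(U_{B})$ over $(Ff,1_{FB})$, and $\mathrm{cj}_{g}$ likewise. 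Concretely, using fibrancy of $\mathbb{D}$ and $1_{FB}^{*}\cong U_{FB}$ (\Cref{identity_filling}), this amounts to showing that the induced globular comparison $F(f_{*})\to F(U_{B})\odot(Ff)_{*}$ is invertible; one builds a candidate inverse from the comparison cell $(Ff)_{*}\to F(f_{*})$ --- available precisely because $F$ is lax, since $F_{U}\colon U_{FA}\to F(U_{A})$ points the way needed to precompose with the image $F(\mathrm{cop}_{f})$ of the op\nobreakdash-Cartesian companion cell --- together with $F_{\odot}$, the unit isomorphisms of $\mathbb{D}$, and the two companion identities of \Cref{fibrant_equivalence}, and checks that it is two-sided by a routine but lengthy diagram chase in $\mathbb{D}_{1}$. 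Granting the sub-lemma, one uses naturality of $F_{\odot}$ and the unit-coherence axioms for the lax functor $F$, together with \Cref{f^*_functorial}, to exhibit $F\gamma$ as the canonical Cartesian cell $\gamma_{Ff,FN,Fg}$ of $\mathbb{D}$ precomposed with a globular isomorphism --- the horizontal composite $F(\mathrm{cj}_{g})\odot 1_{FN}\odot F(\mathrm{cp}_{f})$ being again Cartesian because its outer legs are identities, exactly as for $\gamma$ itself --- whence $F\gamma$ is Cartesian.

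\textbf{Expected main obstacle.} The delicate point is the sub-lemma: a general lax double functor does \emph{not} preserve companions and conjoints on the nose --- $F(f_{*})$ need not be $(Ff)_{*}$, and the ``obvious'' comparison cell assembled from $F_{\odot}$ and $F_{U}$ need not be invertible in isolation --- so one genuinely has to verify that $F$ nonetheless sends Cartesian companion and conjoint cells, hence all Cartesian fillings, to Cartesian cells. This is the only step that uses both the fibrancy of $\mathbb{D}$ and the lax (as opposed to pseudo) structure of $F$, and it is precisely the computation that goes through for the lax variance of $F_{\odot},F_{U}$ on Cartesian cells and, dually, for oplax functors on op\nobreakdash-Cartesian cells.
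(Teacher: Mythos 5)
The paper does not actually prove this lemma---it is quoted from \cite{Shulman2008}---so your proposal has to be measured against Shulman's argument rather than anything in the text. Your overall strategy is the right one and agrees with his: reduce to the canonical Cartesian cell of \Cref{fibrant_equivalence} via a globular isomorphism together with \Cref{vertical_composite}; observe that laxity supplies exactly the comparison $(Ff)_*\to F(f_*)$, obtained by factoring $F(\mathrm{cop}_f)\circ F_U$ through the op-Cartesian cell of $(Ff)_*$; and verify invertibility of the induced globular comparison using the two companion identities, the naturality of $F_U$ and $F_\odot$, and the unit coherence of the lax functor. All of that is correct, and the two-sided-inverse check does go through with exactly the ingredients you name.

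The one genuine soft spot is the assembly in your second stage. Knowing that $F(\mathrm{cp}_f)$ and $F(\mathrm{cj}_g)$ are Cartesian tells you that the horizontal composite $F(\mathrm{cj}_g)\odot 1_{FN}\odot F(\mathrm{cp}_f)$, whose \emph{source} is $F(g^*)\odot FN\odot F(f_*)$, is Cartesian; but the cell you must control is $F(\mathrm{cj}_g\odot 1_N\odot \mathrm{cp}_f)$, whose source is $F(g^*\odot N\odot f_*)$, and the only map relating these two sources is $F_\odot$, which is not invertible for a merely lax $F$. So the general case does not follow formally from the companion/conjoint sub-lemma plus ``horizontal composites with identity inner boundaries are Cartesian''. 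Two clean repairs: (i) prove the sub-lemma in the slightly stronger form that $F$ sends the Cartesian cell $M\odot f_*\to M$ over $(f,1)$ (general $M$, not just $M=U_B$), and dually $g^*\odot M\to M$ over $(1,g)$, to Cartesian cells---the computation is verbatim the one you describe---and then factor the canonical Cartesian cell $g^*\odot N\odot f_*\to N$ \emph{vertically} as $g^*\odot(N\odot f_*)\to N\odot f_*\to N$ and invoke \Cref{vertical_composite}; or (ii) directly check that $F_\odot$ precomposed with the comparison cells $(Fg)^*\to F(g^*)$ and $(Ff)_*\to F(f_*)$ is a two-sided inverse of the induced globular cell $F(g^*\odot N\odot f_*)\to (Fg)^*\odot FN\odot (Ff)_*$, which again uses only the identities you list. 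With either repair the proof is complete, and the oplax/op-Cartesian half dualizes exactly as you say.
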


Here we consider double categories not necessarily fibrant, and we prove that a lax double functor also preserves op-Cartesian cells if it happens to have a lax right adjoint. Dually we can show that an oplax double functor preserves Cartesian cells if it has a left oplax adjoint.

\begin{lemma}\label{right_adjoint_preserves_Cartesian}
For every adjunction $F \dashv G$ in $\mathbf{DblCat}_{\mathcal{L}}$, the lax double functor F preserves op-Cartesian cells and the lax double functor G preserves Cartesian cells.
\end{lemma}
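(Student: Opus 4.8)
### Proof Proposal

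The plan is to prove the two claims by exploiting the adjunction $F \dashv G$ directly, rather than appealing to fibrancy (which is precisely the point of this lemma, as opposed to \Cref{lax_preserves_Cartesian}). The two statements are dual to each other under the vertical-opposite construction, so I would prove only one — say that $G$ preserves Cartesian cells — and remark that the other follows by dualizing (replacing $F \dashv G$ in $\mathbf{DblCat}_{\mathcal{L}}$ by the corresponding adjunction on vertical-opposites, which swaps lax/oplax and Cartesian/op-Cartesian).

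For the claim that $G$ preserves Cartesian cells: let $\alpha$ be a Cartesian cell in $\mathbb{D}$ with boundary $M$ on top, $N$ on the bottom, and vertical legs $f$, $g$. I want to show $G_1(\alpha)$ is Cartesian in $\mathbb{B}$. So I take an arbitrary test cell $\beta$ of the appropriate shape (with top arrow $M'$, bottom arrow $G_1(N)$, vertical legs factoring as $G_0(g)k$ and $G_0(f)h$), and I must produce a unique factorization $\beta = \frac{\gamma}{G_1(\alpha)}$. The key device is the adjunction bijection: a cell with target $G_1(N)$ corresponds, under the hom-set isomorphism $\mathbf{DblCat}_{\mathcal{L}}(\mathbb{B},\mathbb{D})$-style adjunction at the level of $D_1$ (recall from the remark after the double-adjunction definition that $G_1$ is genuinely right adjoint to $F_1$ as ordinary functors, with $\epsilon_N$ universal from $F_1$ to $N$), to a cell with target $N$. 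Concretely, transpose $\beta$ across the adjunction to get a cell $\tilde\beta$ with top arrow $F_1(M')$ and bottom arrow $N$; its vertical legs will be $g\, e\, (\ldots)$ and $f\, e\,(\ldots)$ where $e$ denotes the appropriate counit components, so that $\tilde\beta$ is a legitimate test cell for the Cartesian cell $\alpha$. Factor $\tilde\beta = \frac{\tilde\gamma}{\alpha}$ uniquely using that $\alpha$ is Cartesian, then transpose $\tilde\gamma$ back across the adjunction to obtain the desired $\gamma$. Uniqueness of $\gamma$ follows from uniqueness of $\tilde\gamma$ together with the bijectivity of transposition.

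The main obstacle, and the step that needs care, is bookkeeping the vertical legs and the naturality/triangle-identity coherences so that the transpose of a test cell for $G_1(\alpha)$ really is a test cell for $\alpha$ — i.e. that the legs factor correctly as $f$ (resp.\ $g$) postcomposed with something. This is where the universal property of the counit vertical arrows $\epsilon_D : FGD \to D$ (item 1 of the remark on double adjunctions) and the compatibility $S(\epsilon_M) = \epsilon_C$, $T(\epsilon_M) = \epsilon_D$ from \Cref{adjoint_grandis_pare} come in: they guarantee that transposing a cell whose bottom leg is $G_0(g)k$ produces a cell whose bottom leg is $g$ composed with the transpose of $k$. One must also check that the laxity constraints $F_\odot$, $F_U$ do not interfere — but since the factorization problem for a single cell $\alpha$ never invokes horizontal composition of the cell with others, the lax constraints only enter through the definition of transposition itself and cause no trouble. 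Once the leg-tracking is set up correctly, the argument is a formal diagram chase: transpose, apply the Cartesian universal property, transpose back, and invoke bijectivity for uniqueness.

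For $F$ preserving op-Cartesian cells: this is exactly dual. Given an op-Cartesian cell $\alpha$ in $\mathbb{B}$, a test cell for $F_1(\alpha)$ has its bottom leg of the form $kF_0(g)$ etc.; transpose \emph{via the unit} $\eta_B : B \to G_0F_0 B$ (item 2 of the remark) to obtain a test cell for $\alpha$, use op-Cartesianness to factor, and transpose back. Alternatively, and more economically, I would simply observe that an adjunction $F \dashv G$ in $\mathbf{DblCat}_{\mathcal{L}}$ induces an adjunction $G^{\mathrm{vop}} \dashv F^{\mathrm{vop}}$ between the vertical-opposite double categories, that vertical-opposite turns Cartesian cells into op-Cartesian cells and vice versa, and that it turns lax functors into lax functors; applying the already-proved half to this dualized adjunction yields that $F^{\mathrm{vop}}$'s partner — namely $F$ — preserves op-Cartesian cells. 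I would write out the first half in full and dispatch the second with this one-line duality remark.
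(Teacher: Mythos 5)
Your proposal is correct and follows essentially the same route as the paper's proof: transpose the test cell across the adjunction via the counit (apply the partner functor and compose with $\epsilon_N$), factor through the Cartesian cell $\alpha$, transpose back via the unit, and obtain uniqueness from the bijectivity of transposition, i.e.\ the triangle identities. The only cosmetic difference is that you dispatch the op-Cartesian half with an explicit vertical-opposite duality remark where the paper simply says ``similarly.''
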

\begin{proof}
Consider a Cartesian cell
$$\xymatrixcolsep{0.2cm} \xymatrixrowsep{0.2cm} \xymatrix{ A \haru{M} \ar[dd]_f & & B \ar[dd]^g \\ & \alpha \\ C \hard{N} & & D. }$$
and its image
$$\xymatrixcolsep{0.2cm} \xymatrixrowsep{0.2cm} \xymatrix{ FA \haru{FM} \ar[dd]_{Ff} & & FB \ar[dd]^{Fg} \\ & F\alpha \\ FC \hard{FN} & & FD.}$$
Consider also a cell of the form
$$\xymatrixcolsep{0.2cm} \xymatrixrowsep{0.2cm} \xymatrix{X \ar[dd]_h \haru{H} & & Y \ar[dd]^k \\
\\ FA \ar[dd]_{Ff} & \gamma & FB \ar[dd]^{Fg} \\
\\ FC \hard{FN} & & FD. }$$
We will show that the above cell can be factored uniquely through $F\alpha$. In fact, the cell
$$\xymatrixcolsep{0.2cm} \xymatrixrowsep{0.2cm} \xymatrix{
& GX \ar[dd]_{Gh} \haru{GH} & & GY \ar[dd]^{Gk} \\
\\
& GFA \ar[dd]_{GFf} \ar@/_1pc/[ddl]_-{\epsilon_A} & G \gamma & GFB \ar[dd]^{GFg} \ar@/^1pc/[ddr]^-{\epsilon_B} \\
\\
A \ar@/_1pc/[ddr]_-f & GFC \ar[dd]_{\epsilon_C} \haru{GFN} & & GFD \ar[dd]^{\epsilon_D} & B \ar@/^1pc/[ddl]^-g \\
& & \epsilon_N \\
& C \hard{N} & & D,}$$
where $\epsilon$ is the counit of the adjunction, can be factored uniquely through the Cartesian cell $\alpha$, i.e. there is a unique $\beta$ such that
$$\begin{tikzpicture}[scale=0.4]
\draw (0,0)--(2,0) (4,0)--(6,0);
\draw (1,1) node{$\beta$} (1,-1) node {$\alpha$} (5,1) node{$G\gamma$} (5,-1) node{$\epsilon_N$.};
\end{tikzpicture}$$
So now we have the cell
$$\xymatrixcolsep{0.2cm} \xymatrixrowsep{0.2cm} \xymatrix{
X \ar[dd]_{\eta_X} \haru{H} & & Y \ar[dd]^{\eta_Y} \\
& \eta_H \\
FGX \ar[dd]_{FGh} \hard{FGH} & & FGY \ar[dd]^{FGk}\\
\\
FGFA \ar[dd]_{F \epsilon_A} & F \beta & FGFB \ar[dd]^{F \epsilon_B}\\
\\
FA \hard{FM} & & FB,}$$
where $\eta$ is the unit of the adjunction and for which the following is true:
$$\begin{tikzpicture}[scale=0.4]
\draw (1,0) node{$F\beta$} (1,2) node{$\eta_H$} (1,-2) node{$F\alpha$} (0,1)--(2,1) (0,-1)--(2,-1);
\draw (3,0) node{=};
\draw (5,0) node{$FG\gamma$} (5,2) node{$\eta_H$} (5,-2) node{$F\epsilon$} (4,1)--(6,1) (4,-1)--(6,-1);
\draw (7,0) node{=};
\draw (9,0) node{$\eta F$} (9,2) node{$\gamma$} (9,-2) node{$F \epsilon$} (8,1)--(10,1) (8,-1)--(10,-1);
\draw (11,0) node{=};
\draw (12,0) node{$\gamma$.};
\end{tikzpicture}$$
The above factorization is unique since for any cell $\theta$ with
$$\begin{tikzpicture}[scale=0.4]
\draw (0,0)--(2,0) (1,1) node{$\theta$} (1,-1) node{$F\alpha$};
\draw (3,0) node{=};
\draw (4,1)--(6,1) (4,-1)--(6,-1) (5,0) node{$F\beta$} (5,2) node{$\eta_H$} (5,-2) node{$F\alpha$};
\draw (7,0) node{=};
\draw (8,0) node{$\gamma$,};
\end{tikzpicture}$$
we have
$$\begin{tikzpicture}[scale=0.4]
\draw (0,0)--(2,0) (1,1) node{$G\gamma$} (1,-1) node{$\epsilon_N$};
\draw (3,0) node{=};
\draw (4,1)--(6,1) (4,-1)--(6,-1) (5,0) node{$GF\alpha$} (5,2) node{$G\theta$} (5,-2) node{$\epsilon_N$};
\draw (7,0) node{=};
\draw (8,1)--(10,1) (8,-1)--(10,-1) (9,0) node{$\epsilon_M$} (9,2) node{$G\theta$} (9,-2) node{$\alpha$,};
\end{tikzpicture}$$
and by the uniqueness of the factorization of $\epsilon_N \: G\gamma$, we get
$$\begin{tikzpicture}[scale=0.4]
\draw (0,0) node{$\beta$} (1,0) node{=} (2,0)--(4,0) (3,1) node{$G\theta$} (3,-1) node{$\epsilon_M$};
\draw (5.5,0) node{, \enspace or};
\draw (7,0)--(9,0) (8,1) node{$\eta$} (8,-1) node{$F\beta$} (10,0) node{=} (11,0) node{$\theta$};
\end{tikzpicture}$$
by adjunction.
Similarly we can prove that the left adjoint $F$ preserves op-Cartesian cells.
\end{proof}

\chapter{Cartesian Double Categories} \label{Cartesian Double Categories}

\section{Precartesian Double Categories} \label{Precartesian Double Categories}

Following the approach of \cite{Carboni2008}, before defining Cartesian double categories, we have a short introductory section devoted to precartesian double categories. The central point in this section is the proposition below which shows that given simple product-related conditions, a fibrant double category is precartesian. 

\begin{definition}
A double category $\mathbb{D}$ is said to be \textbf{precartesian} if the diagonal double functor $\Delta: \mathbb{D} \rightarrow \mathbb{D} \times \mathbb{D}$ and the double functor $!: \mathbb{D} \rightarrow \mathbbm{1}$ have lax right adjoints $\times : \mathbb{D}\times \mathbb{D} \rightarrow \mathbb{D}$ and $I: \mathbbm{1} \rightarrow \mathbb{D}$, ie.e they are adjoint in $\mathbf{DblCat}_{\mathcal{L}}$.
\end{definition}

\begin{proposition}\label{fibrant_precartesian}
Consider a double category $\mathbb{D}$ such that:
\begin{enumerate}[label=\roman*.]
\item $\mathbb{D}$ is fibrant,
\item the vertical category $D_0$ has finite products and
\item the horizontal bicategory $\mathcal{H(\mathbb{D})}$ has finite products locally.
\end{enumerate}
Then $\mathbb{D}$ is a precartesian double category.
\end{proposition}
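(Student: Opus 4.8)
The plan is to deduce the statement from \Cref{adjoint_grandis_pare}, applied to the two double functors $\Delta : \mathbb{D} \to \mathbb{D}\times\mathbb{D}$ and $!: \mathbb{D} \to \mathbbm{1}$. Both are pseudo double functors, hence in particular oplax, so that theorem reduces the existence of a lax double right adjoint to the existence of universal vertical arrows (for objects) and universal cells (for horizontal arrows) with matching source and target. Since $D_0$ has finite products and, by \Cref{D1_products}, $D_1$ has finite products, the required universal data is already available; checking the hypotheses of \Cref{adjoint_grandis_pare} then produces lax double right adjoints, which coincide with the lax double functors $\times$ and $I$ constructed in \Cref{lax_cartesian}. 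By definition this is exactly what it means for $\mathbb{D}$ to be precartesian.

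First I would handle $\Delta$. An object of $\mathbb{D}\times\mathbb{D}$ is a pair $(A,B)$, and I would observe that the pair of projections $(p_1,p_2):\Delta_0(A\times B)=(A\times B,A\times B)\to(A,B)$ is a universal vertical arrow from $\Delta_0$ to $(A,B)$ --- this is precisely the universal property of the product $A\times B$ in $D_0$. For a horizontal arrow $(M,N):\srarrow{(A,X)}{(B,Y)}$ of $\mathbb{D}\times\mathbb{D}$ I would take the horizontal arrow $M\times N=({p_1}^*M{p_1}_*)\wedge({p_2}^*N{p_2}_*)$ of \Cref{D1_products}, together with the pair $(\gamma_M\pi_1,\gamma_N\pi_2)$ of cells, where $\gamma_M,\gamma_N$ are the Cartesian fillings and $\pi_1,\pi_2$ the local projections; viewed as a cell $\Delta_1(M\times N)\to(M,N)$ in $\mathbb{D}\times\mathbb{D}$, I claim this is a universal cell from $\Delta_1$ to $(M,N)$. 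Unwinding the definition of universal cell, this is exactly the assertion proved in \Cref{D1_products} that $M\times N$ is the categorical product of $M$ and $N$ in $D_1$ with these projections: a cell $\Delta_1(L)\to(M,N)$ is a pair $(\alpha_M,\alpha_N)$ of cells out of $L$, and the required unique factorizing cell $L\to M\times N$ is the cell $\langle\alpha_M,\alpha_N\rangle$ built there, whose source and target are the pairings $\langle f',g'\rangle$ and $\langle f'',g''\rangle$, i.e. the factorizations of the source and target of $(\alpha_M,\alpha_N)$ through the universal arrows $(p_1,p_2)$. The remark following \Cref{D1_products} ($S\langle\alpha_M,\alpha_N\rangle=\langle S\alpha_M,S\alpha_N\rangle$ and likewise for $T$) is what guarantees this source/target compatibility.

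Next I would do $!: \mathbb{D}\to\mathbbm{1}$. Since $D_0$ has a terminal object $I$, the identity vertical arrow on the unique object of $\mathbbm{1}$ is universal from $!_0$ to it, witnessed by $I$ --- again the universal property of the terminal object. The unique horizontal arrow of $\mathbbm{1}$ is sent to the local terminal $\top_{I,I}$ of $\mathcal{H(\mathbb{D})}(I,I)$, and the unique cell $!_1(\top_{I,I})\to U$ of $\mathbbm{1}$ is universal from $!_1$ to it; unwinding the definition, this is precisely the argument in \Cref{D1_products} that $\top_{I,I}$ is terminal in $D_1$, where the factorizing cell $M\to\top_{I,I}$ of any cell lying over $U$ has source $t_A$ and target $t_B$, as required. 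This verifies conditions (i)--(ii) of \Cref{adjoint_grandis_pare} for both $\Delta$ and $!$, giving the two lax adjunctions $\Delta\dashv\times$ and $!\dashv I$ in $\mathbf{DblCat}_{\mathcal{L}}$.

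The part requiring genuine care is the bookkeeping in the second paragraph: translating the two-step universal property of a universal cell in the sense of \Cref{adjoint_grandis_pare} (a universal source arrow, a universal target arrow, and a unique factorization through the cell respecting those) into the single universal property of a categorical product in $D_1$, and in particular seeing that the source and target of the factorizing cell are forced to be the pairings $\langle f',g'\rangle$ and $\langle f'',g''\rangle$. Everything else is a direct citation of \Cref{D1_products}, \Cref{lax_cartesian}, and \Cref{adjoint_grandis_pare}.
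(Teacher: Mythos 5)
Your proof is correct, but it takes a genuinely different route from the paper's. The paper does not invoke \Cref{adjoint_grandis_pare} here: it takes the lax double functor $\times$ of \Cref{lax_cartesian}, observes that the diagonal/pairing and the projections already give ordinary adjunctions $\Delta_0\dashv\times_0$ and $\Delta_1\dashv\times_1$ at the level of the mere categories $D_0$ and $D_1$, and then verifies by hand that these units and counits assemble into vertical natural transformations in the sense of \Cref{vertical_transf} --- the bulk of that proof is the check that $\delta$ is compatible with the laxity constraints $\times_\odot$ and $\times_U$, carried out by composing with the projections. You instead delegate all of this coherence to the recognition theorem \Cref{adjoint_grandis_pare}: since $\Delta$ and $!$ are pseudo (hence oplax), it suffices to exhibit universal vertical arrows (the products and terminal of $D_0$) and universal cells with matching sources and targets (the products and terminal of $D_1$ from \Cref{D1_products}, with the source/target bookkeeping supplied by the remark following that proposition, exactly as you say). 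Your translation of the two-step universal-cell property into the product property of $D_1$ is the right one, and the uniqueness in \Cref{D1_products} is if anything stronger than what the universal-cell definition demands. What each approach buys: yours is shorter because the coherence verification is absorbed into the cited theorem, while the paper's is more self-contained and explicitly identifies the right adjoint as the functor $\times$ of \Cref{lax_cartesian} (for you this identification is automatic, and precartesianness only asks for existence anyway). One point worth stating explicitly if you write this up: \Cref{adjoint_grandis_pare} concerns an oplax $F$ with a lax right adjoint, whereas precartesianness asks for an adjunction in $\mathbf{DblCat}_{\mathcal{L}}$; this is harmless precisely because $\Delta$ and $!$ are pseudo, so the unit and counit produced are honest vertical transformations between lax double functors.
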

\begin{proof}
Consider the lax double functor $\times: \mathbb{D}\times \mathbb{D} \rightarrow \mathbb{D}$ of \Cref{lax_cartesian}. Since $\times$ is the product in $D_0$ we have the adjunction $\Delta \dashv \times : D_0 \times D_0 \rightarrow D_0$ with unit the natural transformation $d_A = \langle 1_A, 1_A \rangle : A \rightarrow A\times A$ and counit the natural transformation $(p, r)_{A,B} :(A\times B , A\times B) \rightarrow (A,B)$. We also have the adjunction $\Delta \dashv \times : D_1 \times D_1 \rightarrow D_1$ with unit the natural transformation $\delta_M=\langle 1_M, 1_M \rangle : M \rightarrow M \times M$ and counit the natural tranformation $(\gamma_M \pi_1, \gamma_N \pi_2)_{M,N}:(M\times N , M\times N ) \rightarrow (M,N)$. It suffices to show that these transformations combined give vertical natural trasformations $1_{\mathbb{D}} \rightarrow \times \Delta$ and $\Delta \times \rightarrow 1_{\mathbb{D} \times \mathbb{D}}$, since the triangle identites will follow from the above. \par
Observe that $S (\delta_M ) = S \langle 1_M , 1_M \rangle = \langle S 1_M , S 1_M \rangle = \langle 1_{SM}, 1_{SM} \rangle = d_{SM}$ and similarly, $T (\delta_M ) = d_{TM}$, for any horizontal arrow $M$. Also, for any $M$ and $N$, $S(\gamma_M \pi_1, \gamma_N \pi_2)_{M,N} = (p_1,p_2)_{SM,SN}$ and $T(\gamma_M \pi_1, \gamma_N \pi_2)_{M,N} = (p_1,p_2)_{TM,TN}$. It remains to show the identities
$$\xymatrixcolsep{0.2cm} \xymatrixrowsep{0.2cm} \xymatrix{ A \haru{N} \ar@{=}[dd] & & B \haru{M} & & C \ar@{=}[dd] & & A \ar[dd]_{d_A} \haru{N} & & B \ar[dd]_{d_B} \haru{M} & & C \ar[dd]^{d_C}\\
& & 1_{\odot} & & & & & \delta_N & & \delta_M \\
A \ar[dd]_{d_A}\ar[rrrr]|{\vstretch{0.60}{|}} \ar[rrrr]^{(M\odot N)} & & & & C \ar[dd]^{d_C} & = & A\times A \ar@{=}[dd] \hard{N\times N} & & B\times B \hard{M\times M} & & C\times C \ar@{=}[dd]\\
& & \delta_{M \odot N} & & & & & & \times_{\odot} \\
A\times A \ar[rrrr]|{\vstretch{0.60}{|}} \ar[rrrr]_{(M\odot N) \times (M\odot N)} & & & & C\times C & & A\times A \ar[rrrr]|{\vstretch{0.60}{|}} \ar[rrrr]_{(M\odot N) \times (M\odot N)} & & & & C\times C }$$
and
$$\xymatrixcolsep{0.2cm} \xymatrixrowsep{0.2cm} \xymatrix{A \haru{U_{A}} \ar@{=}[dd] & & A \ar@{=}[dd] & & A \ar[dd]_{d_A} \haru{U_{A}} & & A \ar[dd]^{d_A} \\
& 1 & & & & U_{d_A}\\
A \ar[dd]_{d_A}\haru{U_A} & & A \ar[dd]^{d_A} & = & A\times A \ar@{=}[dd] \hard{U_{A\times A}} & & A\times A \ar@{=}[dd]\\
& \delta_{U_A} & & & & \times_U\\
A\times A \hard{U_A \times U_A} & & A\times A & & A\times A \hard{U_A \times U_A} & & A\times A. }$$
However, by composing for instance both of the cells in the first one with the first projection $\gamma_{M\odot N} \pi_1 $ we get $$(\gamma_{M\odot N} \pi_1) \langle \gamma_M \pi_1 \odot \gamma_N \pi_1, \gamma_M \pi_2 \odot \gamma_N \pi_2 \rangle (\delta_M \odot \delta_N ) =$$
$$(\gamma_M \pi_1 \odot \gamma_N \pi_1 ) (\delta_M \odot \delta_N ) =$$
$$(\gamma_M \pi_1 \delta_M ) \odot (\gamma_N \pi_1 \delta_N ) = $$
$$ 1_M \odot 1_N = 1_{M \odot N} = (\gamma_{M\odot N } \pi_1) \delta_{M\odot N}.$$
Similarly $$(\gamma_{M\odot N} \pi_2) \langle \gamma_M \pi_1 \odot \gamma_N \pi_1, \gamma_M \pi_2 \odot \gamma_N \pi_2 \rangle (\delta_M \odot \delta_N ) = (\gamma_{M\odot N } \pi_2) \delta_{M\odot N}.$$
So the first identity is true. Similarly, we can show that the second identity is true as well, and this will complete the proof of the adjunction $\Delta \dashv \times : \mathbb{D} \times \mathbb{D} \rightarrow \mathbb{D}$.

We can also show that the terminal object $I$ of $D_0$ and the terminal object of $\mathcal{H(\mathbb{D})}(I,I)$ provide an adjunction $! \dashv I: \mathbbm{1} \rightarrow \mathbb{D}$.
\end{proof}

Not surprisingly, the double category $\mathrm{\mathbb{S}et}$ with objects sets, vertical arrows functions, and horizontal arrows relations between sets is precartesian. Moreover, the double category $\mathrm{\mathbb{S}pan}\mathcal{E}$ with objects and vertical arrows the objects and the arrows of $\mathcal{E}$, and horizontal arrows spans in $\mathcal{E}$ is precartesian as well. We use the above proposition to prove it.

\begin{proposition}\label{set_precartesian}
The double category $\mathrm{\mathbb{S}et}$ is precartesian.
\end{proposition}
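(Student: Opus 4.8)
The plan is to reduce the statement to \Cref{fibrant_precartesian}, so that it suffices to verify its three hypotheses for $\mathbb{S}\mathrm{et}$. The first hypothesis, that $\mathbb{S}\mathrm{et}$ is fibrant, has already been checked in the example following the definition of a fibrant double category: given a niche with vertical legs $f, g$ and bottom relation $S$, the relation $g^{*}Sf_{*}$ defined by $a\,(g^{*}Sf_{*})\,b \Leftrightarrow (fa)\,S\,(gb)$ supplies the required Cartesian filling, so nothing new is needed there.

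For the second hypothesis, the vertical category of $\mathbb{S}\mathrm{et}$ is $\mathbf{Set}$, which has finite products, namely the usual Cartesian product of sets together with the one-point set as terminal object. For the third hypothesis I would first observe that, for sets $A$ and $B$, a globular cell between two relations $R, S : \srarrow{A}{B}$ is exactly an inclusion $R \subseteq S$; this is immediate from the cell condition $aRb \Rightarrow (1a)\,S\,(1b)$ once both vertical legs are taken to be identities. Hence the hom-category $\mathcal{H}(\mathbb{S}\mathrm{et})(A,B)$ is the poset of subsets of $A\times B$ ordered by inclusion, and this poset has finite products: the terminal object is the total relation $\top_{A,B} = A\times B$, and the product of $R$ and $S$ is their intersection $R \cap S$, with the two inclusions as projections and the evident universal property. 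Therefore $\mathcal{H}(\mathbb{S}\mathrm{et})$ has finite products locally.

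With all three hypotheses of \Cref{fibrant_precartesian} in place, that proposition yields directly that $\mathbb{S}\mathrm{et}$ is precartesian. Since the argument consists only of recalling and checking facts already assembled in the excerpt, there is no genuine obstacle; the one point deserving a moment's attention is the identification of the globular cells of $\mathbb{S}\mathrm{et}$ with relation inclusions, which is what makes the local product structure of $\mathcal{H}(\mathbb{S}\mathrm{et})$ transparent.
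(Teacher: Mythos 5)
Your proposal is correct and follows essentially the same route as the paper: both reduce to \Cref{fibrant_precartesian}, cite the already-established fibrancy of $\mathbb{S}\mathrm{et}$, note that $\mathbf{Set}$ has finite products, and exhibit the local products as intersections of relations with the total relation as terminal (your $R\cap S$ is exactly the paper's $R\wedge S$). The only addition is your explicit identification of globular cells with inclusions, which the paper leaves implicit.
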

\begin{proof}
We have already seen that the double category $\mathrm{\mathbb{S}et}$ is fibrant. We also know that its vertical category has finite products, in particular, the Cartesian product of sets and functions, together with the singleton set as the terminal. In addition, its horizontal bicategory $\mathcal{H(\mathrm{\mathbb{S}et})}$ has finite products locally. Indeed, if $R,S:\srarrow{A}{B}$ are relations, then define their product $R\wedge S:\srarrow{A}{B}$ such that $a (R\wedge S) b \Leftrightarrow (a Rb)\wedge (aSb)$ and the terminal relation $\top_{A,B}:\srarrow{A}{B}$ to be the whole $A\times B$, i.e. $\forall (a,b), a \top_{A,B} b$. So, by \Cref{fibrant_precartesian}, $\mathrm{\mathbb{S}et}$ is a precartesian double category.
\end{proof}
\begin{proposition}\label{Topco_precartesian}
The double category $\mathrm{\mathbb{S}pan}\mathcal{E}$, for $\mathcal{E}$ a category with pullbacks and terminal object, is precartesian.
\end{proposition}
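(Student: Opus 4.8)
The plan is to invoke \Cref{fibrant_precartesian}, exactly as in the proof of \Cref{set_precartesian}. That proposition reduces the claim to verifying three conditions for $\mathbb{S}\mathrm{pan}\mathcal{E}$: that it is fibrant, that its vertical category has finite products, and that its horizontal bicategory has finite products locally.

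For the first condition, fibrancy of $\mathbb{S}\mathrm{pan}\mathcal{E}$ was already established in the example exhibiting the Cartesian filling of a niche as an iterated pullback in $\mathcal{E}$; since $\mathcal{E}$ has pullbacks, nothing further is needed there. For the second condition, the vertical category of $\mathbb{S}\mathrm{pan}\mathcal{E}$ is $\mathcal{E}$ itself, and $\mathcal{E}$ has pullbacks together with a terminal object, hence all finite limits, in particular finite products.

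The only condition requiring work is the third: each hom-category $\mathcal{H}(\mathbb{S}\mathrm{pan}\mathcal{E})(A,B)$, whose objects are spans $A \xleftarrow{f_0} R \xrightarrow{f_1} B$ and whose morphisms are span morphisms, must have finite products. I would take the terminal object to be the span $A \xleftarrow{p} A\times B \xrightarrow{r} B$ given by the product projections of $\mathcal{E}$; for any span $(f_0,R,f_1)$ the arrow $\langle f_0,f_1\rangle : R \to A\times B$ is its unique morphism to this span. For the binary product of spans $(f_0,R,f_1)$ and $(g_0,S,g_1)$ I would form the pullback $R\times_{A\times B} S$ of $\langle f_0,f_1\rangle$ and $\langle g_0,g_1\rangle$, with legs to $A$ and $B$ obtained by post-composing the pullback projection onto $R$ with $f_0$ and $f_1$; the universal property of this pullback then delivers the universal property of the product of spans. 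This computation is precisely the one carried out earlier when showing that the bicategory $\mathbf{Span}(\mathcal{E})$ is precartesian, transported without change to the globular cells of the double category.

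With all three hypotheses of \Cref{fibrant_precartesian} verified, that proposition immediately gives that $\mathbb{S}\mathrm{pan}\mathcal{E}$ is precartesian. I expect no genuine obstacle: the main step is the local-product computation of the previous paragraph, and it is a routine bookkeeping exercise with span legs and span morphisms that has essentially already been done for the bicategorical analogue.
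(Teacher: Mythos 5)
Your proposal is correct and follows essentially the same route as the paper: both reduce the claim to \Cref{fibrant_precartesian}, observe that fibrancy and finite products in the vertical category follow from $\mathcal{E}$ having pullbacks and a terminal object, and exhibit the local products in each hom-category as the terminal span $A \leftarrow A\times B \rightarrow B$ together with binary products computed by pulling back over $A\times B$. Nothing is missing.
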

\begin{proof}
Again, since $\mathrm{\mathbb{S}pan}\mathcal{E}$ is fibrant, we only need to show that its vertical category has finite products and its horizontal bicategory has finite products locally. The vertical category $\mathrm{\mathbb{S}pan}\mathcal{E}_0$ has finite products, since $\mathcal{E}$ has pullbacks and terminal object.
Also, the product of two spans, $R$ and $S$, in $\mathcal{H(\mathrm{\mathbb{S}pan}\mathcal{E})}(A,B)$, is given by the pullback
$$\xymatrix{
R\times_{A\times B} S \ar[d]_-{p_1} \ar[r]^-{p_2} & S \ar[d]^-{\langle s_o, s_1 \rangle } \\
R \ar[r]_-{\langle r_0, r_1 \rangle } & A\times B }$$
together with the composites $$p_1\langle r_0, r_1 \rangle p_1 = p_1 \langle s_o, s_1 \rangle p_2 : R\times_{A\times B} S \rightarrow A \text{ and}$$ $$p_2\langle r_0, r_1 \rangle p_1 = p_2 \langle s_o, s_1 \rangle p_2 : R\times_{A\times B} S  \rightarrow B,$$
and the terminal span $\top_{A,B}$ is the product $A\times B$ together with its projections.
\end{proof}

\section{Cartesian Double Categories} \label{Cartesian Double Categories}

\begin{definition}
A double category $\mathbb{D}$ is said to be \textbf{Cartesian} if the diagonal double functor $\Delta: \mathbb{D} \rightarrow \mathbb{D} \times \mathbb{D}$ and the double functor $!: \mathbb{D} \rightarrow \mathbbm{1}$ have right adjoints $\times : \mathbb{D}\times \mathbb{D} \rightarrow \mathbb{D}$ and $I: \mathbbm{1} \rightarrow \mathbb{D}$ in $\mathbf{DblCat}$.
\end{definition}

As we see now, because of the fact that $\mathbf{DblCat}$ is a strict 2-category and we are allowed to consider adjunctions in it, the definition of a Cartesian double category becomes simple. This was very briefly mentioned in Shulman's remark 2.11. of \cite{Shulman2010}, as a specific case of a Cartesian object in a 2-category with finite products. He also says that many of the examples of Cartesian bicategories in \cite{Carboni1987} and \cite{Carboni2008} can be seen as the horizontal bicategories of some Cartesian objects in the 2-category $\mathbf{DblCat}$. Of course, this is exactly what we want to develop in this section.

If now we have a Cartesian double category $\mathbb{D}$, then we have the universal vertical arrows $\eta_A: A \rightarrow A\times A$ and $\epsilon_{(A,B)}: (A\times B, A\times B) \rightarrow (A,B)$. We write the former as $d_A$ and the latter as the pair of the vertical arrows $p_1:A\times B \rightarrow A$ and $p_2:A\times B \rightarrow B$. We also have the universal cell
$$\xymatrixcolsep{0.2cm} \xymatrixrowsep{0.2cm} \xymatrix{ A \haru{M} \ar[dd]_{\eta_A} & & B \ar[dd]^{\eta_B} \\ & \eta_M \\ A\times A \hard{M\times M} & & B\times B }$$
which will be denoted by $\delta_M$ instead, and the universal cell
$$\xymatrixcolsep{0.2cm} \xymatrixrowsep{0.2cm} \xymatrix{ (A\times B, A\times B) \haru{(M\times N , M\times N)} \ar[dd]_{\epsilon_{(A,B)}} & & (C\times D, C\times D) \ar[dd]^{\epsilon_{(C,D)}} \\ & \epsilon_{(M,N)} \\ (A,B) \hard{(M,N)} & & (C,D) }$$
which will be written as the pair of the cells
$$\xymatrixcolsep{0.2cm} \xymatrixrowsep{0.2cm} \xymatrix{ A\times B \ar[dd]_{p_1} \haru{M\times N} & & C\times D \ar[dd]^{p_1} & & A\times B \ar[dd]_{p_2} \haru{M\times N} & & C\times D \ar[dd]^{p_2} \\
& \pi_1 & & \text{and} & & \pi_2 \\
A \hard{M} & & C & & B \hard{N} & & D. } $$
Note that the triangle identities of the adjunction imply that $p_1 d_A = 1_A=p_2 d_A$, $(p_1 \times p_2)d_{A\times B}=1$, $\pi_1 \delta_M = 1_M = \pi_2 \delta_M$ and $(\pi_1 \times \pi_1) \delta_{M\times N} =1$.\par
Moreover, the adunction $! \dashv I$ implies that there exists an object $I$ such that for each object $A$ there is a unique vertical arrow $t_A:A \rightarrow I$, and the horizontal arrow $U_I:I\rightarrow I$ is such that for every horizontal arrow $M$ there is a unique cell
$$\xymatrixcolsep{0.2cm} \xymatrixrowsep{0.2cm} \xymatrix{ A \ar[dd]_{t_A} \haru{M} & & B \ar[dd]^{t_B} \\
\\ I \hard{U_I} & & I. } $$
It is clear now that the following proposition holds:

\begin{proposition}\label{422}
If $\mathbb{D}$ is a Cartesian double category then it's vertical category $D_0$ has finite products $\times$ with projections $p_1$ and $p_2$, and $I$ the terminal object. Moreover, the category $D_1$ has finite products with projections $\pi_1$ and $\pi_2$, and $U_I$ the terminal object.
\end{proposition}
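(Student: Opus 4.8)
\emph{Proof strategy.} The plan is to unwind the definition of a double adjunction in the strict 2-category $\mathbf{DblCat}$ and to read off the products at the two levels of the double category. The key observation, already recorded in the Remark preceding \Cref{adjoint_grandis_pare}, is that an adjunction $F\dashv G$ in $\mathbf{DblCat}$ restricts to an adjunction $F_0\dashv G_0$ between the vertical categories and to an adjunction $F_1\dashv G_1$ between the categories of horizontal arrows and cells; equivalently, the assignments $\mathbb{D}\mapsto D_0$ and $\mathbb{D}\mapsto D_1$ are 2-functors $\mathbf{DblCat}\to\mathbf{Cat}$, and 2-functors preserve adjunctions.

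First I would apply this to $\Delta\dashv\times$. Since $(\mathbb{D}\times\mathbb{D})_0 = D_0\times D_0$ and $\Delta_0$ is the diagonal functor on $D_0$, the functor $\times_0$ is right adjoint to this diagonal, which is exactly the statement that $D_0$ has binary products; the component $\epsilon_{(A,B)}$ of the counit is a morphism $(A\times B,A\times B)\to(A,B)$ in $D_0\times D_0$, i.e.\ a pair $(p_1,p_2)$ with $p_1\colon A\times B\to A$ and $p_2\colon A\times B\to B$, and the triangle identities give $p_1 d_A = 1_A = p_2 d_A$ together with $(p_1\times p_2)d_{A\times B}=1$, so $(p_1,p_2)$ is the product cone. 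Running the same argument with $(\mathbb{D}\times\mathbb{D})_1 = D_1\times D_1$ and $\Delta_1$ the diagonal on $D_1$ shows that $D_1$ has binary products, with projections the pair $(\pi_1,\pi_2)$ extracted from $\epsilon_{(M,N)}$.

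Next I would treat $!\dashv I$. Both $\mathbbm{1}_0$ and $\mathbbm{1}_1$ are the terminal category, so $I_0$ and $I_1$ are right adjoints to the unique functors $D_0\to\mathbbm{1}$ and $D_1\to\mathbbm{1}$; hence $I:=I_0(\ast)$ is a terminal object of $D_0$ and $I_1(\ast)$ is a terminal object of $D_1$. Because $I$ is a (pseudo) double functor, its structure cell $I_U$ provides an isomorphism $U_{I_0(\ast)}\cong I_1(U_\ast)$, and $U_\ast$ is the unique horizontal arrow of $\mathbbm{1}$, so $I_1(\ast)\cong U_I$; terminal objects being unique up to isomorphism, $U_I$ is itself terminal in $D_1$. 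The universal vertical arrows $t_A\colon A\to I$ and the universal cells $M\to U_I$ are then precisely the unit components of $!\dashv I$, as displayed before the statement.

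I expect no serious obstacle: the proposition is a direct translation of the definition of a Cartesian double category, and the displayed identities in the paragraph preceding it are nothing but the triangle identities of the two adjunctions. The only point that needs a word of care is identifying the terminal object of $D_1$ with $U_I$ rather than with the a priori given object $I_1(\ast)$, which is handled by the invertibility of the comparison cell $I_U$ of the pseudo double functor $I$.
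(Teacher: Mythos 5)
Your proposal is correct and follows essentially the same route as the paper, which simply unpacks the two double adjunctions into the level-$0$ and level-$1$ adjunctions (as in the Remark preceding \Cref{adjoint_grandis_pare}) and reads off the product cones and terminal objects from the counit components and triangle identities. Your extra remark identifying $I_1(\ast)$ with $U_I$ via the invertible constraint $I_U$ of the pseudo double functor $I$ is a point the paper glosses over, and it is handled correctly.
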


The following proposition ensures that for fibrant double categories we can also consider the converse of \Cref{422}:

\begin{proposition} \label{fibrant_cart}
Consider a double category $\mathbb{D}$ such that:
\begin{enumerate}[label=\roman*.]
\item $\mathbb{D}$ is fibrant,
\item the vertical category $D_0$ has finite products,
\item the horizontal bicategory $\mathcal{H(\mathbb{D})}$ has finite products locally and
\item the lax double functors $\times : \mathbb{D} \times \mathbb{D} \rightarrow \mathbb{D}$ and $I:\mathbbm{1} \rightarrow \mathbb{D}$ of \Cref{lax_cartesian} are pseudo.
\end{enumerate}
Then $\mathbb{D}$ is a Cartesian double category.
\end{proposition}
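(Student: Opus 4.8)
The plan is to read this as a repackaging of \Cref{fibrant_precartesian} once hypothesis \textit{iv} is available. By \Cref{fibrant_precartesian}, conditions \textit{i}--\textit{iii} already make $\mathbb{D}$ precartesian: the diagonal $\Delta\colon\mathbb{D}\rightarrow\mathbb{D}\times\mathbb{D}$ and the terminal functor $!\colon\mathbb{D}\rightarrow\mathbbm{1}$ have the lax double functors $\times$ and $I$ of \Cref{lax_cartesian} as lax right adjoints, i.e. $\Delta\dashv\times$ and $!\dashv I$ in $\mathbf{DblCat}_{\mathcal{L}}$. What remains is to promote these to adjunctions in $\mathbf{DblCat}$, and the content of hypothesis \textit{iv} is exactly that the only obstruction to doing so has been removed.

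First I would record that $\mathbf{DblCat}$ is a sub-$2$-category of $\mathbf{DblCat}_{\mathcal{L}}$: it has the same objects, its $1$-cells are the lax double functors that happen to be pseudo (a class closed under composition and containing all identities), and it has all vertical transformations between such $1$-cells. Now $\Delta$ and $!$ are obviously pseudo, and by hypothesis \textit{iv} so are $\times$ and $I$; hence every functor occurring in the two triangle identities---namely $\times\Delta$, $I\,!$, and the various identity functors---is pseudo. Therefore the units and counits produced in the proof of \Cref{fibrant_precartesian} (assembled from $d_A=\langle 1_A,1_A\rangle$, $\delta_M=\langle 1_M,1_M\rangle$ and from the projection transformations) are $2$-cells of $\mathbf{DblCat}$, and the triangle identities they satisfy in $\mathbf{DblCat}_{\mathcal{L}}$ are equations between $2$-cells of $\mathbf{DblCat}$. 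Hence $\Delta\dashv\times$ and $!\dashv I$ in $\mathbf{DblCat}$, which is precisely the statement that $\mathbb{D}$ is Cartesian.

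A more self-contained alternative is to invoke \Cref{fibrant_right_adjoint} directly for $F=\Delta$ (and, similarly and more easily, for $F={!}$). Condition \textit{i} is witnessed by $A\times B$ with its pair of projections, using \textit{ii}; condition \textit{ii} is witnessed by $M\times N=p_1^*M{p_1}_*\wedge p_2^*N{p_2}_*$ together with the universal cell built from the Cartesian fillings and the local projections, exactly as in the proof of \Cref{D1_products}, using \textit{i} and \textit{iii}; and conditions \textit{iii} and \textit{iv}---that composites of universal cells stay universal, and that the unit-composite is universal---amount precisely to the invertibility of the lax constraints $\times_{\odot}$ and $\times_U$ of \Cref{lax_cartesian}, which is hypothesis \textit{iv}.

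There is essentially no hard step: the whole argument is the observation that an adjunction in $\mathbf{DblCat}_{\mathcal{L}}$ whose two functors are pseudo restricts to an adjunction in the sub-$2$-category $\mathbf{DblCat}$. The single point requiring a moment's attention is to check that the units and counits of \Cref{fibrant_precartesian} are \emph{defined} purely from the products in $D_0$ and $D_1$ and the fibrant structure, and not from the lax constraints of $\times$ or $I$---so that no data beyond hypothesis \textit{iv} enters---and this is clear from their construction there.
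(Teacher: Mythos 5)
Your proposal is correct and takes essentially the same approach as the paper: the paper's proof is exactly the observation that hypothesis \textit{iv} upgrades the lax adjunctions of \Cref{fibrant_precartesian} to adjunctions in $\mathbf{DblCat}$, since an adjunction in $\mathbf{DblCat}_{\mathcal{L}}$ whose functors are all pseudo lives in the locally full sub-$2$-category $\mathbf{DblCat}$. Your added check that the units and counits are built independently of the lax constraints is a reasonable extra precaution, but it does not change the argument.
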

\begin{proof}
The proof is straightforward since condition \textit{iv.} shows that the adjunctions of the \Cref{fibrant_precartesian} take place in the 2-category $\mathbf{DblCat}$.
\end{proof}

\begin{remark}
Through the rest of the text, $\times$ on objects and horizontal arrows will often be abbreviated by juxtaposition. We avoid doing the same for vertical arrows and cells since in their case we use juxtaposition for the vertical composition.
\end{remark}

In the rest of this section we prove that some of our main examples are Cartesian double categories. Of course again, it is not surprising that the double category of sets $\mathrm{\mathbb{S}et}$ and the double category of spans $\mathrm{\mathbb{S}pan}\mathcal{E}$ is Cartesian. However, now we are also able to show that the same is true for the double category of $V$-matrices. The proof uses just the definition of a Cartesian double category, without the need of using the fact that $V\mbox{-}\mathrm{\mathbb{M}at}$ is fibrant.

\begin{proposition}
If $V=(V, \times, 1, p_1,p_2)$ is a Cartesian monoidal category, then $V\mbox{-}\mathrm{\mathbb{M}at}$ is a Cartesian double category.
\end{proposition}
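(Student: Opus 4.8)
The plan is to exhibit right adjoints to the diagonal $\Delta$ and to $!$ directly inside $\mathbf{DblCat}$, without invoking fibrancy. First I would build the double functor $\times : V\mbox{-}\mathrm{\mathbb{M}at} \times V\mbox{-}\mathrm{\mathbb{M}at} \to V\mbox{-}\mathrm{\mathbb{M}at}$. On objects it sends a pair of sets $(S,S')$ to the Cartesian product $S\times S'$; on functions it is the product of functions; on horizontal arrows it sends $(M:S \nrightarrow T,\ N:S' \nrightarrow T')$ to the matrix $M\times N : S\times S' \nrightarrow T\times T'$ with $(M\times N)\big((t,t'),(s,s')\big)=M(t,s)\times N(t',s')$, where $\times$ on the right is the product of $V$ (which coincides with $\otimes$ since $V$ is Cartesian monoidal); on cells it is induced componentwise by $\times : V\times V \to V$. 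Since $\otimes=\times$ in $V$, the compositor and unitor comparison cells $(M_2\times N_2)\odot(M_1\times N_1)\to (M_2\odot M_1)\times(N_2\odot N_1)$ and $U_{S\times S'}\to U_S\times U_{S'}$ are assembled from the canonical maps distributing $\times$ over $\bigoplus$ in $V$; because the tensor of $V$ distributes over coproducts these comparisons are isomorphisms, so $\times$ is a genuine pseudo double functor, and its coherence axioms follow from the pentagon and triangle of $V$ together with functoriality of coproducts.

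Next I would exhibit the unit and counit of $\Delta \dashv \times$ as vertical natural transformations. The unit has object component at $S$ the diagonal $d_S:S\to S\times S$, $s\mapsto(s,s)$, and horizontal component at $M:S \nrightarrow T$ the cell $\delta_M$ whose $(t,s)$-entry is the diagonal $M(t,s)\to M(t,s)\times M(t,s)$ in $V$; the counit has object component at $(S,T)$ the pair of projections $(p_1:S\times T\to S,\ p_2:S\times T\to T)$ and horizontal component at $(M,N)$ the pair $(\pi_1,\pi_2)$ whose entries are the $V$-projections $M(t,s)\times N(t',s')\to M(t,s)$ and $M(t,s)\times N(t',s')\to N(t',s')$. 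That these are vertical natural transformations is the verification of the two coherence squares (compatibility with $\odot$ and with $U$) in the definition of a vertical natural transformation; they reduce to naturality of the diagonal and of the projections in $V$ plus the explicit form of the comparison cells above. The triangle identities then hold because $p_i\circ d_S=1_S$ in $\mathbf{Set}$, the analogous identities hold after composing the relevant cells with $\pi_1$ and $\pi_2$, and a cell into a product is determined by its composites with the two projections.

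Finally I would treat $! \dashv I$: the functor $I:\mathbbm{1}\to V\mbox{-}\mathrm{\mathbb{M}at}$ picks out the terminal set $1$, the horizontal arrow $U_1:1 \nrightarrow 1$ whose single entry is the unit object of $V$ (which is terminal in $V$ since $V$ is Cartesian monoidal), and the identity cell; for each set $S$ there is the unique function $t_S:S\to 1$, and for each matrix $M:S \nrightarrow T$ the unique cell $M\to U_1$ with legs $t_S,t_T$ and entries the unique maps $M(t,s)\to 1$ to the terminal object of $V$, which serve as the counit and are universal by terminality in $\mathbf{Set}$ and in $V$. Combining the two adjunctions shows that $V\mbox{-}\mathrm{\mathbb{M}at}$ is Cartesian. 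The one genuinely non-formal step is checking that the comparison cells making $\times$ a pseudo double functor are invertible and coherent; this is precisely where the hypothesis that the tensor of $V$ distributes over coproducts is used, and it is essentially the same computation already performed, in its bicategorical form, for $V\mbox{-}\mathbf{Mat}$.
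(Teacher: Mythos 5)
Your proposal is correct and follows essentially the same route as the paper: both construct $\times$ and $I$ explicitly, take the diagonal arrows of $V$ and $\mathbf{Set}$ as the unit, the projections as the counit, reduce the vertical-naturality checks to the distributivity of the product over the coproducts defining $\odot$, and deduce the triangle identities from those of $\Delta\dashv\times$ in $V$. Your extra remark that invertibility of the comparison cells for $\times$ is the one non-formal step is a fair emphasis, but it is the same computation the paper relies on.
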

\begin{proof}
Construct the double functor $\times : V\mbox{-}\mathrm{\mathbb{M}at} \times V\mbox{-}\mathrm{\mathbb{M}at} \rightarrow V\mbox{-}\mathrm{\mathbb{M}at}$ as follows:
\begin{enumerate}
\item If $S$ and $T$ are sets, then $S\times T$ is their Cartesian product and if $f$ and $g$ are functions, then $f\times g$ is their Cartesian product as well.
\item If $M:\srarrow{S}{T} $ and $N:\srarrow{X}{Y}$ are $V$-matrices, define $M\times N:  \srarrow{S\times X}{T\times Y}$ so that $$(M\times N) ((t,y),(s,x)) = M(t,s) \times N (y,x).$$ If $\phi : M\Rightarrow M' : \srarrow{S}{T}$ and $\psi : N \Rightarrow N' :  \srarrow{X}{Y} $ are cells, define $\phi \times \psi : M\times M' \Rightarrow N\times N' : \srarrow{S\times X}{T\times Y}$ to be the family with components $$(\phi \times \psi )((t,y),(s,x))= \phi (t,s) \times \psi (x,y).$$
\end{enumerate}
We will show that $\times$ is right adjoint to $\Delta$. Since $\times$ describes the Cartesian product for functions, we have the adjunction $\Delta_0 \dashv \times_0$ with unit $${\eta_0}_S: S \rightarrow S\times S, \: s\mapsto (s,s)$$ and counit $${\epsilon_0}_{(S,T)} : (S\times T, S\times T) \rightarrow (S,T) : ((s,t),(s',t') ) \mapsto (s,t').$$ Now, if $M:\srarrow{S}{T}$ is a matrix, set ${\eta_1}_M$ to be the cell
$$\xymatrixcolsep{0.2cm} \xymatrixrowsep{0.2cm} \xymatrix{ S \haru{M} \ar[dd]_{{\eta_0}_S} & & T \ar[dd]^{{\eta_0}_T} \\ & {\eta_1}_M \\ S\times S \hard{M\times M} & & T\times T }$$
with $${\eta_1}_M (t,s): M(t,s) \rightarrow M(t,s)\times M(t,s)$$ the diagonal arrow in $V$.
Also, for $M: \srarrow{S}{T}$ and $N:\srarrow{X}{Y}$, the cell
$$\xymatrixcolsep{0.2cm} \xymatrixrowsep{0.2cm} \xymatrix{ (S\times X, S\times X) \haru{(M\times N , M\times N)} \ar[dd]_{{\epsilon_0}_{(S,X)}} & & (T\times Y, T\times Y) \ar[dd]^{{\epsilon_0}_{(T,Y)}} \\ & {\epsilon_1}_{(M,N)} \\ (S,X) \hard{(M,N)} & & (T,Y) }$$
is the family of arrows $${\epsilon_1}_{(M,N)}(((t,y),(t',y')),((s,x),(s',x'))):$$ $$(M(t,s)\times N(y,x), M(t',s') \times N(y',x')) \rightarrow (M(t,s), N(y',x'))$$ given by the pairs of projections in $V$. The triangles identities for $\eta_1$ and $\epsilon_1$ follow by the triangle identities of the adjunction $\Delta \dashv \times :V\times V\rightarrow V$. \par
By construction, we see that $S{\eta_1}_M={\eta_0}_{SM}$, $T{\eta_1}_M={\eta_0}_{TM}$, $S{\epsilon_1}_{(M,N)}={\epsilon_0}_{(SM,SN)}$ and $T{\epsilon_1}_{(M,N)}={\epsilon_0}_{(TM,TN)}$. So in order to show that $\eta=\{\eta_0,\eta_1\}$ and $\epsilon=\{\epsilon_0,\epsilon_1\}$ are vertical natural transformations it suffices to show that the identities in \Cref{vertical_transf} hold. However, this is easy to verify since the composition of matrices is given by the coproduct in $V$ and the product distributes over it.\par
Lastly, consider the double functor $I: \mathbbm{1} \rightarrow V\mbox{-}\mathrm{\mathbb{M}at}$, which maps $\mathbbm{1}$ to the singleton $\{ \ast \}$, together with the identity function on it, the matrix $I(\ast, \ast)= 1$ and the cell $1_I(\ast,\ast)=1_1$. We know that $\{ \ast \}$ is the terminal object on sets and functions. Also the matrix $I(\ast, \ast)= 1$ is the terminal matrix from $\{ \ast \}$ to $\{ \ast \}$. Indeed, a matrix from $\{ \ast \}$ to $\{ \ast \}$ is just an object of $V$ and $1$ is the terminal object in $V$. The above proves that we also have an adjunction $! \dashv I : \mathbbm{1} \rightarrow V\mbox{-}\mathrm{\mathbb{M}at}$.
\end{proof}

\begin{proposition}
The double category $\mathrm{\mathbb{S}et}$ is Cartesian.
\end{proposition}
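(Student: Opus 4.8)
The plan is to obtain this as an instance of \Cref{fibrant_cart}. In the proof of \Cref{set_precartesian} it was already checked that $\mathrm{\mathbb{S}et}$ is fibrant, that its vertical category of sets and functions has finite products (the Cartesian product, with any singleton as terminal object), and that each hom-category $\mathcal{H}(\mathrm{\mathbb{S}et})(A,B)$ of relations has finite products (intersection $R\wedge S$, with terminal relation $\top_{A,B}=A\times B$). Thus conditions \textit{i.}--\textit{iii.} of \Cref{fibrant_cart} hold, and it remains only to verify condition \textit{iv.}: that the canonical lax double functors $\times:\mathrm{\mathbb{S}et}\times\mathrm{\mathbb{S}et}\to\mathrm{\mathbb{S}et}$ and $I:\mathbbm{1}\to\mathrm{\mathbb{S}et}$ produced in \Cref{lax_cartesian} are in fact pseudo, i.e. that the globular comparison cells $\times_{\odot}$, $\times_U$ and their counterparts for $I$ are invertible. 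Throughout I would use the observation that a globular cell in $\mathrm{\mathbb{S}et}$ between parallel relations $R,S:\srarrow{A}{B}$ exists exactly when $R\subseteq S$ and is then unique, so such a cell is invertible precisely when $R=S$.

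First I would unwind the product of relations. Using the description of the Cartesian filling in $\mathrm{\mathbb{S}et}$, for relations $M:\srarrow{A}{B}$ and $N:\srarrow{X}{Y}$ one gets $(a,x)(p_1^*M{p_1}_*)(b,y)\Leftrightarrow aMb$ and $(a,x)(p_2^*N{p_2}_*)(b,y)\Leftrightarrow xNy$, hence $M\times N=(p_1^*M{p_1}_*)\wedge(p_2^*N{p_2}_*)$ is the relation with $(a,x)(M\times N)(b,y)\Leftrightarrow aMb\wedge xNy$. For the unit comparison, $U_{A\times X}$ is the diagonal relation on $A\times X$, while $(a,x)(U_A\times U_X)(a',x')\Leftrightarrow a=a'\wedge x=x'$; these two relations coincide, so the globular cell $\times_U$ witnessing $U_{A\times X}\subseteq U_A\times U_X$ is in fact an equality, hence invertible.

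Next I would do the same for composition. For $M:\srarrow{A}{B}$, $M':\srarrow{B}{C}$, $N:\srarrow{X}{Y}$, $N':\srarrow{Y}{Z}$, unwinding the composite of relations gives that $(a,x)$ is related to $(c,z)$ by $(M'\times N')\odot(M\times N)$ iff there exists $(b,y)$ with $aMb\wedge xNy\wedge bM'c\wedge yN'z$, which, splitting the existential over the independent coordinates $b$ and $y$, is equivalent to $\bigl((\exists b)\,aMb\wedge bM'c\bigr)\wedge\bigl((\exists y)\,xNy\wedge yN'z\bigr)$, i.e. to $a(M'\odot M)c\wedge x(N'\odot N)z$, i.e. to $(a,x)\bigl[(M'\odot M)\times(N'\odot N)\bigr](c,z)$. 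So the source and target relations of $\times_{\odot}$ are equal and $\times_{\odot}$ is invertible. The statements for $I:\mathbbm{1}\to\mathrm{\mathbb{S}et}$ are immediate: $I$ sends the single object of $\mathbbm{1}$ to a one-element set $1$, the single horizontal arrow to $U_1$ (which on a singleton coincides with $\top_{1,1}=1\times 1$), and the single cell to the identity, so the comparison cells for $I$ are identities. By \Cref{fibrant_cart}, $\mathrm{\mathbb{S}et}$ is Cartesian.

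There is no genuinely hard step here: the entire content is the elementary logical fact that an existential quantifier over a product of mutually independent domains factors as a conjunction of existentials — equivalently, that composition of relations distributes over intersection in each variable separately. The only thing that requires care is bookkeeping: correctly reading off the relations $p_i^*M{p_i}_*$, $R\wedge S$, $U_A\times U_X$ and $S\odot R$ from the fibrancy and product structure already set up for $\mathrm{\mathbb{S}et}$, and keeping straight that invertibility of a globular cell in $\mathrm{\mathbb{S}et}$ is the same thing as equality of its two boundary relations.
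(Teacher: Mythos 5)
Your proposal is correct and follows essentially the same route as the paper: reduce to showing that the lax double functors $\times$ and $I$ of \Cref{lax_cartesian} are pseudo (the paper phrases this via \Cref{set_precartesian} plus the pseudo-ness check, which is exactly condition \textit{iv.} of \Cref{fibrant_cart}), and then verify invertibility of the comparison cells by the observation that a globular cell between relations is an inclusion, together with the splitting of the existential quantifier over independent coordinates and the identification $U_{A\times X}=U_A\times U_X$. No gaps.
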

\begin{proof}
In \Cref{set_precartesian} we showed that $\mathrm{\mathbb{S}et}$ is precartesian. It remains to show that the lax double functors $\times : \mathrm{\mathbb{S}et} \times \mathrm{\mathbb{S}et} \rightarrow \mathrm{\mathbb{S}et}$ and $I:\mathbbm{1} \rightarrow \mathrm{\mathbb{S}et}$, given by the product on $\mathrm{\mathbb{S}et}_0$ and $\mathrm{\mathbb{S}et}_1$, are pseudo. First we describe these functors on the horizontal cells: If $R:\srarrow{A}{B}$ and $S:\srarrow{X}{Y}$ are relations, then $R\times S: A\times X \rightarrow B\times Y$ is given by
$$ \begin{array}{cr}
\underline{(a,x)(R\times S)(b,y)} \\
\underline{(a,x)[(p^*Rp^*)\wedge(r^*Sr^*)](b,y)}\\
\underline{[(a,x)(p^*Rp^*)(b,y)]\wedge[(a,x)(r^*Sr^*)(b,y)]}\\
aRb \wedge xSy
\end{array}$$  
and the terminal object $I:\srarrow{\{\ast\}}{\{\ast\}}$ is the set $\{(\ast,\ast)\}$. To show that $\times$ and $I$ are pseudo double functors it is to show that the canonical cells $$(R' \times S') \odot (R \times S ) \rightarrow (R' \odot R) \times (S' \odot S),$$ for $\xymatrixcolsep{0.2cm} \xymatrixrowsep{0.2cm} \xymatrix{ A \haru{R} & & B \haru{R'} & & C }$ and $\xymatrixcolsep{0.2cm} \xymatrixrowsep{0.2cm} \xymatrix{ X \haru{S} & & Y \haru{S'} & & Z }$, and $$U_{A\times B} \rightarrow U_A \times U_B$$ are invertible. Consider the following series of implications
$$\begin{array}{cr}
(a,x) [(R' \odot R) \times (S' \odot S)] (c,z) \Rightarrow \\
a (R' \odot R) c \wedge x (S' \odot S) z \Rightarrow \\
(\exists b)(aRb \wedge b R' c) \wedge (\exists y)(x S y \wedge y S' z) \Rightarrow \\
(\exists (b,y) )[(a,x) (R\times S )(b,y)\wedge (b,y)(R' \times S')(c,z)] \Rightarrow \\
(a,x) [(R' \times S') \odot (R \times S )] (c,z)
\end{array} $$
and
$$\begin{array}{cr}
(a,b) (U_A \times U_B ) (a',b') \Rightarrow \\
(a U_A a') \wedge (b U_B b') \Rightarrow \\
a=a' \wedge b=b' \Rightarrow \\
(a,b)=(a',b') \Rightarrow \\
(a,b) U_{A\times B} (a',b').
\end{array}$$
These will give the required inverses.
\end{proof}
\begin{proposition}
The double category $\mathrm{\mathbb{S}pan}\mathcal{E}$, for $\mathcal{E}$ a category with pullbacks and terminal object, is Cartesian.
\end{proposition}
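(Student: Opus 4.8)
The plan is to reduce everything to \Cref{fibrant_cart}. We have already observed that $\mathrm{\mathbb{S}pan}\mathcal{E}$ is fibrant, and in \Cref{Topco_precartesian} that its vertical category $\mathrm{\mathbb{S}pan}\mathcal{E}_0$ has finite products (pullbacks together with the terminal object of $\mathcal{E}$) and that its horizontal bicategory $\mathcal{H}(\mathrm{\mathbb{S}pan}\mathcal{E})$ has finite products locally (computed by pullbacks over $A\times B$). So conditions \textit{i.}, \textit{ii.}, \textit{iii.} of \Cref{fibrant_cart} are in place, and the whole content of the proof is to verify condition \textit{iv.}: that the lax double functors $\times : \mathrm{\mathbb{S}pan}\mathcal{E} \times \mathrm{\mathbb{S}pan}\mathcal{E} \rightarrow \mathrm{\mathbb{S}pan}\mathcal{E}$ and $I : \mathbbm{1} \rightarrow \mathrm{\mathbb{S}pan}\mathcal{E}$ of \Cref{lax_cartesian} are in fact pseudo, i.e. that the comparison cells $\times_{\odot}$, $\times_{U}$, $I_{\odot}$, $I_{U}$ are invertible.

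First I would identify the abstract constructions concretely. Unwinding the definition $M \times N = (p_1^*M\,{p_1}_* ) \wedge (p_2^* N\, {p_2}_*)$ from \Cref{D1_products} inside $\mathrm{\mathbb{S}pan}\mathcal{E}$, one checks that the Cartesian fillings are the spans $p_1^*M\,{p_1}_*$ with apex the limit $X \times M \times Y$ of $A\times X \to A \leftarrow M \to B \leftarrow B\times Y$, and $p_2^*N\,{p_2}_*$ with apex $A\times N\times B$; their local product over $(A\times X)\times(B\times Y)$ then collapses to the span $A\times X \xleftarrow{m_0\times n_0} M\times N \xrightarrow{m_1\times n_1} B\times Y$, the $\mathcal{E}$-product of $M$ and $N$. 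This is just the statement that in $\mathcal{E}$ products and pullbacks can be computed coordinatewise. From the same description, $U_{A\times X}$ and $U_A \times U_X$ are both (canonically isomorphic to) the identity span on $A\times X$, which gives at once that $\times_U$ is invertible.

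The heart of the matter is the invertibility of $\times_{\odot}$. For composable spans $R:\srarrow{A}{B}$, $R':\srarrow{B}{C}$ and $S:\srarrow{X}{Y}$, $S':\srarrow{Y}{Z}$, the composite $(R'\times S')\odot(R\times S)$ has apex the pullback $(R'\times S')\times_{B\times Y}(R\times S)$, while $(R'\odot R)\times(S'\odot S)$ has apex $(R'\times_B R)\times(S'\times_Y S)$; the canonical comparison between these is an isomorphism because a product of pullbacks is a pullback of products — a routine ``limits commute with limits'' fact in $\mathcal{E}$. The only genuine subtlety, which is where I expect most of the work to lie, is to check that the globular cell $\times_{\odot}$ produced in \Cref{lax_cartesian}, assembled from the universal cells $\gamma_M$, the projections $\pi_i$ and horizontal composition, actually realises this canonical map; this is a diagram chase using the universal properties of the Cartesian fillings and of the local products. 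Once that identification is made, invertibility is immediate, and a similar (easier) verification handles $I_{\odot}$: the functor $I$ sends the object of $\mathbbm{1}$ to the terminal object $1$ of $\mathcal{E}$ and the horizontal arrow of $\mathbbm{1}$ to the terminal span from $1$ to $1$, which is $U_1$ up to the isomorphism $1\times 1 \cong 1$, so $I_{\odot}$ is a component of $\lambda$ and hence invertible, and $I_U$ is invertible too.

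Therefore all four conditions of \Cref{fibrant_cart} hold and $\mathrm{\mathbb{S}pan}\mathcal{E}$ is Cartesian. In summary: fibrancy and the product hypotheses are already established, so the proof consists of (a) computing $M\times N$, the Cartesian fillings, and $U_{A\times X}$ explicitly as spans, (b) recognising $\times_{\odot}$ and $\times_U$ as the evident coordinatewise comparison maps between iterated limits in $\mathcal{E}$, and (c) invoking commutation of finite limits to see these are isomorphisms, with the analogous trivial check for $I$. The main obstacle is purely bookkeeping — matching the abstractly defined comparison cell with the concrete pullback-of-products versus product-of-pullbacks map — rather than any real mathematical difficulty.
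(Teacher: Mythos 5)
Your proposal is correct and follows essentially the same route as the paper: identify the abstract product in $D_1$ (built from Cartesian fillings and local products) with the coordinatewise product of spans, observe that $\times_U$ is then trivially invertible, and deduce invertibility of $\times_{\odot}$ from the fact that products commute with pullbacks in $\mathcal{E}$, with the analogous easy check for $I$. The paper compresses the ``bookkeeping'' step you flag by simply invoking uniqueness of products to identify the two constructions up to isomorphism, but the mathematical content is identical.
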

\begin{proof}
The product in $\mathrm{\mathbb{S}pan}\mathcal{E}_1$, for spans $\xymatrixcolsep{0.4cm} \xymatrix{A & R \ar[l]_{r_0} \ar[r]^{r_1} & B}$ and $\xymatrixcolsep{0.4cm} \xymatrix{ X & S \ar[l]_{s_0} \ar[r]^{s_1} & Y }$
is given by the span $\xymatrix{A\times X & R\times S \ar[l]_{r_0\times s_0} \ar[r]^{r_1\times s_1} & B\times Y}$, together with the projections
$$\xymatrixcolsep{0.4cm} \xymatrixrowsep{0.4cm} \xymatrix{
& R\times S \ar[dl]_{r_0\times s_0} \ar[dr]^{r_1 \times s_1} \ar[ddd]^{p_{R,S}} & & & & R\times S \ar[dl]_{r_0\times s_0} \ar[dr]^{r_1 \times s_1} \ar[ddd]^{r_{R,S}} \\
A\times X \ar[d]_p & & B\times Y \ar[d]^p & & A\times X \ar[d]_r & & B\times Y \ar[d]^r \\
A & & B & \text{and} & X & & Y. \\
& R \ar[ul]^{r_0} \ar[ur]_{r_1} & & & & S \ar[ul]^{s_0} \ar[ur]_{s_1} }$$
Note that since the product in a category is unique up to isomorphism, the above product will be isomorphic to the product constructed by extending the local product on $\mathcal{H(\mathrm{\mathbb{S}pan}\mathcal{E})}$, i.e. the product used to construct the lax double functors in \Cref{lax_cartesian}. \par
In order to show that $\mathrm{\mathbb{S}pan}\mathcal{E}$ is Cartesian, we need to show that the product commutes with the horizontal composition of spans. This is true though, since the horizontal composition is given by pullbacks and the product commutes with pullbacks. Also, if we consider the identity spans $\xymatrixcolsep{0.4cm} \xymatrix{A & A \ar[l]_1 \ar[r]^1 & A}$ and $\xymatrixcolsep{0.4cm} \xymatrix{B & B \ar[l]_1 \ar[r]^1 & B}$, then their product is the span $\xymatrixcolsep{0.4cm} \xymatrix{A\times B & A \times B \ar[l]_{1 \times 1} \ar[r]^{1\times 1} & A\times B}$, which is exactly the identity span on $A\times B$. So $\mathrm{\mathbb{S}pan}\mathcal{E}$ is indeed a Cartesian double category.
\end{proof}

\section{Cartesian and Fibrant Double Categories} \label{Cartesian and Fibrant Double Categories}

In this last section of Chapter 4 we prove some properties of Cartesian and fibrant double categories that will be used as tools in the rest of the thesis. At the end of it we will define unit-pure double categories, which will be used later, towards our characterization theorem for the double category of spans.

\begin{lemma}\label{product_of_Cartesians}
In any Cartesian and fibrant double category, the product of two Cartesian cells is Cartesian and the product of two op-Cartesian cells is op-Cartesian.
\end{lemma}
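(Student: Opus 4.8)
The plan is to reduce the statement to the fact that, in a Cartesian double category, the product functor $\times:\mathbb{D}\times\mathbb{D}\to\mathbb{D}$ is a genuine (pseudo) double functor, namely the right adjoint of the diagonal $\Delta$, and then to apply the preservation results for double functors between fibrant double categories. First I would record two routine observations about products of double categories. The product $\mathbb{D}\times\mathbb{D}$ is again fibrant: a niche in $\mathbb{D}\times\mathbb{D}$ is a pair of niches in $\mathbb{D}$, and pairing the two Cartesian fillings supplied by fibrancy of $\mathbb{D}$ yields a Cartesian filling, since horizontal composition, cells, and factorizations in $\mathbb{D}\times\mathbb{D}$ are all defined componentwise. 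In the same componentwise spirit, a cell $(\alpha,\alpha')$ of $\mathbb{D}\times\mathbb{D}$ is Cartesian (resp.\ op-Cartesian) if and only if both $\alpha$ and $\alpha'$ are Cartesian (resp.\ op-Cartesian) in $\mathbb{D}$, because the unique factorization of a pair of cells is exactly the pair of the unique factorizations of the components.

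Next I would use that $\mathbb{D}$ is Cartesian: by definition the double functor $\times:\mathbb{D}\times\mathbb{D}\to\mathbb{D}$ is a right adjoint in $\mathbf{DblCat}$, hence a pseudo double functor, and being pseudo it is in particular both lax and oplax (its comparison $\times_\odot$ is invertible, and its inverse serves as the oplax comparison). By \Cref{lax_preserves_Cartesian}, a lax double functor between fibrant double categories preserves Cartesian cells and an oplax one preserves op-Cartesian cells; applying this to $\times:\mathbb{D}\times\mathbb{D}\to\mathbb{D}$, whose domain and codomain are both fibrant by the previous paragraph, shows that $\times$ preserves Cartesian cells and preserves op-Cartesian cells. (For the Cartesian case one could equally invoke \Cref{right_adjoint_preserves_Cartesian}, since $\times$ is a right adjoint.)

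Finally I would conclude: given Cartesian cells $\alpha$ and $\alpha'$ in $\mathbb{D}$, the cell $(\alpha,\alpha')$ is Cartesian in $\mathbb{D}\times\mathbb{D}$, so its image $\times(\alpha,\alpha')=\alpha\times\alpha'$ is Cartesian in $\mathbb{D}$; symmetrically, if $\alpha$ and $\alpha'$ are op-Cartesian then so is $\alpha\times\alpha'$. There is no real obstacle once the product functor is treated as an honest double functor; the only point requiring a little care is the bookkeeping that $\alpha\times\alpha'$ — defined via the universal cells $\delta$ and $(\pi_1,\pi_2)$, equivalently via the local product used in \Cref{lax_cartesian} — is precisely the value of the right-adjoint double functor $\times$ on the pair $(\alpha,\alpha')$, which is built into the construction of the adjunction $\Delta\dashv\times$ in \Cref{fibrant_cart} together with \Cref{422}.
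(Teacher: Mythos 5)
Your proposal is correct and follows essentially the same route as the paper, which likewise observes that the product double functor is both lax and oplax (being pseudo) and invokes \Cref{lax_preserves_Cartesian}. The extra componentwise checks you spell out (fibrancy of $\mathbb{D}\times\mathbb{D}$ and the fact that a cell of the product is (op-)Cartesian iff both components are) are exactly the implicit bookkeeping behind the paper's one-line argument.
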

\begin{proof}
It follows by \Cref{lax_preserves_Cartesian} since the product double functor is both lax and oplax.
\end{proof}

\begin{proposition}\label{horizontal_products}
If $\mathbb{D}$ is a Cartesian and fibrant double category then $\mathcal{H(\mathbb{D})}$ has finite products locally.
\end{proposition}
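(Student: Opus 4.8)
The plan is to transport the finite products of $D_1$ — which exist by \Cref{422} — along the diagonal, using fibrancy. Fix objects $A$ and $B$; the objects of $\mathcal{H(\mathbb{D})}(A,B)$ are the horizontal arrows $M$ with $SM=A$ and $TM=B$, and its morphisms are the globular cells between them. Given two such arrows $M,N$, I would first form their product $M\times N:\srarrow{A\times A}{B\times B}$ in $D_1$, with projection cells $\pi_1:M\times N\Rightarrow M$ and $\pi_2:M\times N\Rightarrow N$ whose sources and targets are the vertical projections $p_1$ and $p_2$. Then, by fibrancy (\Cref{fibrant_equivalence}), take the Cartesian filling of the niche with vertical legs $d_A:A\to A\times A$, $d_B:B\to B\times B$ and bottom $M\times N$; write $M\wedge N:={d_B}^*(M\times N){d_A}_*:\srarrow{A}{B}$ for the resulting horizontal arrow and $\chi:M\wedge N\Rightarrow M\times N$ for the Cartesian cell, which has source $d_A$ and target $d_B$. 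The candidate projections in $\mathcal{H(\mathbb{D})}(A,B)$ are the vertical composites $\rho_1=\frac{\chi}{\pi_1}$ and $\rho_2=\frac{\chi}{\pi_2}$; since $p_1 d_A=1_A=p_2 d_A$ and $p_1 d_B=1_B=p_2 d_B$ by the triangle identities of the Cartesian structure, these composites are globular, hence genuine morphisms of $\mathcal{H(\mathbb{D})}(A,B)$.

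Next I would check the universal property. Given globular cells $\alpha:P\Rightarrow M$ and $\beta:P\Rightarrow N$ with $P$ in $\mathcal{H(\mathbb{D})}(A,B)$, the product structure of $D_1$ produces a unique cell $\langle\alpha,\beta\rangle:P\Rightarrow M\times N$ with $\frac{\langle\alpha,\beta\rangle}{\pi_1}=\alpha$ and $\frac{\langle\alpha,\beta\rangle}{\pi_2}=\beta$. Applying $S$ and $T$ to these equations and using that $\alpha,\beta$ are globular forces $p_1\circ S\langle\alpha,\beta\rangle=1_A=p_2\circ S\langle\alpha,\beta\rangle$, and likewise on targets, so $S\langle\alpha,\beta\rangle=d_A$ and $T\langle\alpha,\beta\rangle=d_B$. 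Therefore $\langle\alpha,\beta\rangle$ factors through the Cartesian cell $\chi$, uniquely, as $\frac{\gamma}{\chi}=\langle\alpha,\beta\rangle$ for a necessarily globular cell $\gamma:P\Rightarrow M\wedge N$; associativity of vertical composition then gives $\frac{\gamma}{\rho_i}=\frac{\frac{\gamma}{\chi}}{\pi_i}=\frac{\langle\alpha,\beta\rangle}{\pi_i}$, i.e. $\alpha$ and $\beta$ respectively. Uniqueness of such a $\gamma$ follows by combining uniqueness in the $D_1$-product with uniqueness of the Cartesian factorization: any globular $\gamma'$ with $\frac{\gamma'}{\rho_i}$ equal to $\alpha,\beta$ has $\frac{\gamma'}{\chi}$ satisfying the defining property of $\langle\alpha,\beta\rangle$, whence $\frac{\gamma'}{\chi}=\langle\alpha,\beta\rangle$ and then $\gamma'=\gamma$.

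For the terminal object I would run the same argument one dimension down. By \Cref{422}, $U_I$ is the terminal object of $D_1$, where $I$ is the terminal object of $D_0$. Take the Cartesian filling $\top_{A,B}:={t_B}^* U_I {t_A}_*:\srarrow{A}{B}$ of the niche with legs $t_A:A\to I$, $t_B:B\to I$ and bottom $U_I$, with Cartesian cell $\chi':\top_{A,B}\Rightarrow U_I$. For any $P$ in $\mathcal{H(\mathbb{D})}(A,B)$, the unique cell $P\Rightarrow U_I$ of $D_1$ automatically has source $t_A$ and target $t_B$, so it factors through $\chi'$ as a unique globular cell $P\Rightarrow\top_{A,B}$; hence $\top_{A,B}$ is terminal in $\mathcal{H(\mathbb{D})}(A,B)$. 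Combining the binary products with the terminal object shows $\mathcal{H(\mathbb{D})}(A,B)$ has finite products, as required.

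The only delicate point is the bookkeeping in the universal-property step: one must keep straight that the cells $\langle\alpha,\beta\rangle$ arising from globular data are precisely those sitting over the diagonals $d_A,d_B$, which is exactly what the Cartesian filling along $d_A,d_B$ is built to absorb, and that all the induced factorizations come out globular. Once sources and targets are tracked this is routine, and I do not anticipate any genuine obstacle.
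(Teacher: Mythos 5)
Your proposal is correct and follows essentially the same route as the paper's proof: define $M\wedge N$ as the Cartesian filling $d_B^*(M\times N){d_A}_*$ with projections given by composing the Cartesian cell with $\pi_1,\pi_2$, verify that any $\langle\alpha,\beta\rangle$ arising from globular data sits over the diagonals and hence factors through the Cartesian cell, and handle the terminal object analogously via $t_B^* U_I {t_A}_*$. Your explicit treatment of uniqueness of the mediating globular cell is slightly more detailed than the paper's, but the argument is the same.
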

\begin{proof}
Consider two horizontal arrows $M$ and $N: \srarrow{A}{B}$. We will show that 
the Cartesian filling $M \wedge N = d_B^* (M\times N) {d_A}_*$ of the niche
$$\xymatrixcolsep{0.2cm} \xymatrixrowsep{0.2cm} \xymatrix{ A \ar[dd]_{d_A} & & B \ar[dd]^{d_B} \\
&\quad \\
A\times A \hard{M\times N} & & B\times B,} $$
is the product of $M$ and $N$, with projections
$$\xymatrixcolsep{0.2cm} \xymatrixrowsep{0.2cm} \xymatrix{
A \ar[dd]_{d_A} \haru{M \wedge N} & & B \ar[dd]^{d_B} & & A \ar[dd]_{d_A} \haru{M\wedge N} & & B \ar[dd]^{d_B} \\
& \gamma_{M,N} & & & & \gamma_{M,N} \\
A\times A \ar[dd]_{p_1} \hard{M\times N} & & B\times B \ar[dd]^{p_1} & \text{and} & A\times A \ar[dd]_{p_2} \hard{M\times N} & & B\times B \ar[dd]^{p_2} \\
& \pi_1 & & & & \pi_2 \\
A \hard{M} & & B & & A \hard{N} & & B, } $$
where $\gamma_{M,N}$ is Cartesian. Note that the cells above are globular since $p_1 d_A = 1_A=p_2d_A$ for each $A$, and so they are actually 2-cells in $\mathcal{H(\mathbb{D})}$.
Consider now cells of the form
$$\xymatrixcolsep{0.2cm} \xymatrixrowsep{0.2cm} \xymatrix{
A \ar@{=}[dd] \haru{L} & & B \ar@{=}[dd] & & A \ar@{=}[dd] \haru{L} & & B \ar@{=}[dd] \\
& \alpha & & \text{and} & & \beta \\
A \hard{M} & & B & & A \hard{N} & & B. } $$
Then we have the cell
$$\xymatrixcolsep{0.2cm} \xymatrixrowsep{0.2cm} \xymatrix{ A \ar[dd]_f \haru{L} & & B \ar[dd]^g \\
& \langle \alpha, \beta \rangle \\
A\times A \hard{M\times N} & & B\times B,} $$
with $\pi_1 \langle \alpha, \beta \rangle = \alpha$ and $\pi_2 \langle \alpha, \beta \rangle = \beta$. So $p_1 f = 1_A$ and $p_2 g = 1_B$. However, we can write the arrows $f$ and $g$ uniquely as $d_A f'$ and $d_B g'$ respectively, with $f' =1_A f' = p_1 d_A f' = p_1f = 1_A$ and similarly $g'=1_B$. Hence $f$ and $g$ are exactly the diagonal vertical arrows $d_A$ and $d_B$. Now, since $\gamma_{M,N}$ is Cartesian we can factor the above cell uniquely as
$$\xymatrixcolsep{0.2cm} \xymatrixrowsep{0.2cm} \xymatrix{
& & & & A \ar@{=}[dd] \haru{L} & & B \ar@{=}[dd] \\
A \ar[dd]_{d_A} \haru{L} & & B \ar[dd]^{d_B} & & & \langle \alpha , \beta \rangle_{\mathcal{H}} \\
& \langle \alpha, \beta \rangle & & = & A \ar[dd]_{d_A} \haru{M \wedge N} & & B \ar[dd]^{d_B} \\
A\times A \hard{M\times N} & & B\times B & & & \gamma_{M,N} \\
& & & & A \times A \hard{M\times N} & & B\times B. } $$
Moreover, $ \pi_1 \gamma_{M,N} \langle \alpha , \beta \rangle_{\mathcal{H}} = \pi_1 \langle \alpha , \beta \rangle = \alpha$ and $ \pi_2 \gamma_{M,N} \langle \alpha , \beta \rangle_{\mathcal{H}} = \pi_2 \langle \alpha , \beta \rangle = \beta $. \par
It remains to show that $\mathcal{H(\mathbb{D})(A,B)}$ has a terminal object. We will show that this is the Cartesian filling $\top=t_B^* U_I {t_A}_*$ of the niche
$$\xymatrixcolsep{0.2cm} \xymatrixrowsep{0.2cm} \xymatrix{ A \ar[dd]_{t_A} & & B \ar[dd]^{t_B} \\
& \quad \\
I \hard{U_I} & & I } $$
is the terminal object in $\mathcal{H(\mathbb{D})(A,B)}$. We have seen that for every horizontal arrow $M: \srarrow{A}{B}$ there is a unique cell
$$\xymatrixcolsep{0.2cm} \xymatrixrowsep{0.2cm} \xymatrix{ A \ar[dd]_{t_A} \haru{M} & & B \ar[dd]^{t_B} \\
& \tau \\
I \hard{U_I} & & I,} $$
which will be factored uniquely as
$$\xymatrixcolsep{0.2cm} \xymatrixrowsep{0.2cm} \xymatrix{ A \ar@{=}[dd] \haru{M} & & B \ar@{=}[dd] \\
& \tau' \\
A \ar[dd]_{t_A} \hard{\top} & & B \ar[dd]^{t_B} \\
\\ I \hard{U_I} & & I. } $$
Suppose that there is another globular cell $\alpha$ from $M$ to $\top$. Then by composing with
$$\xymatrixcolsep{0.2cm} \xymatrixrowsep{0.2cm} \xymatrix{
A \ar[dd]_{t_A} \haru{\top} & & B \ar[dd]^{t_B} \\
\\ I \hard{U_I} & & I } $$
we get a cell from $M$ to $U_I$, which has to be the same as $\tau$. Because of the uniqueness of the factorization of $\tau$, $\alpha$ must be the same as $\tau'$.
\end{proof}

\begin{corollary}
For a fibrant double category $\mathbb{D}$, the following are equivalent:
\begin{enumerate}
\item $\mathbb{D}$ is Cartesian.
\item $D_0$ has finite products, $\mathcal{H(\mathbb{D})}$ has finite products locally, and the induced lax double functors of \Cref{lax_cartesian} are pseudo.
\end{enumerate}
\end{corollary}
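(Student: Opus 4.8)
The plan is to prove the two implications separately, each by reduction to results already in hand. For $(2)\Rightarrow(1)$ I would simply invoke \Cref{fibrant_cart}: the three clauses of condition $(2)$ are exactly conditions ii., iii., and iv. of that proposition, and its condition i. (fibrancy) is the standing hypothesis of the corollary, so \Cref{fibrant_cart} gives at once that $\mathbb{D}$ is Cartesian.

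For $(1)\Rightarrow(2)$, I would assume $\mathbb{D}$ Cartesian and fibrant and collect the three conclusions in turn. The assertion that $D_0$ has finite products is immediate from \Cref{422}, and the assertion that $\mathcal{H(\mathbb{D})}$ has finite products locally is \Cref{horizontal_products}. The only point needing argument is that the \emph{particular} lax double functors $\times$ and $I$ of \Cref{lax_cartesian} are pseudo. Here I would use uniqueness of adjoints. Since $\mathbb{D}$ is fibrant, $D_0$ has finite products, and $\mathcal{H(\mathbb{D})}$ has finite products locally, the proof of \Cref{fibrant_precartesian} exhibits $\times$ and $I$ of \Cref{lax_cartesian} as lax right adjoints of the diagonal functor $\Delta:\mathbb{D}\rightarrow\mathbb{D}\times\mathbb{D}$ and of $!:\mathbb{D}\rightarrow\mathbbm{1}$ in $\mathbf{DblCat}_{\mathcal{L}}$. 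On the other hand, because $\mathbb{D}$ is Cartesian, $\Delta$ and $!$ also have honest right adjoints $\times'$ and $I'$ in $\mathbf{DblCat}$ (a priori different from $\times$ and $I$); being pseudo double functors, $\times'$ and $I'$ are in particular lax, hence are lax right adjoints of $\Delta$ and $!$ in $\mathbf{DblCat}_{\mathcal{L}}$ as well, since the inclusion $\mathbf{DblCat}\hookrightarrow\mathbf{DblCat}_{\mathcal{L}}$ is a $2$-functor and therefore preserves adjunctions.

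By uniqueness of right adjoints in the $2$-category $\mathbf{DblCat}_{\mathcal{L}}$ there are then vertical natural isomorphisms $\phi:\times\xrightarrow{\cong}\times'$ and $\psi:I\xrightarrow{\cong}I'$. I would conclude by reading off, from the coherence identities of \Cref{vertical_transf} applied to $\phi$, that the constraint $\times_{\odot}$ (respectively $\times_U$) is obtained from $\times'_{\odot}$ (respectively $\times'_U$) by vertical composition with components of $\phi$, of $\phi^{-1}$, and with images under $\odot$ and $U$ of the (invertible) components of $\phi$. Since $\times'$ is pseudo, $\times'_{\odot}$ and $\times'_U$ are invertible, and all these composites are therefore invertible; hence $\times_{\odot}$ and $\times_U$ are invertible and $\times$ is pseudo. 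The same argument applied to $\psi$ shows $I$ is pseudo, which establishes $(2)$.

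The step I expect to require the most care is this last transport argument: one must run the uniqueness-of-adjoints argument inside $\mathbf{DblCat}_{\mathcal{L}}$ rather than $\mathbf{DblCat}$ (in the latter the lax functor $\times$ is not even an object), and one must check that a vertical natural isomorphism between lax double functors does carry the property of being pseudo across — which is immediate once the compatibility squares of \Cref{vertical_transf} are written out, but should be stated explicitly. Everything else is a direct appeal to \Cref{422}, \Cref{horizontal_products}, \Cref{fibrant_precartesian}, and \Cref{fibrant_cart}.
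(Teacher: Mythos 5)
Your proof is correct and follows the same route as the paper, whose own proof is just the one-line citation of \Cref{horizontal_products} and \Cref{fibrant_cart} (with \Cref{422} implicit for the products in $D_0$). The uniqueness-of-right-adjoints argument you spell out — passing the adjunction $\Delta\dashv\times'$ from $\mathbf{DblCat}$ into $\mathbf{DblCat}_{\mathcal{L}}$ along the inclusion $2$-functor, comparing it with the lax adjoint of \Cref{fibrant_precartesian}, and transporting pseudo-ness across the resulting vertical natural isomorphism via the coherence squares of \Cref{vertical_transf} — is precisely the detail the paper elides, and it is sound.
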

\begin{proof}
It follows by \Cref{horizontal_products} and \Cref{fibrant_cart}.
\end{proof}

\begin{remark}
A similar result for Cartesian bicategories was presented by Verity in the revised version of his PhD thesis \cite{Verity11}. In particular, he showed that Cartesian bicategories can be regarded as ``Cartesian objects'' in specific bicategory-enriched categories.
\end{remark}

In the following, we will often ignore the isomorphism $U\times U \cong U$ in the diagrams, in order to present our proofs in a simpler way. We will also ignore the isomorphisms $\alpha,\lambda,\rho$ that express the horizontal associativity and the horizontal unitary property of a double category.

\begin{lemma}\label{horizontal_naturality}
In a Cartesian and fibrant double category there are isomorphisms of the form:
$$\xymatrixcolsep{0.2cm} \xymatrixrowsep{0.2cm} \xymatrix{
AB \ar@{=}[dd] \haru{FB} & & XB \haru{{p_1}_*} & & X \ar@{=}[dd] & & BA \ar@{=}[dd] \haru{BF} & & AX \haru{{p_2}_*} & & X \ar@{=}[dd]\\
& & & & & ,\\
AB \hard{{p_1}_*} & & A \hard{F} & & X & & BA \hard{{p_2}_*} & & A \hard{F} & & X}$$
and
$$\xymatrixcolsep{0.2cm} \xymatrixrowsep{0.2cm} \xymatrix{
A \ar@{=}[dd] \haru{{p_1}^*} & & AB \haru{FB} & & XB \ar@{=}[dd] & & A \ar@{=}[dd] \haru{{p_2}^*} & & BA \haru{BF} & & BX \ar@{=}[dd]\\
& & & & & ,\\
A \hard{F} & & X \hard{{p_1}^*} & & XB & & A \hard{F} & & X \hard{{p_2}^*} & & BX.}$$
\end{lemma}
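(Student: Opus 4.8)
The four displayed isomorphisms are permuted among themselves by two symmetries: swapping the two factors of a binary product carries the $p_1$-statements to the $p_2$-statements, and passing to the horizontal opposite $\mathbb{D}^{\mathrm{op}}$ (again Cartesian and fibrant, with companions and conjoints interchanged, and with local products in $\mathcal{H(\mathbb{D}^{\mathrm{op}})}$ inherited from those in $\mathcal{H(\mathbb{D})}$) carries the companion statements to the conjoint statements. So it suffices to construct a single globular isomorphism
$${p_1}_*\odot(F\times U_B)\ \xrightarrow{\ \sim\ }\ F\odot {p_1}_*$$
of horizontal arrows $\srarrow{A\times B}{X}$, where $ {p_1}_*$ is the companion of $p_1\colon X\times B\to X$ on the left and of $p_1\colon A\times B\to A$ on the right.

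The plan is to exhibit both sides as op-Cartesian fillings of one and the same niche and then invoke the uniqueness of such fillings. Recall from \Cref{horizontal_products} (and the construction of the product on $D_1$) that $F\times U_B$ is the local product ${p_1}^*F{p_1}_*\wedge{p_2}^*U_B{p_2}_*$ in $\mathcal{H(\mathbb{D})}(A\times B, X\times B)$, in which ${p_1}^*F{p_1}_*$ is the Cartesian filling of the niche with legs $p_1\colon A\times B\to A$ and $p_1\colon X\times B\to X$ and bottom $F$; let $q$ denote the projection $F\times U_B\to {p_1}^*F{p_1}_*$ of the local product and $\gamma$ the Cartesian cell defining ${p_1}^*F{p_1}_*$. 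Using \Cref{identity_filling} and the unit laws, the left-hand arrow ${p_1}_*\odot(F\times U_B)$ is the op-Cartesian filling of the niche with top $F\times U_B$, left leg $1_{A\times B}$ and right leg $p_1\colon X\times B\to X$; let $\omega$ be its op-Cartesian cell. On the other hand, again by \Cref{identity_filling}, the horizontal arrow ${p_1}^*F{p_1}_*\cong {p_1}^*\odot(F\odot{p_1}_*)$ is also the Cartesian filling of the niche whose top is $F\odot{p_1}_*$, whose left leg is $1_{A\times B}$ and whose right leg is $p_1\colon X\times B\to X$; writing $\chi$ for its Cartesian cell, the vertical composite $\chi\circ q$ is a cell $\omega'$ with top $F\times U_B$, bottom $F\odot{p_1}_*$, left leg $1_{A\times B}$ and right leg $p_1\colon X\times B\to X$.

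The crux --- and the step I expect to be the main obstacle --- is to prove that $\omega'$ is op-Cartesian; once this is known, uniqueness of op-Cartesian fillings identifies $F\odot{p_1}_*$ with ${p_1}_*\odot(F\times U_B)$, and unwinding the construction shows the induced comparison is exactly the globular cell of the statement. To verify op-Cartesianness of $\omega'$ one argues from the factorization property directly: an arbitrary test cell out of $F\times U_B$ is first resolved into its two components along the product ${p_1}^*F{p_1}_*\wedge{p_2}^*U_B{p_2}_*$, the ${p_1}$-component is transported through $\gamma$ and then through $\chi$, the ${p_2}$-component is pinned down using that ${p_2}^*U_B{p_2}_*$ becomes terminal in the relevant hom-category after whiskering $U_B$ with $t_B$ (\Cref{horizontal_products}), and the pieces are reassembled; uniqueness of the factorization is then forced by the triangle identities for the companion/conjoint adjunctions together with the defining equations of \Cref{fibrant_equivalence}. (An alternative is to write the comparison directly, as the counit $ {p_1}_*\odot{p_1}^*\to U$ applied after $q$, and its inverse via the universal property of the local product together with $\chi$, then check both composites are identities by the same triangle-identity computations; the op-Cartesian-filling formulation seems cleaner to write up.) Finally, the remaining three isomorphisms follow formally from this one by the two symmetries noted at the outset.
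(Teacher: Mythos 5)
Your two symmetry reductions are fine, and recasting the first isomorphism as the assertion that $\chi\circ q$ is op-Cartesian is a legitimate reformulation: in a fibrant double category a cell with legs $1_{AB}$ and $p_1$ is op-Cartesian precisely when the induced globular cell ${p_1}_*\odot(F\times U_B)\to F\odot{p_1}_*$ is invertible. But that means the ``crux'' is literally equivalent to the statement being proved, and the argument you sketch for it does not work. You propose to ``resolve an arbitrary test cell out of $F\times U_B$ into its two components along the product ${p_1}^*F{p_1}_*\wedge{p_2}^*U_B{p_2}_*$'', but the universal property of a (local) product classifies cells \emph{into} it, not out of it: a cell with source $F\times U_B$ admits no such decomposition, so the factorization scheme never gets started. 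The parenthetical alternative has the same problem in mirror image: to build the inverse $F\odot{p_1}_*\to{p_1}_*\odot(F\times U_B)$ via the universal property of $\wedge$ you must supply a second component landing in ${p_2}^*U_B{p_2}_*\cong{p_2}^*\odot{p_2}_*$, and there is no canonical candidate for it.

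The telltale sign of the gap is that your argument nowhere uses the hypothesis that $\times$ is a \emph{pseudo} double functor, i.e.\ the interchange constraints $(M'\times N')\odot(M\times N)\cong(M'\odot M)\times(N'\odot N)$ and $U_{A\times B}\cong U_A\times U_B$ -- exactly what separates Cartesian from precartesian, and the decisive input in the paper's proof. The paper first constructs an explicit invertible cell identifying the companion ${p_1}_*:\srarrow{XB}{X}$ with $U_X\times{t_B}_*$ (modulo $XI\cong X$), and dually $U_A\times{t_B}_*$ with ${p_1}_*:\srarrow{AB}{A}$, using only the companion data of $p_1$ and $t_B$ together with the terminal object; it then computes
$${p_1}_*\odot FB\;\cong\;(U_X\times{t_B}_*)\odot(F\times U_B)\;\cong\;F\times{t_B}_*\;\cong\;(F\times U_I)\odot(U_A\times{t_B}_*)\;\cong\;F\odot{p_1}_*,$$
applying interchange in the two middle steps. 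If you want to retain your op-Cartesian-filling formulation, the repair is to prove ${p_1}_*\cong U_X\times{t_B}_*$ first and deduce op-Cartesianness of $\chi\circ q$ from interchange -- at which point you have reproduced the paper's argument rather than found an alternative to it.
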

\begin{proof}
First observe that the cell
$$\xymatrixcolsep{0.2cm} \xymatrixrowsep{0.2cm} \xymatrix{
XB \ar@{=}[dddd] \haru{U} & & XB \ar[dddd]_{X\times t_B} \haru{U} & & XB \ar[dd]_{p_1} \haru{{p_1}_*} & & X \ar@{=}[dd] \\
\\
& & & U & X \haru{U} \ar[dd]_{\cong} & & X \ar[dd]^{\cong}\\
& & & & & U \\
XB \hard{X {t_B}_*} & & XI \hard{U} & & XI \hard{U} & & XI }$$
is invertible in $\mathbb{D}$, with inverse:
$$\xymatrixcolsep{0.2cm} \xymatrixrowsep{0.2cm} \xymatrix{
XB \ar@{=}[dddd] \haru{U} & & XB \ar[dddd]_{p_1} \haru{U} & & XB \ar[dd]_{X \times t_B} \haru{X{t_B}_*} & & XI \ar@{=}[dd]\\
\\
& & & U & XI \haru{U} \ar[dd]_{\cong} & & XI \ar[dd]^{\cong}\\
& & & & & U_{p_1} \\
XB \hard{{p_1}_*} & & X \hard{U} & & X \hard{U} & & X.}$$
Similarly, there is a cell
$$\xymatrixcolsep{0.2cm} \xymatrixrowsep{0.2cm} \xymatrix{
AB \haru{A {t_B}_*} \ar@{=}[dd] & & AI \ar[dd]^{\cong}\\
\\
AB \hard{{p_1}_*} & & A,}$$
which is invertible in $\mathbb{D}$. Now we have the isomorphism:
$$\xymatrixcolsep{0.2cm} \xymatrixrowsep{0.2cm} \xymatrix{
AB \ar@{=}[dd] \haru{FB} & & XB \ar@{=}[dd] \haru{{p_1}_*} & & X \ar[dd]^{\cong}\\
& 1 & & \cong\\
AB \ar@{=}[dd] \hard{FB} & & XB \hard{X {t_B}_*} & & XI \ar@{=}[dd] \\
& & \cong\\
AB \ar@{=}[dd] \haru{A {t_B}_*} & & AI \ar[dd]_{\cong} \haru{FI} & & XI \ar[dd]^{\cong}\\
& \cong & & \cong \\
AB \hard{{p_1}_*} & & A \hard{F} & & X,}$$
where the isomorphism in the middle follows by the functoriality of $\times$.\par
To prove the third isomorphism we can consider the diagram below instead:
$$\xymatrixcolsep{0.2cm} \xymatrixrowsep{0.2cm} \xymatrix{
A \ar[dd]_{\cong} \haru{{p_1}^*} & & AB \ar@{=}[dd] \haru{FB} & & XB \ar@{=}[dd] \\
& \cong & & 1 \\
AI \ar@{=}[dd] \hard{A {t_B}^*} & & AB \hard{FB} & & XB \ar@{=}[dd]\\
& & \cong\\
AI \ar[dd]_{\cong} \haru{FI} & & XI \ar[dd]^{\cong} \haru{X {t_B}^*} & & XB \ar@{=}[dd]\\
& \cong & & \cong \\
A \hard{F} & & X \hard{{p_1}^*} & & XB. }$$
The isomorphisms to the right follow in a similar way.
\end{proof}

\begin{remark}
A similar result as the one above was also proven for Cartesian bicategories in \cite{Carboni2008}, in proposition 4.7.
\end{remark}

\begin{definition}
A double category $\mathbb{D}$ is called \textbf{unit-pure} if the unit functor $U:D_0 \rightarrow D_1$ is full.
\end{definition}

\begin{remark}
In a unit-pure double category, whenever we have a cell of the form
$$\xymatrixcolsep{0.2cm} \xymatrixrowsep{0.2cm} \xymatrix{ A \haru{U_A} \ar[dd]_{f} & & A \ar[dd]^{g} \\ & \alpha \\ B \hard{U_B} & & B }$$
then $f=g$ and $\alpha=U_f$. Notice also that the functor $U$ is always faithful, so in the above case U is fully faithful.

\begin{example}
The double category $\mathbb{S}et$ is unit-pure since it is locally discrete.

The double category $\mathbb{S}pan\mathcal{E}$ is also unit-pure since a cell of the above form is as follows:
$$\xymatrixcolsep{0.2cm} \xymatrixrowsep{0.2cm} \xymatrix{
& & A \ar[dll]_{1} \ar[drr]^{1} \ar[dddd]^{f} \\
A \ar[dd]_f & & & & A \ar[dd]^f \\
\\ B & & & & B \\
& & B \ar[ull]^{1} \ar[urr]_{1}.}$$
\end{example}

\begin{example}
The double category of profunctors is not unit-pure. We can just consider the profunctors internal to sets for simplicity: Asking for this double category to be unit-pure would be the same as asking that for every two objects $A,A'$ in a category $\mathcal{C}$ and functors $F,G$ with domain $\mathcal{C}$, every function $\text{Hom}(A,A') \rightarrow \text{Hom}(GA,FA')$ is the same as the function that sends an arrow $f$ to $Ff$. One can also show that the double category of $V$-matrices is not unit-pure either.
\end{example}

For a  Cartesian double category in particular, the unit-pure condition implies that
$$\xymatrixcolsep{0.2cm} \xymatrixrowsep{0.2cm} \xymatrix{
AX \haru{U} \ar[dd]_{p_1} & & AX \ar[dd]^{p_1} & & AX \haru{U} \ar[dd]_{p_1} & & AX \ar[dd]^{p_1}\\
& \pi_1 & & = & & U_{p_1}\\
A \hard{U} & & A & & A \hard{U} & & A,}$$
$$\xymatrixcolsep{0.2cm} \xymatrixrowsep{0.2cm} \xymatrix{
AX \haru{U} \ar[dd]_{p_2} & & AX \ar[dd]^{p_2} & & AX \haru{U} \ar[dd]_{p_2} & & AX \ar[dd]^{p_2}\\
& \pi_2 & & = & & U_{p_2}\\
X \hard{U} & & X & & X \hard{U} & & X.}$$
and
$$\xymatrixcolsep{0.2cm} \xymatrixrowsep{0.2cm} \xymatrix{
A \haru{U} \ar[dd]_{d} & & A \ar[dd]^{d} & & A \haru{U} \ar[dd]_{d} & & A \ar[dd]^{d}\\
& \delta & & = & & U_{d}\\
AA \hard{U} & & AA & & AA \hard{U} & & AA.}$$
\end{remark}

\begin{lemma}
In a Cartesian, fibrant and unit-pure double category, for every horizontal arrow $F:\srarrow{A}{X}$, the cell
$$\xymatrixcolsep{0.2cm} \xymatrixrowsep{0.2cm} \xymatrix{ AAX \haru{AFX} \ar[dd]_{p_2} & & AXX \ar[dd]^{p_2} \\ & \pi_2 \\ A \hard{F} & & X}$$
is op-Cartesian.
\end{lemma}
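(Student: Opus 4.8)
The plan is to identify the displayed cell, up to a globular isomorphism, with the canonical op-Cartesian cell produced by fibrancy, and then to use unit-purity to upgrade that isomorphism to the identity.

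Since $\mathbb{D}$ is fibrant, the niche with left leg $p_2\colon AAX\to A$, horizontal arrow $AFX$, and right leg $p_2\colon AXX\to X$ has an op-Cartesian filling ${p_2}_*\,(AFX)\,{p_2}^*$ together with a universal op-Cartesian cell $\zeta$ over $(p_2,p_2)$. Factoring $\pi_2$ through $\zeta$ yields a unique globular cell $\theta\colon {p_2}_*\,(AFX)\,{p_2}^*\to F$. Since the vertical composite of an op-Cartesian cell with a globular isomorphism is again op-Cartesian (equivalently, op-Cartesian fillings are unique up to globular iso), it suffices to prove that $\theta$ is invertible, i.e.\ that $\pi_2$ already exhibits $F$ as ${p_2}_*\,(AFX)\,{p_2}^*$.

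To see this I would compute ${p_2}_*\odot(AFX)\odot{p_2}^*$ directly. First, \Cref{f^*_functorial} lets me split the companion and conjoint of the (composite) projection $p_2$ into horizontal composites of companions and conjoints of the elementary binary projections. Then I would apply the isomorphisms of \Cref{horizontal_naturality} repeatedly: these are precisely the tool for sliding a companion or conjoint of a product projection past a product of horizontal arrows, each move trading a projection-companion (resp.\ conjoint) on one side of $AFX$ against the matching projection-companion (resp.\ conjoint) on the other side. After all the sliding the $U_A$- and $U_X$-factors of $AFX$ have contributed only companions and conjoints of identity vertical arrows; by \Cref{identity_filling} these are the horizontal units, and — ignoring the coherence cells $\lambda,\rho$ as the text does — they are absorbed, leaving exactly $F$.

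Unit-purity enters when one must check that the isomorphism ${p_2}_*\,(AFX)\,{p_2}^*\cong F$ built in this way is $\theta$, hence that $\theta$ is invertible. In a unit-pure double category every cell whose horizontal source and target are units is the unit on its vertical boundary; in particular the projection cells and the diagonal cell coincide with $U_{p_1},U_{p_2},U_d$, as recorded in the remarks preceding the lemma. This rigidity forces each comparison cell appearing in the computation to be the unique ``expected'' one, so their composite agrees with the globular cell obtained by factoring $\pi_2$ through $\zeta$. The main obstacle I anticipate is purely bookkeeping: keeping track of which of the several projections out of $AAX$ and $AXX$ is being written $p_2$, and carrying the suppressed associativity/unitality coherence cells through the chain of applications of \Cref{horizontal_naturality} and \Cref{f^*_functorial} so that one can honestly conclude the surviving comparison cell is $\pi_2$ and is an isomorphism. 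A more hands-on alternative is to verify op-Cartesianness straight from the definition: given a test cell out of $AFX$ over $(h\,p_2,\,k\,p_2)$, construct the required unique factorization using the local products of $\mathcal{H}(\mathbb{D})$ together with fibrancy, unit-purity again pinning down the globular comparison cells.
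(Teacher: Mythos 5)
There is a genuine gap at the heart of your computation, namely the claim that ${p_2}_*\odot(AFX)\odot{p_2}^*$ reduces to $F$ by splitting the projections with \Cref{f^*_functorial}, sliding with \Cref{horizontal_naturality}, and absorbing what remains via \Cref{identity_filling}. The sliding isomorphisms only move $F$ past the companions and conjoints of the projections; they do not cancel those companions and conjoints. Carrying the computation out (associating $AAX=A(AX)$, say), one lands on
$${p_1}_*\odot{p_2}_*\odot{p_2}^*\odot{p_1}^*\odot F,$$
with $p_1:XX\to X$ and $p_2:A(XX)\to XX$, and a companion composed with a conjoint of a genuine projection is not a horizontal unit: \Cref{identity_filling} applies only to identity vertical arrows, and none of the surviving projections are identities. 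So your assertion that ``the $U_A$- and $U_X$-factors have contributed only companions and conjoints of identity vertical arrows'' is false, and the computation does not terminate in $F$. This is not a bookkeeping issue. In $\mathbb{S}\mathrm{pan}(\mathbf{Set})$ the op-Cartesian filling ${p_2}_*\odot(AFX)\odot{p_2}^*$ is the span with apex $A\times F\times X$ whose legs are $(a,f,x)\mapsto f_0$ and $(a,f,x)\mapsto f_1$, which is visibly not isomorphic to $F$ when $A$ and $X$ are nontrivial; no purely formal companion/conjoint calculus can therefore produce the isomorphism you need.

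The paper's proof takes an essential step that your argument has no access to: it builds the candidate inverse $\beta:F\to{p_2}_*\odot(AFX)\odot{p_2}^*$ by first tripling $F$ along the diagonal, $\delta_3:F\to FFF$ over $d_3$, identifying $FFF$ with $FXX\odot AFX\odot AAF$ via pseudofunctoriality of $\times$, and then projecting the outer factors onto units; the diagonal is what supplies the extra data populating the $A$- and $X$-slots. Unit-purity then enters at one specific point — the identification $\pi_2=U_{p_2}$ on unit horizontal arrows — to verify the inverse identities; it is not a blanket principle forcing arbitrary comparison cells to be ``the expected ones,'' since it only constrains cells whose horizontal source and target are units, so your appeal to it to identify your isomorphism with $\theta$ does not go through either. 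Your fallback of verifying op-Cartesianness directly from the definition is not developed, and in view of the $\mathbb{S}\mathrm{pan}$ computation above it would meet the same obstruction. If you want to repair the argument you must engage with the $\delta_3$ construction — and I would urge you, while doing so, to test the left-inverse identity $\beta\circ(\mathrm{conj}\,|\,\pi_2\,|\,\mathrm{comp})=1$ against the $\mathbb{S}\mathrm{pan}(\mathbf{Set})$ example, where $\beta$ sends $f$ to $(f_0,f,f_1)$ and the composite is $(a,f,x)\mapsto(f_0,f,f_1)$, not the identity on $A\times F\times X$.
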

\begin{proof}
It suffices to show that the cell
$$\xymatrixcolsep{0.2cm} \xymatrixrowsep{0.2cm} \xymatrix{
A \ar@{=}[dd] \haru{{p_2}^*} & & AAX \haru{AFX} \ar[dd]_{p_2} & & AXX \ar[dd]^{p_2} \haru{{p_2}_*} & & X \ar@{=}[dd] \\
& & & \pi_2 \\
A \hard{U} & & A \hard{F} & & X \hard{U} & & X}$$
is an isomorphism, with inverse $\beta$. Indeed, if this is the case and we consider a cell
$$\xymatrixcolsep{0.2cm} \xymatrixrowsep{0.2cm} \xymatrix{ AAX \haru{AFX} \ar[dd]_{p_2} & & AXX \ar[dd]^{p_2} \\
\\
A \ar[dd]_b & \theta & X \ar[dd]^c\\
\\
B \hard{M} & & C,}$$
then we can show that it factors uniquely through $\pi_2$ as follows:
$$\xymatrixcolsep{0.2cm} \xymatrixrowsep{0.2cm} \xymatrix{
AAX \ar@{=}[dd] \ar[rrrrrr]^{AFX} \ar[rrrrrr]|{\vstretch{0.60}{|}} & & & & & & AXX \ar@{=}[dd]\\
& & & \pi_2\\
A \ar@{=}[dd] \ar[rrrrrr]^{F} \ar[rrrrrr]|{\vstretch{0.60}{|}} & & & & & & X \ar@{=}[dd]\\
& & & \beta\\
A \ar@{=}[dd] \haru{{p_2}^*} & & AAX \haru{AFX} \ar[dd]_{p_2} & & AXX \ar[dd]^{p_2} \haru{{p_2}_*} & & X \ar@{=}[dd] \\
\\
A \ar[dd]_b \hard{U} & & A \ar[dd]_b & \theta & X \ar[dd]^c \hard{U} & & X \ar[dd]^c\\
& U_b & & & & U_c\\
B \hard{U} & & B \hard{M} & & C \hard{U} & & C.}$$
Now the following equality shows that it has a right inverse $\beta$:
$$\xymatrixcolsep{0.2cm} \xymatrixrowsep{0.2cm} \xymatrix{
A \ar[dd]_{d_3} \ar[rrrrrrrrrr]^{F} \ar[rrrrrrrrrr]|{\vstretch{0.60}{|}} & & & & & & & & & & X \ar[dd]_{d_3}\\
& & & & & \delta_3\\
AAA \ar@{=}[dd] \ar[rrrrrrrrrr]^{FFF} \ar[rrrrrrrrrr]|{\vstretch{0.60}{|}} & & & & & & & & & & XXX \ar@{=}[dd]\\
& & & & & \cong\\
AAA \ar[dd]_{p_2} \haru{AAF} & & AAX \ar[dd]_{p_2} \haru{U} & & AAX \ar@{=}[dd] \haru{AFX} & & AXX \ar@{=}[dd] \haru{U} & & AXX \ar[dd]^{p_2} \haru{FXX} & & XXX \ar[dd]^{p_2} & =\\
& \pi_2 & & & & 1 & & & & \pi_2\\
A \ar@{=}[dd] \haru{U} & & A \ar@{=}[dd] \haru{{p_2}^*} & & AAX \ar[dd]_{p_2} \haru{AFX} & & AXX \ar[dd]^{p_2} \haru{{p_2}_*} & & X \ar@{=}[dd] \haru{U} & & X\ar@{=}[dd] \\
& 1 & & & & \pi_2 & & & & 1\\
A \hard{U} & & A \hard{U} & & A \hard{F} & & X \hard{U} & & X \hard{U} & & X}$$

$$\xymatrixcolsep{0.14cm} \xymatrixrowsep{0.2cm} \xymatrix{
A \ar[dd]_{d_3} \ar[rrrrrrrrrr]^{F} \ar[rrrrrrrrrr]|{\vstretch{0.60}{|}} & & & & & & & & & & X \ar[dd]_{d_3}\\
& & & & & \delta_3\\
AAA \ar@{=}[dd] \ar[rrrrrrrrrr]^{FFF} \ar[rrrrrrrrrr]|{\vstretch{0.60}{|}} & & & & & & & & & & XXX \ar@{=}[dd] & & A \ar@{=}[dd] \haru{F} & & X \ar@{=}[dd]\\
& & & & & \cong & & & & & & = & & 1\\
AAA \ar[dd]_{p_2} \haru{AAF} & & AAX \ar[dd]_{p_2} \haru{U} & & AAX \ar[dd]_{p_2} \haru{AFX} & & AXX \ar[dd]^{p_2} \haru{U} & & AXX \ar[dd]^{p_2} \haru{FXX} & & XXX \ar[dd]^{p_2} & & A \hard{F} & & X\\
& \pi_2 & & U_{p_2} & & \pi_2 & & U_{p_2} & & \pi_2\\
A \hard{U} & & A \hard{U} & & A \hard{F} & & X \hard{U} & & X \hard{U} & & X.}$$
Then we also have
$$\xymatrixcolsep{0.00001cm} \xymatrixrowsep{0.2cm} \xymatrix{
AAX \ar[dd]_{p_2} \ar[rrrrrr]^{AFX} \ar[rrrrrr]|{\vstretch{0.60}{|}} & & & & & & AXX \ar[dd]^{p_2} & & AAX \ar@{=}[dd] \ar[rrrrrr]^{AFX} \ar[rrrrrr]|{\vstretch{0.60}{|}} & & & & & & AXX \ar@{=}[dd] \\
& & & \pi_2 & & & & & & & & A \beta X \\
A \ar@{=}[dd] \ar[rrrrrr]^{F} \ar[rrrrrr]|{\vstretch{0.60}{|}} & & & & & & X \ar@{=}[dd] & = & AAX \ar[dd]_{p_2} \haru{A{p_2}^*X} & & AAAXX \ar[dd]_{p_2} \haru{AAFXX} & & AAXXX \ar[dd]^{p_2} \haru{A{p_2}_*X} & & AXX \ar[dd]^{p_2} \\
& & & \beta & & & & & & \pi_2 & & \pi_2 & & \pi_2 \\
A \hard{{p_2}^*} & & AAX \hard{AFX} & & AXX \hard{{p_2}_*} & & X & & A \hard{{p_2}^*} & & AAX \hard{AFX} & & AXX \hard{{p_2}_*} & & X,}$$
by the naturality of $\pi_2$ and the functoriality of the product. The property $U_{p_2}=\pi_2$ of a unit-pure Cartesian double category is used to show that the second diagram is equal to:
$$\xymatrixcolsep{0.2cm} \xymatrixrowsep{0.2cm} \xymatrix{
AAX \ar[dd]_{p_2} \haru{U} & & AAX \ar@{=}[dd] \haru{AFX} & & AXX \ar@{=}[dd] \haru{U} & & AXX \ar[dd]^{p_2}\\
& & & 1\\
A \hard{{p_2}^*} & & AAX \hard{AFX} & & AXX \hard{{p_2}_*} & & X}$$
and finally
$$\xymatrixcolsep{0.2cm} \xymatrixrowsep{0.2cm} \xymatrix{
A \ar@{=}[dd] \haru{{p_2}^*} & & AAX \haru{AFX} \ar[dd]_{p_2} & & AXX \ar[dd]^{p_2} \haru{{p_2}_*} & & X \ar@{=}[dd] \\
& & & \pi_2 & & & & & A \ar@{=}[dd] \haru{{p_2}^*} & & AAX \haru{AFX} & & AXX \haru{{p_2}_*} & & X \ar@{=}[dd] \\
A \ar@{=}[dd] \hard{U} & & A \hard{F} & & X \hard{U} & & X \ar@{=}[dd] & = & & & & 1\\
& & & \beta & & & & & A \hard{{p_2}^*} & & AAX \hard{AFX} & & AXX \hard{{p_2}_*} & & X \\
A \hard{{p_2}^*} & & AAX \hard{AFX} & & AXX \hard{{p_2}_*} & & X,}$$
i.e. $\beta$ is also a left inverse for the given cell.
\end{proof}

\chapter{Spans} \label{Spans}

In this chapter we focus on the double category of spans and we show conditions under which a Cartesian and fibrant double category $\mathbb{D}$ is of the form $\mathbb{S}pan\mathcal{E}$ for some category $\mathcal{E}$ with pullbacks and terminal object.

\section{Tabulators} \label{Tabulators}

Tabulators for double categories were introduced in Grandis and Par\'e's paper \cite{LimitsGP} where they defined the tabulator of a given horizontal arrow $F$ as the double limit of the double diagram formed by $F$. However, in their later papers, they only considered the one-dimensional universal property of this definition. This is what we use here as well.

\begin{definition}\cite{Grandis2017}
We say that a double category has \textbf{tabulators} if for every horizontal arrow $F:\srarrow{A}{X}$ there is an object $T$ and a cell
$$\xymatrixcolsep{0.2cm} \xymatrixrowsep{0.2cm} \xymatrix{ T \haru{U} \ar[dd]_{q_1} & & T \ar[dd]^{q_2} \\ & \iota \\ A \hard{F} & & X}$$
which is universal in the sense that if there is another cell $\beta:U\rightarrow F$ then there is a unique vertical arrow $b:H\rightarrow T$ such that:
$$\xymatrixcolsep{0.2cm} \xymatrixrowsep{0.2cm} \xymatrix{
& & & & H \ar[dd]_b \haru{U} & & H \ar[dd]^b \\
H \haru{U} \ar[dd]_{h_2} & & H \ar[dd]^{h_1} & & & U \\
& \beta & & = & T \ar[dd]_{q_1} \haru{U} & & T \ar[dd]^{q_2} \\
A \hard{F} & & X & & & \iota \\
& & & & A \hard{F} & & X.}$$
Equivalently, the double category $\mathbb{D}$ has tabulators if the functor $U:D_0 \rightarrow D_1$ has right adjoint.\\
If the double category is fibrant, we say that the tabulators are \textbf{strong} if for each horizontal arrow $F:\srarrow{A}{X}$, the cell
$$\xymatrixcolsep{0.2cm} \xymatrixrowsep{0.2cm} \xymatrix{
A \ar@{=}[dd] \haru{{q_1}^*} & & T \haru{{q_2}_*} & & X \ar@{=}[dd] \\
& & \upsilon\\
A \ar[rrrr]_{F} \ar[rrrr]|{\vstretch{0.60}{|}} & & & & X}$$
we get from the universal property of the horizontal arrow ${q_2}_* \odot {q_1}^*$ and the cell $\iota$ are invertible.
\end{definition}

The main result of this section is that a Cartesian and fibrant double category with Eilenberg-Moore objects for co-pointed endomorphisms, as defined in \Cref{Eilenberg-Moore Objects}, has tabulators. In order to show that, first observe that the functor $U$ can be written as the composite of the two inclusions
$$D_0 \xrightarrow{I_0} \mathbf{Copt} (\mathbb{D})_0 \xrightarrow{K} D_1.$$
By definition, a double category has Eilenberg-Moore objects if the inclusion $\mathbb{D} \xrightarrow{I} \mathbf{Copt}(\mathbb{D})$ has a right adjoint, so the inclusion $I_0$ in the diagram above between the vertical categories has a right adjoint as well in this case. Below we build a functor $G: D_1 \rightarrow \mathbf{Copt} (\mathbb{D})_0$, right adjoint to $K$.

\begin{proposition}\label{functor_G}
For every Cartesian and fibrant double category there is a functor $G: D_1 \rightarrow \mathbf{Copt(\mathbb{D})}_0$ that maps a horizontal arrow $F:\srarrow{A}{X}$ to the Cartesian filling $G(F)$ as we see below
$$\xymatrixcolsep{0.4cm} \xymatrixrowsep{0.2cm} \xymatrix{A X \ar[dd]_{d_A \times X} \haru{G(F)} & & A X \ar[dd]^{A \times d_X}\\
& \gamma_F \\
A  A  X \hard{A  F  X} & & A  X  X.}$$
The endomorphism $G(F)$ is co-pointed with counit $\epsilon_{G(F)}$: 
$$\xymatrixcolsep{0.4cm} \xymatrixrowsep{0.2cm} \xymatrix{A X \ar[dd]_{d_A \times X} \ar@{=}@/_4pc/[dddd] \haru{G(F)} & & A X \ar[dd]^{A \times d_X} \ar@{=}@/^4pc/[dddd]\\
& \gamma_F \\
A  A  X \ar[dd]_{p_{1,3}} \hard{A F X} & & A X X \ar[dd]^{p_{1,3}}\\
& \pi_{1,3}\\
A X \hard{U_{A X}} & & A X.}$$
\end{proposition}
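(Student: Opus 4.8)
The plan is to verify three things in turn: that $G$ is well-defined on objects and morphisms and is functorial; that $G(F)$ carries a co-pointed structure via the cell $\epsilon_{G(F)}$ displayed; and that the counit axiom holds. Throughout I would work in the Cartesian and fibrant double category $\mathbb{D}$, using $d_A$ for the diagonals, $p_{1,3}$ for the projection $AAX\to AX$ (resp.\ $AXX\to AX$) onto the outer two factors, $\pi_{1,3}$ for the corresponding globular projection cell supplied by \Cref{horizontal_products}, and the Cartesian fillings guaranteed by \Cref{fibrant_equivalence}.

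First I would define $G$ on objects: for $F:\srarrow{A}{X}$, the niche
$$\xymatrixcolsep{0.2cm} \xymatrixrowsep{0.2cm} \xymatrix{ AX \ar[dd]_{d_A\times X} & & AX \ar[dd]^{A\times d_X} \\ & \\ AAX \hard{AFX} & & AXX }$$
has a Cartesian filling $(d_A\times X)^*\,(AFX)\,(A\times d_X)_* =: G(F)$ together with a Cartesian cell $\gamma_F$, by \Cref{fibrant_equivalence}. On morphisms, a cell $\alpha:F\to F'$ in $D_1$ over vertical arrows $a:A\to A'$, $x:X\to X'$ induces, after whiskering with the evident naturality cells of $\times$ and the Cartesian cell $\gamma_{F'}$, a cell $AAX \to A'A'X'$ of the right shape; its factorization through $\gamma_{F'}$ (Cartesian) gives a unique globular-over-$(a\times x)$ cell $G(\alpha):G(F)\to G(F')$. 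Functoriality ($G(1)=1$, $G(\beta\alpha)=G(\beta)G(\alpha)$) then follows from uniqueness in the universal property of Cartesian cells, exactly as in \Cref{f^*_functorial}; I would not write out the pasting diagrams in detail.

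Next I would equip $G(F)$ with a counit. The composite cell $\epsilon_{G(F)}$ in the statement is $\gamma_F$ followed vertically by $\pi_{1,3}$, and its source and target vertical arrows are $p_{1,3}\circ(d_A\times X)=1_{AX}$ and $p_{1,3}\circ(A\times d_X)=1_{AX}$ by the triangle identities for the product (so the composite is globular, landing in $U_{AX}$ since $\pi_{1,3}$ is the projection onto the $A$- and $X$-legs and $AFX$ restricted along these collapses to $U_{AX}\times\top$, i.e.\ $U_{AX}$ after the canonical iso). This is precisely a counit $\epsilon_{G(F)}:G(F)\to U_{AX}$, making $G(F)$ an object of $\mathbf{Copt}(\mathbb{D})_0$. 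I would also check that $G(\alpha)$ is a morphism of co-pointed endomorphisms, i.e.\ compatible with the counits; this again reduces to the naturality of $\pi_{1,3}$ and $\gamma$ together with uniqueness, so $G$ lands in $\mathbf{Copt}(\mathbb{D})_0$ as claimed.

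The step I expect to be the main obstacle is the identification of the target of $\pi_{1,3}\gamma_F$ with $U_{AX}$ — i.e.\ checking that the projection of $AFX$ onto its first and third factors really is (canonically isomorphic to) $U_{AX}$, which requires the horizontal-naturality isomorphisms of \Cref{horizontal_naturality} (projecting away the middle $F$-leg) together with $\top_{I,I}\cong U_I$ and the functoriality of $\times$ on horizontal arrows. Once that canonical isomorphism is pinned down, the co-pointedness is immediate and the remaining verifications (functoriality of $G$, compatibility of $G(\alpha)$ with counits) are routine diagram chases driven by the uniqueness clauses of the Cartesian universal property, which I would only sketch.
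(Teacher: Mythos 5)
Your proposal is correct and follows essentially the same route as the paper: define $G(F)$ as the Cartesian filling, define $G(\alpha)$ by composing $\gamma_F$ with $U_a\times\alpha\times U_x$ and factoring uniquely through the Cartesian cell $\gamma_{F'}$, get functoriality and counit-compatibility from the uniqueness clause, and take $\epsilon_{G(F)}=\pi_{1,3}\gamma_F$, which is globular because $p_{1,3}(d_A\times X)=1_{AX}=p_{1,3}(A\times d_X)$. The one step you single out as the main obstacle is actually immediate: the codomain of the projection $\pi_{1,3}$ is $U_A\times U_X$, which is identified with $U_{A\times X}$ simply because $\times_U$ is invertible ($\times$ being a pseudo double functor in a Cartesian double category) — no appeal to \Cref{horizontal_naturality} or to $\top_{I,I}$ is needed, and the paper suppresses this isomorphism explicitly.
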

\begin{proof}
First we extend the definition to the arrows of $D_1$, i.e. the cells: Consider a cell
$$\xymatrixcolsep{0.4cm} \xymatrixrowsep{0.2cm} \xymatrix{A \ar[dd]_f  \haru{F} & & X \ar[dd]^g\\
& \alpha \\
A' \hard{F'} & & X'.}$$
The cell
$$\xymatrixcolsep{0.2cm} \xymatrixrowsep{0.2cm} \xymatrix{
& A X \ar@/_1pc/[ddl]_{f \times g} \ar[dd]_{d_A \times X} \haru{G(F)} & & A X \ar[dd]^{A \times d_X} \ar@/^1pc/[ddr]^{f\times g}\\
& & \gamma_F\\
\quad A'  X' = \ar@/_1pc/[ddr]_-{d_{A'}\times X'} & A  A  X \ar[dd]_{f\times f\times g} \haru{A F  X} & & A X  X \ar[dd]^{f \times g \times g} & = A'  X' \ar@/^1pc/[ddl]^-{A' \times d_{X'}} \quad \\
& & U_f \times \alpha \times U_g \\
& A'  A'  X' \hard{A'  F'  X'} & & A'  X'  X' }$$ 
can be factored uniquely through $\gamma_{F'}$ as follows:
$$\xymatrixcolsep{0.2cm} \xymatrixrowsep{0.2cm} \xymatrix{
A X \ar[dd]_{f \times g} \haru{G(F)} & & A X \ar[dd]^{f \times g}\\
\\
A'  X' \ar[dd]_{d_{A'} \times X'} \haru{G(F')} & & A' X' \ar[dd]^{A' \times d_{X'}}\\
& \gamma_{F'}\\
A'  A'  X' \hard{A'  F'  X'} & & A'  X'  X'.}$$
Define $G(\alpha)$ to be the unique cell of the above factorization.
Notice that by the universality of $\gamma_{F'}$ the following property is true, which we will often be using later on:
$$\begin{tikzpicture}[scale=0.4]
\draw (1,2) node{$\gamma_F$} (6,2) node{$G(\alpha$)} (1,0) node{$U_f \times \alpha \times U_g$} (6,0) node{$\gamma_{F'}.$};
\draw (0,1)--(2,1) (5,1)--(7,1) (4,1) node{=};
\end{tikzpicture}$$
Moreover, $G(\alpha)$ is compatible with the counits of $G(F)$ and $G(F')$ since:
$$\begin{tikzpicture}[scale=0.4]
\draw (1,4) node{$G(\alpha)$} (6,4) node{$\gamma_F$} (11,4) node{$\gamma_F$} (15,4) node{$\gamma_F$};
\draw (1,2) node{$\gamma_{F'}$} (6,2) node{$U_f \times \alpha \times U_g$} (11,2) node{$\pi_{1,3}$} (15,2) node{$\pi_{1,3}$};
\draw (1,0) node{$\pi_{1,3}$} (6,0) node{$\pi_{1,3}$} (11,0) node{$U_f\times U_g$} (15,0) node{$U_{f\times g}$};
\draw (0,3)--(2,3) (4.3,3)--(7.7,3) (10,3)--(12,3) (14,3)--(16,3);
\draw (0,1)--(2,1) (4.3,1)--(7.7,1) (10,1)--(12,1) (14,1)--(16,1);
\draw (2.5,2) node{=} (9.6,2) node{=} (13,2) node{=};
\end{tikzpicture}$$
We also have that $G(1_F)=1_{G(F)}$ by the universal property of $\gamma_F$ and the equality
$$\begin{tikzpicture}[scale=0.4]
\draw (1,0) node{$\gamma_F$} (5,0) node{$1$} (1,2) node{$G(1_F)$} (5,2) node{$\gamma_F$} (3,1) node{=} (7,1) node{=} (9,1) node{$\gamma_F$.};
\draw (0,1)--(2,1) (4,1)--(6,1);
\end{tikzpicture}$$
Similarly, $G(\alpha' \alpha)=G(\alpha') G(\alpha)$ for vertically composable cells
$$\xymatrixcolsep{0.2cm} \xymatrixrowsep{0.2cm}\xymatrix{
A \ar[dd]_{f} \haru{F} & & X \ar[dd]^{g}\\
& \alpha \\
A' \ar[dd]_{f'} \haru{F'} & & X' \ar[dd]^{g'}\\
& \alpha' \\
A'' \hard{F''} & & X'',}$$
by the universal property of $\gamma_{F''}$ and the equality
$$\begin{tikzpicture}[scale=0.4]
\draw (1,4) node{$G(\alpha)$} (7,4) node{$G(\alpha)$} (15,4) node{$\gamma_F$};
\draw (1,2) node{$G(\alpha')$} (7,2) node{$\gamma_{F'}$} (15,2) node{$U_f \times \alpha \times U_g$};
\draw (1,0) node{$\gamma_{F''}$} (7,0) node{$U_{f'}\times \alpha' \times U_{g'}$} (15,0) node{$U_{f'}\times \alpha' \times U_{g'}$};
\draw  (24,3) node{$\gamma_F$} (32,3) node{$G(\alpha' \alpha)$} (24,1) node{$U_{f'f} \times (\alpha'\alpha) \times U_{g'g}$} (32,1) node{$\gamma_{F''}.$};
\draw (0,3)--(2,3) (6,3)--(8,3) (14,3)--(16,3);
\draw (0,1)--(2,1) (6,1)--(8,1) (14,1)--(16,1);
\draw (22,2)--(26,2) (31,2)--(33,2);
\draw (4,2) node{=} (10,2) node{=} (19,2) node{=} (29,2) node{=};
\end{tikzpicture}$$
\end{proof}

\begin{remark}
A suitable version for bicategories of the above construction was observed in both \cite{Carboni2008} and \cite{SpansLWW}. In the latter, they consider a Cartesian bicategory that satifies the Frobenius and the separable axioms, and they build for any 1-cell $F:A\rightarrow X$ a comonoid $G$ on $AX$. Here we did not assume those extra conditions and we showed that we can always build a co-pointed structure on $AX$, which in addition, can be extended to a functor.
\end{remark}

\begin{proposition}
For every Cartesian and fibrant double category, the functor $G$ defined above is right adjoint to the inclusion $K:{\mathbf{Copt} (\mathbb{D})}_0 \rightarrow D_1$.
\end{proposition}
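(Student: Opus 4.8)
The plan is to verify the adjunction $K \dashv G$ directly by exhibiting a unit and counit and checking the triangle identities, or equivalently by showing that for every co-pointed endomorphism $(Z, P)$ in $\mathbf{Copt}(\mathbb{D})_0$ and every horizontal arrow $F: \srarrow{A}{X}$, the set of cells $KP \to F$ in $D_1$ (i.e. cells with horizontal source $P$ and target $F$) is naturally isomorphic to the set of morphisms $(Z, P) \to G(F)$ in $\mathbf{Copt}(\mathbb{D})_0$. I would work with the counit formulation since $G$ is defined explicitly: for each $F: \srarrow{A}{X}$ the counit component $\varepsilon_F : K G(F) \to F$ should be a cell
$$\xymatrixcolsep{0.2cm} \xymatrixrowsep{0.2cm} \xymatrix{ AX \haru{G(F)} \ar[dd]_{p_1} & & AX \ar[dd]^{p_2} \\ & \varepsilon_F \\ A \hard{F} & & X,}$$
obtained by composing the Cartesian cell $\gamma_F$ defining $G(F)$ with the appropriate projection cell $\pi$ from $AFX$ down to $F$; concretely $\varepsilon_F = \pi_{(2,\text{out})} \circ \gamma_F$ where the outer projection picks the first $A$-coordinate on the left and the last $X$-coordinate on the right, using $p_1 (d_A \times X) = p_1$ and $p_2(A \times d_X) = p_2$.

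Next I would check that $\varepsilon_F$ is compatible with the counit of the co-pointed structure $G(F)$ — i.e. that $(p_1, p_2, \varepsilon_F)$ is a morphism in $\mathbf{Copt}(\mathbb{D})_0$ — which is essentially immediate from the definition of $\epsilon_{G(F)}$ in \Cref{functor_G} together with the compatibility equations established there. Then the main work: given any co-pointed endomorphism $(Z,P)$ with counit $\epsilon_P : P \to U_Z$ and any cell $\beta : KP \to F$ with source vertical arrows $a : Z \to A$, $x : Z \to X$, I would produce the unique factorization through $\varepsilon_F$. The factorizing vertical arrow $Z \to AX$ should be $\langle a, x \rangle$, and the factorizing cell $\hat{\beta} : P \to G(F)$ is built from $\beta$ by using the Cartesian universal property of $\gamma_F$ applied to a cell $P \to AFX$ assembled out of $\beta$, the counit $\epsilon_P$ (to supply the two extra "diagonal" legs), and the horizontal naturality isomorphisms of \Cref{horizontal_naturality}. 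One then checks $\hat{\beta}$ respects the counits (so it lives in $\mathbf{Copt}(\mathbb{D})_0$) and that $\varepsilon_F \circ K\hat{\beta}$ recovers $\beta$, using $\pi_1 \delta = 1$, $\pi_2 \delta = 1$, and the counit law for $P$. Uniqueness follows from the uniqueness in the Cartesian factorization through $\gamma_F$ plus faithfulness considerations. Finally I would verify the two triangle identities $G\varepsilon \cdot \eta G = 1$ and $\varepsilon K \cdot K \eta = 1$, where $\eta$ is the induced unit; these reduce to the defining property $\gamma_F | G(\alpha) = (U_f \times \alpha \times U_g) | \gamma_{F'}$ and the equations $\pi_{1,3}\gamma_F = U_{f\times g}$-type identities proven in \Cref{functor_G}.

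The main obstacle I anticipate is the bookkeeping in constructing $\hat{\beta}$: one must correctly interleave the counit $\epsilon_P : P \to U_Z$ with the horizontal composite structure so that the resulting cell $P \to A F X$ has the right source and target vertical arrows ($\langle a,x\rangle$ composed with $d_A \times X$ and $A \times d_X$ respectively), and this requires careful use of the isomorphisms of \Cref{horizontal_naturality} together with the coherence isomorphisms $\alpha, \lambda, \rho$ that the author has agreed to suppress. Getting the source/target data to match up so that the Cartesian factorization through $\gamma_F$ is even applicable is the delicate point; once that is set up, verifying the equations is routine given the identities already recorded in the excerpt. I would also need to confirm naturality of $\varepsilon$ in $F$, which again follows formally from the defining property of $G$ on cells stated in \Cref{functor_G}.
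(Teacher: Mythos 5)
Your proposal follows essentially the same route as the paper: the adjunction is established via the universal-arrow characterization, with counit $\pi_{2}\circ\gamma_F$ (the middle projection of $AFX$ composed with the Cartesian cell defining $G(F)$), and the factorizing co-pointed morphism obtained by applying the Cartesian property of $\gamma_F$ to the triple $\langle \frac{\epsilon_P}{U_{h_1}},\,\beta,\,\frac{\epsilon_P}{U_{h_2}}\rangle$, exactly as in the paper. The only slips are cosmetic: the intermediate claim that $(p_1,p_2,\varepsilon_F)$ should be a morphism in $\mathbf{Copt}(\mathbb{D})_0$ does not type-check (the counit lives in $D_1$ and $F$ is not an endomorphism, so no such compatibility is required), and the appeal to \Cref{horizontal_naturality} is unnecessary — the product pairing in $D_1$ and the universal property of $\gamma_F$ suffice.
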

\begin{proof}
From the second version of the known characterization of adjunctions in \cite{MacLane}, it suffices to show that for every co-pointed endomorphism $P:\srarrow{H}{H}$ and cell
$$\xymatrixcolsep{0.2cm} \xymatrixrowsep{0.2cm} \xymatrix{ H \ar[dd]_{h_1} \haru{P} & & H \ar[dd]^{h_2}\\
& \beta\\
A \hard{F} & & X}$$
there is a unique co-pointed morphism
$$\xymatrixcolsep{0.2cm} \xymatrixrowsep{0.2cm} \xymatrix{ H \ar[dd]_{\langle h_1, h_2 \rangle } \haru{P} & & H \ar[dd]^{\langle h_1, h_2 \rangle }\\
& \alpha \\
AX \hard{G(F)} & & AX}$$
such that
$$\begin{tikzpicture}[scale=0.4]
\draw (0,0) node{$\beta$} (1,0) node{=} (3,0) node{$\gamma_F$} (3,2) node{$\alpha$} (3,-2) node{$\pi_2$.} (2,1)--(4,1) (2,-1)--(4,-1);
\end{tikzpicture}$$
Consider the cells
$$\xymatrixcolsep{0.2cm} \xymatrixrowsep{0.2cm} \xymatrix{
H \ar@{=}[dd] \haru{P} & & H \ar@{=}[dd] & & H \ar@{=}[dd] \haru{P} & & H \ar@{=}[dd]\\
& \epsilon_P & & & & \epsilon_P\\
H \ar[dd]_{h_1} \haru{U_H} & & H \ar[dd]^{h_1} & \text{ and } & H \ar[dd]_{h_2} \haru{U_H} & & H \ar[dd]^{h_2}\\
& U_{h_1} & & & & U_{h_2}\\
A \hard{U} & & A & & X \hard{U_X} & & X.}$$
Then we have the cell
$$\xymatrixcolsep{0.2cm} \xymatrixrowsep{0.2cm} \xymatrix{
H \ar[dd]_{\langle h_1, h_1, h_2 \rangle} \haru{P} & & H \ar[dd]^{\langle h_1, h_2, h_2 \rangle}\\
& \langle \frac{\epsilon_P}{U_{h_1}}, \beta, \frac{\epsilon_P}{U_{h_2}} \rangle\\
AAX \hard{AFX} & & AXX.}$$
So by the universal property of $G(F)$, there is a unique
$$\xymatrixcolsep{0.2cm} \xymatrixrowsep{0.2cm} \xymatrix{
H \ar[dd]_{\langle h_1,h_2\rangle} \haru{P} & & H \ar[dd]^{\langle h_1, h_2 \rangle}\\
& \alpha \\
AX \hard{G(F)} & & AX}$$
such that
$$\begin{tikzpicture}[scale=0.4]
\draw (0,0) node{$\Big{\langle}$} (0.4,0)--(2.4,0) (1.4,1) node{$\epsilon_P$} (1.2,-1) node{$U_{h_1}$} (2.8,0) node{,} (3.6,0) node{$\beta$} (4.4,0) node{,};
\draw (4.8,0)--(6.8,0) (5.8,1) node{$\epsilon_P$} (5.8,-1) node{$U_{h_2}$} (7.2,0) node{$\Big{\rangle}$};
\draw (8,0) node{=} (9,0)--(11,0) (10,1) node{$\alpha$} (10,-1) node{$\gamma_F$};
\draw (13,0) node{ or };
\draw (15,0) node{$\beta$} (16,0) node{=} (18,0) node{$\gamma_F$} (18,2) node{$\alpha$} (18,-2) node{$\pi_2$} (17,1)--(19,1) (17,-1)--(19,-1);
\end{tikzpicture}$$
The pair $(\langle h_1,h_2 \rangle, \alpha)$ is a co-pointed morphism since we have
$$\begin{tikzpicture}[scale=0.4]
\draw (0,0)--(2,0) (1,1) node{$\alpha$} (1,-1) node{$\epsilon_{G(F)}$} (3,0) node{=};
\draw (4,1)--(6,1) (4,-1)--(6,-1) (5,0) node{$\gamma_F$} (5,2) node{$\alpha$} (5,-2) node{$\pi_{1,3}$} (7,0) node{=};
\draw (8.2,0)--(10.2,0) (10.8,0)--(12.8,0) (9.2,1) node{$\epsilon_P$} (9.2,-1) node{$U_{h_1}$} (11.8,1) node{$\epsilon_P$} (11.8,-1) node{$U_{h_2}$};
\draw (7.8,0) node{$\Big{\langle}$} (13.2,0) node{$\Big{\rangle}$} (10.5,0) node{,};
\draw (14,0) node{=} (15,0)--(17,0) (16,1) node{$\epsilon_P$} (16,-1) node{$U_{\langle h_1, h_2 \rangle.}$};
\end{tikzpicture}$$
It remains to show that $\alpha$ is unique with the required property. For every cell of the form
$$\xymatrixcolsep{0.2cm} \xymatrixrowsep{0.2cm} \xymatrix{ H \ar[dd]_{f'} \haru{P} & & H \ar[dd]^{f'}\\
& \alpha' \\
AX \hard{G(F)} & & AX}$$
that satisfies
$$\begin{tikzpicture}[scale=0.4]
\draw (0,0)--(2,0) (3,0) node{=} (4,0)--(6,0) (1,1) node{$\alpha'$} (1,-1) node{$\epsilon_{G(F)}$} (5,1) node{$\epsilon_P$} (5,-1) node{$U_{f'}$};
\draw (7.5,0) node{ and };
\draw (9,0) node{$\beta$} (10,0) node{=} (12,0) node{$\gamma_F$} (12,2) node{$\alpha'$} (12,-2) node{$\pi_2$} (11,1)--(13,1) (11,-1)--(13,-1);
\end{tikzpicture}$$
we have that $f'=\langle h_1,h_2 \rangle$ by the second property and that
$$\begin{tikzpicture}[scale=0.4]
\draw (4,1)--(6,1) (4,-1)--(6,-1) (5,0) node{$\gamma_F$} (5,2) node{$\alpha'$} (5,-2) node{$\pi_{1,3}$} (7,0) node{=};
\draw(8,0)--(10,0) (9,1) node{$\epsilon_P$} (9,-1) node{$U_{\langle h_1, h_2 \rangle.}$};
\end{tikzpicture}$$
by the first one. So
$$\begin{tikzpicture}[scale=0.4]
\draw (-1.2,0) node{=} (-2.2,0)--(-4.2,0) (-3.2,1) node{$\alpha'$} (-3.2,-1) node{$\gamma_F$};
\draw (0,0) node{$\langle$} (0.4,0)--(2.4,0) (1.4,1) node{$\epsilon_P$} (1.2,-1) node{$U_{h_1}$} (2.8,0) node{,} (3.6,0) node{$\beta$} (4.4,0) node{,};
\draw (4.8,0)--(6.8,0) (5.8,1) node{$\epsilon_P$} (5.8,-1) node{$U_{h_2}$} (7.2,0) node{$\rangle$};
\draw (8,0) node{=} (9,0)--(11,0) (10,1) node{$\alpha$} (10,-1) node{$\gamma_F$};
\end{tikzpicture}$$
and by the universal property of $\gamma_F$, $\alpha'=\alpha$.\par
In particular, the unit of the adjunction is given by the unique cell
$$\xymatrixcolsep{0.2cm} \xymatrixrowsep{0.2cm} \xymatrix{
H \ar[dd]_d \haru{P} & & H \ar[dd]^d\\
& \alpha \\
HH \hard{G(P)} & & HH}$$
for which
$$\begin{tikzpicture}[scale=0.4]
\draw (0,0) node{$1_P$} (1,0) node{=} (3,0) node{$\gamma_P$} (3,2) node{$\alpha$} (3,-2) node{$\pi_2$.} (2,1)--(4,1) (2,-1)--(4,-1);
\end{tikzpicture}$$
\end{proof}

\begin{corollary}
If $\mathbb{D}$ is a Cartesian and fibrant double category, then $\mathbf{Copt}_0(\mathbb{D})$ has finite products, and the functor $G$ defined above preserves them.
\end{corollary}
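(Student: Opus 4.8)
The plan is to build finite products in $\mathbf{Copt}(\mathbb{D})_0$ by hand, transporting them along the forgetful functor $K$ from the products of $D_1$, and then to read off preservation by $G$ from the fact that $G$ is a right adjoint.

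First I would assemble the ingredients. By \Cref{422} the category $D_1$ has finite products, with projections $\pi_1,\pi_2$ and terminal object $U_I$; and since $\times\colon\mathbb{D}\times\mathbb{D}\to\mathbb{D}$ is a (pseudo) double functor, we have $U_A\times U_B\cong U_{A\times B}$ in $D_1$, compatibly with the product projections. For the terminal object of $\mathbf{Copt}(\mathbb{D})_0$ I would take $(I,U_I)$ with counit the identity cell $1_{U_I}$: given any co-pointed endomorphism $(A,P,\epsilon_P)$, the unique $D_1$-cell $P\to U_I$ has both source and target equal to the unique vertical arrow $A\to I$, so it is a vertical morphism, and counit-compatibility is automatic by terminality in $D_1$. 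For the binary product of $(A,P,\epsilon_P)$ and $(B,Q,\epsilon_Q)$ I would take $\bigl(A\times B,\ P\times Q,\ \epsilon_{P\times Q}\bigr)$, where $P\times Q$ is the product in $D_1$ and $\epsilon_{P\times Q}$ is $\epsilon_P\times\epsilon_Q$ followed by the isomorphism $U_A\times U_B\cong U_{A\times B}$, with projections the $D_1$-projections $P\times Q\to P$ and $P\times Q\to Q$.

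Then I would run through the verifications in the following order. (i) The $D_1$-projections $P\times Q\to P$ and $P\times Q\to Q$ have coinciding source and target, namely $p_1$, respectively $p_2$, so they are genuine morphisms of $\mathbf{Copt}(\mathbb{D})_0$; this is exactly the content of the Remark following \Cref{D1_products}, which computes the source and target of the pairing cells componentwise. (ii) These projections are counit-compatible: a short diagram chase comparing $\epsilon_P$ with the first component of $\epsilon_{P\times Q}$, using the defining equations of the $D_1$-product projections and the coherence of the pseudofunctor $\times$ with respect to the isomorphism $U_A\times U_B\cong U_{A\times B}$. (iii) The universal property: given a cone $r_1\colon\mathcal R\to(A,P)$, $r_2\colon\mathcal R\to(B,Q)$ in $\mathbf{Copt}(\mathbb{D})_0$, the $D_1$-pairing $\langle r_1,r_2\rangle$ is a cell into $P\times Q$ whose source and target again coincide (same Remark, since those of $r_1$ and $r_2$ do), and which is counit-compatible because $\pi_i\circ\langle r_1,r_2\rangle=r_i$ and each $r_i$ is; its uniqueness is inherited from the uniqueness of $\langle r_1,r_2\rangle$ in $D_1$. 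Together with (i) this establishes that $\mathbf{Copt}(\mathbb{D})_0$ has finite products.

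Finally, preservation by $G$ is immediate: by the preceding proposition $G$ is right adjoint to $K$, hence preserves all limits, and in particular the finite products just constructed, so the canonical comparison $G(M\times N)\to GM\times GN$ is an isomorphism. (One could also unwind the Cartesian fillings of \Cref{functor_G} and check this directly, but the adjointness argument makes that unnecessary.) I expect the only delicate point in the whole argument to be the counit bookkeeping of steps (ii) and (iii) — keeping track of the coherence isomorphism $U_A\times U_B\cong U_{A\times B}$ and of how the $D_1$-product projections interact with it — but this uses nothing beyond the already-established pseudo-ness of $\times$.
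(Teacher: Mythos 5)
Your proof is correct and follows essentially the same route as the paper: the paper likewise takes the product of $(A,P,\epsilon_P)$ and $(B,R,\epsilon_R)$ to be $(P\times R:\srarrow{AB}{AB},\ \epsilon_P\times\epsilon_R)$ (with the coherence isomorphism $U_A\times U_B\cong U_{AB}$ left implicit) and deduces preservation from $G$ being a right adjoint. You have simply filled in the verifications — source/target bookkeeping via the remark after \Cref{D1_products}, counit compatibility, and the terminal object $(I,U_I)$ — that the paper leaves to the reader.
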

\begin{proof}
The products in $\mathbf{Copt}_0(\mathbb{D})$ are given by $(PR:\srarrow{AB}{AB}, \epsilon_P \times \epsilon_R)$ for co-pointed arrows $(P:\srarrow{A}{A}, \epsilon_P)$ and $(R:\srarrow{B}{B}, \epsilon_R)$. That $G$ preserves them follows by the fact that it is a right adjoint.
\end{proof}

\begin{lemma}\label{F_iso_GF}
If $\mathbb{D}$ is a Cartesian and fibrant double category, then for any horizontal arrow $F$, $F$ is isomorphic to the horizontal arrow
$$\xymatrixcolsep{0.2cm} \xymatrixrowsep{0.2cm} \xymatrix{
A \haru{{p_1}^*} & & AX \haru{G(F)} & & AX \haru{{p_2}_*} & & X}$$
in $\mathcal{H(\mathbb{D})}.$
\end{lemma}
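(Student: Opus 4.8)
\emph{The plan.} By \Cref{functor_G}, the horizontal arrow $G(F)\colon\srarrow{AX}{AX}$ is the Cartesian filling of the niche with left leg $d_A\times X\colon AX\to AAX$, right leg $A\times d_X\colon AX\to AXX$, and bottom horizontal arrow $U_A\times F\times U_X\colon\srarrow{AAX}{AXX}$; hence $G(F)\cong(A\times d_X)^*\odot(U_A\times F\times U_X)\odot(d_A\times X)_*$ in $\mathcal{H(\mathbb{D})}$. Substituting this into the target expression, it suffices to exhibit an isomorphism
$${p_2}_*\odot(A\times d_X)^*\odot(U_A\times F\times U_X)\odot(d_A\times X)_*\odot{p_1}^*\ \cong\ F .$$

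First I would massage the two diagonal legs: since $\times$ is a pseudo double functor and $1_*\cong 1^*\cong U$ by \Cref{identity_filling}, and using \Cref{f^*_functorial}, we have $(A\times d_X)^*\cong U_A\times(d_X)^*$ and $(d_A\times X)_*\cong (d_A)_*\times U_X$. Next I would repeatedly apply the sliding isomorphisms of \Cref{horizontal_naturality}, which transport a horizontal arrow padded by a unit on one side past the companion or conjoint of a product projection. Working from the outside in, the conjoint ${p_1}^*$ slides past $(d_A)_*\times U_X$ and then past $U_A\times F\times U_X$, cancelling the unit-padded first-coordinate factors one at a time via \Cref{identity_filling}; dually, the companion ${p_2}_*$ slides past $U_A\times(d_X)^*$ and then past $U_A\times F\times U_X$, cancelling the second-coordinate unit factors. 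After all cancellations only the middle copy of $F$ survives, which gives the claimed isomorphism. Throughout I would suppress the coherence isomorphisms $\alpha,\lambda,\rho$ and $U\times U\cong U$, as the surrounding text does.

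An equivalent and perhaps tidier way to organize the bookkeeping is this: observe that ${p_2}_*\odot G(F)\odot{p_1}^*$ is precisely the op-Cartesian filling of the niche $\bigl(AX\xrightarrow{p_1}A,\ G(F),\ AX\xrightarrow{p_2}X\bigr)$, and that the cell $\epsilon_F:=\pi_2\circ\gamma_F\colon G(F)\Rightarrow F$ --- the middle projection $U_A\times F\times U_X\Rightarrow F$ composed with the Cartesian cell $\gamma_F$, which is exactly the counit of the adjunction $K\dashv G$ established just above, and which one checks has source leg $p_1$ and target leg $p_2$ --- is a cell over that niche. Since isomorphism cells are op-Cartesian and the vertical composite of op-Cartesian cells is op-Cartesian, it then suffices to verify that $\epsilon_F$ is op-Cartesian, equivalently that the induced globular comparison ${p_2}_*\odot G(F)\odot{p_1}^*\to F$ is invertible; this last check is again carried out with \Cref{horizontal_naturality}.

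The only real obstacle is bookkeeping. There is no conceptual difficulty once \Cref{horizontal_naturality} is available, but one must apply the correct instances in the correct order and keep careful track of sources, targets, and which projection cancels which padding coordinate --- the three-fold product $U_A\times F\times U_X$ together with the two graph-type diagonals $d_A\times X$ and $A\times d_X$ make the combinatorics somewhat fiddly, so the care goes into matching the data of each pasting rather than into any genuinely hard step.
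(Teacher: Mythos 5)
Your proof is correct and follows essentially the same route as the paper: expand $G(F)$ as the Cartesian filling $(A\times d_X)^*\odot AFX\odot(d_A\times X)_*$, split the conjoint/companion of the product maps (the paper justifies $(A\times d)^*\cong A\times d^*$ via \Cref{product_of_Cartesians} rather than \Cref{f^*_functorial}, but either works), and cancel down to $F$ by alternating \Cref{horizontal_naturality} with \Cref{f^*_functorial}. The reorganization in your second paragraph via op-Cartesianness of $\pi_2\circ\gamma_F$ is only a repackaging, since verifying that that cell is op-Cartesian is exactly the invertibility of the comparison being proved, so it adds no independent argument.
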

\begin{proof}
We have:
\begin{align*} 
{p_2}_* \odot G(F) \odot {p_1}^*  &\cong  {p_2}_* \odot (A\times d)^* \odot AFX \odot (d\times X)_* \odot {p_1}^* && (\text{definition of } G(F)) \\ 
&\cong  {p_2}_* \odot A\times d^* \odot AFX \odot (d\times X)_* \odot {p_1}^* && (\text{\Cref{product_of_Cartesians}}) \\
&\cong {p_2}_* \odot (A\times (d^* \odot FX )) \odot (d\times X)_* \odot {p_1}^* && (\text{functoriality of } \times)\\
&\cong d^* \odot FX \odot {p_2}_* \odot  (d\times X)_* \odot {p_1}^* && (\text{\Cref{horizontal_naturality}})\\
& \cong d^* \odot FX \odot ({p_2} (d\times X))_* \odot {p_1}^* && (\text{\Cref{f^*_functorial}})\\
&\cong d^* \odot FX \odot {1_{AX}}_* \odot {p_1}^* \\
&\cong d^* \odot FX \odot {p_1}^* \\
&\cong d^* \odot {p_1}^* \odot F && (\text{\Cref{horizontal_naturality}})\\
&\cong (p_1 d)^* \odot F && (\text{\Cref{f^*_functorial}})\\
&\cong {1_X}^* \odot F \\
&\cong F
\end{align*}
\end{proof}

As we mentioned in \Cref{Eilenberg_definition}, if a double category admits Eilenberg-Moore objects for co-pointed endomorphisms, then for every co-pointed endomorphism $P$ we have a universal cell
$$\xymatrixcolsep{0.2cm} \xymatrixrowsep{0.2cm} \xymatrix{ EM(P) \haru{U} \ar[dd]_{u} & & EM(P) \ar[dd]^{u} \\ & \theta_P \\ A \hard{P}  & & A}$$
from $U$ to $P$. In a fibrant double category particularly, we can consider the following type of Eilenberg-Moore objects:

\begin{definition}
We say that a fibrant double category admits \textbf{strong Eilenberg-Moore objects for co-pointed endomorphisms} if $P\cong u_* u^*$, where $P$ and $u$ are as in the diagram above.
\end{definition}

\begin{remark}
If we were using comonads instead of co-pointed arrows, then there wouldn't be a need for a similar definition as the one above. In fact, it is well known by \cite{Street1972} that the Eilenberg-Moore objects for comonads are always strong in the above sense. In the case of co-pointed arrows however, we do not have enough structure to prove something similar.
\end{remark}

\begin{corollary}\label{Cart_fibrant_EM_give tab}
If $\mathbb{D}$ is a Cartesian and fibrant double category with Eilenberg-Moore objects for co-pointed endomorphisms, then it has tabulators.

If moreover the Eilenberg-Moore objects are strong, then the tabulators are strong as well.
\end{corollary}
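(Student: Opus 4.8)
The plan is to use the standard fact, recorded in the definition of tabulators, that $\mathbb{D}$ has tabulators exactly when the functor $U\colon D_0\to D_1$ has a right adjoint, and to produce such a right adjoint as a composite. Recall the factorization $D_0\xrightarrow{I_0}\mathbf{Copt}(\mathbb{D})_0\xrightarrow{K}D_1$ of $U$ noted above. Since $\mathbb{D}$ is Cartesian and fibrant, the functor $G$ of \Cref{functor_G} is, by the proposition above establishing $K\dashv G$, a right adjoint to $K$. Since $\mathbb{D}$ has Eilenberg--Moore objects for co-pointed endomorphisms, the inclusion $I_0$ has by definition a right adjoint $EM\colon\mathbf{Copt}(\mathbb{D})_0\to D_0$. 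As right adjoints compose, $EM\circ G$ is right adjoint to $K\circ I_0=U$, and therefore $\mathbb{D}$ has tabulators. Concretely, the tabulator of a horizontal arrow $F\colon\srarrow{A}{X}$ is $T=EM(G(F))$; unwinding the counit of the composite adjunction identifies its legs as $q_1=p_1u$ and $q_2=p_2u$, where $u\colon T\to AX$ is the universal vertical arrow of the Eilenberg--Moore object of the co-pointed endomorphism $G(F)$ on $AX$, and the universal cell $\iota$ is the vertical composite of $\theta_{G(F)}\colon U_T\to G(F)$ with the counit cell $G(F)\to F$ of $K\dashv G$, which is $\gamma_F$ followed by a product projection.

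For the second statement, assume the Eilenberg--Moore objects are strong, so $G(F)\cong u_*\odot u^*$. Applying \Cref{f^*_functorial} to the composites $p_2u$ and $p_1u$ gives
$${q_2}_*\odot{q_1}^* = (p_2u)_*\odot(p_1u)^* \cong {p_2}_*\odot u_*\odot u^*\odot{p_1}^* \cong {p_2}_*\odot G(F)\odot{p_1}^*,$$
and the right-hand horizontal arrow is isomorphic to $F$ by \Cref{F_iso_GF}. Thus ${q_2}_*\odot{q_1}^*$ and $F$ are canonically isomorphic, and it remains only to check that this isomorphism \emph{is} the comparison cell $\upsilon$ of the definition of strong tabulators. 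Since $\upsilon$ is the unique globular cell with $\iota=\upsilon\circ\xi$, where $\xi\colon U_T\to{q_2}_*\odot{q_1}^*$ is the op-Cartesian cell of the niche consisting of $U_T$ with legs $q_1$ and $q_2$ (combined with the unit isomorphism ${q_2}_*\odot U_T\odot{q_1}^*\cong{q_2}_*\odot{q_1}^*$), it suffices to show that the canonical isomorphism above, precomposed with $\xi$, equals $\iota$.

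The main obstacle is precisely this compatibility: one must trace the isomorphism ${q_2}_*\odot{q_1}^*\cong F$ back through \Cref{f^*_functorial}, \Cref{F_iso_GF} (and hence through \Cref{horizontal_naturality} and \Cref{product_of_Cartesians}) and the strong Eilenberg--Moore isomorphism $u_*\odot u^*\cong G(F)$, and match the resulting pasting against $\iota$, which by the computation above equals the composite of $\theta_{G(F)}$ with $\gamma_F$ and a projection. This is a diagram chase governed by the defining equations of companions and conjoints in \Cref{fibrant_equivalence} and the universal property of $\gamma_F$; once the two pastings are seen to agree, uniqueness of the op-Cartesian factorization of $\iota$ forces $\upsilon$ to be the canonical isomorphism, so the tabulators are strong. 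The remaining bookkeeping---that the counit of the composite adjunction is the cell $\iota$ as described, and that $G$ and $EM$ assemble into an honest adjunction---is routine.
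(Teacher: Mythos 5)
Your proof follows essentially the same route as the paper's: factor $U$ as $K\circ I_0$, compose the right adjoints $G$ and $EM_0$ to obtain the tabulator functor, and then establish strongness via the chain ${q_2}_*\odot{q_1}^*\cong {p_2}_*\odot u_*\odot u^*\odot {p_1}^*\cong {p_2}_*\odot G(F)\odot{p_1}^*\cong F$ using \Cref{f^*_functorial}, the strong Eilenberg--Moore hypothesis, and \Cref{F_iso_GF}. The extra compatibility check you flag---that this composite isomorphism really is the comparison cell $\upsilon$---is a legitimate point of care, but the paper itself elides it entirely (it treats strongness as the mere existence of an isomorphism ${q_2}_*\odot{q_1}^*\cong F$), so your argument is at least as complete as the one in the text.
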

\begin{proof}
As mentioned above, if $EM:\mathbf{Copt}(\mathbb{D}) \rightarrow \mathbb{D}$ is the double functor that assigns Eilenberg-Moore objects to co-pointed arrows, then the composite $EM_0 \circ G: D_1 \rightarrow D_0$ is right adjoint to the functor $U:D_0 \rightarrow D_1$, i.e. it assigns tabulators to horizontal arrows. In particular, for every horizontal arrow $F:\srarrow{A}{X}$, the cell
$$\xymatrixcolsep{0.2cm} \xymatrixrowsep{0.2cm} \xymatrix{
EM \ar@/_4pc/[dddddd]_{q_1} \ar[dd]_{\langle q_1, q_2 \rangle} \haru{U} & & EM \ar@/^4pc/[dddddd]^{q_2} \ar[dd]^{\langle q_1,q_2\rangle}\\
& \theta\\
AX \ar[dd]_{d \times X} \haru{G(F)} & & AX \ar[dd]^{A\times d}\\
& \gamma_F\\
AAX \ar[dd]_{p_2} \hard{AFX} & & AXX \ar[dd]^{p_2}\\
& \pi_2\\
A \hard{F} & & X,}$$
provides $F$ with a tabulator.

To show that the tabulators are strong we need to show that ${q_2}_* \odot {q_1}^* \cong F$, which is true since:
\begin{align*} 
{q_2}_* \odot {q_1}^* &\cong (p_2 \langle q_1, q_2 \rangle)_* \odot (p_1 \langle q_1, q_2 \rangle)^* \\
&\cong {p_2}_* \odot \langle q_1, q_2 \rangle_* \odot \langle q_1, q_2 \rangle^* \odot {p_1}^* &&  (\text{\Cref{f^*_functorial}})\\
&\cong {p_2}_* \odot G(F) \odot {p_1}^* && (\text{strong EM objects})\\
&\cong F && (\text{\Cref{F_iso_GF}})
\end{align*}
\end{proof}

\section{The functor $C: \mathrm{\mathbb{S}pan}(D_0) \rightarrow \mathbb{D}$ and the Beck-Chevalley condition} \label{The functorC and the Beck-Chevalley condition}

In this section we build a functor $C: \mathrm{\mathbb{S}pan}(D_0) \rightarrow \mathbb{D}$ which will be our base for the characterization theorem in the following section. This is not the first time that this construction appears. It can be found in Niefield's \cite{SpanCospan}, and in \cite{SpansLWW} in its bicategorical version. Niefield showed that in every fibrant double category that has pullbacks for its vertical arrows, we can build an oplax and normal $C: \mathrm{\mathbb{S}pan}(D_0) \rightarrow \mathbb{D}$. We will consider here the Beck-Chevalley condition because this is exactly what we need for this $C$ to be pseudo. A more general notion of the Beck-Chevalley condition for double categories has been studied by Koudenburg in \cite{Roald}.

Consider a pullback of vertical arrows
$$\xymatrix{ D \ar[d]_{q_1} \ar[r]^-{q_2} & B \ar[d]^g \\ A \ar[r]_f & C }$$
in a fibrant double category $\mathbb{D}$. Then we have the cell
$$\xymatrixcolsep{0.2cm} \xymatrixrowsep{0.2cm} \xymatrix{
A \haru{q_1^*} \ar@{=}[dd] & & D \ar[dd]_{q_1} \haru{U_D} & & D \ar[dd]^{q_2} \haru{{q_2}_*} & & B \ar@{=}[dd] \\
\\
 A \ar@{=}[dd] \haru{U_A} & & A \ar[dd]_f & U & B \ar[dd]^g \haru{U_B} & & B \ar@{=}[dd] \\
\\
A \hard{f_*} & & C \hard{U_C} & & C \hard{g^*} & & B }$$
which we will call $\mathcal{Y}(q_1,q_2,f,g)$.

\begin{definition}\label{Beck_Chev}
We say that a fibrant double category $\mathbb{D}$ satisfies the \textbf{Beck-Chevalley condition} if for every pullback of vertical arrows as above, the cell $\mathcal{Y}(q_1,q_2,f,g)$ is invertible.
\end{definition}

\begin{example}
The double category $\mathrm{\mathbb{S}pan}\mathcal{E}$, for $\mathcal{E}$ a category with pullbacks and terminal object, satisfies the Beck-Chevalley condition. Indeed, if
$$\xymatrix{ D \ar[d]_{q_1} \ar[r]^{q_2} & B \ar[d]^g \\
A \ar[r]_f & C }$$
is a pullback square in $\mathcal{E}$, then the cell $\mathcal{Y}(q_1,q_2,f,g)$ turns out to be
$$\xymatrixcolsep{0.2cm} \xymatrixrowsep{0.2cm} \xymatrix{ & D \ar[dl]_{q_1} \ar[dr]^{q_2} \ar[dddd]^1 \\
A \ar@{=}[dd] & & B \ar@{=}[dd] \\
\\
A & & B \\
& D \ar[ul]^{q_1} \ar[ur]_{q_2} }$$
which is invertible.
\end{example}

\begin{proposition}\label{tab_give_pullbacks}
Consider a unit-pure and fibrant double category $\mathbb{D}$ that has tabulators. Then the category $D_0$ has pullbacks.\\
If moreover the tabulators are strong, then $\mathbb{D}$ satisfies the Beck-Chevalley condition.
\end{proposition}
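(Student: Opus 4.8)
The plan is to construct pullbacks in $D_0$ as tabulators of Cartesian fillings, and then to identify the Beck--Chevalley cell $\mathcal{Y}$ with the comparison cell witnessing strongness of the tabulator. For the first statement, given $f:A\to C$ and $g:B\to C$ I would form the niche they determine, take its Cartesian filling $F:=g^{*}U_{C}f_{*}:\srarrow{A}{B}$ with Cartesian cell $\gamma:F\to U_{C}$ of legs $f,g$ (Proposition \ref{fibrant_equivalence}), and then take a tabulator $(T,\,q_{1}:T\to A,\,q_{2}:T\to B,\,\iota:U_{T}\to F)$ of $F$. The claim is that $T$ together with $q_{1},q_{2}$ is the pullback of $f$ and $g$ over $C$. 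First, $\gamma\iota:U_{T}\to U_{C}$ is a cell between unit horizontal arrows with legs $fq_{1}$ and $gq_{2}$, so by unit-purity (fullness of $U$) it has the form $U_{h}$, and comparing legs forces $fq_{1}=h=gq_{2}$; hence the square commutes.

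For the universal property, given $h_{1}:H\to A$, $h_{2}:H\to B$ with $fh_{1}=gh_{2}=:k$, I would factor $U_{k}:U_{H}\to U_{C}$ (whose legs are $fh_{1},gh_{2}$) through the Cartesian cell $\gamma$ as $U_{k}=\gamma\,\tilde\beta$ with $\tilde\beta:U_{H}\to F$ of legs $h_{1},h_{2}$ unique, and then apply the universal property of the tabulator to $\tilde\beta$ to obtain the unique $b:H\to T$ with $\iota\,U_{b}=\tilde\beta$, which forces $q_{1}b=h_{1}$ and $q_{2}b=h_{2}$. Uniqueness of such a $b$ follows the same pattern: if $b'$ also satisfies $q_{i}b'=h_{i}$, then $\iota\,U_{b'}$ has legs $h_{1},h_{2}$, and $\gamma\,\iota\,U_{b'}$, being a cell between unit arrows, equals $U_{k}$ by unit-purity, so $\iota\,U_{b'}$ is a Cartesian factorization of $U_{k}$ of legs $h_{1},h_{2}$ and hence equals $\tilde\beta$, whence $b'=b$. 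This proves $D_{0}$ has pullbacks.

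For the second statement I would start from the given pullback square of $f,g$. By the previous part the tabulator of $F=g^{*}U_{C}f_{*}$ is a pullback of $f,g$, and since pullbacks and tabulators are each unique up to canonical isomorphism — which induces invertible comparison cells between the corresponding companions and conjoints via Lemma \ref{f^*_functorial} — it is harmless to assume $(D,q_{1},q_{2})$ is that tabulator, with tabulator cell $\iota:U_{D}\to F$. Let $o:U_{D}\to q_{2*}\odot q_{1}^{*}$ be the op-Cartesian cell of the niche $(q_{1},q_{2})$ over $U_{D}$; strongness of the tabulator says precisely that the unique globular $\upsilon:q_{2*}\odot q_{1}^{*}\to F$ with $\upsilon\,o=\iota$ is invertible. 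Writing $\lambda':g^{*}\odot f_{*}\xrightarrow{\ \sim\ }g^{*}U_{C}f_{*}=F$ for the canonical unitor and $\gamma':=\gamma\lambda'^{-1}$ for the induced Cartesian cell of the niche of $f,g$ (for the alternative filling $g^{*}\odot f_{*}$), the horizontal coherence isomorphisms let me regard $\mathcal{Y}=\mathcal{Y}(q_{1},q_{2},f,g)$ as a globular cell $q_{2*}\odot q_{1}^{*}\to g^{*}\odot f_{*}$. It then suffices to prove $\mathcal{Y}=\lambda'\,\upsilon$, for then $\mathcal{Y}$ is a composite of invertible cells.

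Since $\mathcal{Y}$ and $\lambda'\upsilon$ are both globular cells out of the op-Cartesian filling $q_{2*}\odot q_{1}^{*}$, by uniqueness of factorizations through $o$ it is enough to check $\mathcal{Y}\,o=\lambda'\,\iota$, equivalently $\lambda'^{-1}\mathcal{Y}\,o=\iota$ as cells $U_{D}\to F$ with legs $q_{1},q_{2}$. Composing both sides with $\gamma$: on one side $\gamma\iota=U_{fq_{1}}$ (unit-purity, as in the first part), and on the other $\gamma'\,\mathcal{Y}\,o$ is a cell between unit horizontal arrows with legs $fq_{1},gq_{2}$, hence also $U_{fq_{1}}$ by unit-purity — with no computation. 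Uniqueness of the Cartesian factorization of $U_{fq_{1}}$ through $\gamma$ then yields $\lambda'^{-1}\mathcal{Y}\,o=\iota$, and hence $\mathcal{Y}=\lambda'\upsilon$ is invertible, so $\mathbb{D}$ satisfies the Beck--Chevalley condition. The main obstacle I anticipate is purely organizational: writing $\mathcal{Y}(q_{1},q_{2},f,g)$ — assembled from the structural companion/conjoint cells of Proposition \ref{fibrant_equivalence} and the horizontal unitors and associators — in a form that is literally comparable, through the coherence isomorphisms, to the op-Cartesian cell $o$ and the tabulator cell $\iota$; once the composites are aligned, unit-purity carries the actual content.
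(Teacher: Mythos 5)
Your proposal is correct and follows essentially the same route as the paper: the pullback of $f,g$ is the tabulator of the Cartesian filling $g^*\odot f_*$, unit-purity gives $fq_1=gq_2$ and the key identifications, and strongness identifies the Beck--Chevalley cell $\mathcal{Y}$ with the invertible comparison $\upsilon$ (up to unitors). The only cosmetic difference is that you verify $\mathcal{Y}=\lambda'\upsilon$ via the uniqueness clauses of the op-Cartesian and Cartesian factorizations, whereas the paper checks the same identity by a direct pasting computation using fullness of $U$ on the middle square; you also spell out the universal property of the pullback (via the Cartesian factorization of $U_k$), which the paper leaves implicit.
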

\begin{proof}
For a pair of vertical arrows 
$$\xymatrix{& B \ar[d]^g \\ A \ar[r]_f & C }$$
consider the tabulator of the horizontal arrow $g^* f_*$:
$$\xymatrixcolsep{0.2cm} \xymatrixrowsep{0.2cm} \xymatrix{
T \ar[dd]_{q_1} \ar[rrrr]^{U_T} \ar[rrrr]|{\vstretch{0.60}{|}} & & & & T \ar[dd]^{q_2}\\
& & \iota\\
A \hard{f_*} & & C \hard{g^*} & & B.}$$
We claim that the square
$$\xymatrix{T \ar[d]_{q_1} \ar[r]^{q_2} & B \ar[d]^g \\
A \ar[r]_f & C }$$
is a pullback square. We have the cell
$$\xymatrixcolsep{0.2cm} \xymatrixrowsep{0.2cm} \xymatrix{
T \ar[dd]_{q_1} \ar[rrrr]^{U_T} \ar[rrrr]|{\vstretch{0.60}{|}} & & & & T \ar[dd]^{q_2}\\
& & \iota\\
A \ar[dd]_f \hard{f_*} & & C \ar@{=}[dd] \hard{g^*} & & B \ar[dd]^g \\
\\
C \hard{U} & & C \hard{U} & & C }$$
and since $U$ is full, $f q_1 = g q_2$. The universal property of the pullback follows by the universal property of the tabulator.\\
To say that the tabulator is strong, means that the cell
$$\xymatrixcolsep{0.2cm} \xymatrixrowsep{0.2cm} \xymatrix{
A \ar@{=}[dd] \haru{{q_1}^*} & & T \ar[dd]_{q_1} \ar[rrrr]^{U_T} \ar[rrrr]|{\vstretch{0.60}{|}} & & & & T \ar[dd]^{q_2} \haru{{q_2}_*} & & B \ar@{=}[dd]\\
& & & & \iota\\
A \hard{U_A} & & A \hard{f_*} & & C \hard{g^*} & & B \hard{U_B} & & B}$$
is invertible. Then the following cell is invertible as well:
$$\xymatrixcolsep{0.2cm} \xymatrixrowsep{0.2cm} \xymatrix{
A \ar@{=}[dd] \haru{{q_1}^*} & & T \ar[dd]_{q_1} \ar[rrrr]^{U_T} \ar[rrrr]|{\vstretch{0.60}{|}} & & & & T \ar[dd]^{q_2} \haru{{q_2}_*} & & B \ar@{=}[dd] & & A \ar@{=}[dd] \haru{{q_1}^*} & & T \ar[dd]_{q_1} \haru{U_T} & & T \ar[dd]^{q_2} \haru{{q_2}_*} & & B \ar@{=}[dd] \\
& & & & \iota\\
A \ar@{=}[dd] \hard{U_A} & & A \ar[dd]_f \hard{f_*} & & C \ar@{=}[dd] \hard{g^*} & & B \ar[dd]^g \hard{U_B} & & B \ar@{=}[dd] & \overset{U\text{ is full}}{=} & A \ar@{=}[dd] \hard{U_A} & & A \ar[dd]_f & U & B \ar[dd]^g \hard{U_B} & & B \ar@{=}[dd] \\
\\
A \hard{f_*} & & C \hard{U_C} & & C \hard{U_C} & & C \hard{g^*} & & B & & A \hard{f_*} & & C \hard{U_C} & & C \hard{g^*} & & B,}$$
i.e. $\mathbb{D}$ satisfies the Beck-Chevalley condition.
\end{proof}

\begin{corollary}\label{Cart_fibrant_EM_give_pullbacks}
If $\mathbb{D}$ is Cartesian, fibrant, unit-pure, and has strong Eilenberg-Moore objects for co-pointed endomorphisms, then $D_0$ has pullbacks that satisfy the Beck-Chevalley condition.
\end{corollary}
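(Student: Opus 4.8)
The plan is to obtain the statement by chaining the two immediately preceding results. First I would check that the hypotheses of \Cref{Cart_fibrant_EM_give tab} are met verbatim: $\mathbb{D}$ is Cartesian, fibrant, and admits Eilenberg-Moore objects for co-pointed endomorphisms, and in fact these are strong. That corollary then equips $\mathbb{D}$ with tabulators, and since the Eilenberg-Moore objects are strong, it further tells us that the tabulators are strong.

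Next I would feed this into \Cref{tab_give_pullbacks}, whose hypotheses — $\mathbb{D}$ unit-pure, fibrant, and possessing tabulators — are now all in hand. The first part of that proposition produces pullbacks in $D_0$, realized as the tabulators of the horizontal arrows $g^{*}f_{*}$; the second part, which is exactly the part that requires the tabulators to be strong, yields the Beck-Chevalley condition. Combining the two conclusions gives that $D_0$ has pullbacks satisfying the Beck-Chevalley condition.

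I do not expect a genuine obstacle here, since the corollary is purely a matter of lining up hypotheses. The one point deserving a moment's care is the propagation of the adjective \emph{strong}: one must track that strong Eilenberg-Moore objects produce strong tabulators (through \Cref{Cart_fibrant_EM_give tab}), and that strong tabulators are precisely what \Cref{tab_give_pullbacks} needs in order to upgrade ``$D_0$ has pullbacks'' to ``$D_0$ has pullbacks satisfying Beck-Chevalley''.
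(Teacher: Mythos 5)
Your proposal is correct and is exactly the paper's own argument: the corollary is proved by combining \Cref{Cart_fibrant_EM_give tab} (strong Eilenberg--Moore objects give strong tabulators) with \Cref{tab_give_pullbacks} (strong tabulators in a unit-pure fibrant double category give pullbacks satisfying Beck--Chevalley). Your extra care in tracking the adjective ``strong'' through both steps is precisely the right point to check.
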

\begin{proof}
This follows by \Cref{Cart_fibrant_EM_give tab} and \Cref{tab_give_pullbacks}.
\end{proof}

\begin{proposition}\cite{SpanCospan}
Suppose that $\mathbb{D}$ is a double category and $D_0$ has pullbacks. Then $\mathbb{D}$ is fibrant if and only if the identity functor on $D_0$ extends to an oplax and normal double functor 
$C: \mathrm{\mathbb{S}pan}(D_0) \rightarrow \mathbb{D}$.
\end{proposition}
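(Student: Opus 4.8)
The plan is to prove both directions of the equivalence, with the ``only if'' direction (fibrant $\Rightarrow$ $C$ exists) being the substantive construction and the ``if'' direction (the existence of such a $C$ forces fibrancy) being comparatively short. First I would treat the ``only if'' direction. Assume $\mathbb{D}$ is fibrant and $D_0$ has pullbacks, so that $\mathrm{\mathbb{S}pan}(D_0)$ is defined. On objects and vertical arrows, $C$ is the identity. On a horizontal arrow of $\mathrm{\mathbb{S}pan}(D_0)$, i.e. a span $A \xleftarrow{s_0} S \xrightarrow{s_1} B$, set $C(S) = {s_1}_* \odot {s_0}^*$, the op-Cartesian/Cartesian filling of the niche determined by $s_0,s_1$ (using fibrancy via \Cref{fibrant_equivalence}). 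On a cell of $\mathrm{\mathbb{S}pan}(D_0)$ — an arrow $\alpha : S \to S'$ of $D_0$ over vertical arrows $f,g$ with $s_0' \alpha = f s_0$, $s_1' \alpha = g s_1$ — I would produce the required cell of $\mathbb{D}$ by pasting: factor through the Cartesian cell defining ${s_0'}^*$ and the op-Cartesian cell defining ${s_1'}_*$, using \Cref{f^*_functorial} and the universal properties to get a canonical globular comparison. Functoriality of $C_1$ on cells, and that $C_0 = \mathrm{id}$ is compatible with source/target, then follow from uniqueness in these universal properties.

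Next I would supply the oplax structure cells and verify normality. The oplax comparison $C(T \odot S) \to C(T) \odot C(S)$ for composable spans $A \xleftarrow{} S \xrightarrow{} B \xleftarrow{} T \xrightarrow{} C$ — where $T \odot S$ is computed by a pullback $S \times_B T$ in $D_0$ — comes from the canonical comparison between ${(t_1 p_2)}_* \odot {(s_0 p_1)}^*$ and $({t_1}_* \odot {t_0}^*) \odot ({s_1}_* \odot {s_0}^*)$; concretely one uses \Cref{f^*_functorial} to split $(t_1 p_2)_*$ and $(s_0 p_1)^*$, and then the lemma that $g^*(M \odot N) f_* \cong g^* \odot M \odot N \odot f_*$ (stated in the excerpt), leaving a cell $p_2^* \odot {t_0}^* \cdots$ versus $\cdots {t_0}^* \odot {s_1}_* \cdots$ which is exactly the Cartesian filling of a niche on $U_B$, hence maps canonically to $\mathcal{Y}$-type data. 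The key point is that without the Beck–Chevalley condition this comparison need not be invertible, so $C$ is only oplax. For normality: $C(U_A)$ is ${1_A}_* \odot {1_A}^* \cong U_A \odot U_A \cong U_A$ by \Cref{identity_filling}, so the unit comparison is invertible. Coherence of the oplax structure (the associativity pentagon and unit triangles) reduces, after expanding everything via \Cref{f^*_functorial} and the filling lemmas, to the coherence of the horizontal associativity/unit isomorphisms $\alpha,\lambda,\rho$ in $\mathbb{D}$ together with uniqueness of Cartesian factorizations; I would not grind through this but cite the bookkeeping.

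For the ``if'' direction, suppose the identity on $D_0$ extends to an oplax normal double functor $C : \mathrm{\mathbb{S}pan}(D_0) \to \mathbb{D}$. Given a vertical arrow $f : A \to B$, consider in $\mathrm{\mathbb{S}pan}(D_0)$ the spans $f_*^{\mathbb{S}} = (1_A, A, f)$ and $f^{*\mathbb{S}} = (f, A, 1_A)$; these are companion and conjoint of $f$ in $\mathrm{\mathbb{S}pan}(D_0)$, and the structure cells realizing that (a pair of cells over $1$ and over $f$ satisfying the two companion/conjoint identities) are built from identities and pullback projections in $D_0$. Apply $C$: since $C$ is a (normal, oplax) double functor it sends these cells to cells in $\mathbb{D}$ exhibiting $C(f_*^{\mathbb{S}})$ and $C(f^{*\mathbb{S}})$ as companion and conjoint of $f$ — here normality is used so that the images of the identity-span cells genuinely have the form required in \Cref{fibrant_equivalence}(1), and oplaxness (rather than laxness) is exactly what makes the composites go in the correct direction. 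This verifies condition (1) of \Cref{fibrant_equivalence}, so $\mathbb{D}$ is fibrant.

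The main obstacle I expect is the coherence verification for the oplax structure on $C$ in the forward direction: assembling the associativity and unit diagrams requires carefully tracking the canonical isomorphisms from \Cref{f^*_functorial}, from $g^*(M\odot N)f_* \cong g^*\odot M\odot N\odot f_*$, and from \Cref{identity_filling}, and checking that all the pasted comparison cells agree — this is where one must lean on uniqueness of factorizations through Cartesian and op-Cartesian cells rather than explicit computation. The companion/conjoint bookkeeping in the converse direction is routine by comparison, the only subtlety being to confirm that it is precisely \emph{normality} of $C$ that upgrades the raw images into the cells with identity source/target demanded by the definition of a fibrant double category.
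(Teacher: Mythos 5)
Your construction of $C$ coincides with the paper's: the op-Cartesian filling ${r_2}_*\odot {r_1}^*$ on spans, cells obtained from the universal property of the (op-)Cartesian structure cells, normality via \Cref{identity_filling}, and the oplax comparison assembled from the cell $\mathcal{Y}$ of the Beck--Chevalley condition; the paper itself only records this forward construction (citing Niefield for the full statement), so your converse — pushing the companion and conjoint spans $(1_A,A,f)$ and $(f,A,1_A)$ through $C$ and using the unit constraints to transport the companion/conjoint identities — is a sound supplement. One small correction: it is normality alone, not oplaxness as opposed to laxness, that makes that transport work, since normal lax double functors preserve companions and conjoints equally well (the defining equations involve only the unit comparison, which is invertible in either case).
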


We give here the construction of the above opnormal double functor $C$, if $\mathbb{D}$ is fibrant.

 If $\xymatrixcolsep{0.4cm} \xymatrix{A & R \ar[l]_{r_1} \ar[r]^{r_2} & B}$ is a span in $D_0$, then define its image to be the op-Cartesian filling ${r_2}_* {r_1}^*$ as in
$$\xymatrixcolsep{0.2cm} \xymatrixrowsep{0.2cm} \xymatrix{ R \haru{U_R} \ar[dd]_{r_1} & & R \ar[dd]^{r_2} \\ & \zeta_R \\ A \hard{{r_2}_* {r_1}^*} & & B. }$$
Consider now a cell
$$\xymatrixcolsep{0.2cm} \xymatrixrowsep{0.2cm} \xymatrix{
& & R \ar[dll]_{r_1} \ar[drr]^{r_2} \ar[dddd]^{\alpha} \\
A \ar[dd]_f & & & & B \ar[dd]^g \\
\\ X & & & & Y \\
& & S \ar[ull]^{s_1} \ar[urr]_{s_2} }$$
in $\mathrm{\mathbb{S}pan}(D_0)$. Then the cell
$$\xymatrixcolsep{0.2cm} \xymatrixrowsep{0.2cm} \xymatrix{
& R \ar[dd]_{\alpha} \ar@/_3pc/[dddd]_{f r_1} \haru{U_R} & & R \ar[dd]^{\alpha} \ar@/^3pc/[dddd]^{g r_2} \\
& & U_{\alpha} \\
= & S \ar[dd]_{s_1} \haru{U_S} & & S \ar[dd]^{s_2} & = \\
& & \zeta_S \\
& X \hard{{s_2}_* {s_1}^*} & & Y }$$
can be factored through the op-Cartesian cell $\zeta_R$ as
$$\xymatrixcolsep{0.2cm} \xymatrixrowsep{0.2cm} \xymatrix{
R \ar[dd]_{r_1} \haru{U_R} & & R \ar[dd]^{r_2} \\
& \zeta_R \\
A \ar[dd]_{f} \haru{{r_2}_* {r_1}^*} & & B \ar[dd]^{g} \\
 \\
X \hard{{s_2}_* {s_1}^*} & & Y, }$$
where the bottom cell will be our $C(\alpha)$.\\

\begin{corollary}
If $D_0$ has pullbacks, $\mathbb{D}$ is fibrant and satisfies the Beck-Chevalley condition, then the opnormal double functor $C$ is pseudo.
\end{corollary}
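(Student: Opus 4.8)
The plan is to prove that the oplax comparison cells $C_\odot \colon C(S\odot R)\to C(S)\odot C(R)$ are invertible. Since $C$ is already (op)normal — so its unit comparison cell is invertible — and since the coherence axioms for a pseudo double functor are obtained from those of the oplax one by inverting the comparison cells, invertibility of $C_\odot$ is exactly what is missing for $C$ to be pseudo in the sense of \Cref{double_functor}. So the whole problem reduces to a single local statement about a composable pair of spans.

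First I would unwind both sides on such a pair. For $R = (A\xleftarrow{r_1}R\xrightarrow{r_2}B)$ and $S = (B\xleftarrow{s_1}S\xrightarrow{s_2}C)$ in $\mathrm{\mathbb{S}pan}(D_0)$, the composite $S\odot R$ is the span $A\xleftarrow{r_1\pi_1}R\times_B S\xrightarrow{s_2\pi_2}C$, where $\pi_1,\pi_2$ are the projections of the pullback of $r_2$ along $s_1$; hence $C(S\odot R)\cong (s_2\pi_2)_*\odot(r_1\pi_1)^*$, whereas $C(S)\odot C(R)\cong {s_2}_*\odot{s_1}^*\odot{r_2}_*\odot{r_1}^*$. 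Applying \Cref{f^*_functorial} to the composite vertical arrows $s_2\pi_2$ and $r_1\pi_1$ yields canonical isomorphisms $(s_2\pi_2)_*\cong{s_2}_*\odot{\pi_2}_*$ and $(r_1\pi_1)^*\cong{\pi_1}^*\odot{r_1}^*$, so that $C(S\odot R)\cong {s_2}_*\odot{\pi_2}_*\odot{\pi_1}^*\odot{r_1}^*$. Comparing with $C(S)\odot C(R)$, the only discrepancy is the middle factor, and therefore $C_\odot$ is invertible precisely when the canonical globular comparison ${\pi_2}_*\odot{\pi_1}^*\to{s_1}^*\odot{r_2}_*$ (between horizontal arrows $R\to S$) is invertible. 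But that comparison is exactly the cell $\mathcal{Y}(\pi_1,\pi_2,r_2,s_1)$ of the pullback square with corner $R\times_B S$, top leg $\pi_2$, left leg $\pi_1$, bottom leg $r_2$ and right leg $s_1$, so its invertibility is precisely the Beck–Chevalley condition of \Cref{Beck_Chev}.

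The remaining point to check is that the comparison cell produced by Niefield's construction of $C$ genuinely coincides with the composite of the canonical isomorphisms of \Cref{f^*_functorial} with the cell $\mathcal{Y}$. I would handle this by uniqueness rather than by a direct pasting computation: $C$ sends a span to the op-Cartesian filling ${r_2}_*{r_1}^*$ of the relevant niche, and its oplax structure map $C_\odot$ is characterized as the unique cell obtained from the op-Cartesian universal property of $\zeta_{R\times_B S}$ applied to an evident pasting of the cells $\zeta_R$ and $\zeta_S$. It then suffices to verify that the composite of the canonical isomorphisms with $\mathcal{Y}$, pasted below the same op-Cartesian cell, provides a factorization of that same pasting; by the uniqueness of factorizations through op-Cartesian cells it must then equal $C_\odot$, and combined with the computation of the previous paragraph this shows $C_\odot$ is invertible.

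The main obstacle is precisely this last identification. It requires unwinding the definition of $C$ on cells and of its oplax comparison in terms of the op-Cartesian fillings $\zeta$, and being careful about the orientation of the pullback square so that the cell one extracts is literally the $\mathcal{Y}$ of \Cref{Beck_Chev} and not some transpose of it. Once the conventions are pinned down, the conceptual content — span composition is pullback, $C$ turns the two legs of a span into a companion and a conjoint, \Cref{f^*_functorial} absorbs the composite legs coming from the pullback projections, and Beck–Chevalley resolves the pullback corner — makes the argument essentially routine.
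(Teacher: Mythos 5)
Your proposal is correct and follows essentially the same route as the paper: decompose $C(S\odot R)$ via \Cref{f^*_functorial} applied to the composite legs $s_2\pi_2$ and $r_1\pi_1$, and identify the remaining middle comparison with the Beck--Chevalley cell $\mathcal{Y}$ of the pullback square defining span composition. Your final paragraph, checking via the op-Cartesian universal property that Niefield's oplax comparison really is this composite, is a point the paper simply asserts, so your extra care there is warranted but does not change the argument.
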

\begin{proof}
For every pair of composable spans
$$\xymatrixcolsep{0.2cm} \xymatrixrowsep{0.2cm} \xymatrix{ & & R\times_B V \ar[dl]_-{q_1} \ar[dr]^-{q_2} \\
& R \ar[dl]_-{r_1} \ar[dr]^-{r_2} & & V \ar[dl]_-{v_1} \ar[dr]^-{v_2} \\
\quad A \quad & & \quad B \quad & & \quad X ,\quad}$$
the natural transformation $C_{\odot}$ is given by the cell
$$(v_2 q_2)_* (r_1 q_1)^* \cong {v_2}_* {q_2}_* {q_1}^* {r_1}^* \xrightarrow{{v_2}_* \mathcal{Y}(q_1,q_2,f,g) {r_1}^*} {v_2}_* {v_1}^* {r_2}_* {r_1}^*,$$
where the isomorphism holds by \Cref{f^*_functorial} and $\mathcal{Y}(q_1,q_2,f,g)$ is the cell defined in the Beck-Chevalley condition. If $\mathbb{D}$ satisfies this condition, then $\mathcal{Y}(q_1,q_2,f,g)$ is invertible and so $C_{\odot}$ is invertible as well.
\end{proof}

\section{A Characterization of Spans as Cartesian Double Categories} \label{A Characterization of Spans as Cartesian Double Categories}

We now have all the necessary tools to prove our characterization theorem. We will consider the functor $C$ from the previous section and we will give conditions under which this functor is an equivalence. First though we need the following lemma:

\begin{lemma}\label{tabulators_of_spans}
If $\mathbb{D}$ is a Cartesian, fibrant, and unit-pure double category which has strong Eilenberg-Moore objects for copointed endomorphisms, then for every span
$\xymatrixcolsep{0.4cm} \xymatrix{A & R \ar[l]_{r_1} \ar[r]^{r_2} & X}$
of vertical arrows, the tabulator of ${r_2}_* {r_1}^*$ is given by $R$ itself, and the vertical arrows $r_1$ and $r_2$.
\end{lemma}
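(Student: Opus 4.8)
The plan is to show that the span $\xymatrixcolsep{0.4cm}\xymatrix{A & R \ar[l]_{r_1} \ar[r]^{r_2} & X}$ equipped with the op-Cartesian filling cell $\zeta_R$ realizes the universal property of the tabulator of $M := {r_2}_* {r_1}^*$. By \Cref{Cart_fibrant_EM_give tab}, a tabulator exists; by uniqueness of tabulators it suffices to exhibit $(R, r_1, r_2)$ together with a cell as a tabulator. First I would take the op-Cartesian cell
$$\xymatrixcolsep{0.2cm} \xymatrixrowsep{0.2cm} \xymatrix{ R \haru{U_R} \ar[dd]_{r_1} & & R \ar[dd]^{r_2} \\ & \zeta_R \\ A \hard{{r_2}_* {r_1}^*} & & X }$$
as the candidate cell $\iota$. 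Note $\zeta_R$ has source $r_1$ and target $r_2$, as required.

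Next I would verify the universal property. Given any cell $\beta : U_H \rightarrow M$ with some vertical arrows $h_1, h_2 : H \rightarrow A, X$, I need a unique vertical arrow $b : H \rightarrow R$ with $\zeta_R \circ U_b = \beta$ (in the appropriate pasting sense, i.e. $\beta$ factors through $\zeta_R$ as $U_b$ followed by $\zeta_R$), and with $r_1 b = h_1$, $r_2 b = h_2$. The key move is to use the strong Eilenberg--Moore hypothesis: from \Cref{Cart_fibrant_EM_give_pullbacks}, $D_0$ has pullbacks satisfying Beck--Chevalley, and $R$ with $r_1, r_2$ is (up to canonical iso) the pullback arising as the tabulator of $g^* f_*$ for the cospan $r_1, r_2$ — more precisely, by the strong-tabulator description in \Cref{Cart_fibrant_EM_give tab} applied to the earlier proof that $D_0$ has pullbacks via \Cref{tab_give_pullbacks}, the object $R$ together with $r_1, r_2$ is identified with the tabulator object $EM \circ G$ of the horizontal arrow ${r_2}_* {r_1}^*$. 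Here the role of unit-purity (already invoked in \Cref{tab_give_pullbacks}) is that a globular cell between units $U$ forces equality of the bounding vertical arrows, which pins down $b$ uniquely from $h_1, h_2$.

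The cleanest route, though, is to argue directly: the tabulator of $M$, call it $(T, q_1, q_2, \iota_T)$, comes with the canonical comparison. By \Cref{tab_give_pullbacks}, $(T, q_1, q_2)$ is a pullback of $r_1$ against $r_2$ in $D_0$ — wait, more carefully, $T$ is the tabulator of $g^* f_*$-type horizontal arrow, and since the Eilenberg--Moore objects are strong, \Cref{Cart_fibrant_EM_give tab} gives ${q_2}_* {q_1}^* \cong M = {r_2}_* {r_1}^*$. So I would instead compare $(T,q_1,q_2)$ and $(R,r_1,r_2)$ as pullbacks: $T$ is a pullback of the cospan $(r_1 : R \to A, \text{something})$... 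Actually the most transparent argument: by \Cref{tab_give_pullbacks} the tabulator $(T, q_1, q_2)$ of $M$ is a pullback square $f q_1 = g q_2$ where... this requires knowing $M$ as $g^* f_*$. Since $M = {r_2}_* {r_1}^*$ is exactly the op-Cartesian filling, and the op-Cartesian cell $\zeta_R$ is universal, $\zeta_R$ and $\iota_T$ induce a mutually inverse pair of vertical arrows $R \rightleftarrows T$ over $A$ and $X$: one direction from the universal property of $\iota_T$ applied to $\zeta_R$ (viewed as a cell $U_R \to M$), the other from uniqueness. This shows $R \cong T$ compatibly with the projections, hence $R$ is itself a tabulator of $M$. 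The main obstacle I anticipate is carefully matching conventions: the tabulator's universal cell has shape $U_T \to M$ while $\zeta_R$ has shape $U_R \to M$, so I must confirm $\zeta_R$ qualifies as a competitor in the tabulator's universal property (it does, trivially), and conversely that the induced arrows are inverse — this is where unit-purity does the work, forcing the two composite endo-arrows on $R$ and on $T$ to equal identities because they fit into globular unit-cells.

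Finally I would record the conclusion: the comparison $R \to T$ is an isomorphism commuting with $r_i$ and $q_i$, so $(R, r_1, r_2)$ with cell $\zeta_R$ is a tabulator of ${r_2}_* {r_1}^*$, as claimed. The hard part is purely bookkeeping about which cells are op-Cartesian versus Cartesian and keeping the source/target vertical arrows straight; no substantial new computation is needed beyond invoking \Cref{Cart_fibrant_EM_give tab}, \Cref{tab_give_pullbacks}, and the universal property of op-Cartesian fillings.
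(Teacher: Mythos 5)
There is a genuine gap at the crux of your argument: the construction of the comparison map $T\to R$. The direction $R\to T$ is fine --- you apply the one-dimensional universal property of the tabulator cell $\iota_T$ to $\zeta_R\colon U_R\to {r_2}_*{r_1}^*$ and get a unique $b\colon R\to T$ over $A$ and $X$. But "the other from uniqueness" does not produce an arrow $T\to R$: uniqueness statements never manufacture maps. The only universal property you have attached to $(R,\zeta_R)$ is op-Cartesianness, and that is a factorization property in the wrong direction --- it factors cells of the shape $U_R\to N'$ with boundaries $hr_1,\ kr_2$ \emph{through} $\zeta_R$ followed by a cell underneath, whereas the tabulator property requires factoring an arbitrary cell $U_H\to {r_2}_*{r_1}^*$ as a unit cell $U_c$ \emph{above} $\zeta_R$. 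In particular $\iota_T\colon U_T\to {r_2}_*{r_1}^*$ does not fit the op-Cartesian template (its domain is $U_T$, not $U_R$), so you cannot factor it through $\zeta_R$ this way. Unit-purity only helps once you already have both composites $bc$ and $cb$ sitting inside globular unit cells; it cannot supply $c$. (That some extra input is genuinely needed is visible from $\mathbb{P}\mathrm{rof}$: there the op-Cartesian filling ${r_2}_*{r_1}^*$ of a span of functors exists but its tabulator is not $R$; of course $\mathbb{P}\mathrm{rof}$ is not unit-pure, but this shows op-Cartesianness alone never gives the tabulator property.)

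The paper closes exactly this gap by a computation you have omitted: using the description of the tabulator as $EM(G(-))$ from \Cref{Cart_fibrant_EM_give tab}, it shows $G({r_2}_*{r_1}^*)\cong \langle r_1,r_2\rangle_*\odot\langle r_1,r_2\rangle^*$ by unwinding the definition of $G$ and applying the Beck--Chevalley condition (available by \Cref{Cart_fibrant_EM_give_pullbacks}) to three explicit pullback squares in $D_0$, together with \Cref{f^*_functorial} and \Cref{horizontal_naturality}. The strong Eilenberg--Moore hypothesis then identifies $EM(\langle r_1,r_2\rangle_*\odot\langle r_1,r_2\rangle^*)$ with $R$ and the projections with $r_1,r_2$. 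Your proposal name-checks Beck--Chevalley but never applies it; that application is where all the actual work of the lemma lives. Also, your aside that "$R$ with $r_1,r_2$ is (up to canonical iso) the pullback arising as the tabulator of $g^*f_*$" is not meaningful here: $R$ is an arbitrary span, not presented as a pullback, and ${r_2}_*{r_1}^*$ is not of the form $g^*f_*$ for a cospan. To repair the proof you would either need to reproduce the $G$-computation, or give an independent argument that every cell $U_H\to{r_2}_*{r_1}^*$ factors uniquely through $\zeta_R$ via a vertical arrow $H\to R$; neither is present.
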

\begin{proof}
First we show that $G({r_2}_* {r_1}^*)$, where $G$ is the functor that was defined in \Cref{functor_G}, is isomorphic to $\langle r_1,r_2 \rangle_* \langle r_1, r_2 \rangle^*$. From the definition of $G$ we have that
\begin{align*}
G({r_2}_* {r_1}^*) &\cong (A \times d)^* \odot (A \times ({r_2}_* {r_1}^*) \times X) \odot (d \times X)_*  \\
&\cong (A \times d)^* \odot A {r_2}_* X \odot A {r_1}^* X \odot (d \times X)_* \\
&\cong (A\times d)^* \odot (A \times r_2 \times X)_* \odot (A \times r_1 \times X)^* \odot (d\times X)_*.
\end{align*}
Consider the following two pullback squares of vertical arrows:
$$\xymatrixcolsep{1cm} \xymatrix{
RX \ar[d]_{r_1 \times X} \ar[r]^{\langle r_1,R \rangle \times X} & ARX \ar[d]^{A \times r_1 \times X} & & AR \ar[d]_{A \times \langle A, r_2 \rangle} \ar[r]^{A \times r_2} & AX \ar[d]^{A \times d}\\
AX \ar[r]_{d \times X} & AAX & \text{and} & ARX \ar[r]_{A \times r_2 \times X} & AXX.}$$
By \Cref{Cart_fibrant_EM_give_pullbacks} $\mathbb{D}$ satisfies the Beck-Chevalley condition. So we have the canonical isomorphisms
$$(A \times r_1 \times X)^* \odot (d\times X)_* \cong (\langle r_1, R \rangle \times X )_* \odot (r_1 \times X)^*$$
and
$$(A\times d)^* \odot (A \times r_2 \times X)_* \cong (A r_2)_* \odot (A \times \langle R, r_2 \rangle)^*.$$
So
$$G({r_2}_* {r_1}^*) \cong (A r_2)_* \odot (A \times \langle R, r_2 \rangle)^* \odot (\langle r_1, R \rangle \times X )_* \odot (r_1 \times X)^*.$$
Also, from the pullback
$$\xymatrixcolsep{1.2cm} \xymatrix{
R \ar[d]_{\langle R, r_2 \rangle} \ar[r]^{\langle r_1, R \rangle} & AR \ar[d]^{A \times \langle R, r_2 \rangle}\\
RX \ar[r]_{\langle r_1, R \rangle \times X} & ARX,}$$
we take
$$(A \times \langle R, r_2 \rangle)^* \odot (\langle r_1, R \rangle \times X )_* \cong \langle r_1, R \rangle_* \odot \langle R, r_2 \rangle^*.$$
Then
\begin{align*}
G({r_2}_* {r_1}^*) &\cong (A r_2)_* \odot \langle r_1, R \rangle_* \odot \langle R, r_2 \rangle^* \odot (r_1 \times X)^* \\
&\cong ( (A r_2) \langle r_1, R \rangle)_* \odot ((r_1 \times X)\langle R, r_2 \rangle)^* \\
&\cong \langle r_1, r_2 \rangle_* \odot \langle r_1, r_2 \rangle^*.
\end{align*}
Now, the tabulator of ${r_2}_* {r_1}^*$ is the Eilenberg-Moore object of $G({r_2}_* {r_1}^*)$, which is $R$, as one can see from the discussion on Eilenberg-Moore objects for spans in \Cref{Eilenberg-Moore Objects}.
\end{proof}

In the following, we will use the definition of an equivalence between two double categories that was given in \cite{AdjointsGP}, according to which a double functor $C$ is an equivalence if and only if it is fully faithful (meaning that both $C_0$ and $C_1$ are fully faithful), and essentially surjective on horizontal arrows.

\begin{theorem}\label{characterization}
Consider a double category $\mathbb{D}$. The following are equivalent:
\begin{enumerate}
\item $\mathbb{D}$ is of the form $\mathbb{S}\mathrm{pan}(\mathcal{E})$, for some category $\mathcal{E}$ with finite limits.
\item $\mathbb{D}$ is Cartesian, fibrant, unit-pure, and has strong Eilenberg-Moore objects for copointed endomorphisms.
\item $D_0$ has pullbacks that satisfy the Beck-Chevalley condition, and the canonical double functor $C: \mathrm{\mathbb{S}pan}(D_0) \rightarrow \mathbb{D}$ is an equivalence.
\end{enumerate}
\end{theorem}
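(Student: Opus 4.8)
The plan is to prove the cycle of implications $(1) \Rightarrow (2) \Rightarrow (3) \Rightarrow (1)$, since each arrow has by now been largely prepared by the lemmas and propositions above. For $(1) \Rightarrow (2)$, I would take $\mathbb{D} = \mathbb{S}\mathrm{pan}(\mathcal{E})$ with $\mathcal{E}$ finitely complete and assemble the facts already verified in the excerpt: $\mathbb{S}\mathrm{pan}(\mathcal{E})$ is Cartesian (shown in the corresponding proposition in Section~4.2), it is fibrant (shown by the explicit Cartesian filling in Section~3.4), it is unit-pure (shown in the example following the definition of unit-pure, since a cell over identity spans is forced to be $U_\alpha$), and it has strong Eilenberg–Moore objects for co-pointed endomorphisms. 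For the last point I would invoke \Cref{Eilenberg_Moore_Spans} together with the observation in the example after \Cref{Eilenberg_definition} that the same proof gives Eilenberg–Moore objects for co-pointed endomorphisms; strength, i.e. $P \cong u_* u^*$, is immediate because for a co-pointed span $(P \rightrightarrows X)$ the object $EM(P)$ is $P$ with the two legs, and $\langle p,p\rangle_* \langle p,p\rangle^*$ is readily seen to be $P$ again using that $\mathcal{E}$ has pullbacks.

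For $(2) \Rightarrow (3)$, the pullback part and the Beck–Chevalley condition are exactly \Cref{Cart_fibrant_EM_give_pullbacks}, so what remains is that $C : \mathrm{\mathbb{S}pan}(D_0) \to \mathbb{D}$ is an equivalence of double categories, i.e. $C_0$ and $C_1$ fully faithful and $C$ essentially surjective on horizontal arrows. Since $C_0$ is the identity on $D_0$ it is trivially fully faithful. Essential surjectivity on horizontal arrows is \Cref{F_iso_GF} combined with strong Eilenberg–Moore objects: given $F:\srarrow{A}{X}$ in $\mathbb{D}$, let $R = EM(G(F))$ with legs $q_1,q_2$; then $C(R \xleftarrow{q_1} R \xrightarrow{q_2} X) = {q_2}_* \odot {q_1}^*$, which is isomorphic to $F$ by the computation in the proof of \Cref{Cart_fibrant_EM_give tab} (strong tabulators). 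For full faithfulness of $C_1$, I would argue: a cell of $\mathrm{\mathbb{S}pan}(D_0)$ from a span $R$ to a span $S$ is a vertical arrow $\alpha$ compatible with the legs, and $C_1$ sends it to the induced cell between the op-Cartesian fillings; faithfulness follows because the source/target data of $C(\alpha)$ already determines $\alpha$ (compatibility with the legs $r_i$, $s_i$), and fullness follows by taking a cell $\theta : {r_2}_*{r_1}^* \to {s_2}_*{s_1}^*$ in $\mathbb{D}$, applying tabulators: by \Cref{tabulators_of_spans} the tabulator of ${r_2}_*{r_1}^*$ is $R$ with legs $r_1,r_2$ and similarly for $S$, so the universal property of tabulators produces a unique vertical arrow $R \to S$ over $\theta$, which is the required preimage and is automatically leg-compatible.

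For $(3) \Rightarrow (1)$, if $C : \mathrm{\mathbb{S}pan}(D_0) \to \mathbb{D}$ is an equivalence then $\mathbb{D} \simeq \mathbb{S}\mathrm{pan}(D_0)$, and it remains to note that $D_0$ has pullbacks by hypothesis; one still needs a terminal object in $D_0$ for $\mathbb{S}\mathrm{pan}(D_0)$ to make sense in the way the examples are set up. Here I would observe that condition (3) does not literally mention a terminal object, so I expect the theorem is implicitly using that $\mathbb{D}$ also has whatever finite-limit structure is needed; to be safe I would remark that in the intended reading $\mathcal{E} = D_0$ and the hypotheses of $(3)$ should be read together with the standing assumptions, or alternatively derive a terminal object of $D_0$ from the Cartesian structure inherited along the equivalence. (If one instead routes $(3)\Rightarrow(2)\Rightarrow(1)$ this is automatic, since Cartesianness gives the terminal object directly; I would in fact prefer to close the cycle as $(1)\Rightarrow(2)\Rightarrow(3)\Rightarrow(2)\Rightarrow(1)$ or simply also prove $(3)\Rightarrow(2)$ cheaply, noting $\mathbb{S}\mathrm{pan}(D_0)$ is Cartesian, fibrant, unit-pure with strong EM objects by the $(1)\Rightarrow(2)$ argument.)

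\textbf{Main obstacle.} The delicate point is full faithfulness of $C_1$ and, dovetailing with it, the verification that the tabulator computation in \Cref{tabulators_of_spans} genuinely identifies cells between op-Cartesian fillings with leg-compatible vertical arrows of spans — in other words, checking that the adjunction $U \dashv (EM \circ G)$ restricts correctly on cells and that $C_1$ is, up to the isomorphisms ${r_2}_*{r_1}^* \cong F$, inverse to this process. This requires carefully tracking the op-Cartesian factorizations defining $C(\alpha)$ against the universal property of tabulators, and making sure the unit-pure hypothesis is used exactly where an a priori cell over a unit is forced to be $U$ of a vertical arrow. Everything else is either already proved in the excerpt or a routine diagram chase.
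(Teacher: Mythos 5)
Your plan reproduces the paper's proof almost step for step: the same cycle $(1)\Rightarrow(2)\Rightarrow(3)\Rightarrow(1)$, with $(1)\Rightarrow(2)$ assembled from the earlier results on $\mathbb{S}\mathrm{pan}(\mathcal{E})$, and $(2)\Rightarrow(3)$ obtained by combining \Cref{Cart_fibrant_EM_give_pullbacks} for the pullback/Beck--Chevalley part with essential surjectivity of $C_1$ via strong tabulators and full faithfulness via \Cref{tabulators_of_spans}. There is, however, one step that fails as written: your justification of \emph{faithfulness} of $C_1$. You assert that ``the source/target data of $C(\alpha)$ already determines $\alpha$ (compatibility with the legs $r_i$, $s_i$)''. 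But the source and target of $C(\alpha)$ are only the vertical arrows $f$ and $g$, and compatibility with the legs is a \emph{condition} on a span morphism, not a determination of it: in general many distinct arrows $a:R\to S$ satisfy $s_1a=fr_1$ and $s_2a=gr_2$, since the legs of $S$ need not be jointly monic. The argument the paper gives (and the one you need) is again via tabulators: $C(a)$ is characterized by the factorization $C(a)\circ\iota_R=\iota_S\circ U_a$, so $C(a)=C(b)$ forces $\iota_S\circ U_a=\iota_S\circ U_b$, and the uniqueness clause in the universal property of the tabulator cell $\iota_S$ gives $U_a=U_b$, hence $a=b$ since $U$ is faithful. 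You do flag full faithfulness as the delicate point in your closing paragraph, but the faithfulness argument you actually wrote down must be replaced by this one.

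Your remark about $(3)\Rightarrow(1)$ is a fair observation rather than a defect of your proof: condition $(3)$ as stated only supplies pullbacks in $D_0$, whereas $(1)$ asserts that $\mathcal{E}$ has all finite limits, and the paper dismisses this implication as trivial without addressing the terminal object. Closing the cycle through $(2)$ (where the adjunction $!\dashv I$ produces the terminal object of $D_0$), or transporting the Cartesian structure along the equivalence, is a reasonable repair, and is essentially forced if one wants the three conditions to be literally equivalent as stated. Apart from the faithfulness slip, the proposal is the paper's argument.
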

\begin{proof}
$\mathit{(1 \Rightarrow 2)}$ We have proved that the double category $\mathbb{S}\mathrm{pan}(\mathcal{E})$ is Cartesian, fibrant, unit-pure, and has Eilenberg-Moore objects for copointed endomorphisms. From the proof of \Cref{Eilenberg_Moore_Spans}, we can also see that the Eilenberg-Moore objects are strong.

$\mathit{(2 \Rightarrow 3)}$ From \Cref{tab_give_pullbacks}, $D_0$ has pullbacks that satisfy the Beck-Chevalley condition. Also, the functor $C_0$ is just the identity, so it remains to show that the functor $C_1$ is an equivalence as well, in a compatible way with the sources and the targets.\par
We first show that $C_1$ is essentially surjective. For a horizontal arrow $F$ and its tabulator
$$\xymatrixcolsep{0.2cm} \xymatrixrowsep{0.2cm} \xymatrix{ 
T \ar[dd]_{q_1} \haru{U} & & T \ar[dd]^{q_2} \\
& \iota \\
A \hard{F} & & X,}$$
we can consider the span in $D_0$
$$\xymatrixcolsep{0.2cm} \xymatrixrowsep{0.2cm} \xymatrix{& T \ar[dl]_-{q_1} \ar[dr]^-{q_2} \\
A & & X.}$$
Since by \Cref{Cart_fibrant_EM_give tab}, tabulators are strong under the above assumptions, we have ${q_2}_* {q_1}^* \cong F$.\par
We now show that $C_1$ is full. Consider two spans
$$\xymatrixcolsep{0.2cm} \xymatrixrowsep{0.2cm} \xymatrix{
& & R \ar[dll]_{r_1} \ar[drr]^{r_2} \\
A & & & & B,\\
& & S \ar[dll]^{s_1} \ar[drr]_{s_2} \\
X & & & & Y,}$$
and a cell
$$\xymatrixcolsep{0.2cm} \xymatrixrowsep{0.2cm} \xymatrix{
A \ar[dd]_f \haru{{r_2}_* {r_1}^*} & & X \ar[dd]^g \\
& \alpha \\
B \hard{{s_2}_* {s_1}^*}& & Y.}$$
Then, by \Cref{tabulators_of_spans}, we have the tabulators $R$ and $S$ of ${r_2}_* {r_1}^*$ and ${s_2}_* {s_1}^*$ respectively and so we get a unique vertical arrow $a$ such that the following holds:
$$\xymatrixcolsep{0.2cm} \xymatrixrowsep{0.2cm} \xymatrix{
R\ar[dd]_{r_1} \haru{U} & & R \ar[dd]^{r_2} & & R \ar[dd]_{a} \haru{U} & & R \ar[dd]^{a} \\
& \iota_R & & & & U_a\\
A \ar[dd]_f \haru{{r_2}_* {r_1}^*} & & X \ar[dd]^g & = & S \ar[dd]_{s_1} \haru{U} & & S \ar[dd]^{s_2} \\
& \alpha & & & & \iota_S \\
B \hard{{s_2}_* {s_1}^*}& & Y & & B \hard{{s_2}_* {s_1}^*}& & Y.}$$
Then we have the cell
$$\xymatrixcolsep{0.2cm} \xymatrixrowsep{0.2cm} \xymatrix{
& & R \ar[dll]_{r_1} \ar[drr]^{r_2} \ar[dddd]^{\alpha} \\
A \ar[dd]_f & & & & B \ar[dd]^g \\
\\ X & & & & Y \\
& & S \ar[ull]^{s_1} \ar[urr]_{s_2} }$$
in $\mathrm{\mathbb{S}pan}(D_0)$ and $C(a)=\alpha$.\par
Lastly, we show that $C_1$ is faithful. For spans like above, suppose that we have equal cells
$$\xymatrixcolsep{0.2cm} \xymatrixrowsep{0.2cm} \xymatrix{
A \ar[dd]_f \haru{{r_2}_* {r_1}^*} & & X \ar[dd]^g & & A \ar[dd]_f \haru{{r_2}_* {r_1}^*} & & X \ar[dd]^g \\
& C(\alpha) & & = & & C(\beta) \\
B \hard{{s_2}_* {s_1}^*}& & Y & & B \hard{{s_2}_* {s_1}^*}& & Y.}$$
Then
$$\begin{tikzpicture}[scale=0.4]
\draw (0,0)--(2,0) (4,0)--(6,0) (8,0)--(10,0) (12,0)--(14,0) (3,0) node{=} (7,0) node{=} (11,0) node{=};
\draw (1,1) node{$U_{\alpha}$} (1,-1) node{$\iota_S$} (5,1) node{$\iota_R$} (5,-1) node{$C(\alpha)$} (9,1) node{$\iota_R$} (9,-1) node{$C(\beta)$} (13,1) node{$U_{\beta}$} (13,-1) node{$\iota_S,$};
\end{tikzpicture}$$
and by the universal property of $\iota_S$, we get $\alpha=\beta$.

$\mathit{(3 \Rightarrow 1)}$ Trivial.
\end{proof}

From the construction above we have essentially defined a double functor $S:\mathbb{D}\rightarrow \mathrm{\mathbb{S}pan}(D_0)$, which is identity on objects and vertical arrows, and maps a horizontal arrow $F:\srarrow{A}{X}$ to the span
$$\xymatrixcolsep{0.2cm} \xymatrixrowsep{0.2cm} \xymatrix{& T \ar[dl]_-{q_1} \ar[dr]^-{q_2} \\
A & & X,}$$
given by the tabulator of $F$. This extends to the cells as follows:
Consider a cell
$$\xymatrixcolsep{0.2cm} \xymatrixrowsep{0.2cm} \xymatrix{
A \ar[dd]_{f} \haru{F} & & X \ar[dd]^{g} \\
& \alpha \\
B \hard{G}& & Y.}$$
We define $S(\alpha)$ to be the unique vertical arrow we get from the universal property of the tabulator $T(G)$ of $G$ as in the diagram below:
$$\xymatrixcolsep{0.2cm} \xymatrixrowsep{0.2cm} \xymatrix{
T(F) \ar[dd]_{q_1^F} \haru{U} & & T(F) \ar[dd]^{q_2^F} & & T(F) \ar[dd]_{S(\alpha)} \haru{U} & & T(F) \ar[dd]^{S(\alpha)} \\
& \iota_F & & & & U_{S(\alpha)}\\
A \ar[dd]_f \haru{F} & & X \ar[dd]^g & = & T(G) \ar[dd]_{q_1^G} \haru{U} & & T(G) \ar[dd]^{q_2^G} \\
& \alpha & & & & \iota_G \\
B \hard{G}& & Y & & B \hard{G}& & Y.}$$

\vspace{0.3cm}

In \cite{SpanCospan}, proposition 5.3.3., Niefield showed that for a fibrant double category $\mathbb{D}$ with tabulators and pullbacks for its vertical arrows, there is an oplax/lax adjunction of the form
$$\xymatrixcolsep{0.2cm} \xymatrixrowsep{0.2cm} \xymatrix{
 \mathrm{\mathbb{S}pan}(D_0) \ar@/^1pc/[rrr]^-{C} & \perp & & \mathbb{D} \ar@/^1pc/[lll]^-{S}},$$
where her $S$ coincides with the double functor $S$ above. Since every equivalence can be refined to an adjoint equivalence by modifying one of the natural isomorphisms, and adjunction data between two functors is unique, we can say that under the assumptions of \Cref{characterization}, there is an adjoint equivalence from $\mathrm{\mathbb{S}pan}(D_0)$ to $\mathbb{D}$.

\vspace{0.3cm}

In \cite{Grandis2017}, Grandis and Par\'e considered double categories $\mathbb{D}$ with tabulators and pullbacks for vertical arrows, without using the condition of $\mathbb{D}$ being fibrant. They showed that we can always build a lax double functor $S$ as the one above, and they gave the following definition:

\begin{definition}\cite{Grandis2017}
A double category $\mathbb{D}$ is called \textbf{span representative} if:
\begin{enumerate}
\item $\mathbb{D}$ has tabulators.
\item $D_0$ has pullbacks.
\item The lax double functor $S:\mathbb{D}\rightarrow \mathrm{\mathbb{S}pan}(D_0)$ is vertically faithful, i.e. both $S_0$ and $S_1$ are faithful.
\end{enumerate}
\end{definition}

\begin{proposition}
If $\mathbb{D}$ is Cartesian, fibrant, unit-pure, and has strong Eilenberg-Moore objects for co-pointed endomorphisms, then it is span representative.
\end{proposition}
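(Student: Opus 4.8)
The plan is to verify the three conditions in the definition of span representative directly, using results already established in the excerpt. Conditions (1) and (2) are essentially immediate: by \Cref{Cart_fibrant_EM_give tab}, a Cartesian and fibrant double category with (strong) Eilenberg-Moore objects for co-pointed endomorphisms has tabulators, and by \Cref{tab_give_pullbacks} (or \Cref{Cart_fibrant_EM_give_pullbacks}), since $\mathbb{D}$ is also unit-pure, $D_0$ has pullbacks. So the only real content is condition (3), the vertical faithfulness of the lax double functor $S:\mathbb{D}\rightarrow \mathrm{\mathbb{S}pan}(D_0)$.

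For condition (3), I would argue as follows. The functor $S_0$ is the identity on $D_0$ (as noted in the discussion preceding the definition), hence trivially faithful. For $S_1$, I would reuse the argument given in the proof of $\mathit{(2 \Rightarrow 3)}$ of \Cref{characterization}, specifically the part showing $C_1$ is faithful, transported to $S$. Concretely, given two cells $\alpha,\beta: F \rightarrow G$ over the same vertical arrows $f,g$ with $S(\alpha)=S(\beta)$, the defining property of $S(\alpha)$ and $S(\beta)$ via the universal cell $\iota_G$ of the tabulator $T(G)$ gives
$$\frac{\iota_F}{\alpha} \;=\; \frac{U_{S(\alpha)}}{\iota_G} \;=\; \frac{U_{S(\beta)}}{\iota_G} \;=\; \frac{\iota_F}{\beta},$$
and then the universal property of $\iota_F$ (which is an isomorphism of the appropriate sort, or at least a universal cell that cells factor through uniquely) forces $\alpha=\beta$. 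The key input here is \Cref{tabulators_of_spans} together with the fact that tabulators exist and the one-dimensional universal property of $\iota_F$ is enough to cancel it on the left.

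The main obstacle, such as it is, is bookkeeping: one must be careful that $S(\alpha)$ is defined precisely so that pasting $\iota_F$ on top of $\alpha$ equals pasting $U_{S(\alpha)}$ on top of $\iota_G$, and that the universal property of the tabulator $T(F)$ applies to cells whose source and target vertical arrows match $q_1^F$ and $q_2^F$ — which they do after composing with $f$ and $g$. Since $S(\alpha)=S(\beta)$ as vertical arrows, the two composites $\frac{\iota_F}{\alpha}$ and $\frac{\iota_F}{\beta}$ literally coincide, and uniqueness in the universal property of $\iota_F$ (viewed as the universal cell from $U$ to $F$) yields $\alpha = \beta$. I expect no genuine difficulty beyond assembling these pieces; everything needed is already in \Cref{Cart_fibrant_EM_give tab}, \Cref{tab_give_pullbacks}, and the construction of $S$ given just above the definition of span representative.

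\begin{proof}
By \Cref{Cart_fibrant_EM_give tab}, $\mathbb{D}$ has tabulators, so condition (1) holds. By \Cref{Cart_fibrant_EM_give_pullbacks}, $D_0$ has pullbacks (satisfying the Beck-Chevalley condition), so condition (2) holds. It remains to check that $S:\mathbb{D}\rightarrow \mathrm{\mathbb{S}pan}(D_0)$ is vertically faithful. The functor $S_0$ is the identity on $D_0$, hence faithful. For $S_1$, consider cells
$$\xymatrixcolsep{0.2cm} \xymatrixrowsep{0.2cm} \xymatrix{
A \ar[dd]_{f} \haru{F} & & X \ar[dd]^{g} \\
& \alpha \\
B \hard{G}& & Y
} \qquad \text{and} \qquad
\xymatrixcolsep{0.2cm} \xymatrixrowsep{0.2cm} \xymatrix{
A \ar[dd]_{f} \haru{F} & & X \ar[dd]^{g} \\
& \beta \\
B \hard{G}& & Y
}$$
with $S(\alpha)=S(\beta)$. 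Writing $\iota_F$ and $\iota_G$ for the universal cells of the tabulators $T(F)$ and $T(G)$, the defining property of $S(\alpha)$ and $S(\beta)$ gives
$$\begin{tikzpicture}[scale=0.4]
\draw (0,0)--(2,0) (4,0)--(6,0) (8,0)--(10,0) (12,0)--(14,0) (3,0) node{=} (7,0) node{=} (11,0) node{=};
\draw (1,1) node{$\iota_F$} (1,-1) node{$\alpha$} (5,1) node{$U_{S(\alpha)}$} (5,-1) node{$\iota_G$} (9,1) node{$U_{S(\beta)}$} (9,-1) node{$\iota_G$} (13,1) node{$\iota_F$} (13,-1) node{$\beta$.};
\end{tikzpicture}$$
Since $\iota_F$ is a universal cell from $U$ to $F$, every cell factors through it uniquely, and in particular the equality $\frac{\iota_F}{\alpha} = \frac{\iota_F}{\beta}$ forces $\alpha=\beta$. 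Hence $S_1$ is faithful, and $\mathbb{D}$ is span representative.
\end{proof}
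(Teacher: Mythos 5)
Your handling of conditions (1) and (2) matches the paper exactly. The problem is in your argument for condition (3), at the very last step. The one-dimensional universal property of the tabulator cell $\iota_F : U_{T(F)} \rightarrow F$ says that any cell $U_H \rightarrow F$ factors \emph{uniquely} as $\frac{U_b}{\iota_F}$ for a vertical arrow $b : H \rightarrow T(F)$; it is a statement about cells sitting \emph{above} $\iota_F$. Your cells $\alpha,\beta$ sit \emph{below} $\iota_F$, and the equality $\frac{\iota_F}{\alpha} = \frac{\iota_F}{\beta}$ is an equality of cells $U_{T(F)} \rightarrow G$. Applying the universal property of $\iota_G$ to that common cell only recovers the uniqueness of its factorization through $\iota_G$, i.e.\ it gives back $S(\alpha) = S(\beta)$ — which is your hypothesis, not your conclusion. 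Nothing in the one-dimensional universal property makes $\iota_F$ left-cancellable in the vertical direction. Note the contrast with the faithfulness argument for $C_1$ in \Cref{characterization}, which you say you are transporting: there the two cells being compared are $U_\alpha$ and $U_\beta$ for \emph{vertical} arrows $\alpha,\beta$, placed \emph{on top of} $\iota_S$, so the tabulator's uniqueness clause applies verbatim. Your configuration is the mirror image, and the clause does not apply.

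The gap is repairable with the hypotheses you have. By \Cref{Cart_fibrant_EM_give tab} the tabulators are strong, so the canonical cell $\upsilon : {q_1^F}^{\ast}\text{-then-}{q_2^F}_{\ast}$-composite $\rightarrow F$ is invertible and $\iota_F$ decomposes as the op-Cartesian cell $\zeta : U_{T(F)} \rightarrow {q_2^F}_{\ast} \odot {q_1^F}^{\ast}$ followed by $\upsilon$. Uniqueness of factorizations through an op-Cartesian cell \emph{is} a left-cancellation statement: $\frac{\zeta}{\gamma} = \frac{\zeta}{\gamma'}$ forces $\gamma = \gamma'$. Applying this with $\gamma = \frac{\upsilon}{\alpha}$ and $\gamma' = \frac{\upsilon}{\beta}$, and then cancelling the invertible $\upsilon$, gives $\alpha = \beta$. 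Alternatively, the paper avoids the issue entirely: it invokes Niefield's oplax/lax adjunction $C \dashv S$ together with the fact that $C$ is an equivalence under these hypotheses (\Cref{characterization}), so that $S$ is the right adjoint of an adjoint equivalence and hence vertically fully faithful. Either route closes the gap; as written, your proof does not.
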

\begin{proof}
We saw in \Cref{Cart_fibrant_EM_give tab} and \Cref{Cart_fibrant_EM_give_pullbacks} that, given the assumptions above, $\mathbb{D}$ has tabulators and $D_0$ has pullbacks. That $S$ is vertically faithful follows by our discussion after \Cref{characterization}, since it is the right adjoint of an adjoint equivalence.
\end{proof}

\chapter{Profunctors} \label{Profunctors}

\section{Modules in Double Categories}

In this section we consider the construction of modules over monads in a double category $\mathbb{D}$. Modules here are defined in the usual way, i.e. as arrows with a left and a right action.

\begin{definition}\cite{Shulman2008}
A \textbf{module} from a monad $(A,S)$ to a monad $(B,T)$ is a horizontal arrow $M:\srarrow{A}{B}$ equipped with two globular cells $\rho: M\odot S \rightarrow M$ and $\lambda :T \odot M \rightarrow M$, such that the following hold:
$$\xymatrixcolsep{0.2cm} \xymatrixrowsep{0.2cm} \xymatrix{ A \haru{U_A} \ar@{=}[dd] & & A \haru{M} \ar@{=}[dd] & & B \ar@{=}[dd] \\
& \eta & & 1_M & & & A \ar@{=}[dd] \haru{U_A} & & A \haru{M} & & B \ar@{=}[dd] \\
A \ar@{=}[dd] \haru{S} & & A \haru{M} & & B \ar@{=}[dd] & = & & & r\\
& & \rho & & & & A \ar[rrrr]_M \ar[rrrr]|{\vstretch{0.60}{|}} & & & & B,\\
A \ar[rrrr]_M \ar[rrrr]|{\vstretch{0.60}{|}} & & & & B}$$
$$\xymatrixcolsep{0.2cm} \xymatrixrowsep{0.2cm} \xymatrix{A \ar@{=}[dd] \haru{S} & & A \haru{S} & & A \ar@{=}[dd] \haru{M} & & B \ar@{=}[dd] & & A \ar@{=}[dd] \haru{S} & & A\ar@{=}[dd] \haru{S} & & A \haru{M} & & B \ar@{=}[dd] \\
& & \mu & & & 1_M & & & & 1_S & & & \rho \\
A \ar@{=}[dd] \ar[rrrr]_S \ar[rrrr]|{\vstretch{0.60}{|}} & & & & A \hard{M} & & B \ar@{=}[dd] & = & A \ar@{=}[dd] \hard{S} & & A \ar[rrrr]_M \ar[rrrr]|{\vstretch{0.60}{|}} & & & & B \ar@{=}[dd] \\
& & & \rho & & & & & &  & & \rho\\
A \ar[rrrrrr]_M \ar[rrrrrr]|{\vstretch{0.60}{|}} & & & & & & B & & A \ar[rrrrrr]_M \ar[rrrrrr]|{\vstretch{0.60}{|}} & & & & & & B, }$$
$$\xymatrixcolsep{0.2cm} \xymatrixrowsep{0.2cm} \xymatrix{ A \haru{M} \ar@{=}[dd] & & B \haru{U_B} \ar@{=}[dd] & & B \ar@{=}[dd] \\
& 1_M & & \eta & & & A \ar@{=}[dd] \haru{M} & & B \haru{U_B} & & B \ar@{=}[dd] \\
A \ar@{=}[dd] \haru{M} & & B \haru{T} & & B \ar@{=}[dd] & = & & & l\\
& & \lambda & & & & A \ar[rrrr]_M \ar[rrrr]|{\vstretch{0.60}{|}} & & & & B,\\
A \ar[rrrr]_M \ar[rrrr]|{\vstretch{0.60}{|}} & & & & B }$$
$$\xymatrixcolsep{0.2cm} \xymatrixrowsep{0.2cm} \xymatrix{A \ar@{=}[dd] \haru{M} & & B \ar@{=}[dd] \haru{T} & & B \haru{T} & & B \ar@{=}[dd] & & A \ar@{=}[dd] \haru{M} & & B \haru{T} & & B \ar@{=}[dd] \haru{T} & & B \ar@{=}[dd] \\
& 1_M & & & \mu & & & & & & \lambda & & & 1_T \\
A \ar@{=}[dd] \hard{M} & & B \ar[rrrr]_T \ar[rrrr]|{\vstretch{0.60}{|}} & & & & B \ar@{=}[dd] & = & A \ar@{=}[dd] \ar[rrrr]_M \ar[rrrr]|{\vstretch{0.60}{|}} & & & & B \hard{T} & & B \ar@{=}[dd] \\
& & & \lambda & & & & & &  & & \lambda \\
A \ar[rrrrrr]_M \ar[rrrrrr]|{\vstretch{0.60}{|}} & & & & & & B & & A \ar[rrrrrr]_M \ar[rrrrrr]|{\vstretch{0.60}{|}} & & & & & & B }$$
$$\xymatrixcolsep{0.2cm} \xymatrixrowsep{0.2cm} \xymatrix{A \ar@{=}[dd] \haru{S} & & A \haru{M} & & B \ar@{=}[dd] \haru{T} & & B \ar@{=}[dd] & & A \ar@{=}[dd] \haru{S} & & A\ar@{=}[dd]  \haru{M} & & B \haru{T} & & B \ar@{=}[dd] \\
& & \rho & & & 1_T & & & & 1_S & & & \lambda \\
A \ar@{=}[dd] \ar[rrrr]_M \ar[rrrr]|{\vstretch{0.60}{|}} & & & & B \hard{T} & & B \ar@{=}[dd] & = & A \ar@{=}[dd] \hard{S} & & A \ar[rrrr]_M \ar[rrrr]|{\vstretch{0.60}{|}} & & & & B \ar@{=}[dd]\ar@{=}[dd] \\
& & & \lambda & & & & & &  & & \rho\\
A \ar[rrrrrr]_M \ar[rrrrrr]|{\vstretch{0.60}{|}} & & & & & & B & & A \ar[rrrrrr]_M \ar[rrrrrr]|{\vstretch{0.60}{|}} & & & & & & B, }$$

Let $(f,\phi):(A,S)\rightarrow (C,U)$ and $(g,\psi):(B,T)\rightarrow (D,V)$ be monad morphisms and $M:(A,S)\rightarrow (B,T)$, $N:(C,U)\rightarrow(D,V)$ modules. A $(\phi,\psi)-$\textbf{equivariant map} is a cell
$$\xymatrixcolsep{0.2cm} \xymatrixrowsep{0.2cm} \xymatrix{ A \haru{M} \ar[dd]_f & & B \ar[dd]^g \\ & \alpha \\ C \hard{N} & & D }$$
which is compatible with the actions of the modules, namely the following hold:
$$\xymatrixcolsep{0.2cm} \xymatrixrowsep{0.2cm} \xymatrix{
A \haru{S} \ar[dd]_f & & A \haru{M} \ar[dd]_f & & B \ar[dd]^g & & A \haru{S} \ar@{=}[dd] & & A \haru{M} & & B \ar@{=}[dd] \\
& \phi & & \alpha & & & & & \rho_M \\
C \hard{U} \ar@{=}[dd] & & C \hard{N} & & D \ar@{=}[dd] & = & A \ar[rrrr]_M \ar[rrrr]|{\vstretch{0.60}{|}} \ar[dd]_f & & & & B \ar[dd]^g \\
& & \rho_N & & & & & & \alpha \\
C \ar[rrrr]_N \ar[rrrr]|{\vstretch{0.60}{|}} & & & & D & & C \ar[rrrr]_N \ar[rrrr]|{\vstretch{0.60}{|}} & & & & D }$$
and
$$\xymatrixcolsep{0.2cm} \xymatrixrowsep{0.2cm} \xymatrix{
A \haru{M} \ar[dd]_f & & B \haru{T} \ar[dd]_g & & B \ar[dd]^g & & A \haru{M} \ar@{=}[dd] & & B \haru{T} & & B \ar@{=}[dd] \\
& \alpha & & \psi & & & & & \lambda_M \\
C \hard{N} \ar@{=}[dd] & & D \hard{V} & & D \ar@{=}[dd] & = & A \ar[rrrr]_M \ar[rrrr]|{\vstretch{0.60}{|}} \ar[dd]_f & & & & B \ar[dd]^g \\
& & \lambda_N & & & & & & \alpha \\
C \ar[rrrr]_N \ar[rrrr]|{\vstretch{0.60}{|}} & & & & D & & C \ar[rrrr]_N \ar[rrrr]|{\vstretch{0.60}{|}} & & & & D. }$$
\end{definition}

\begin{theorem}\cite{Shulman2008}
Consider a fibrant double category $\mathbb{D}$ such that every category $\mathcal{H(\mathbb{D})}(A,A')$ has coequalizers and $\odot$ preserves them in both variables. Then the monads in $\mathbb{D}$, together with the monad morphisms as vertical arrows, the modules as horizontal arrows and the equivariant maps as cells, form a fibrant double category $\mathbb{M}\mathbf{od(\mathbb{D})}$.
\end{theorem}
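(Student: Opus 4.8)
The plan is to verify the three pieces of structure that make $\mathbb{M}\mathbf{od(\mathbb{D})}$ a fibrant double category: (i) that it is a double category at all, i.e. that horizontal composition of modules is well defined and associative/unital up to coherent isomorphism, (ii) that it is fibrant, and (iii) along the way that the source/target functors behave correctly. First I would recall that for monads $(A,S)$, $(B,T)$, $(C,U)$ and modules $M:(A,S)\to(B,T)$, $N:(B,T)\to(C,U)$, the horizontal composite $N\odot_{T} M$ should be the coequalizer in $\mathcal{H(\mathbb{D})}(A,C)$ of the reflexive pair
$$\xymatrixcolsep{0.6cm}\xymatrix{ N\odot T\odot M \ar@<0.5ex>[r]^-{\lambda_N\odot M} \ar@<-0.5ex>[r]_-{N\odot\rho_M} & N\odot M \ar[r] & N\odot_T M, }$$
which exists by the hypothesis that each hom-category has coequalizers. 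The left and right $U$- and $S$-actions on $N\odot_T M$ are induced from $\rho_M$ and $\lambda_N$ using that $\odot$ preserves coequalizers in both variables; I would check the module axioms for $N\odot_T M$ by a routine diagram chase, pushing everything through the (epic) coequalizing cell and using the module axioms for $M$ and $N$. The horizontal unit on $(A,S)$ is $S$ itself with $\mu$ acting on both sides, exactly as the identity $s,s$-bimodule in the bicategorical version. Associativity and unit constraints are obtained from those of $\mathbb{D}$ together with the universal property of the iterated coequalizers, and coherence follows because all the constraints are uniquely determined by the $\mathbb{D}$-constraints; this is the standard argument and I would cite it as routine. For the cells, I would note that a $(\phi,\psi)$-equivariant map composed horizontally with another is again equivariant (the compatibility squares paste), and the interchange law is inherited from $\mathbb{D}$.

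Next I would address fibrancy. Given a vertical arrow in $\mathbb{M}\mathbf{od(\mathbb{D})}$, that is a monad morphism $(f,\phi):(A,S)\to(B,T)$, I need companion and conjoint modules. The natural candidate for the companion is the op-Cartesian filling $f_*$ in $\mathbb{D}$ of the niche determined by $f$, equipped with module structure: a right $S$-action and a left $T$-action. The right $S$-action $f_*\odot S\to f_*$ comes from the op-Cartesian universal property applied to the cell built from $\phi$ and the counit-type cells $\mathbb{D}$ provides for $f$; dually, the conjoint is $f^*$ with its induced actions. One then checks that these are genuine modules (module axioms via the op-Cartesian/Cartesian universal properties and the monad-morphism axioms for $(f,\phi)$), and that the binding cells exhibiting $f_*\dashv f^*$ in $\mathcal{H}(\mathbb{M}\mathbf{od(\mathbb{D})})$ are equivariant — which again reduces to the corresponding binding cells in $\mathbb{D}$ being compatible with the actions. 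Equivalently, and perhaps cleaner, I would verify condition (2) of \Cref{fibrant_equivalence}: for any niche in $\mathbb{M}\mathbf{od(\mathbb{D})}$ over a module $M:(B,T)\to(D,V)$ with legs $(f,\phi)$, $(g,\psi)$, the Cartesian filling in $\mathbb{D}$, namely $g^*Mf_*$, carries a canonical module structure from $(A,S)$ to $(C,U)$ making the $\mathbb{D}$-Cartesian cell into a Cartesian cell in $\mathbb{M}\mathbf{od(\mathbb{D})}$; the factorization property is then inherited because the forgetful functor $\mathbb{M}\mathbf{od(\mathbb{D})}\to\mathbb{D}$ (on horizontal arrows and cells) reflects the universal property once one knows the factoring cell is equivariant.

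The main obstacle I anticipate is the interaction between the coequalizers defining horizontal composition and the Cartesian/op-Cartesian fillings defining the fibrant structure: showing that the module structure induced on a Cartesian filling $g^*Mf_*$ is coherent with horizontal composition of modules requires that $\odot$-preservation of coequalizers play well with the Cartesian factorization, and this is where one must be careful rather than formal. Concretely, verifying that $(N\odot_T M)$ equipped with its induced actions really satisfies the pentagon/triangle with the new unit $S$, and that the companions compose correctly (an analogue of \Cref{f^*_functorial} in $\mathbb{M}\mathbf{od(\mathbb{D})}$, which will follow from the one in $\mathbb{D}$ plus identifications of the relevant coequalizers), is the technically heaviest part. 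Everything else — the module axioms, equivariance of composites, the triangle identities for the binding cells — is a sequence of diagram chases that reduce cleanly to the already-established structure of $\mathbb{D}$ and to Shulman's setup, so I would present those as routine and concentrate the written proof on the two points above.
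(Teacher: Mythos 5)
The thesis itself gives no proof of this theorem — it is cited directly from Shulman — so there is no in-paper argument to compare against; your proposal is essentially a reconstruction of Shulman's proof, and its overall architecture (horizontal composition as the reflexive coequalizer $N\odot T\odot M \rightrightarrows N\odot M \to N\odot_T M$, actions induced via $\odot$-preservation of coequalizers, unit module $S$ with $\mu$ as both actions, unit laws via the split coequalizer coming from $\eta$, and fibrancy via restrictions) is the right one.

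There is, however, one step that would fail as written: your ``natural candidate'' for the companion of a monad morphism $(f,\phi):(A,S)\to(B,T)$, namely $f_*$ itself equipped with a right $S$-action and a left $T$-action. In general $f_*$ carries neither action. A left action would require a globular cell $T\odot f_*\to f_*$, and the monad morphism only supplies cells in the other direction ($f_*\odot S\to T\odot f_*$, the mate of $\phi$); even the right action $f_*\odot S\to f_*$ does not exist, as one sees already in $\mathbb{S}\mathrm{pan}(\mathcal{E})$, where $f_*\odot S$ is the span $A_0\xleftarrow{s}A_1\xrightarrow{f_0t}B_0$ and neither $s$ nor $t$ gives a span map to $A_0 = A_0\to B_0$. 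The correct companion is the restriction of the unit module, $T\odot f_*$ (in $\mathbb{P}\mathrm{rof}$ this is the representable profunctor $B(-,f-)$, not $f_*$). So your two proposed routes to fibrancy are not equivalent: only the second one — verifying condition (2) of \Cref{fibrant_equivalence} by showing that the $\mathbb{D}$-Cartesian filling $g^*Mf_*$ carries a canonical $(S,U)$-bimodule structure (via $M\odot f_*\odot S\to M\odot T\odot f_*\to M\odot f_*$ and dually on the left) and that the $\mathbb{D}$-Cartesian cell remains Cartesian in $\mathbb{M}\mathbf{od}(\mathbb{D})$ because the unique $\mathbb{D}$-factorization of an equivariant cell is automatically equivariant — actually works, and it is the route Shulman takes. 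If you drop the first candidate and keep the second, together with the (correctly flagged) compatibility check between the coequalizers and the restrictions, the proof goes through.
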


We will write $\mathbf{FbrCat}_{\mathcal{L}}^{\mathcal{Q}}$ and $\mathbf{FbrCat}^{\mathcal{Q}}$ for the full sub-2-categories of $\mathbf{FbrCat}_{\mathcal{L}}$ and $\mathbf{FbrCat}$, respectively, determined by the fibrant double categories $\mathbb{D}$ in which every category $\mathcal{H(\mathbb{D})(A,B)}$ has coequalizers and $\odot$ preserves them in both variables.

\begin{proposition}\cite{Shulman2008} \label{ModShulman}
$\mathbf{Mod}$ defines a 2-functor $\mathbf{FbrCat}_{\mathcal{L}}^{\mathcal{Q}} \rightarrow \mathbf{FbrCat}_{\mathcal{L}}^{\mathcal{Q}}$, which restricts to a 2-functor $\mathbf{FbrCat}^{\mathcal{Q}}\rightarrow \mathbf{FbrCat}^{\mathcal{Q}}$.
\end{proposition}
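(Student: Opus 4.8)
The plan is to verify four things in sequence: that $\mathbf{Mod}(\mathbb{D})$ again lies in $\mathbf{FbrCat}_{\mathcal{L}}^{\mathcal{Q}}$; that $\mathbf{Mod}$ acts on lax double functors and on vertical natural transformations; that these assignments are strictly functorial in both dimensions; and finally that pseudo double functors are carried to pseudo double functors, which yields the restriction to $\mathbf{FbrCat}^{\mathcal{Q}}$. Fibrancy of $\mathbf{Mod}(\mathbb{D})$ is exactly the theorem quoted just above, so the first point reduces to showing that each hom-category $\mathcal{H}(\mathbf{Mod}(\mathbb{D}))((A,S),(B,T))$ of modules and equivariant cells has coequalizers and that the module composite preserves them in each variable. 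Given a parallel pair of equivariant maps $\alpha,\beta\colon M\to N$, I would take the coequalizer $c\colon N\to Q$ of their underlying globular cells in $\mathcal{H}(\mathbb{D})(A,B)$; since $-\odot S$ and $T\odot-$ preserve coequalizers, the composites $c\circ\rho_N$ and $c\circ\lambda_N$ coequalize $\alpha\odot S,\beta\odot S$ and $T\odot\alpha,T\odot\beta$ respectively, hence descend uniquely along $c\odot S$ and $T\odot c$ to actions $\rho_Q,\lambda_Q$; the module axioms for $Q$, the equivariance of $c$, and its universal property among equivariant maps all follow from the fact that coequalizers are epimorphisms and that $\odot$ preserves them. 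That the module composite (itself a coequalizer of $M\odot T\odot N\rightrightarrows M\odot N$) preserves these coequalizers is then the standard interchange of colimits, again using preservation of coequalizers by $\odot$ in $\mathbb{D}$.

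For the action on $1$-cells I would first invoke the double-categorical analogue of the bicategorical lemma on lax functors preserving monads: a lax double functor $F$ sends a monad $(A,S)$ to the monad $(F_0A,F_1S)$ with unit $F_1\eta\cdot F_U$ and multiplication $F_1\mu\cdot F_\odot$, a module $M$ to $F_1M$ with actions $F_1\rho\cdot F_\odot$ and $F_1\lambda\cdot F_\odot$, a monad morphism $(f,\phi)$ to $(F_0f,F_1\phi)$, and an equivariant cell $\alpha$ to $F_1\alpha$; the verifications use only naturality of $F_\odot$ and the coherence axioms of $F$. The lax structure of $\mathbf{Mod}(F)$ is then supplied by a unit constraint which is literally the identity, since the horizontal unit of $\mathbf{Mod}(\mathbb{D})$ at $(F_0A,F_1S)$ and the value of $\mathbf{Mod}(F)$ on the unit module $S$ at $(A,S)$ are the \emph{same} module $F_1S$ — so $\mathbf{Mod}(F)$ is automatically normal — and by a composition constraint $F_1N\odot_{F_1T}F_1M\to F_1(N\odot_TM)$ induced from $F_\odot$ via the universal property of the coequalizer defining the source. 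The lax-functor pentagon and unit triangles for $\mathbf{Mod}(F)$ reduce to those for $F$ after cancelling the (epimorphic) coequalizer quotient maps. A vertical natural transformation $\phi\colon F\to G$ goes to $\mathbf{Mod}(\phi)$, with component the monad morphism $(\phi_{0,A},\phi_{1,S})$ at an object $(A,S)$ and the equivariant cell $\phi_{1,M}$ at a module $M$; naturality of $\phi$ gives that these are a monad morphism and an equivariant map, and the two identities of \Cref{vertical_transf} transfer verbatim.

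Strict $2$-functoriality — the equalities $\mathbf{Mod}(1_{\mathbb{D}})=1_{\mathbf{Mod}(\mathbb{D})}$, $\mathbf{Mod}(G\circ F)=\mathbf{Mod}(G)\circ\mathbf{Mod}(F)$, $\mathbf{Mod}(1_F)=1_{\mathbf{Mod}(F)}$, and compatibility with vertical composition and whiskering of vertical transformations — is bookkeeping once the definitions are set: everything is computed by applying $F_0$ and $F_1$ componentwise, so the only point needing a word is that the composition constraints compose correctly, which follows once more from the universal property of the module-composition coequalizers (for the $\odot$-constraint) and from the strict equality $\mathbf{Mod}(F)_U=\mathrm{id}$ (for the unit constraint). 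For the restriction to $\mathbf{FbrCat}^{\mathcal{Q}}$, since $\mathbf{Mod}(F)$ is already normal it suffices that $\mathbf{Mod}(F)_\odot$ be invertible when $F_\odot$ is: transporting the coequalizer defining $F_1N\odot_{F_1T}F_1M$ along the isomorphism $F_\odot$ identifies it with $F_1$ applied to the reflexive coequalizer $M\odot T\odot N\rightrightarrows M\odot N\to M\odot_TN$, and $\mathbf{Mod}(F)_\odot$ is then the comparison map from this to $F_1(M\odot_TN)$.

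I expect this last step to be the main obstacle. Unlike everything that precedes it, it genuinely requires $F$ to interact well with the \emph{local reflexive coequalizers} that define module composition, rather than merely with $\odot$ and the units; showing that the comparison map is an isomorphism, and being explicit about why this holds in the situations of interest, is where the real care is needed. All the remaining verifications, though lengthy, are routine diagram chases driven entirely by the coherence data already at hand.
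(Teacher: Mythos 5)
The paper itself gives no proof of this proposition (it is quoted from \cite{Shulman2008}), so there is nothing to compare your argument against; judged on its own terms, the lax half of your outline is the standard construction and is sound. Building coequalizers of equivariant cells by descending the actions along the coequalizer taken in $\mathcal{H}(\mathbb{D})(A,B)$, the interchange-of-colimits argument showing $\odot_T$ preserves them, the induced lax structure on $\mathbf{Mod}(F)$ with identity unit constraint (hence automatic normality), and the transfer of all coherence and naturality axioms along the epimorphic quotient maps are all correct, and the strict $2$-functoriality really is bookkeeping.

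The genuine gap is exactly the one you flag at the end, and it should be treated as a gap rather than a loose end. Invertibility of $\mathbf{Mod}(F)_\odot\colon F_1N\odot_{F_1T}F_1M\to F_1(N\odot_T M)$ is equivalent, after transporting along the isomorphism $F_\odot$, to the assertion that $F_1$ carries the reflexive coequalizer $N\odot T\odot M\rightrightarrows N\odot M\to N\odot_T M$ to a coequalizer; this does not follow from $F_\odot$ and $F_U$ being invertible, since reflexive coequalizers are not absolute colimits. Because $\mathbf{FbrCat}^{\mathcal{Q}}$ is defined here as a \emph{full} sub-$2$-category of $\mathbf{FbrCat}$, a pseudo double functor between its objects need not preserve the local coequalizers, so the restriction claim does not close without either (a) adding preservation of local reflexive coequalizers as a hypothesis on the $1$-cells, or (b) verifying that preservation separately for each functor to which $\mathbf{Mod}$ is actually applied --- for instance the diagonal and product functors used in \Cref{Mod_Cart}, where in the case $\mathbb{D}=\mathbb{S}\mathrm{pan}(\mathcal{E})$ it is the standing assumption that reflexive coequalizers in $\mathcal{E}$ are preserved by pullback that does the work. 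Everything else in your proposal is complete in outline; this one step is where an additional hypothesis or an additional argument must be supplied.
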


\begin{proposition}\label{Mod_Cart}
\begin{enumerate}
\item If $\mathbb{D}$ is a fibrant precartesian double category then $\mathbf{Mod(\mathbb{D})}$ is a fibrant precartesian double category too.
\item If $\mathbb{D}$ is a fibrant Cartesian double category then $\mathbf{Mod(\mathbb{D})}$ is a fibrant Cartesian double category too.
\end{enumerate}
\end{proposition}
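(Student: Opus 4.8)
The plan is to reduce both statements to the characterizations of precartesian and Cartesian double categories in terms of right adjoints of the diagonal functor $\Delta$ and the terminal functor $!$, and then to transport these adjunctions through the 2-functor $\mathbf{Mod}$ of \Cref{ModShulman}. First I would recall that $\mathbf{Mod}$ is a 2-functor $\mathbf{FbrCat}_{\mathcal{L}}^{\mathcal{Q}} \to \mathbf{FbrCat}_{\mathcal{L}}^{\mathcal{Q}}$, restricting to $\mathbf{FbrCat}^{\mathcal{Q}} \to \mathbf{FbrCat}^{\mathcal{Q}}$; since any 2-functor preserves adjunctions, products, and in particular the diagonal and the terminal object, it carries the adjunction $\Delta \dashv \times : \mathbb{D}\times\mathbb{D} \to \mathbb{D}$ (respectively $! \dashv I$) in the relevant 2-category to an adjunction $\mathbf{Mod}(\Delta) \dashv \mathbf{Mod}(\times)$ (respectively $\mathbf{Mod}(!) \dashv \mathbf{Mod}(I)$) in the same 2-category. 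The two things that need checking are: (a) that $\mathbf{Mod}$ preserves finite products of double categories, i.e. $\mathbf{Mod}(\mathbb{D}\times\mathbb{D}) \cong \mathbf{Mod}(\mathbb{D})\times\mathbf{Mod}(\mathbb{D})$ and $\mathbf{Mod}(\mathbbm{1}) \cong \mathbbm{1}$, so that $\mathbf{Mod}(\Delta)$ really is the diagonal on $\mathbf{Mod}(\mathbb{D})$ and $\mathbf{Mod}(!)$ the terminal functor; and (b) that $\mathbf{Mod}(\mathbb{D})$ is again fibrant, which is exactly the content of the theorem of Shulman quoted just before \Cref{ModShulman}, provided we know $\mathbb{D} \in \mathbf{FbrCat}^{\mathcal{Q}}$.

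For part (a), the key observation is that a monad in $\mathbb{D}\times\mathbb{D}$ is precisely a pair of monads in $\mathbb{D}$, a monad morphism is a pair of monad morphisms, a module is a pair of modules, and an equivariant cell is a pair of equivariant cells — all of this because the structure in $\mathbb{D}\times\mathbb{D}$ (units, $S$, $T$, $\odot$, and the coherence cells) is defined componentwise, and the coequalizers used to form $\odot$ in $\mathbf{Mod}$ are likewise computed componentwise. Hence there is an isomorphism of double categories $\mathbf{Mod}(\mathbb{D}\times\mathbb{D}) \cong \mathbf{Mod}(\mathbb{D})\times\mathbf{Mod}(\mathbb{D})$, and under this isomorphism $\mathbf{Mod}(\Delta)$ is identified with the diagonal double functor on $\mathbf{Mod}(\mathbb{D})$; similarly $\mathbf{Mod}(\mathbbm{1}) \cong \mathbbm{1}$ and $\mathbf{Mod}(!)$ is the unique functor to $\mathbbm{1}$. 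I would state this as a short lemma (or a remark) and verify it by unwinding the definition of $\mathbf{Mod}$ and of the product double category.

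With (a) and (b) in hand the argument is then essentially formal. For statement 1: if $\mathbb{D}$ is fibrant and precartesian, then $\Delta \dashv \times$ and $! \dashv I$ are adjunctions in $\mathbf{DblCat}_{\mathcal{L}}$; since $\mathbb{D}$ is fibrant and precartesian, both $\times$ and $I$ are lax double functors between fibrant double categories, and one checks they lie in $\mathbf{FbrCat}_{\mathcal{L}}^{\mathcal{Q}}$ (the hom-categories of $\mathcal{H}(\mathbb{D})$ having coequalizers preserved by $\odot$ is inherited — this needs the mild observation that the precartesian/product structure is compatible with these coequalizers, or one simply assumes $\mathbb{D}\in\mathbf{FbrCat}_{\mathcal{L}}^{\mathcal{Q}}$ as part of the hypothesis, matching the standing hypothesis needed even to form $\mathbf{Mod}(\mathbb{D})$). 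Applying the 2-functor $\mathbf{Mod}: \mathbf{FbrCat}_{\mathcal{L}}^{\mathcal{Q}} \to \mathbf{FbrCat}_{\mathcal{L}}^{\mathcal{Q}}$ and invoking (a), we obtain adjunctions $\Delta_{\mathbf{Mod}(\mathbb{D})} \dashv \mathbf{Mod}(\times)$ and $!_{\mathbf{Mod}(\mathbb{D})} \dashv \mathbf{Mod}(I)$ in $\mathbf{DblCat}_{\mathcal{L}}$, i.e. $\mathbf{Mod}(\mathbb{D})$ is precartesian; and by (b) it is fibrant. For statement 2 the same argument runs inside $\mathbf{FbrCat}^{\mathcal{Q}}$ using the restricted 2-functor $\mathbf{Mod}: \mathbf{FbrCat}^{\mathcal{Q}} \to \mathbf{FbrCat}^{\mathcal{Q}}$: if $\mathbb{D}$ is Cartesian and fibrant, $\times$ and $I$ are genuine (pseudo) double functors, the adjunctions live in $\mathbf{DblCat}$, and $\mathbf{Mod}$ carries them to adjunctions $\Delta \dashv \mathbf{Mod}(\times)$, $! \dashv \mathbf{Mod}(I)$ in $\mathbf{DblCat}$, so $\mathbf{Mod}(\mathbb{D})$ is Cartesian and fibrant. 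The main obstacle, and the only place where real care is needed, is part (a): making the identification $\mathbf{Mod}(\mathbb{D}\times\mathbb{D}) \cong \mathbf{Mod}(\mathbb{D})\times\mathbf{Mod}(\mathbb{D})$ precise enough that $\mathbf{Mod}(\Delta)$ is literally the diagonal — this is where the componentwise description of monads, modules, equivariant cells, and the relevant coequalizers has to be checked — together with confirming that the product structure does not disturb the $\mathcal{Q}$-condition so that everything stays inside the right 2-category.
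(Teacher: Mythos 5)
Your proposal is correct and takes essentially the same route as the paper, whose entire proof reads ``It follows by \Cref{ModShulman}, and the fact that 2-functors preserve adjunctions.'' You have simply made explicit the details the paper leaves implicit, most usefully the identification $\mathbf{Mod}(\mathbb{D}\times\mathbb{D})\cong\mathbf{Mod}(\mathbb{D})\times\mathbf{Mod}(\mathbb{D})$ and $\mathbf{Mod}(\mathbbm{1})\cong\mathbbm{1}$ needed so that $\mathbf{Mod}(\Delta)$ and $\mathbf{Mod}(!)$ really are the diagonal and terminal functors on $\mathbf{Mod}(\mathbb{D})$.
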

\begin{proof}
It follows by \Cref{ModShulman}, and the fact that 2-functors preserve adjunctions.
\end{proof}

We prove the following lemma so that we will be able to consider the double category of modules of the double category of monads.

\begin{lemma}
If $\mathbb{D}$ is a double category such that every hom-category $\mathcal{H(\mathbb{D})}(A,A')$ has coequalizers and $\odot$ preserves them in both variables, then every hom-category $\mathcal{H(\mathbf{Mnd(\mathbb{D})})}((A,S),(A,S'))$ has coequalizers and $\odot$ preserves them in both variables.
\end{lemma}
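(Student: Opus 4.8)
The plan is to realise the forgetful functor
\[ V \colon \mathcal{H}(\mathbf{Mnd}(\mathbb{D}))\big((A,S),(A,S')\big) \longrightarrow \mathcal{H}(\mathbb{D})(A,A), \]
sending a horizontal monad map $(F,\alpha_F)$ to its underlying horizontal arrow $F$ and a globular equivariant cell to its underlying cell, as a functor that \emph{creates} coequalizers; the preservation statement then follows formally from this together with the hypothesis on $\mathbb{D}$. (The general case $\big((A,S),(B,T)\big)$ is identical.)

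First I would take a parallel pair $\theta_1,\theta_2\colon (F,\alpha_F)\rightrightarrows(G,\alpha_G)$ in the hom-category above and let $q\colon G\to Q$ be the coequalizer of the underlying globular cells in $\mathcal{H}(\mathbb{D})(A,A)$, which exists by hypothesis. Writing the structure cell of a horizontal monad map as $\alpha_F\colon F\odot S\Rightarrow S'\odot F$, equivariance of $\theta_i$ reads $\alpha_G\cdot(\theta_i\odot S)=(S'\odot\theta_i)\cdot\alpha_F$, whence
\[ (S'\odot q)\cdot\alpha_G\cdot(\theta_1\odot S)=(S'\odot q\theta_1)\cdot\alpha_F=(S'\odot q\theta_2)\cdot\alpha_F=(S'\odot q)\cdot\alpha_G\cdot(\theta_2\odot S). \]
Since $\odot$ preserves coequalizers in each variable, $q\odot S$ is the coequalizer of $\theta_1\odot S,\theta_2\odot S$, so there is a unique globular cell $\alpha_Q\colon Q\odot S\Rightarrow S'\odot Q$ with $\alpha_Q\cdot(q\odot S)=(S'\odot q)\cdot\alpha_G$. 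I would then verify the two horizontal monad map axioms for $(Q,\alpha_Q)$: compatibility with the multiplications is checked after precomposing with $q\odot S\odot S$, which is an epimorphism since it is the image of the coequalizer $q$ under the coequalizer-preserving functor $(-)\odot S\odot S$, and then invoking the corresponding axiom for $\alpha_G$; compatibility with the units is checked after precomposing with $q\odot U_A$, using $(-)\odot U_A\cong\mathrm{id}$. The defining equation for $\alpha_Q$ says exactly that $q$ lifts to an equivariant cell $(G,\alpha_G)\to(Q,\alpha_Q)$, and it plainly coequalizes $\theta_1,\theta_2$ there. For the universal property, given an equivariant $h\colon(G,\alpha_G)\to(H,\alpha_H)$ with $h\theta_1=h\theta_2$, I take the unique factorization $\bar h\colon Q\to H$ in $\mathcal{H}(\mathbb{D})(A,A)$; precomposing the equivariance identity to be proven with the epimorphism $q\odot S$ and using equivariance of $h$ and of $q$ shows $\bar h$ is equivariant, and uniqueness is inherited from $\mathcal{H}(\mathbb{D})(A,A)$. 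This both proves that the hom-categories of $\mathbf{Mnd}(\mathbb{D})$ have coequalizers and that $V$ creates them.

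For preservation, fix a composable horizontal monad map $(P',\alpha_{P'})$ and a coequalizer $q\colon(G,\alpha_G)\to(Q,\alpha_Q)$ in $\mathcal{H}(\mathbf{Mnd}(\mathbb{D}))$ of a parallel pair $\theta_1,\theta_2$. Horizontal composition in $\mathbf{Mnd}(\mathbb{D})$ agrees with that of $\mathbb{D}$ on underlying horizontal arrows and cells, so $V$ carries $(P',\alpha_{P'})\odot\theta_i$ to $P'\odot\theta_i$ and $(P',\alpha_{P'})\odot q$ to $P'\odot q$; the latter is a coequalizer of $P'\odot\theta_1,P'\odot\theta_2$ in $\mathcal{H}(\mathbb{D})(A,A)$ because $\odot$ preserves coequalizers in this variable. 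Since $V$ creates coequalizers, the coequalizer of $(P',\alpha_{P'})\odot\theta_1,(P',\alpha_{P'})\odot\theta_2$ upstairs is the unique lift of $P'\odot q$, and $(P',\alpha_{P'})\odot q$ is such a lift and coequalizes the pair, so by the uniqueness clause in ``creates'' it is that coequalizer. Hence $\odot$ in $\mathbf{Mnd}(\mathbb{D})$ preserves this coequalizer, and the argument in the other variable is symmetric. I expect the only real obstacle to be the bookkeeping in the middle step — tracking the globular cells and checking the two horizontal monad map axioms for $\alpha_Q$, and making precise that a coequalizer of underlying arrows, once equipped with the forced structure cell, is genuinely a coequalizer in the category of monad maps, i.e. that $V$ creates rather than merely lifts coequalizers; everything else is a routine diagram chase resting on coequalizers being epimorphisms and $\odot$ preserving them in each variable.
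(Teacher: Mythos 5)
Your proposal is correct and follows essentially the same route as the paper: form the coequalizer of the underlying globular cells in $\mathcal{H}(\mathbb{D})(A,A')$, use the fact that $\odot$ preserves coequalizers to induce the unique structure cell $\alpha_Q$ on the coequalizer object from $(S'\odot q)\cdot\alpha_G$, and check the horizontal monad map axioms against the resulting epimorphisms. You are in fact slightly more complete than the paper, which constructs $\alpha_H$ and observes that $\gamma$ becomes a cell of $\mathbf{Mnd}(\mathbb{D})$ but leaves the universal property upstairs and the preservation statement implicit; your creation-of-coequalizers framing supplies exactly those missing routine verifications.
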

\begin{proof}
Consider two cells $$\alpha, \beta:(F,\alpha_F) \rightarrow (G,\alpha_G): \srarrow{(A,S)}{(A',S')}$$ in $\mathcal{H(\mathbf{Mnd(\mathbb{D})})}$ and their coequalizer
$$\xymatrix{F \ar@<1ex>[r]^{\alpha} \ar@<-1ex>[r]_{\beta} & G \ar[r]^{\gamma} & H}$$
in $\mathcal{H(\mathbb{D})}$. To show that the lemma holds it suffices to show that $H$ is a horizontal comonad map and that $\gamma$ is a cell in $\mathbf{Mnd(\mathbb{D})}$. Since $\odot$ preserves coequalizers, the diagram
$$\xymatrix{F\odot S \ar@<1ex>[r]^{\alpha \odot S} \ar@<-1ex>[r]_{\beta \odot S} & G \odot S \ar[r]^{\gamma \odot S} & H \odot S}$$ is a coequalizer diagram too. We also have
$$\xymatrixcolsep{0.02cm} \xymatrixrowsep{0.2cm} \xymatrix{
A \ar@{=}[dd] \haru{S} & & A \ar@{=}[dd] \haru{F} & & A' \ar@{=}[dd] & & A \ar@{=}[dd] \haru{S} & & A \haru{F} & & A' \ar@{=}[dd] & & A \ar@{=}[dd] \haru{S} & & A \haru{F} & & A' \ar@{=}[dd] & & A \ar@{=}[dd] \haru{S} & & A \ar@{=}[dd] \haru{F} & & A' \ar@{=}[dd] \\
& 1 & & \alpha & & & & & \alpha_F & & & & & & \alpha_F & & & & & 1 & & \beta \\
A \ar@{=}[dd] \haru{S} & & A \haru{G} & & A' \ar@{=}[dd] & & A \ar@{=}[dd] \haru{F} & & A' \ar@{=}[dd] \haru{S'} & & A' \ar@{=}[dd] & & A \ar@{=}[dd] \haru{F} & & A' \ar@{=}[dd] \haru{S'} & & A' \ar@{=}[dd] & & A \ar@{=}[dd] \haru{S} & & A \haru{G} & & A' \ar@{=}[dd] \\
& & \alpha_G & & & = & & \alpha & & 1 & & = & & \beta & & 1 & & = & & & \alpha_G \\
A \ar@{=}[dd] \hard{G} & & A' \ar@{=}[dd] \hard{S'} & & A' \ar@{=}[dd] & & A \ar@{=}[dd] \hard{G} & & A' \ar@{=}[dd] \hard{S'} & & A' \ar@{=}[dd] & & A \ar@{=}[dd] \hard{G} & & A' \ar@{=}[dd] \hard{S'} & & A' \ar@{=}[dd] & & A \ar@{=}[dd] \hard{G} & & A' \ar@{=}[dd] \hard{S'} & & A' \ar@{=}[dd] \\
& \gamma & & 1 & & & & \gamma & & 1 & & & & \gamma & & 1 & & & & \gamma & & 1 \\
A \hard{H} & & A' \hard{S'} & & A' & & A \hard{H} & & A' \hard{S'} & & A' & & A \hard{H} & & A' \hard{S'} & & A' & & A \hard{H} & & A' \hard{S'} & & A'. }$$
So $(S\odot \gamma) \alpha_G (\alpha \odot S) = (S\odot \gamma) \alpha_G (\beta \odot S)$, which means that there is a unique cell $\alpha_H$ in $\mathbb{D}$
$$\xymatrixcolsep{0.2cm} \xymatrixrowsep{0.2cm} \xymatrix{F\odot S \ar@<1ex>[rr]^{\alpha \odot S} \ar@<-1ex>[rr]_{\beta \odot S} & & G \odot S \ar[rr]^{\gamma \odot S} \ar[dr]_-{\alpha_G} & & H \odot S \ar@{-->}[dd]^{\alpha_H} \\
& & & S \odot G \ar[dr]_-{S' \odot \gamma} \\
& & & & S' \odot H }$$
that makes the above diagram commute. We can show that $\alpha_H$ is a horizontal comonad map by using once again the universal property of the coequalizer and the fact that $\alpha_G$ is a horizontal comonad map. Also, the commutativity of the triangle above shows exactly that $\gamma$ is a cell in $\mathbf{Mnd(\mathbb{D})}$ from $(G,\alpha_G)$ to $(H,\alpha_H)$.
\end{proof}

We can now define the double category $\mathbf{Mod(Mnd(\mathbb{D}))}$, and similary we can show that we can define the double category $\mathbf{Mod(Com(\mathbb{D}))}$ as well.

\section{The Double Category of Profunctors}

In this last section we will consider a category $\mathcal{E}$ with finite limits and reflexive coequalizers preserved by pullback functors, and we will define the double category of profunctors internal to $\mathcal{E}$. Its horizontal bicategory is the usual bicategory of internal profunctors as in \cite{JohnstoneT}.

\begin{definition}
The double category of profunctors $\mathbb{P}\mathrm{rof}(\mathcal{E})$ is defined to be the double category $\mathbf{Mod(\mathbb{S}\mathrm{pan}(\mathcal{E})})$.
\end{definition}

The vertical category of $\mathbb{P}\mathrm{rof}(\mathcal{E})$ is exactly the category of internal categories and internal $\mathcal{E}$-valued functors. The horizontal arrows are the internal profunctors in $\mathcal{E}$, and the cells the internal natural transformations, which are defined below. That is, the horizontal bicategory of $\mathbb{P}\mathrm{rof}(\mathcal{E})$ is exactly the bicategory $\mathbf{Prof}(\mathcal{E})$, as in \cite{JohnstoneT}.

\begin{definition}\cite{JohnstoneT}
An \textbf{internal category} $A$ in $\mathcal{E}$ consists of the following data:
\begin{enumerate}[label=\roman*.]
\item an object $A_0$ of $\mathcal{E}$, called the object of objects,
\item an object $A_1$ of $\mathcal{E}$, called the object of arrows,
\item two arrows $s,t:A_1 \rightarrow A_0$ in $\mathcal{E}$, called respectively source and target,
\item an arrow $i:A_0 \rightarrow A_1$ in $\mathcal{E}$ called identity, and
\item an arrow $c : A_1\times_{A_0} A_1 \rightarrow A_1$ in $\mathcal{E}$, called composition, where $A_1\times_{A_0} A_1 $ is the pullback
$$ \xymatrix{ & A_1\times_{A_0} C_1 \ar[r]^{\quad p_2} \ar[d]_{p_1} & A_1 \ar[d]^s \\
& A_1 \ar[r]_t & A_0 &, } $$
\end{enumerate}
subject to the axioms:
\begin{enumerate}
\item $s \circ i = 1_{A_0} = t \circ i$,
\item $s \circ c = s\circ p_1$, $ t\circ c = t\circ p_2$,
\item $ c \circ (c \: \times_{A_0} \: 1_{A_1}) = c \circ (1_{A_1} \: \times_{A_0} \: c)$, and
\item $c \circ (1_{A_1} \: \times_{A_0} \: (i \circ t)) = 1_{A_1} = c \circ ((i \circ s) \: \times_{A_0} \: 1_{A_1})$.
\end{enumerate}

Given an internal category $A$ in $\mathcal{E}$, an \textbf{internal} $\mathcal{E}$\textbf{-valued functor} $P: A \rightarrow \mathcal{E}$ is an object $P_0 \in |\mathcal{E}|$ together with two arrows $p_0:P_0 \rightarrow A_0$ and $p_1:A_1 \: \times_ {A_0} \: P_0 \rightarrow P_0$ of $\mathcal{E}$, where $A_1 \: \times_ {A_0} \: P_0$ is the pullback
$$\xymatrix{ & A_1 \: \times_ {A_0} \: P_0 \ar[d]_{r_1} \ar[r]^{\quad r_2} & P_0 \ar[d]^{p_0} \\
& A_1 \ar[r]_t & A_0 &.}$$
These data are required to satisfy the following axioms:
\begin{enumerate}
\item $p_0 \circ p_1 = s \circ r_1$,
\item $p_1 \circ ( (i\circ p_0) \: \times_{A_0} \: 1_{P_0}) = 1_{P_0}$, and
\item $p_1 \circ (1_{A_1} \: \times_{A_0} \: p_1) = p_1 \circ (c \: \times_{A_0} \: 1_{P_0})$.
\end{enumerate}

Given an internal category $A$ in $\mathcal{E}$ and two internal $\mathcal{E}$-valued functors $P,Q:A \rightarrow \mathcal{E}$, written as $P=(P_0,p_0,p_1)$ and $Q=(Q_0,q_0,q_1)$, an \textbf{internal natural transformation} $\phi :P \Rightarrow Q$ in $\mathcal{E}$ is an arrow $\phi: P_0 \rightarrow Q_0$ in $\mathcal{E}$ which satisfies the following conditions:
\begin{enumerate}
\item $q_0 \circ \phi = p_0$ and
\item $\phi \circ p_1 = q_1 \circ (1_{A_1} \: \times_{A_0} \: \phi)$.
\end{enumerate}

Given two internal categories $A$ and $B$, an \textbf{internal profunctor} $P: A \nrightarrow B$ in $\mathcal{E}$ is an internal $\mathcal{E}$-valued functor $B^{op} \times A \rightarrow \mathcal{E}$. The internal categories in $\mathcal{E}$, together with the internal profunctors and the internal natural transformations between them form a bicategory $\mathbb{P}\mathrm{rof}(\mathcal{E})$.
\end{definition}

\begin{proposition}
The double category $\mathbb{P}\mathrm{rof}(\mathcal{E})$ is Cartesian.
\end{proposition}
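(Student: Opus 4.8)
The plan is to invoke \Cref{Mod_Cart}, part 2, together with the results established for $\mathbb{S}\mathrm{pan}(\mathcal{E})$. By definition, $\mathbb{P}\mathrm{rof}(\mathcal{E}) = \mathbf{Mod}(\mathbb{S}\mathrm{pan}(\mathcal{E}))$, so it suffices to check that $\mathbb{S}\mathrm{pan}(\mathcal{E})$ is a fibrant Cartesian double category lying in $\mathbf{FbrCat}^{\mathcal{Q}}$, i.e. that every hom-category $\mathcal{H}(\mathbb{S}\mathrm{pan}(\mathcal{E}))(A,B)$ has coequalizers and $\odot$ preserves them in both variables.

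First I would recall that $\mathbb{S}\mathrm{pan}(\mathcal{E})$ is fibrant (shown earlier, using that $\mathcal{E}$ has pullbacks) and Cartesian (also shown earlier, using that $\mathcal{E}$ has pullbacks and a terminal object), both of which are available since $\mathcal{E}$ is finitely complete. Next I would verify the $\mathcal{Q}$-condition: the hom-category $\mathcal{H}(\mathbb{S}\mathrm{pan}(\mathcal{E}))(A,B)$ is the category of spans from $A$ to $B$ in $\mathcal{E}$ with span morphisms, i.e. the slice-like category of objects over $A \times B$; since $\mathcal{E}$ has reflexive coequalizers, so does this category, and in particular it has the reflexive coequalizers needed to form the composite of modules. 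Moreover horizontal composition $\odot$ in $\mathbb{S}\mathrm{pan}(\mathcal{E})$ is computed by pullback, and by hypothesis reflexive coequalizers in $\mathcal{E}$ are preserved by pullback functors, so $\odot$ preserves these coequalizers in each variable. Hence $\mathbb{S}\mathrm{pan}(\mathcal{E}) \in \mathbf{FbrCat}^{\mathcal{Q}}$ and is Cartesian.

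Then I would apply \Cref{Mod_Cart}(2): since $\mathbb{S}\mathrm{pan}(\mathcal{E})$ is a fibrant Cartesian double category (and in $\mathbf{FbrCat}^{\mathcal{Q}}$, so that $\mathbf{Mod}$ of it is defined and fibrant by the theorem of Shulman recalled above), it follows that $\mathbf{Mod}(\mathbb{S}\mathrm{pan}(\mathcal{E})) = \mathbb{P}\mathrm{rof}(\mathcal{E})$ is a fibrant Cartesian double category. This yields the claim.

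The only real subtlety — and the step I would be most careful about — is confirming the $\mathcal{Q}$-hypothesis, namely that coequalizers of the relevant reflexive pairs exist in the categories of spans and are preserved by $\odot$ in both variables. This reduces to the stated assumption that $\mathcal{E}$ has reflexive coequalizers preserved by pullback functors, but one should spell out that the reflexive pair defining module composition in $\mathbb{S}\mathrm{pan}(\mathcal{E})$ genuinely lives in a slice of $\mathcal{E}$ and that $\odot$ restricted to either variable is (up to the relevant comparison) a composite of pullback functors, so preservation follows. Everything else is a direct citation of \Cref{Mod_Cart} and the earlier results on $\mathbb{S}\mathrm{pan}(\mathcal{E})$.
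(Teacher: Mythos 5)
Your proof is correct and follows exactly the route the paper takes: the paper's entire proof is a citation of \Cref{Mod_Cart}, applied to the fibrant Cartesian double category $\mathbb{S}\mathrm{pan}(\mathcal{E})$. Your additional verification that $\mathbb{S}\mathrm{pan}(\mathcal{E})$ lies in $\mathbf{FbrCat}^{\mathcal{Q}}$ (reflexive coequalizers in the slice-like hom-categories, preserved by $\odot$ since composition is by pullback and $\mathcal{E}$'s reflexive coequalizers are pullback-stable) is a welcome filling-in of hypotheses the paper leaves implicit.
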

\begin{proof}
It follows by \Cref{Mod_Cart}.
\end{proof}

To close this chapter, we would like to give a conjecture for a potential characterization of profunctors. For this we need the Kleisli construction on double categories, which we define below.

\begin{definition}
We say that a double category $\mathbb{D}$ \textbf{admits the construction of Kleisli objects for monads} if the inclusion $J: D_0\rightarrow (\mathbf{Mnd(\mathbb{D})})_0$ has a left adjoint.
\end{definition}

\begin{remark}
To say that a fibrant double category has Kleisli objects, it is to say that for every monad $S:\srarrow{A}{A}$, there is an object $K$ and a cell
$$\xymatrixcolsep{0.2cm} \xymatrixrowsep{0.2cm} \xymatrix{
A \haru{S} \ar[dd]_v & & A \ar[dd]^v \\
\\ K \hard{U} & & K,}$$
universal from $S$ to $U$.
\end{remark}

In \cite{Carboni1987}, an essential part for the characterization of the locally ordered bicategory $\mathbf{Idl}\mathcal{E}$ of ordered objects and ordered ideals is the bicategory of discrete objects in a Cartesian locally ordered bicategory. Since both profunctors and ordered ideals can be seen as modules in suitable bicategories, we believe that there is an analogous characterization for profunctors, that uses the notion of discrete objects. We give the following definition, suitable to double categories:

\begin{definition}
An object in a Cartesian and fibrant double category $\mathbb{D}$ is called \textbf{discrete} if the following two pullbacks satisfy the Beck-Chevalley condition (\Cref{Beck_Chev}):
$$\xymatrix{
A \ar[d]_d \ar[r]^d & A\times A \ar[d]^{d\times A} & & A \ar[d]_{1} \ar[r]^{1} & A \ar[d]^{d} \\
A\times A \ar[r]_-{A\times d} & A \times A \times A & ,& A \ar[r]_-{d} & A\times A.} $$
Define $\mathbb{D}\mathrm{isc(\mathbb{D})}$ to be the double category of discrete objects in $\mathbb{D}$.
\end{definition}

\begin{conjecture}\label{conjecture1}
If $\mathbb{D}$ is a Cartesian and fibrant double category, then the double category $\mathbb{D}\mathrm{isc(\mathbb{D})}$ is unit-pure.
\end{conjecture}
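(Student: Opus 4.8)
The plan is to unwind \emph{unit-pure} into an elementary statement and then translate it, via the fibrant structure, into a factorisation problem for the diagonal. Concretely, $\mathbb{D}\mathrm{isc}(\mathbb{D})$ is unit-pure iff for every pair of discrete objects $A,B$ and every cell $\alpha$ with boundary
$$\xymatrixcolsep{0.2cm} \xymatrixrowsep{0.2cm} \xymatrix{ A \haru{U_A} \ar[dd]_{f} & & A \ar[dd]^{g} \\ & \alpha \\ B \hard{U_B} & & B }$$
one has $f=g$ and $\alpha=U_f$; since $U$ is always faithful, the content is that a cell $\alpha$ of this form \emph{exists} only when $f=g$, and is then \emph{unique}.

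First I would replace $\alpha$ by a globular cell. Since $\mathbb{D}$ is fibrant, the op-Cartesian filling of the niche with top $U_A:\srarrow{A}{A}$ and sides $f,g$ is $g_*\odot U_A\odot f^*\cong g_*\odot f^*:\srarrow{B}{B}$, and by the universal property of op-Cartesian cells (applied with the trivial factorisations $f=1_B\circ f$, $g=1_B\circ g$) the cells $\alpha$ as above correspond bijectively to globular cells $\psi:g_*\odot f^*\to U_B$. So it suffices to analyse such $\psi$. Using $f=p_1\langle f,g\rangle$, $g=p_2\langle f,g\rangle$ and \Cref{f^*_functorial}, I would rewrite
$$g_*\odot f^*\;\cong\;{p_2}_*\odot\langle f,g\rangle_*\odot\langle f,g\rangle^*\odot {p_1}^*,$$
so that $g_*\odot f^*$ is exhibited as the horizontal arrow ``carrying'' $\langle f,g\rangle:A\to B\times B$. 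In the model $\mathbb{D}=\mathbb{S}\mathrm{pan}(\mathcal{E})$ this is literally the span $(f,g)$, and a globular cell from it to $U_B$ amounts to a factorisation of $\langle f,g\rangle$ through $d_B:B\to B\times B$; the abstract argument must reproduce this behaviour.

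The heart of the proof is therefore the following lemma: \emph{if $B$ is discrete then a globular cell $\psi:g_*\odot f^*\to U_B$ produces a vertical arrow $h:A\to B$ with $\langle f,g\rangle=d_B\circ h$.} This is exactly where the two Beck--Chevalley conditions in the definition of discreteness enter. Mating $\psi$ along $f_*\dashv f^*$ first turns it into a $2$-cell $\bar\psi:g_*\to f_*$ in $\mathcal{H}(\mathbb{D})$; then, using \Cref{horizontal_products} and the identification (read off from its proof) $f_*\wedge g_*\cong d_B^*\odot\langle f,g\rangle_*$ together with the fact that the pseudo double functor $\times$ preserves companions, the pair $(\bar\psi,1_{g_*})$ yields a globular cell $g_*\to d_B^*\odot\langle f,g\rangle_*$. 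The second discreteness condition on $B$ — the square built from $1,1,d_B,d_B$, i.e.\ ``$d_B$ is monic and satisfies Beck--Chevalley'', which gives $d_B^*\odot {d_B}_*\cong U_B$ — together with the first, Frobenius--type condition — the square built from $d_B,d_B,B\times d_B,d_B\times B$, which I would use to rewrite ${d_B}_*\odot d_B^*$ after whiskering — let me recognise $g_*\odot f^*$, and the cell $\psi$, as pulled back along $d_B$ from a niche that literally tests factorisation of $\langle f,g\rangle$; feeding $\psi$ through this recognition produces $h$, and the monic/Beck--Chevalley part forces $h$ to be unique. I expect the main obstacle to be precisely the construction of the right niche and the verification that the two discreteness identities make the relevant comparison cell invertible; a secondary, routine task is to record that products of discrete objects are again discrete (so that $B\times B$, $B^3$, etc.\ are legitimate objects of $\mathbb{D}\mathrm{isc}(\mathbb{D})$) and that identities such as $\langle f,g\rangle_*\cong(f_*\times g_*)\odot {d_A}_*$ hold.

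Granting the lemma the conclusion is formal: from $\langle f,g\rangle=d_B\circ h$ we get $f=p_1 d_B h=h=p_2 d_B h=g$, so $f=g$; and then, with $g=f$, the uniqueness of $h$ forces $\psi$ to be the unique globular cell $f_*\odot f^*\to U_B$, which corresponds under the op-Cartesian bijection to the cell $U_f$. Hence $\alpha=U_f$, and $\mathbb{D}\mathrm{isc}(\mathbb{D})$ is unit-pure.
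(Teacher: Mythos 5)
This statement is a \emph{conjecture} in the thesis: the author gives no proof, so your argument has to stand entirely on its own. Your opening reductions are fine. By the op-Cartesian property, cells $U_A\to U_B$ with boundary $(f,g)$ do correspond bijectively to globular cells $\psi:g_*\odot f^*\to U_B$, equivalently (mating along $f_*\dashv f^*$) to $2$-cells $g_*\to f_*$ in $\mathcal{H}(\mathbb{D})$, and the identification $f_*\wedge g_*\cong d_B^*\odot\langle f,g\rangle_*$ is correct. But everything after that is deferred to the lemma you yourself flag as ``the main obstacle,'' and that lemma is the entire content of the statement; a proposal that postpones it to an unverified ``recognition'' is not a proof. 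Even granting the lemma exactly as you state it, your closing paragraph has a further gap: uniqueness of the vertical arrow $h$ does not give uniqueness of the cell $\psi$ (hence of $\alpha$). For that you would need the passage $\psi\mapsto h$ to be a \emph{bijection} onto the set of factorizations of $\langle f,g\rangle$ through $d_B$, not merely a function.

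More seriously, the key lemma cannot be derived from the stated hypotheses, because it fails for them. Let $\mathcal{C}$ be any category with finite products containing two distinct parallel arrows (e.g.\ $\mathbf{Set}$), and let $\mathbb{D}$ be the ``chaotic'' double category with $D_0=\mathcal{C}$, exactly one horizontal arrow between any two objects, and exactly one cell in every boundary (so $D_1\cong\mathcal{C}\times\mathcal{C}$). This $\mathbb{D}$ is strict and fibrant (every cell is both Cartesian and op-Cartesian), and it is Cartesian: the componentwise product is a strict double functor right adjoint to $\Delta$, all $D_1$-level equations holding automatically because a cell is determined by its boundary. Every object is discrete: both squares in the definition are pullbacks in a category with finite products, and the associated cells $\mathcal{Y}$ are globular endocells of the unique horizontal endomorphism, hence identities. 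Yet for any $f\neq g:A\to B$ there is a (unique) globular cell $g_*\odot f^*\to U_B$, whereas no $h$ with $\langle f,g\rangle=d_B\circ h$ can exist, since that would force $f=h=g$. So your lemma is false, and indeed $\mathbb{D}\mathrm{isc}(\mathbb{D})=\mathbb{D}$ fails to be unit-pure, so the conjecture itself fails as literally stated. The structural reason your strategy cannot succeed is that every hypothesis at your disposal --- fibrancy, Cartesianness, and the two Beck--Chevalley conditions defining discreteness --- asserts only that certain cells are invertible; such conditions are vacuous when the cell structure is degenerate and can never force an equation $f=g$ between vertical arrows. Any correct treatment must either strengthen the notion of discreteness or add hypotheses on $\mathbb{D}$ beyond those in the statement.
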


In \cite{CKW}, it was shown that a bicategory $\mathcal{B}$ with local stable coequalizers is equivalent to some bicategory of modules if and only if it has Kleisli objects in the bicategorical sense. The case where every Kleisli object is discrete was also discussed in \cite{Carboni1987}. Inspired by the results in these two papers, we believe that the following statement is true:

\begin{conjecture}\label{conjecture2}
Consider a double category $\mathbb{D}$ in $\mathbf{FbrCat}^{\mathcal{Q}}$ which:
\begin{enumerate}
\item is Cartesian,
\item has Kleisli objects for monads, and
\item the Kleisli object of every monad is discrete.
\end{enumerate}
Then $\mathbb{D} \simeq \mathbf{Mod(\mathbb{D}\mathrm{isc(\mathbb{D})})}$.
\end{conjecture}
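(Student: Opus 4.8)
The statement to prove is \Cref{conjecture2}, which asserts that a Cartesian double category $\mathbb{D}$ in $\mathbf{FbrCat}^{\mathcal{Q}}$ with discrete Kleisli objects for monads is equivalent to $\mathbf{Mod}(\mathbb{D}\mathrm{isc}(\mathbb{D}))$. The guiding template is the characterization theorem for spans (\Cref{characterization}) together with the proof strategy from \cite{CKW} for characterizing bicategories of modules: there, a bicategory with local stable coequalizers is shown to be a bicategory of modules precisely when it has Kleisli objects, by exhibiting the original bicategory as modules over the sub-bicategory of ``Cauchy-small'' or discrete objects obtained via the Kleisli construction. I would run the analogous argument one dimension up, using double-categorical Kleisli objects in place of the bicategorical ones.

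The plan is to construct a comparison double functor $\mathbb{D}\mathrm{isc}(\mathbb{D}) \to \mathbb{D}$ and show that applying $\mathbf{Mod}$ to it, composed with a canonical double functor $\mathbb{D} \to \mathbf{Mod}(\mathbb{D}\mathrm{isc}(\mathbb{D}))$ built from Kleisli data, gives an equivalence. First I would use hypothesis (2) to get, for every monad $(A,S)$ in $\mathbb{D}$, a Kleisli object $K(A,S)$ together with the universal cell $v \colon S \to U_{K(A,S)}$ from the remark following the definition of Kleisli objects; by the dual of \Cref{adjoint_grandis_pare} this assembles into a (lax, then by genericity pseudo) double functor. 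Second, I would check that hypothesis (3) — discreteness of each Kleisli object, i.e. the two Beck--Chevalley conditions in the definition of discrete object — forces the Kleisli objects to lie in $\mathbb{D}\mathrm{isc}(\mathbb{D})$, and moreover (invoking \Cref{conjecture1}, which I would assume as proved) that $\mathbb{D}\mathrm{isc}(\mathbb{D})$ is unit-pure. Third, I would define the candidate equivalence $\mathbb{D} \to \mathbf{Mod}(\mathbb{D}\mathrm{isc}(\mathbb{D}))$ sending an object $A$ to the monad obtained from $U_A$ together with the unit of the Kleisli adjunction, a horizontal arrow $F \colon \srarrow{A}{B}$ to $F$ viewed as a module over those monads (using fibrancy and \Cref{fibrant_equivalence} to equip $F$ with the requisite left and right actions via $v_*$ and $v^*$), and a cell to the corresponding equivariant map. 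Fourth, I would verify this double functor is fully faithful and essentially surjective on horizontal arrows — essential surjectivity being exactly the statement that every module arises, up to isomorphism, as a Cartesian filling, which should follow from fibrancy plus the universal property of Kleisli objects, while full faithfulness on cells follows from the universal property together with unit-purity of $\mathbb{D}\mathrm{isc}(\mathbb{D})$, in close parallel with the $\mathit{(2 \Rightarrow 3)}$ part of \Cref{characterization}.

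The main obstacle I expect is the same gap that is flagged for Eilenberg--Moore objects earlier in the thesis: Kleisli objects for monads (as opposed to the bicategorical Kleisli objects over genuine monads with multiplication) need not be \emph{strong}, i.e. one does not automatically get $S \cong v^* \odot v_*$ for the universal $v$, so the reconstruction of a monad from its Kleisli object may fail to be an equivalence without an extra ``strong Kleisli'' hypothesis analogous to ``strong Eilenberg--Moore objects'' in \Cref{characterization}. I would either add such a hypothesis to the conjecture, or try to derive strongness from the combination of Cartesianness, fibrancy, and discreteness of the Kleisli object — the Beck--Chevalley conditions in the definition of discreteness are precisely the kind of data that let one commute $v_*$ and $v^*$ past the comonoid structure, so there is real hope that discreteness is exactly what rescues strongness here, but making that precise is the crux of the argument.

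A secondary technical point, which I would handle but do not expect to be genuinely hard, is checking that the various constructed double functors land in $\mathbf{FbrCat}^{\mathcal{Q}}$ and are pseudo rather than merely lax: the preservation of local coequalizers by $\odot$ is used to invoke \Cref{ModShulman}, and the passage from lax to pseudo for the Kleisli double functor should follow from the usual argument that a lax functor with all structure cells built from universal cells is automatically pseudo, exactly as in the treatment of $EM$ and of $C$ earlier. Once strongness is in hand, assembling fully faithful plus essentially surjective into an equivalence is routine by the definition of double-categorical equivalence from \cite{AdjointsGP} used in \Cref{characterization}.
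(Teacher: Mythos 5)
The statement you are proving is presented in the thesis as a \emph{conjecture}: the author explicitly writes ``we believe that the following statement is true'' and supplies no argument, so there is no proof in the paper to compare yours against. Your outline does track the strategy the author signals (the Kleisli-object criterion of \cite{CKW} run one dimension up, in parallel with the $\mathit{(2\Rightarrow 3)}$ direction of \Cref{characterization}), so as a research plan it is pointed in the right direction. But it is a plan, not a proof, and by your own account the crux is unresolved: you need $S\cong v^*\odot v_*$ for the universal cell $v$ out of a monad $S$ (the ``strong Kleisli'' property), and you only express ``real hope'' that the two Beck--Chevalley squares in the definition of discreteness deliver it. Until that implication is actually established (or the hypothesis is added), neither essential surjectivity on horizontal arrows nor full faithfulness goes through, because both rest on reconstructing $S$ from its Kleisli data. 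You also invoke \Cref{conjecture1}, which is itself unproven in the thesis, so even a complete version of your argument would be conditional.

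Two further gaps are worth naming concretely. First, the paper's definition of ``has Kleisli objects for monads'' is purely one-dimensional: only the inclusion $J\colon D_0\rightarrow (\mathbf{Mnd}(\mathbb{D}))_0$ is required to have a left adjoint. Your steps three and four need a two-dimensional universal property -- universal cells against horizontal arrows, as in condition \textit{ii} of \Cref{adjoint_grandis_pare} -- in order to make a horizontal arrow $F$ into a module with well-defined actions and to make cells into equivariant maps; that data is not supplied by the hypotheses as stated, so either the conjecture's hypothesis must be strengthened or the extra universality must be derived (fibrancy alone does not give it). Second, essential surjectivity on \emph{objects} of $\mathbf{Mod}(\mathbb{D}\mathrm{isc}(\mathbb{D}))\to\mathbb{D}$ requires every object of $\mathbb{D}$ to be (equivalent to) the Kleisli object of a monad on a discrete object; your sketch assigns to $A$ the identity monad $U_A$, whose Kleisli object is $A$ itself, and hypothesis (3) would then force every object of $\mathbb{D}$ to be discrete -- a degenerate reading that suggests the correspondence on objects needs more care than the proposal gives it. In short: reasonable approach, genuinely open middle.
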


\begin{corollary}[of \Cref{conjecture1} and \Cref{conjecture2}]
Consider a double category $\mathbb{D}$ in $\mathbf{FbrCat}^{\mathcal{Q}}$ which:
\begin{enumerate}
\item is Cartesian,
\item has Kleisli objects for monads,
\item the Kleisli object of every monad is discrete, and
\item $\mathbb{D}\mathrm{isc(\mathbb{D})}$ has strong Eilenberg-Moore objects for co-pointed endomorphisms.
\end{enumerate}
Then $\mathbb{D} \simeq \mathbb{P}\mathrm{rof}(\mathbb{D}\mathrm{isc(\mathbb{D})}_0)$.
\end{corollary}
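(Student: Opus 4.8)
The plan is to deduce this corollary directly from Conjecture~\ref{conjecture1}, Conjecture~\ref{conjecture2}, and the characterization of the double category of spans (\Cref{characterization}), together with the definition $\mathbb{P}\mathrm{rof}(\mathcal{E}) = \mathbf{Mod}(\mathbb{S}\mathrm{pan}(\mathcal{E}))$. The idea is to chain equivalences: first rewrite $\mathbb{D}$ as a double category of modules over its discretes, then identify the double category of discretes with a double category of spans, and finally fold these two facts together.

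First I would apply \Cref{conjecture2}: under hypotheses (1)--(3), we get $\mathbb{D} \simeq \mathbf{Mod}(\mathbb{D}\mathrm{isc}(\mathbb{D}))$. Next I would examine $\mathbb{D}\mathrm{isc}(\mathbb{D})$ itself. By \Cref{conjecture1}, since $\mathbb{D}$ is Cartesian and fibrant, $\mathbb{D}\mathrm{isc}(\mathbb{D})$ is unit-pure. I would also need that $\mathbb{D}\mathrm{isc}(\mathbb{D})$ is itself Cartesian and fibrant; this should follow from the fact that the Beck--Chevalley conditions defining discreteness are preserved under the relevant constructions, so $\mathbb{D}\mathrm{isc}(\mathbb{D})$ inherits the products and companions/conjoints of $\mathbb{D}$ (one checks that the diagonal and terminal maps restrict, and that Cartesian/op-Cartesian fillings of niches between discrete objects are again over discrete objects). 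Combined with hypothesis (4), which gives strong Eilenberg--Moore objects for co-pointed endomorphisms in $\mathbb{D}\mathrm{isc}(\mathbb{D})$, all four conditions of clause~(2) of \Cref{characterization} are satisfied for $\mathbb{D}\mathrm{isc}(\mathbb{D})$. Hence \Cref{characterization} yields $\mathbb{D}\mathrm{isc}(\mathbb{D}) \simeq \mathbb{S}\mathrm{pan}(\mathcal{E})$ where $\mathcal{E} = \mathbb{D}\mathrm{isc}(\mathbb{D})_0$, the vertical category of the discretes.

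Finally I would assemble the pieces. From the two equivalences above,
$$\mathbb{D} \simeq \mathbf{Mod}(\mathbb{D}\mathrm{isc}(\mathbb{D})) \simeq \mathbf{Mod}(\mathbb{S}\mathrm{pan}(\mathbb{D}\mathrm{isc}(\mathbb{D})_0)) = \mathbb{P}\mathrm{rof}(\mathbb{D}\mathrm{isc}(\mathbb{D})_0),$$
using that $\mathbf{Mod}(-)$ is a $2$-functor on $\mathbf{FbrCat}^{\mathcal{Q}}$ by \Cref{ModShulman} and therefore sends the equivalence $\mathbb{D}\mathrm{isc}(\mathbb{D}) \simeq \mathbb{S}\mathrm{pan}(\mathbb{D}\mathrm{isc}(\mathbb{D})_0)$ to an equivalence, together with the last line being the definition of $\mathbb{P}\mathrm{rof}$. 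One small bookkeeping point: to invoke $\mathbf{Mod}$ I must check $\mathbb{D}\mathrm{isc}(\mathbb{D})$ lies in $\mathbf{FbrCat}^{\mathcal{Q}}$, i.e. that its hom-categories have coequalizers preserved by $\odot$; since $\mathcal{E}$ has finite limits and the relevant coequalizers here come from $\mathbb{S}\mathrm{pan}(\mathcal{E})$, and we can impose (or derive) the needed reflexive-coequalizer condition on $\mathcal{E}$, this is routine once the span identification is in place.

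The main obstacle I anticipate is not the chaining itself but verifying that $\mathbb{D}\mathrm{isc}(\mathbb{D})$ is Cartesian and fibrant and lies in $\mathbf{FbrCat}^{\mathcal{Q}}$, so that both \Cref{characterization} and \Cref{ModShulman} genuinely apply to it; this requires showing the sub-double-category of discrete objects is closed under the Cartesian structure, the companions/conjoints, and the local colimits of $\mathbb{D}$. This is where the precise form of the Beck--Chevalley conditions in the definition of \emph{discrete} does the work, and it is the step that would need the most care in a full write-up; everything else is a formal consequence of the two conjectures and the span characterization.
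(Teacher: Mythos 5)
Your proposal follows exactly the paper's own argument: Conjecture~\ref{conjecture2} gives $\mathbb{D} \simeq \mathbf{Mod}(\mathbb{D}\mathrm{isc}(\mathbb{D}))$, Conjecture~\ref{conjecture1} together with hypothesis (4) feeds into \Cref{characterization} to give $\mathbb{D}\mathrm{isc}(\mathbb{D}) \simeq \mathbb{S}\mathrm{pan}(\mathbb{D}\mathrm{isc}(\mathbb{D})_0)$, and applying $\mathbf{Mod}$ chains the equivalences to $\mathbb{P}\mathrm{rof}(\mathbb{D}\mathrm{isc}(\mathbb{D})_0)$. The extra care you take in flagging that $\mathbb{D}\mathrm{isc}(\mathbb{D})$ must be checked to be Cartesian, fibrant, and in $\mathbf{FbrCat}^{\mathcal{Q}}$ is a legitimate point the paper leaves implicit, but it does not change the route.
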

\begin{proof}
From the first three conditions and \Cref{conjecture2} we take that $$\mathbb{D} \simeq \mathbf{Mod(\mathbb{D}\mathrm{isc(\mathbb{D})})}.$$
Also, \Cref{conjecture1} and \emph{4.} show that the double category $\mathbb{D}\mathrm{isc(\mathbb{D})}$ satisfies the conditions of the characterization \Cref{characterization}, and then $$\mathbb{D}\mathrm{isc(\mathbb{D})} \simeq \mathbb{S}\mathrm{pan}(\mathbb{D}\mathrm{isc(\mathbb{D})}_0).$$
It follows that
$$\mathbb{D} \simeq \mathbf{Mod(\mathbb{D}\mathrm{isc(\mathbb{D})})} \simeq \mathbf{Mod(\mathbb{S}\mathrm{pan}(\mathbb{D}\mathrm{isc(\mathbb{D})}_\mathrm{0}))} \simeq \mathbb{P}\mathrm{rof}(\mathbb{D}\mathrm{isc(\mathbb{D})}_0).$$
\end{proof}

\chapter{Conclusion}\label{Conclusion}

\section{A Comparison Between Cartesian Double Categories and Bicategories}

We can now make a comparison between the two different settings that have been used for the generalization of the classic notion of Cartesian categories. The first one was that of Cartesian bicategories and the second is the one that we used and developed in this thesis, i.e. Cartesian double categories. We repeat the two definitions here for reference:

\vspace{0.3cm}
$\clubsuit$ \emph{A bicategory $\mathcal{B}$ is said to be Cartesian if:
\begin{enumerate}
\item The bicategory $\mathbf{Map}(\mathcal{B})$ has finite bicategorical products.
\item Each category $\mathcal{B}(A,B)$ has finite products.
\item Certain derived lax functors $\otimes: \mathcal{B} \times \mathcal{B} \rightarrow \mathcal{B}$ and $I:\mathbbm{1} \rightarrow \mathcal{B}$, extending the product structure of $\mathbf{Map}(\mathcal{B})$, are pseudo.
\end{enumerate}
}

\vspace{0.3cm}
$\clubsuit$ \emph{A double category $\mathbb{D}$ is said to be Cartesian if there are adjunctions
$$\xymatrixcolsep{0.2cm} \xymatrixrowsep{0.2cm} \xymatrix{
\quad \mathbb{D} \ar@/^1pc/[rr]^-{\Delta} & \perp & \mathbb{D} \times \mathbb{D} \ar@/^1pc/[ll]^-{\times} & \text{and} & \mathbb{D} \ar@/^1pc/[rr]^{!} & \perp & \mathbbm{1} \ar@/^1pc/[ll]^-I }$$
in the 2-category $\mathbf{DblCat}$.}

\vspace{0.3cm}
The first and most evident difference is that for double categories the definition is closer to the definition for Cartesian categories, which asks for the respective right adjoints to the diagonal and unique functors. As we have mentioned before, the reason for that is that double categories, double functors, and vertical natural transformations form a strict 2-category. This is an important property which we believe will play a central role in the further development of the subject.

The second difference is related to the symmetric monoidal structure that arises from the above definitions. A monoidal structure on double categories is a quite simple idea, given the fact that it is just a pair of monoidal structures on mere categories, together with an extra compatibility property (\cite{Shulman2010}). A monoidal bicategory on the other hand is not a very simple idea. The traditional definition is that of an one-object tricategory. The original reference for tricategories is Gordon, Power, and Street's \cite{tricategories}, where one can see the complexity of the definition.

Moreover, in the paper \cite{Carboni2008}, the authors had to use the Grothendieck construction to shift from precartesian to Cartesian bicategories. For double categories, to make the same shift, we just replaced the lax right adjoints to the diagonal and the unique double functors in the definition for pseudo. Also, if we were to consider the Grothendieck construction for the horizontal bicategory of a double category $\mathbb{D}$, then this would correspond to the category $D_1$ (when we don't consider the 2-cells of the Grothendieck construction). $D_1$ though was already given to us when we asked for a double category $\mathbb{D}$.

Another issue one encounters when working with a Cartesian bicategory is its very first condition, the one that asks for finite bicategorical products in $\mathbf{Map}(\mathcal{B})$. For the bicategory of spans this was an easy question since $\mathbf{Map}(\mathbf{Span(\mathcal{E})})$ is just the category $\mathcal{E}$ (\cite{SpansLWW}). However, we saw in Chapter 2 that checking whether $\mathbf{Map}(\mathbf{Prof(\mathcal{E})})$ or $\mathbf{Map}(V\mbox{-}\mathbf{Mat})$ has finite products is not a simple question, let alone that products in the bicategorical sense are not trivial. How we overcome this issue with double categories is that we just ask for products in the categories $\mathbf{Cat(\mathcal{E})}$ or $\mathbf{Set}$, i.e. we ask for products only for the ``nice'' maps in $\mathbf{Prof(\mathcal{E})}$ or $V\mbox{-}\mathbf{Mat}$ respectively. It has been a very interesting question throughout the years to characterize those ``nice'' maps in general and see how they can be picked out of $\mathbf{Map}(\mathcal{B})$ for a general $\mathcal{B}$. As far as we know, this still remains unknown.

Lastly, we would like to emphasize that many times, calculations in this thesis appeared to be simpler than the calculations for similar results for Cartesian bicategories. For most of the properties that were shown for Cartesian bicategories, a suitable version of them was also shown for double categories. Moreover, we can use both setting as a base for a characterization of spans. The two characterization theorems look quite different, which stresses the distinction between Cartesian bicategories and Cartesian double categories. However, the point here is that we didn't lose anything when we moved from bicategories to double categories, but we were also able to develop it even more, by proving for examble that $\mathbb{M}\mathbf{od(\mathbb{D})}$ is Cartesian if $\mathcal{B}$ is Cartesian, and as a result, $\mathbf{Prof(\mathcal{E})}$ is Cartesian.

\vspace{0.3cm}
One of our main goals for the future is to show that we can also characterize the double category of profunctors as a Cartesian double category. We gave a guideline for that in our last chapter. We also believe that there is a lot to be said and studied for Cartesian double categories in general, as well as investigating possible generalizations of usual properties of Cartesian categories. For example, Heunen and Vicary showed in their lecture notes \cite{Vicary} that if $\mathcal{C}$ is a symmetric monoidal category equipped with monoidal natural transformations with components $\delta_A: A\rightarrow A\otimes A$ and $\epsilon_A: A \rightarrow I$, subject to specific properties, then $\mathcal{C}$ is a Cartesian category. Would a generalization of that be possible for Cartesian double categories?

\vspace{0.3cm}
Throughout the years, category theorists seemed to favor 2-categories or bicategories for the development of Category Theory in higher dimensions. Recently though, double categories have gained significant ground in the discussion. We hope that what we presented in this thesis will be a helpful tool in this regard.

\bibliographystyle{alpha}
\bibliography{Thesis}

\begin{thebibliography}{CKWW08}

\bibitem[BCSW83]{Variation}
Renato Betti, Aurelio Carboni, Ross Street, and Robert Walters.
\newblock Variation through enrichment.
\newblock {\em Journal of Pure and Applied Algebra}, 29(2):109--127, 1983.

\bibitem[BD86]{Borceux1986}
Francis Borceux and Dominique Dejean.
\newblock Cauchy completion in category theory.
\newblock {\em Cahiers de Topologie et G{\'{e}}om{\'{e}}trie
  Diff{\'{e}}rentielle Cat{\'{e}}goriques}, 27(2):133--146, 1986.

\bibitem[Ben67]{Benabou}
Jean Benabou.
\newblock {\em Introduction to bicategories}.
\newblock Springer, Berlin, 1967.

\bibitem[Ben73]{Benabou73}
Jean Benabou.
\newblock {\em Les distributeurs}.
\newblock Rapport, Institute de math{\'{e}}matique pure et appliqu{\'{e}}e,
  Louvain, 1973.

\bibitem[Bor94]{Handbook}
Francis Borceux.
\newblock {\em Handbook of categorical algebra, Encyclopedia of Mathematics and
  its Applications}, volume~51.
\newblock Cambridge University Press, Cambridge, 1994.

\bibitem[CKW87]{CKW}
Aurelio Carboni, Stefano Kasangian, and Robert Walters.
\newblock An axiomatics for bicategories of modules.
\newblock {\em Journal of Pure and Applied Algebra}, 45:127--141, 1987.

\bibitem[CKWW08]{Carboni2008}
Aurelio Carboni, Gregory Kelly, Robert Walters, and Richard Wood.
\newblock {Cartesian bicategories II}.
\newblock {\em Theory and Applications of Categories}, 19(6):93--124, 2008.

\bibitem[CS86]{Order}
Aurelio Carboni and Ross Street.
\newblock Order ideals in categories.
\newblock {\em Pacific Journal of Mathematics}, 124(2):275--288, 1986.

\bibitem[CW87]{Carboni1987}
A~Carboni and R~F~C Walters.
\newblock {Cartesian Bicategories I}.
\newblock {\em Journal of Pure and Applied Algebra}, 49:11--32, 1987.

\bibitem[Ehr65]{ehresmann65}
Charles Ehresmann.
\newblock {\em Categories et structures}.
\newblock Dunod, Paris, 1965.

\bibitem[FGK11a]{Fiore2011c}
Thomas~M. Fiore, Nicola Gambino, and Joachim Kock.
\newblock Double adjunctions and free monads.
\newblock {\em arXiv:1105.6206}, 2011.

\bibitem[FGK11b]{Fiore2011b}
Thomas~M. Fiore, Nicola Gambino, and Joachim Kock.
\newblock {Monads in double categories}.
\newblock {\em Journal of Pure and Applied Algebra}, 215(6):1174--1197, 2011.

\bibitem[Fio06]{Fiore2006}
Thomas Fiore.
\newblock Pseudo limits, biadjoints, and pseudo algebras: Categorical
  foundations of conformal field theory.
\newblock {\em arXiv:math/0408298}, 2006.

\bibitem[GP99]{LimitsGP}
Marco Grandis and Robert Par{\'{e}}.
\newblock Limits in double categories.
\newblock {\em Cahiers de Topologie et G{\'{e}}om{\'{e}}trie
  Diff{\'{e}}rentielle Cat{\'{e}}goriques}, 40:162--220, 1999.

\bibitem[GP04]{AdjointsGP}
Marco Grandis and Robert Par{\'{e}}.
\newblock Adjoints for double categories.
\newblock {\em Cahiers de Topologie et G{\'{e}}om{\'{e}}trie
  Diff{\'{e}}rentielle Cat{\'{e}}goriques}, 45:193--240, 2004.

\bibitem[GP07]{LaxKanGP}
Marco Grandis and Robert Par{\'{e}}.
\newblock Lax kan extensions for double categories (on weak double categories,
  part iv).
\newblock {\em Cahiers de Topologie et G{\'{e}}om{\'{e}}trie
  Diff{\'{e}}rentielle Cat{\'{e}}goriques}, 48(3):163--199, 2007.

\bibitem[GP08]{KanGP}
Marco Grandis and Robert Par{\'{e}}.
\newblock Kan extensions in double categories (on weak double categories, part
  ii).
\newblock {\em Theory and Applications of Categories}, 20(8):152--185, 2008.

\bibitem[GP17]{Grandis2017}
Marco Grandis and Robert Par{\'{e}}.
\newblock Span and cospan representations of weak double categories.
\newblock {\em Categories and General Algebraic Structures with Applications},
  6(November 2016):85--105, 2017.

\bibitem[GPS95]{tricategories}
R.~Gordon, A.J. Power, and Ross Street.
\newblock {\em Coherence for Tricategories}, volume 117.
\newblock Memoirs of the American Mathematical Society, 1995.

\bibitem[Gra74]{Gray1974}
John Gray.
\newblock {\em Formal category theory: adjointness for 2-categories, Lecture
  notes in Mathematics}, volume 391.
\newblock Springer-Verlag, Berlin, 1974.

\bibitem[HV12]{Vicary}
Chris Heunen and Jamie Vicary.
\newblock {\em Lectures on Categorical Quantum Mechanics}.
\newblock Oxford, 2012.

\bibitem[Joh77]{JohnstoneT}
Peter Johnstone.
\newblock {\em Topos theory}, volume~10 of {\em London Mathematical Society
  Monographs}.
\newblock Academic Press, 1977.

\bibitem[KL01]{Kelly2001}
Gregory Kelly and Stephen Lack.
\newblock V-cat is locally presentable or locally bounded if {V} is so.
\newblock {\em Theory and Applications of Categories}, 8(23):555--575, 2001.

\bibitem[Kou15]{Roald}
Roald Koudenburg.
\newblock A double-dimensional approach to formal category theory.
\newblock {\em arXiv:1511.04070}, 2015.

\bibitem[Lac10]{Companion}
Stephen Lack.
\newblock A 2-categories companion.
\newblock {\em arXiv:math/0702535}, 2010.

\bibitem[Law02]{Metric}
William Lawvere.
\newblock Metric spaces, generalized logic, and closed categories.
\newblock {\em Reprints in Theory and Applications of Categories}, (1):1–37,
  2002.

\bibitem[LWW10]{SpansLWW}
Stephen Lack, Robert Walters, and Richard Wood.
\newblock {Bicategories of spans as cartesian bicategories}.
\newblock {\em Theory and Applications of Categories}, 24(1):1--24, 2010.

\bibitem[Mac78]{MacLane}
Saunders MacLane.
\newblock {\em Categories for the Working Mathematician}.
\newblock Springer, 1978.

\bibitem[Nie12a]{SusanExp}
Susan Niefield.
\newblock Exponentiability via double categories.
\newblock {\em Theory and Applications of Categories}, 27(2):10--26, 2012.

\bibitem[Nie12b]{SusanGlue}
Susan Niefield.
\newblock The glueing construction and double categories.
\newblock {\em Journal of Pure and Applied Algebra}, 216:1827–1836, 2012.

\bibitem[Nie12c]{SpanCospan}
Susan Niefield.
\newblock {Span , Cospan , and Other Double Categories}.
\newblock {\em arXiv:1201.3789}, 2012.

\bibitem[Par89]{Pare1989}
Robert Par{\'{e}}.
\newblock Double limits.
\newblock {\em International Category Theory Meeting talk}, 1989.

\bibitem[Shu08]{Shulman2008}
Michael Shulman.
\newblock {Framed bicategories and monoidal fibrations}.
\newblock {\em Theory and Applications of Categories}, 20(18):650--738, 2008.

\bibitem[Shu10]{Shulman2010}
Michael Shulman.
\newblock Constructing symmetric monoidal bicategories.
\newblock {\em arXiv:1004.0993}, 2010.

\bibitem[Str72]{Street1972}
Ross Street.
\newblock {The formal theory of monads}.
\newblock {\em Journal of Pure and Applied Algebra}, 2:149--168, 1972.

\bibitem[Str80]{Fibrations}
Ross Street.
\newblock Fibrations in bicategories.
\newblock {\em Cahiers de Topologie et G{\'{e}}om{\'{e}}trie
  Diff{\'{e}}rentielle Cat{\'{e}}goriques}, 21:111--160, 1980.

\bibitem[Ver11]{Verity11}
Dominic Verity.
\newblock Enriched categories, internal categories, and change of base (revised
  {P}h{D} thesis).
\newblock {\em Reprints in Theory and Applications of Categories}, (20):1--266,
  2011.

\bibitem[Woo82]{Woo82}
Richard Wood.
\newblock Abstract proarrows {I}.
\newblock {\em Cahiers de Topologie et G{\'{e}}om{\'{e}}trie
  Diff{\'{e}}rentielle Cat{\'{e}}goriques}, 23(3):279--290, 1982.

\bibitem[Woo85]{Woo85}
Richard Wood.
\newblock Proarrows {II}.
\newblock {\em Cahiers de Topologie et G{\'{e}}om{\'{e}}trie
  Diff{\'{e}}rentielle Cat{\'{e}}goriques}, 26(2):135--168, 1985.

\end{thebibliography}

\end{document}